\documentclass[11pt,reqno]{amsart}
\pdftrailerid{}

\usepackage[margin=1.12in]{geometry}
\usepackage[T1]{fontenc}
\usepackage{lmodern}
\usepackage[expansion=false]{microtype}
\usepackage{mathtools,amssymb,amsfonts}
\usepackage{enumitem}
\usepackage{xcolor}
\usepackage{tikz}
\usetikzlibrary{calc,decorations.markings}
\usepackage{aliascnt}
\usepackage[colorlinks=true,linkcolor=blue!55!black,citecolor=green!35!black,urlcolor=blue!55!black]{hyperref}
\usepackage[nameinlink,capitalize]{cleveref}
\makeatletter
\renewcommand{\@defaultbiblabelstyle}[1]{[#1]}
\makeatother

\newtheorem{theorem}{Theorem}[section]
\newaliascnt{proposition}{theorem}
\newtheorem{proposition}[proposition]{Proposition}
\aliascntresetthe{proposition}
\newaliascnt{lemma}{theorem}
\newtheorem{lemma}[lemma]{Lemma}
\aliascntresetthe{lemma}
\newaliascnt{corollary}{theorem}
\newtheorem{corollary}[corollary]{Corollary}
\aliascntresetthe{corollary}
\theoremstyle{definition}
\newaliascnt{definition}{theorem}
\newtheorem{definition}[definition]{Definition}
\aliascntresetthe{definition}
\newaliascnt{convention}{theorem}
\newtheorem{convention}[convention]{Convention}
\aliascntresetthe{convention}
\theoremstyle{remark}
\newaliascnt{remark}{theorem}
\newtheorem{remark}[remark]{Remark}
\aliascntresetthe{remark}

\DeclareMathOperator{\Ad}{Ad}
\DeclareMathOperator{\End}{End}
\DeclareMathOperator{\FPdim}{FPdim}
\DeclareMathOperator{\Dim}{Dim}
\DeclareMathOperator{\Hom}{Hom}
\DeclareMathOperator{\Inv}{Inv}
\DeclareMathOperator{\Irr}{Irr}
\DeclareMathOperator{\Kar}{Kar}
\DeclareMathOperator{\rank}{rank}

\DeclareMathOperator{\Tr}{Tr}
\DeclareMathOperator{\Gal}{Gal}

\newcommand{\Q}{\mathbb Q}
\newcommand{\Z}{\mathbb Z}
\newcommand{\CC}{\mathbb C}
\newcommand{\Fib}{\mathrm{Fib}}
\newcommand{\HFib}{H_{\mathrm{Fib}}}
\newcommand{\KFib}{K_{\mathrm{Fib}}}
\newcommand{\CoxeterRigidityTheorem}{3.1}
\newcommand{\CoxeterConstructionTheorem}{1.3}
\newcommand{\CoxeterRankTwoProposition}{10.1}
\newcommand{\CoxeterClassificationCorollary}{1.7}

\newcommand{\YL}{\mathcal{YL}}
\renewcommand{\Vec}{\mathrm{Vec}}
\newcommand{\TT}{\mathcal{TT}_3}
\newcommand{\TTthree}{\TT}
\newcommand{\Ising}{\mathrm{Ising}}
\newcommand{\boxt}{\boxtimes}
\newcommand{\calA}{\mathcal A}
\newcommand{\calB}{\mathcal B}
\newcommand{\calC}{\mathcal C}
\newcommand{\calD}{\mathcal D}
\newcommand{\calE}{\mathcal E}
\newcommand{\calP}{\mathcal P}
\newcommand{\rtw}{\rtimes}
\newcommand{\ph}{\varphi}
\newcommand{\id}{\boldsymbol{1}}

\definecolor{A3}{RGB}{0,150,0}
\definecolor{theta}{named}{blue}
\definecolor{rho}{named}{red}
\definecolor{mu}{named}{orange}
\tikzset{
  unshaded/.style={fill=white},
  rho/.style={color=rho,solid},
  mu/.style={color=mu,solid}
}

\title[Finite Fibonacci--pointed composites]{Finite Composites of the Fibonacci Category\\with Pointed Fusion Categories}
\author{Huixuan He}
\address{H. He, Department of Mathematics,
Tsinghua University, Beijing, China}
\email{hhx22@mails.tsinghua.edu.cn}
\author{Zhengwei Liu}
\address{Z. Liu, Yau Mathematical Sciences Center
and Department of Mathematics, Tsinghua University,
and Beijing Institute of Mathematical Sciences and Applications,
Beijing, China}
\email{liuzhengwei@mail.tsinghua.edu.cn}
\date{}
\hypersetup{pdftitle={Finite Composites of the Fibonacci Category with Pointed Fusion Categories},pdfauthor={Huixuan He; Zhengwei Liu}}

\begin{document}
\raggedbottom

\begin{abstract}
We classify finite composites of the positive Fibonacci category with
arbitrary pointed fusion categories, retaining the entire source functor
and making no unitarity assumption on the target. Formal codegrees,
dimension fields, and algebraic integrality exclude all dihedral Fibonacci
rings $R_n$ with $n\ge4$ from categorification. Together with an independent
graphical uniqueness argument, this gives the finite two-Fibonacci
classification. For several marked Fibonacci generators, a geometric
height-and-polygon argument identifies the based ring with a standard-basis
Coxeter--Hecke ring. Transitive conjugation by the pointed image forces the
noncommutation graph to be empty or a perfect matching. We then obtain a
crossed-product classification in terms of a quotient group, a stabilizer,
labelled Fibonacci or rank-six $\TT$ blocks, and compatible cocycle and
categorical action data. This includes the Fibonacci--$\Z/3\Z$ composite
problem without target unitarity. We also establish the corresponding
forms and marked descent statements over characteristic-zero fields
admitting the chosen Fibonacci-type input.
\end{abstract}

\maketitle

\section{Introduction}\label{sec:introduction}

The composition problem for subfactors asks which isomorphism classes of
inclusions $N\subset M$ arise from chains $N\subset P\subset M$ with
prescribed finite-index components $N\subset P$ and $P\subset M$, as their
relative position varies. Standard invariants describe the associated
bimodule structure and provide a categorical approach to this problem.  In the bimodule description, the two components have
even parts on the common $P$ side.  They generate a tensor category of
$P$--$P$ bimodules which is a quotient of their free product, and which is
a unitary fusion category when the composite has finite depth.  Subfactor
composition thus leads to the classification of such categorical quotients,
with the component inclusions retained by their canonical algebra objects,
or Q-systems.  Izumi--Morrison--Penneys make this correspondence explicit
for the small-index composites
\cite[Sections~1.1 and~4]{IzumiMorrisonPenneys2013}.

Bisch and Haagerup studied the group-type chains
$R^H\subset R\subset R\rtimes K$, where finite groups $H$ and $K$ act
outerly on the hyperfinite $\mathrm{II}_1$ factor $R$.  They proved that
the composite has finite depth exactly when $H$ and $K$ generate a finite
group in $\operatorname{Out}(R)$ \cite{BischHaagerup1996}.  The first
non-group-like case combines the pointed category $\Vec_{\Z/2\Z}$,
the even part of the $A_3$ subfactor, with the Fibonacci category, the
even part of the $A_4$ subfactor.  Liu classified the standard invariants
of the $A_3$--$A_4$ and $A_4$--$A_4$ composites
\cite[Theorems~1.1 and~1.2]{Liu2015}.  These results naturally lead to
the next cyclic pointed case: the finite composites of Fibonacci with
$\Vec_{\Z/3\Z}$.

Edie--Michell states a classification in the unitary cyclic setting
and identifies the nonunitary two-Fibonacci problem as an obstacle to
removing unitarity
\cite[Theorem~1.1 and Remark~1.2]{EdieMichell2022}.
Our argument develops the two-generator classification and the
transitive-symmetry block decomposition independently. The companion
paper \cite{FibCoxeter} examines the intermediate three-generator
claims used in the published cyclic-classification proof.
We resolve the Fibonacci--$\Z/3\Z$ problem without a unitarity assumption
as part of a classification for arbitrary pointed fusion categories.
We begin with the two-Fibonacci classification.

Write $\Fib$ for the positive complex Fibonacci category, generated by
$\tau$ with $\tau^{\otimes2}\cong\mathbf1\oplus\tau$, and $\TT$ for the
rank-six category associated to the $2D2$ subfactor.  A finite quotient is a
dominant tensor functor with fusion target.  Equivalence of quotients
retains the entire source functor, including its ordered factors.

\begin{theorem}[Two Fibonacci factors]\label{thm:intro-two-fibonacci}
Up to equivalence retaining the ordered source functors, the finite
quotients of $\Fib*\Fib$ are exactly
\[
 \Fib,\qquad \Fib\boxtimes\Fib,\qquad\TT.
\]
The first is the fold quotient.  In the other two cases the distinguished
generators $a,b$ are nonisomorphic and their first alternating relations
are, respectively, $ab\cong ba$ and $aba\cong bab$.
No unitary or unitarizable structure is assumed on the target.
\end{theorem}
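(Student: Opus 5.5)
We work in the image of the source functor $F\colon\Fib*\Fib\to\calC$, with $a=F(\tau_1)$ and $b=F(\tau_2)$; dominance means every simple of $\calC$ is a summand of a word in $a,b$. Both $a$ and $b$ are simple with $\FPdim=\phi=\tfrac{1+\sqrt5}{2}$, and this holds without unitarity, since $F$ restricted to each factor is a tensor functor out of $\Fib$ and hence a fully faithful embedding.

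First we separate the fold case. If $a\cong b$, then the two embedded copies of $\Fib$ have the same image. The Fibonacci category has a unique positive structure once the generator is fixed, so, after composing with an autoequivalence matched to the source data, the two functors agree. The quotient is then $\Fib$, and equivalence retaining both factors identifies it with the fold quotient. Henceforth $a\not\cong b$.

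Next we constrain the based ring. Since $\dim\Hom(ab,ab)=\dim\Hom(a^2,b^2)=1$, the product $ab$ is simple. We then follow the alternating words $ab, aba, abab,\dots$. At each length we compare $\dim\Hom$ between the alternating words $(ab)^k$, $(ab)^ka$ and their reverses, using Frobenius reciprocity together with the fusion rules $a^2=1+a$ and $b^2=1+b$. Let $n$ be the first length at which a nontrivial morphism appears between the alternating word starting with $a$ and the one starting with $b$. Finiteness forces $n$ to exist. Its first occurrence is an isomorphism $\underbrace{aba\cdots}_{n}\cong\underbrace{bab\cdots}_{n}$, and the based ring is then the dihedral Fibonacci ring $R_n$, the standard-basis Coxeter--Hecke type ring for $I_2(n)$ at $q$ with $q+q^{-1}=\phi$, up to a semisimple quotient. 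Determining this ring combinatorially is routine but careful. The main claimed input is the earlier exclusion result: formal codegrees, dimension fields, and algebraic integrality rule out categorifying $R_n$ for $n\ge4$. What remains is $n=2$, giving rank $4$ and the ring of $\Fib\boxtimes\Fib$, and $n=3$, giving rank $6$ and the $\TT$ ring. As a sanity check, $n=2$ gives global dimension $(1+\phi^2)^2$, consistent with $\Fib\boxtimes\Fib$.

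The final step is uniqueness of the categorification with fixed source functor, and this is where we expect the main obstacle. For $n=2$, the isomorphism $ab\cong ba$ together with the half-braidings shows that the pair $(a,b)$ generates commuting copies of $\Fib$. We then use that $\Fib\boxtimes\Fib$ is the unique fusion category in which two copies of $\Fib$ centralize each other with that based ring. Concretely, the fusion subcategory generated by $a$ and $b$ is the image of $\Fib\boxtimes\Fib\to\calC$, and simplicity of $ab$ makes that functor an equivalence.

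For $n=3$ we use a graphical argument. The relation $aba\cong bab$ gives a trivalent morphism $\rho\colon ab\to ba$ modulo the other summands, and we parametrize the skein relations, namely the Yang--Baxter type and bigon/triangle relations involving the fixed trivalent vertices of each $\Fib$. We normalize scalars using the retained source functors. Consistency of the relations, evaluated on a small basis of $\Hom(aba,bab)$ and its composites, should then pin down all remaining parameters up to gauge, and the category must be the planar algebra of the $2D2$ subfactor, that is, $\TT$. Doing this without positivity or unitarity requires the complex solutions of these polynomial consistency equations to form a single gauge orbit; checking this is the delicate computation.
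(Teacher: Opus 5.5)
Your reduction to $n\in\{2,3\}$ is the paper's. It runs through the Hom-space induction on alternating words (\cref{prop:standard-form}) and then the arithmetic exclusion of $n\ge4$. The induction in fact gives exactly $R_n$, with all $2n$ words simple and distinct, so there is no further quotient. The Hecke parameter is $q=\ph^2$ for $T_s=\ph h_s$, not $q+q^{-1}=\ph$.

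In the fold case the needed input is that $\Fib$ has no nontrivial tensor autoequivalence. Hence two embeddings with the same image are uniquely monoidally isomorphic (\cref{lem:fib-marking-rigidity}). You may not compose one factor with an autoequivalence when the whole ordered source functor is retained.

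The genuine gap is the final uniqueness step, which is the real content of the theorem for $n=2,3$.

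For $n=2$, an object isomorphism $ab\cong ba$ provides no half-braiding. A tensor functor $\Fib\boxtimes\Fib\to\calC$ restricting to the two markings needs a natural interchange $c_{Y,X}\colon YX\to XY$ compatible with tensor products in each variable. For a chosen $c_{b,a}$, these compatibilities can fail by scalars on the summands $\mathbf1,a$ of $a^2$ and $\mathbf1,b$ of $b^2$. That a single rescaling of $c_{b,a}$ removes all of them is exactly what must be proved.

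This is not formal. In $\Vec_{\Z/2\times\Z/2}^{\omega}$ with a mixed $3$-cocycle, the two factors commute objectwise and multiplication is an abelian equivalence, yet the category is not their Deligne product. You need one of two inputs:
\begin{itemize}
\item the rigidity of twisted Deligne products with trivial universal grading groups, as in \cref{lem:coherent-block-multiplication}; or
\item the paper's graphical proof that the rotation eigenvalue $\omega_U$ and jellyfish scalar $\sigma_U$ of the coupon $U\colon ab\to ba$ are both $1$.
\end{itemize}

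For $n=3$ you give a plan rather than a proof, and it starts from a morphism that vanishes. Since $ab$ and $ba$ are distinct simples, $\Hom(ab,ba)=0$. The new generator is the six-valent isomorphism $U\colon aba\to bab$ with inverse $V=U^{-1}$. The paper supplies what you defer:
\begin{itemize}
\item Automatic pseudo-unitarity. Squared norms multiply and every alternating word is simple, so $\Dim(\calC)=\FPdim(\calC)$. This gives a spherical structure and a nondegenerate trace pairing without any unitarity assumption.
\item The normalization $\mathcal R_-(U)=V$ with $\omega_U^n=1$, and local jellyfish scalars with $x^2=\omega_U$, $y^2=1$.
\item A coefficient comparison of two expansions of the same roof-nested coupon (the paper's $\mathsf L_3=\mathsf R_3$). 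This is done in a train basis whose reflection Gram matrix is diagonal, and it forces $\omega_U=\sigma_U=1$.
\item A jellyfish evaluation algorithm showing that every closed diagram has a realization-independent value. Hence $\calC$ is the Karoubi completion of the presentation modulo its negligible ideal, and the equivalence retains both markings.
\end{itemize}
Your remark that the complex solutions ``should form a single gauge orbit'' is precisely this computation, and it is left unverified.
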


The full statement is \cref{thm:two-fib-classification}.
Our first contribution is a new number-theoretic proof excluding a first
alternating relation of length $n\ge4$.  It uses formal codegrees, their
number fields, and algebraic integrality.  This differs from Liu's
graph-planar-algebra analysis and from the jellyfish consistency
calculations of Izumi--Morrison--Penneys
\cite{Liu2015,IzumiMorrisonPenneys2013,IzumiMorrisonPenneys2016}.
The arithmetic argument depends only on the fusion rules, so it applies
without a unitary structure or a choice of rank-two monoidal realization.
For the surviving cases, the graphical proof of ordered monoidal uniqueness
is given in Appendix~\ref{app:two-fibonacci-uniqueness}.

We next consider a fusion category generated by several pairwise
nonisomorphic Fibonacci objects with specified source embeddings.  Join two generators by an edge when they
do not commute.  We prove that this noncommutation graph determines the
entire fusion ring: it is the standard-basis Hecke ring $\HFib(W_\Gamma)$ of the associated
Coxeter group (\cref{thm:coxeter-rigidity}).  Thus the local two-generator
relations determine all fusion rules.  Finiteness forces the graph to be a
forest; the precise presentation and proof are given in
\cref{sec:many-fibonacci}.
The proof uses Frobenius--Perron height, two-colour polygons, and local
path elimination. A different left--right matching argument in the
companion paper \cite{FibCoxeter} proves rigidity for general rigid based
rings, even in infinite rank. That paper also constructs the ADE examples
and proves their marked algebraic uniqueness; our focus here is the
arithmetic obstruction and the pointed-composite classification.

Group symmetry makes this description particularly rigid.  If a group acts
by tensor autoequivalences and transitively permutes the generating
Fibonacci objects, every vertex has the same degree.  A finite regular forest is either empty or a perfect
matching.  At the categorical level, the resulting composite is therefore
a tensor product of copies of $\Fib$ in the first case, or of $\TT$ in the
second (\cref{thm:many-fib-blocks}).  The equivalence preserves the labelled
Fibonacci embeddings.

To apply this structure theorem to the pointed-composite problem, let $G$
be any finite group and $\Omega$ any pointed associator.  In a finite composite
\begin{equation}\label{eq:composite-intro}
 F:\Fib*\Vec_G^\Omega\longrightarrow\calE,
\end{equation}
the pointed image acts transitively by conjugation on the distinct images
of the Fibonacci generator.  The preceding result determines the
subcategory $\calB$ generated by these conjugates.  Its block structure
then yields the following classification.

\begin{theorem}[Fibonacci--pointed composites, abbreviated]
\label{thm:intro-classification}
Every finite composite \eqref{eq:composite-intro} has a crossed-product
presentation
\[
 \calE\simeq\calB\rtw_\rho^\omega H,
\]
where $H$ is a quotient of $G$ and $\calB$ is a tensor power of $\Fib$ or
of $\TT$.  Equivalence classes retaining the source functor are determined
by the quotient, the stabilizer of the Fibonacci generator, the labelled
block category, and compatible cocycle and marked categorical action data,
with the equivalence relation of \cref{thm:fib-pointed-classification}.
\end{theorem}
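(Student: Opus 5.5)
The plan is to reduce the composite to a group-graded extension of the subcategory generated by the Fibonacci conjugates, and then to read off the classification data from that extension.

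\emph{Setting up the block subcategory.} Write $a=F(\tau)$, and let $\calP\subset\calE$ be the pointed image of $\Vec_G^\Omega$, with invertible objects $F(g)$. For each $g\in G$ the object $a_g:=F(g)\,a\,F(g)^{-1}$ is a Fibonacci object. Let $X$ be the set of isomorphism classes of the $a_g$. By construction $G$ acts transitively on $X$ by conjugation, and $X\cong G/K$, where $K$ is the stabilizer of the class of $a$. Let $\calB$ be the fusion subcategory generated by $X$. It is finite because $\calE$ is, and its generators are pairwise nonisomorphic Fibonacci objects with the specified embeddings. So \cref{thm:coxeter-rigidity} identifies its based ring with $\HFib(W_\Gamma)$ for a finite forest $\Gamma$ on $X$. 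Conjugation by $F(g)$ is a tensor autoequivalence of $\calB$ that permutes $X$ transitively, so $\Gamma$ is vertex-transitive. A vertex-transitive finite forest is regular, hence has no edges or is a perfect matching. \cref{thm:many-fib-blocks} then gives $\calB\simeq\Fib^{\boxtimes X}$ or $\calB\simeq\TT^{\boxtimes X/2}$, with the labelled embeddings preserved.

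\emph{Grading and crossed product.} Every simple object of $\calE$ lies in some $\calB\otimes F(g)$, because $\calE$ is generated by $a$ and the $F(g)$ and we can commute the invertibles to the right. So $\calE=\bigoplus_{h}\calB\,F(g_h)$, a faithful grading by $H:=G/N$, where
\[
N=\{g: F(g)\in\calB\}.
\]
This $N$ is normal because $\calB$ is stable under conjugation. The invertible objects of $\calB$ are trivial (tensor powers of $\Fib$ or $\TT$ have none; I would check this for $\TT$ from its fusion rules), so $N=\ker(G\to\Inv(\calE))$, up to the pointed part inside $\calB$. Each graded piece is an invertible $\calB$-bimodule containing an invertible object. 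Hence the extension is a crossed product $\calB\rtw_\rho^\omega H$, determined by a categorical action $\rho:H\to\underline{\mathrm{Aut}}_\otimes(\calB)$, obtained by conjugating with chosen lifts, together with a $3$-cocycle $\omega$. This is the ENO description of extensions by invertible bimodules of inner type.

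\emph{Classification.} Conversely, the data consist of a quotient $H$ of $G$, a stabilizer (the image of $K$ in $H$), the labelled block, an action $\rho$ that permutes labels compatibly with $G/K$, and an $\omega$ whose pullback to $G$ agrees with $\Omega$ modulo the obstruction class. These data build a composite whose source functor is the evident one. Equivalence of composites retaining $F$ corresponds to changing the lifts and applying monoidal natural isomorphisms. This gives the stated equivalence relation.

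\emph{Main obstacle.} I expect the hardest step to be this last one: showing that the marked data are a complete invariant. The difficulty is matching the cohomological freedom of $(\rho,\omega)$ with the requirement that the full source functor on $\Vec_G^\Omega$ is preserved. I would handle it by working in the $2$-groupoid of marked $H$-crossed extensions, and by using the marked uniqueness in \cref{thm:many-fib-blocks} to rigidify $\calB$.
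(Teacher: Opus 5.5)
Your first two steps are essentially the paper's argument. The conjugates form a transitive marked family, and Coxeter rigidity plus vertex-transitivity force an empty graph or a perfect matching. The block factorization follows. Then $\Inv(\calB)=1$, from $\FPdim=\ph^{\ell(w)}$, gives a faithful grading by the image $H$ of $G$ with one invertible per degree, hence a crossed product. One step should be stated: your subgroup $N=\{g:F(g)\in\calB\}$ indexes disjoint components only because $Xu_g\cong Yu_{g'}$ puts $u_{gg'^{-1}}$ inside $X^*Y\in\calB$ by Frobenius reciprocity. This is the paper's normal-form separation lemma. So the presentation $\calE\simeq\calB\rtw_\rho^\omega H$ is fine.

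The gap is in the classification step. Your datum omits the pointed tensorator. The restriction of $F$ to $\Vec_G^\Omega$ is a functor onto $\Vec_H^\omega$ whose tensorator is a cochain $\eta\in C^2(G,\CC^\times)$ with $d\eta=\Omega/q^*\omega$. Since the source functor is retained, $\eta$ must be part of the datum, up to $\eta\,q^*\kappa=\eta'\,d\lambda$ with $d\kappa=\omega/\theta^*\omega'$. It is not determined by $\omega$.

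For example, take $G=\Z/2\Z\times\Z/2\Z\twoheadrightarrow H=\Z/2\Z$, $\Omega=\omega=1$, and $\calB=\Fib$ with trivial action. Then $\kappa$ is a cocycle on $H$, hence a coboundary. So the two classes in $H^2(G,\CC^\times)\cong\Z/2\Z$ give inequivalent composites with identical $(H,K,\calB,\rho,\omega)$. Thus there is no ``evident'' source functor, and your map from data to composites is neither well defined nor injective. The condition ``agrees with $\Omega$ modulo the obstruction class'' should be replaced by the cochain equation, with $\eta$ kept as data.

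Second, you assert the equivalence relation rather than derive it. First, any target equivalence is automatically graded, because $\calB=\Ad(\calE)$ and $\Inv(\calE)\cong H$ are intrinsic. The relation then comes out exactly as stated by specializing Galindo's description of graded equivalences of crossed products. That specialization needs three vanishing statements:
\begin{itemize}
\item $\Inv(\calB)=1$, so homogeneous units map to $\mathbf1$;
\item $\operatorname{Aut}_\otimes(\mathrm{id}_\calB)=1$;
\item $\Inv(Z(\calB))=1$.
\end{itemize}
The second and third exclude extra $2$-cells and central lifts. They follow from $c^2=1$ and $c^2=c$ on each Fibonacci generator. Together they force every such equivalence into the scalar normal form $(T,\gamma_h,\kappa)$, to which one adds the marked cells $t_s$ and the source scalars $\lambda_g$. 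Your plan to ``rigidify $\calB$ by marked uniqueness'' points the right way, but it has to be turned into these three statements before the stated relation follows.
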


The explicit data and their equivalence are given in
\cref{sec:classification}.  The classification also has a ground-field form over fields
of characteristic zero admitting the chosen Fibonacci-type category
(\cref{sec:characteristic-zero-fields}).

\medskip
\noindent
After the preliminaries and formal-codegree tools,
\cref{sec:two-fibonacci,sec:algebraic-uniqueness} establish the two-generator
classification.  \Cref{sec:many-fibonacci} treats several generators, and
\cref{sec:neutral-reduction,sec:group-actions,sec:classification} give the
pointed classification.  Examples, ground-field forms, and further
questions follow.  Appendix~\ref{app:two-fibonacci-uniqueness} contains the graphical
proof and presentation used for the two-generator and ground-field results.

\section{Tensor-categorical preliminaries}\label{sec:preliminaries}

\subsection{Conventions and marked Fibonacci objects}

\begin{convention}\label{conv:standing}
Except in \cref{sec:characteristic-zero-fields}, we work over $\CC$.
All categories and tensor functors are linear over the specified ground
field.  A fusion category is finite semisimple and
rigid, with simple tensor unit.  The free-product sources used below are
semisimple rigid tensor categories with simple unit and finite-dimensional
Hom spaces, but generally have infinitely many simple objects.  Tensor
functors are linear functors equipped with invertible structure maps
\[
 F_2(X,Y):F(X)\otimes F(Y)\longrightarrow F(X\otimes Y),
 \qquad F_0:\mathbf1\longrightarrow F(\mathbf1),
\]
satisfying the usual coherence axioms.  For a fusion category
$\calD$, $\Inv(\calD)$ denotes the group of isomorphism classes of invertible
simple objects; brackets such as $[u]$ denote the class of an invertible
object $u$.
\end{convention}

The symbol $\Fib$ denotes the usual unitarizable Fibonacci category.  A
\emph{marked Fibonacci object} in $\calC$ is a simple object $x$ together
with a specified tensor equivalence
\[
 \iota_x:\Fib\xrightarrow{\ \simeq\ }\langle x\rangle,
 \qquad \iota_x(\tau)=x.
\]
By contrast, an object satisfying the Fibonacci fusion rule is only an
object $x$ with $x^{\otimes2}\cong\mathbf1\oplus x$.

\begin{remark}[Marked type and Yang--Lee]\label{rem:marked-types}
Over $\CC$, the Moore--Seiberg rank-two classification, as recalled by
Ostrik, gives two monoidal categorifications of the fusion rule
$x^2=\mathbf1+x$: the usual Fibonacci category and its Yang--Lee Galois
conjugate $\YL$ \cite[Introduction, Main Theorem, and Section~2.5]{Ostrik2003}.
Consequently an ordered pair of fusion-rule objects may have source type
\[
 (\Fib,\Fib),\quad(\Fib,\YL),\quad(\YL,\Fib),\quad(\YL,\YL).
\]
The marked quotient problem in this paper uses only the first type.  The
based-ring and formal-codegree arguments in Sections~\ref{sec:formal-codegrees}
and~\ref{sec:two-fibonacci} depend only on the fusion rule; monoidal
uniqueness statements do depend on the marked source.  In particular, the
mixed source problems cannot be inferred from the common based ring $R_n$.
\end{remark}

\begin{lemma}[Rigidity of the usual Fibonacci marking]
\label{lem:fib-marking-rigidity}
Let $\iota_1,\iota_2:\Fib\to\calC$ be tensor embeddings into a fusion
category.  If
\[
 \iota_1(\tau)\cong\iota_2(\tau),
\]
then there is a unique monoidal natural isomorphism
$\iota_1\Rightarrow\iota_2$.
\end{lemma}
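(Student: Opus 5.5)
The plan is to reduce the statement to a scalar computation on the two simple objects $\mathbf1$ and $\tau$ of $\Fib$, using that all relevant Hom spaces are one-dimensional. Write $x=\iota_1(\tau)$ and fix an isomorphism $\varphi:\iota_1(\tau)\to\iota_2(\tau)$. Since $\Fib$ is semisimple with simples $\mathbf1,\tau$, a natural transformation $\eta:\iota_1\Rightarrow\iota_2$ is determined by its two components $\eta_{\mathbf1}$ and $\eta_\tau$. Compatibility with the unit structure maps $F_0$ forces
\[
\eta_{\mathbf1}=(\iota_2)_0\circ(\iota_1)_0^{-1}.
\]
Because $\End(x)=\CC$, the other component must have the form $\eta_\tau=\lambda\varphi$ for some $\lambda\in\CC^\times$. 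Existence and uniqueness therefore reduce to showing that there is exactly one $\lambda$ for which $\eta$ is monoidal.

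For $\eta$ to be monoidal it suffices to check the tensor-compatibility square on $(\tau,\tau)$. Every other pair involves $\mathbf1$ and is handled by the unit axioms. Set $e=$ the chosen basis vector of $\Hom_{\Fib}(\tau\otimes\tau,\mathbf1)$ and $m=$ the chosen basis vector of $\Hom_{\Fib}(\tau\otimes\tau,\tau)$. For $i=1,2$, transport these to
\[
e_i=\iota_i(e)\circ(\iota_i)_2(\tau,\tau),\qquad f_i=\iota_i(m)\circ(\iota_i)_2(\tau,\tau),
\]
after identifying $\iota_i(\mathbf1)$ with $\mathbf1$ via $F_0$. The spaces $\Hom_\calC(x\otimes x,\mathbf1)$ and $\Hom_\calC(x\otimes x,x)$ are one-dimensional and the maps are nonzero. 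Hence there are unique nonzero scalars $a,b$ with
\[
f_2\circ(\varphi\otimes\varphi)=a\,\varphi\circ f_1,\qquad e_2\circ(\varphi\otimes\varphi)=b\,e_1 .
\]
By the decomposition $\tau\otimes\tau\cong\mathbf1\oplus\tau$, the monoidality square on $(\tau,\tau)$ is equivalent to the two scalar equations
\[
\lambda^2a=\lambda,\qquad \lambda^2b=1 .
\]
The first forces $\lambda=a^{-1}$, which gives uniqueness. Existence then amounts to the single consistency condition $b=a^2$.

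The main step is to derive $b=a^2$ from the fact that both $\iota_i$ transport the same associator of $\Fib$. Consider the pentagon-free $F$-move expressing $f_i\circ(f_i\otimes\mathrm{id})\in\Hom(x^{\otimes3},x)$ in the basis
\[
\{\,f_i\circ(\mathrm{id}\otimes f_i),\ \mathrm{id}\otimes e_i\,\}
\]
(with unitors suppressed, and the associator of $\calC$ inserted). Its coefficient on $\mathrm{id}\otimes e_i$ is the image of the corresponding $F$-matrix entry of $\Fib$, so it is the same number $c$ for $i=1,2$. This entry is nonzero: every entry of the $2\times2$ Fibonacci $F$-matrix is nonzero.

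Now conjugate the $i=2$ identity by $\varphi^{\otimes3}$ and $\varphi$, and rewrite it in terms of the $i=1$ maps:
\[
f_2\circ(f_2\otimes\mathrm{id})\ \text{becomes}\ a^2\,f_1\circ(f_1\otimes\mathrm{id}),\qquad f_2\circ(\mathrm{id}\otimes f_2)\ \text{becomes}\ a^2\,f_1\circ(\mathrm{id}\otimes f_2),\qquad \mathrm{id}\otimes e_2\ \text{becomes}\ b\,(\mathrm{id}\otimes e_1).
\]
Comparing with the $i=1$ identity, the coefficient $c$ is rescaled to $c\,b/a^2$. Hence $c\,b/a^2=c$, and since $c\neq0$ this gives $b=a^2$.

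The obstacle I expect is bookkeeping: the placement of associators and unitors, and checking that naturality of $\varphi$ with respect to the associator of $\calC$ really yields exactly this rescaling. This is routine but must be done carefully.

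With $\lambda=a^{-1}$ both monoidality equations hold, so $\eta$ is a monoidal natural isomorphism, and it is the only one by the uniqueness of $\lambda$.
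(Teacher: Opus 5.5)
Your proposal is correct. Its existence argument differs from the paper's.

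The paper corestricts both embeddings to equivalences onto $\langle a\rangle$ and forms the comparison autoequivalence $\bar\iota_2^{-1}\bar\iota_1$ of $\Fib$. It then cites the Etingof--Gelaki computation that the rank-two category has no nontrivial tensor autoequivalence. Only uniqueness is done by hand, and it is your scalar argument in disguise: the paper's equations $z^2=1$, $z^2=z$ are your $\lambda^2 b=1$, $\lambda^2 a=\lambda$ in the special case $\iota_2=\iota_1$, $\varphi=\mathrm{id}$, where $a=b=1$.

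You instead prove the needed triviality directly. Once $\varphi$ is fixed, the only obstruction to existence is the scalar $b/a^2$. Transporting the $F$-move for $m\circ(m\otimes\mathrm{id})$ through both functors shows this scalar is $1$, because the off-diagonal entry $c$ of $F^{\tau\tau\tau}_\tau$ is nonzero. That nonvanishing is gauge-invariant and can be read off the explicit matrix \eqref{eq:fibonacci-f-matrix-definition}.

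The trade-off is as follows.
\begin{itemize}
\item The paper's route is shorter and leans on the literature.
\item Your route is self-contained and needs no quasi-inverse. It also isolates exactly which piece of the associator is used.
\item Your route runs verbatim over any field for fully faithful embeddings of the split model $\mathcal F_{\tau,k}$, and equally for Yang--Lee, since every Hom space you use is then one-dimensional over the ground field. So it would make the passage to $\bar k$ and the Galois descent in \cref{lem:k-fib-marking-rigidity} unnecessary.
\end{itemize}

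There is one slip in your bookkeeping. After conjugating by $\varphi^{\otimes 3}$ and $\varphi$, the middle term should be $a^2\,f_1\circ(\mathrm{id}\otimes f_1)$, not $a^2\,f_1\circ(\mathrm{id}\otimes f_2)$. Moving $\varphi^{\otimes3}$ past the associator of $\calC$ and $\varphi$ past the right unitor uses their naturality. This does not affect the conclusion $c\,b=c\,a^2$.
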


\begin{proof}
Let $a$ denote the common isomorphism class of the two nontrivial images and
let $\calA=\langle a\rangle$ be the replete tensor subcategory that it
generates.  Both embeddings corestrict to tensor equivalences
$\bar\iota_i:\Fib\xrightarrow{\simeq}\calA$.  After choosing a tensor
quasi-inverse to $\bar\iota_2$, their comparison is a tensor
autoequivalence of $\Fib$.  The rank-two form calculation of
\cite[Section~4.2.5]{EtingofGelaki2012} states that the Fibonacci/Yang--Lee
rank-two category has no nontrivial tensor autoequivalence.  Hence this
comparison is monoidally isomorphic to the identity, and transporting that
isomorphism through $\bar\iota_2$ gives a monoidal natural isomorphism
$\iota_1\Rightarrow\iota_2$.

For uniqueness, let $\lambda,\lambda':\iota_1\Rightarrow\iota_2$ be two such
isomorphisms.  Their ratio is a monoidal natural automorphism of $\iota_1$.
It is the identity on the unit and is a scalar, say $z$, on the nontrivial
simple object.  Its restrictions to the two summands $\mathbf1$ and $\tau$
of $\tau^{\otimes2}$ give respectively $z^2=1$ and $z^2=z$.  Thus $z=1$,
and the two transformations agree on the tensor generator and hence on all
of $\Fib$.
\end{proof}

\begin{remark}[Galois conjugation]
Until \cref{sec:characteristic-zero-fields}, the classification statements
in this paper are $\CC$-linear.  Let
$\upsilon\in\operatorname{Aut}(\CC)$ interchange the two embeddings of
$\Q(\sqrt5)$.  Twisting scalar multiplication and every structure morphism
defines ${}^{\upsilon}\calC$; the natural correspondence
$\calC\to{}^{\upsilon}\calC$ is $\upsilon$-semilinear, not a
$\CC$-linear tensor equivalence.  A single global automorphism conjugates
both source factors, so it sends a Fib--Fib pair to a YL--YL pair, not to a
mixed pair.  It also acts coefficientwise on all pointed and action data:
\[
 \Omega\mapsto\upsilon(\Omega),\qquad
 \omega\mapsto\upsilon(\omega),\qquad
 \eta\mapsto\upsilon(\eta),
\]
and sends the rank-six model $\TT$ to ${}^{\upsilon}\TT$.  Thus, for a
fixed cocycle $\Omega$, the direct correspondence is
\[
 \operatorname{Class}_{G}(\Fib,\upsilon^{-1}\Omega)
 \longrightarrow
 \operatorname{Class}_{G}(\YL,\Omega),
\]
where $\operatorname{Class}_{G}(\mathcal F,\Omega)$ denotes equivalence classes
of finite composites $\mathcal F*\Vec_G^\Omega\to\calE$ retaining the entire
source functor.  The correspondence depends on the chosen extension of
$\upsilon$.
Forms and descent over fields that need not be algebraically closed are
treated in \cref{sec:characteristic-zero-fields}.
\end{remark}

\subsection{Hecke notation}
Write $\ph=(1+\sqrt5)/2$ and $\KFib=\Q(\sqrt5)$.
For a Coxeter system $(W,S)$, let
\[
 \HFib(W)=\bigoplus_{w\in W}\Z h_w
\]
be the standard-basis Hecke ring with generator multiplication
\[
 h_sh_w=\begin{cases}
 h_{sw},&\ell(sw)=\ell(w)+1,\\
 h_{sw}+h_w,&\ell(sw)=\ell(w)-1.
 \end{cases}
\]
In particular $h_s^2=1+h_s$, and $h_w^*=h_{w^{-1}}$.
The positive dimension function is $d(h_w)=\ph^{\ell(w)}$; for a
fusion-category categorification it is the Frobenius--Perron dimension.
This is the standard Hecke basis, not the Kazhdan--Lusztig basis.
The alternative generators $T_s=\ph h_s$ satisfy
$(T_s-\ph^2)(T_s+1)=0$ over $\KFib$; this explains the notation
$H_{\ph^2}(W)$ used for that scalar-extended Hecke algebra below.
The basis and multiplication follow from the standard Hecke presentation
\cite[Chapter~4]{GeckPfeiffer2000}.

\subsection{The fixed rank-six model}

Let $\TT$ be the dual even part of the $2D2$ subfactor, with simple objects
\[
 \mathbf1,\ a,\ b,\ ab,\ ba,\ c,
 \qquad c=aba=bab,
\]
and Frobenius--Perron dimensions
\[
 1,\ \ph,\ \ph,\ \ph^2,\ \ph^2,\ \ph^3.
\]
Right multiplication by the two generators is
\begin{equation}\label{eq:TT-fusion-table}
\begin{array}{c|cc}
 X&X a&X b\\ \hline
 \mathbf1&a&b\\
 a&\mathbf1+a&ab\\
 b&ba&\mathbf1+b\\
 ab&c&a+ab\\
 ba&b+ba&c\\
 c&ab+c&ba+c
\end{array}
\end{equation}
Fix an embedding $\iota_a:\Fib\to\TT$.  The order-two planar automorphism of
the $2D2$ planar algebra acts on even projections and on their corner
morphism spaces.  Because it preserves tensor tangles, it induces a tensor
autoequivalence $\sigma$ of the additive Karoubi envelope.  Its square is
strictly the identity on the planar generator and hence supplies the
canonical action coherence
$\epsilon:\sigma^2\Rightarrow\operatorname{id}_{\TT}$.  Define
$\iota_b=\sigma\iota_a$; then $\sigma$ interchanges the two ordered marked
Fibonacci subcategories.  The model, fusion table, and planar involution
follow from
\cite[Section~5]{IzumiMorrisonPenneys2013},
\cite[Theorem~1.1 and Corollary~1.7]{MorrisonPenneys2015}, and the
planar-to-tensor correspondence of \cite[Theorem~A]{HPT2023}; compare
\cite[Example~2.8]{EdieMichell2022}.

\subsection{Free products and finite quotients}

Let $S$ be finite.  We write $\Fib^{*S}$ for the free product of labelled
copies of $\Fib$.  The factor embeddings are fully faithful and generate
$\Fib^{*S}$ under tensor products, direct sums, and summands
\cite[Section~3]{Yamagami2002}.

\begin{definition}[Finite quotient]
A finite quotient of a semisimple rigid $\CC$-linear tensor category with
simple unit is a faithful dominant tensor functor to a fusion category.
Here faithful means injective on Hom spaces; dominant means that every target
object is a summand of an object in the image.
\end{definition}

Faithfulness does not prevent two nonisomorphic source objects from having
isomorphic images.  Thus it is compatible with relations $u_n\cong v_n$ and
with quotient homomorphisms $G\twoheadrightarrow H$.

\begin{lemma}[Automatic faithfulness]\label{lem:automatic-faithfulness}
Every tensor functor from a semisimple rigid tensor category
with simple unit is faithful.
\end{lemma}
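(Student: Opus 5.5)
The plan is to reduce faithfulness to a statement about simple objects, and then prove that statement using rigidity. Let $F:\calC\to\calD$ be a tensor functor, where $\calC$ is semisimple and rigid with simple unit. We use that Hom spaces in $\calC$ are finite-dimensional, that $\calD$ is linear over the same field, and that $F$ is linear. Every object of $\calC$ is a finite direct sum of simples, and $F$ preserves finite direct sums. Decomposing a morphism $f:X\to Y$ into matrix components between simple summands therefore shows that $F(f)=0$ forces every component $F(f_{ij})$ to vanish. By Schur's lemma in the semisimple category $\calC$, it then suffices to show: for each simple $X$ and each nonzero $f:X\to X'$ with $X'$ simple, $F(f)\neq0$. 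A nonzero map between simples is an isomorphism, and $F$ sends isomorphisms to isomorphisms. So the whole question reduces to showing $F(X)\neq 0$ for every simple (indeed every nonzero) object $X$.

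Next I would show $F(X)\neq0$ using rigidity and the simple unit. Let $X$ be nonzero with dual $X^*$. The evaluation $ev_X:X^*\otimes X\to\mathbf1$ is nonzero, since the zigzag identity expresses $\mathrm{id}_X$ through it. Because $\mathbf1$ is simple and $\calC$ is semisimple, $ev_X$ is a split epimorphism: there is $s:\mathbf1\to X^*\otimes X$ with $ev_X\circ s=\mathrm{id}_{\mathbf1}$. So $\mathbf1$ is a direct summand of $X^*\otimes X$. Applying $F$, and using the invertible structure maps $F_2$ and $F_0$, the object $F(\mathbf1)\cong\mathbf1_\calD$ becomes a retract of $F(X^*)\otimes F(X)$. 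The target unit is nonzero: either $\calD$ is a fusion category with simple unit, or at least $\End(\mathbf1_\calD)\neq0$, which is part of being a nonzero tensor category. Hence $F(X^*)\otimes F(X)\neq0$. If $F(X)=0$, then $F(X^*)\otimes F(X)\cong F(X^*)\otimes 0=0$ by additivity of $\otimes$, a contradiction. So $F(X)\neq0$.

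Finally I would assemble the two steps. For $f\neq0$, choose simple summands with a nonzero component $f_{ij}$, which is an isomorphism between simples. Then $F(f_{ij})$ is an isomorphism between nonzero objects, so it is nonzero. Hence $F(f)\neq0$, and $F$ is injective on Hom spaces.

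The only subtle point is the splitting of evaluation, where the simplicity of the unit is genuinely used. Without it, $\mathbf1$ need not be a summand of $X^*\otimes X$ in the needed form: a unit summand could be killed while others survive. I expect no other obstacle; in particular, dominance and finiteness of the target play no role.
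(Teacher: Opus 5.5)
Your proof is correct and follows the same route as the paper. The paper likewise uses that $\mathbf1$ is a summand of $X^*\otimes X$ to show $F$ kills no nonzero object, then notes that a nonzero morphism has a nonzero matrix entry between simples, which is an isomorphism whose image is a nonzero isomorphism; your extra care about splitting the evaluation and the nonvanishing of the target unit fills in details the paper leaves implicit.
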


\begin{proof}
If $S$ is simple, then $\mathbf1$ is a summand of $S^*\otimes S$, so a tensor
functor cannot send $S$ to zero.  Let $0\ne f:X\to Y$.  Choose simple
summands $S\subset X$ and $T\subset Y$ for which the matrix entry
$p_Tfi_S:S\to T$ is nonzero.  A nonzero morphism between simple objects is
an isomorphism, and its image is a nonzero isomorphism.  Hence the image of
$f$ is nonzero.
\end{proof}

The free product has the following functorial property in the form needed
below: labelled tensor functors $F_s:\Fib\to\calD$ determine, up to a unique
monoidal natural isomorphism fixed on the factors, a tensor functor
$*_{s\in S}F_s:\Fib^{*S}\to\calD$.  This follows directly from the
alternating-word construction in \cite[Section~3]{Yamagami2002}: define the
functor on reduced words factor by factor and use the factor tensorators for
every reduction.  The factor pentagons give independence of the reduction
order.  In particular, a tensor-generating family of marked Fibonacci
objects in a fusion category $\calB$ gives a finite quotient
\[
 Q:\Fib^{*S}\longrightarrow\calB,
 \qquad \tau_s\longmapsto x_s.
\]

\subsection{Pointed functors and crossed products}

A pointed fusion category is $\Vec_G^\Omega$ for a finite group $G$ and a
normalized $3$-cocycle $\Omega\in Z^3(G,\CC^\times)$.  Consider a tensor
functor
\[
 \Vec_G^\Omega\longrightarrow\Vec_H^\omega,
 \qquad \omega\in Z^3(H,\CC^\times).
\]
If the functor is dominant, its map on simple objects is an epimorphism
$q:G\twoheadrightarrow H$.  We use the multiplicative inhomogeneous
differential
\[
 (d\eta)(g,h,k)=
 \frac{\eta(h,k)\eta(g,hk)}{\eta(gh,k)\eta(g,h)}.
\]
A normalized cochain $\eta\in C^2(G,\CC^\times)$ gives its tensorator
precisely when
\begin{equation}\label{eq:eta-differential}
 d\eta=\frac{\Omega}{q^*\omega}.
\end{equation}

Let $\rho$ be a normalized categorical action of $H$ on $\calB$.  Thus each
$\rho_h$ is a tensor autoequivalence with specified tensor structure
\[
 J^h_{X,Y}:\rho_h(X)\otimes\rho_h(Y)\longrightarrow
 \rho_h(X\otimes Y),
 \qquad J^h_0:\mathbf1\longrightarrow\rho_h(\mathbf1),
\]
and there are compositors $\mu_{h,k}:\rho_h\rho_k\Rightarrow\rho_{hk}$
satisfying the usual unit and action pentagons.  The maps $J^h$ are part of
the action datum.  For a normalized
$\omega\in Z^3(H,\CC^\times)$, the crossed product
$\calB\rtw_\rho^\omega H$ is the additive category whose objects are
finite direct sums of homogeneous objects $(X,h)$, with morphisms between
homogeneous objects given by
\[
 \Hom((X,h),(Y,k))=
 \begin{cases}\Hom_\calB(X,Y),&h=k,\\0,&h\ne k,\end{cases}
\]
morphisms between direct sums given by matrices, unit $(\mathbf1,e)$,
and tensor product on homogeneous objects
\[
 (X,h)\otimes(Y,k)=\bigl(X\otimes\rho_h(Y),hk\bigr).
\]
The tensor product extends to finite direct sums by distributivity.
Its associator on homogeneous objects is $\omega(h,k,\ell)$ times the composite
\begin{align*}
 &(X\otimes\rho_hY)\otimes\rho_{hk}Z
   \longrightarrow X\otimes(\rho_hY\otimes\rho_{hk}Z)\\
 &\longrightarrow X\otimes(\rho_hY\otimes\rho_h\rho_kZ)
   \longrightarrow X\otimes\rho_h(Y\otimes\rho_kZ),
\end{align*}
where the arrows are respectively the associator of $\calB$,
$\operatorname{id}\otimes(\operatorname{id}\otimes\mu_{h,k,Z}^{-1})$, and
$\operatorname{id}\otimes J^h_{Y,\rho_kZ}$.  The action coherence and
$d\omega=1$ give the pentagon.  The associator and unit constraints
extend to finite direct sums componentwise.

\subsection{Adjoint subcategories and dimensions}

The adjoint subcategory $\Ad(\calE)$ is generated by all simple summands of
$X\otimes X^*$ as $X$ ranges over the simple objects of $\calE$.
If $x^2\cong\mathbf1\oplus x$, then $x\cong x^*$ and
$x\in\Ad(\calE)$.

\begin{lemma}[Fibonacci simplicity]\label{lem:fib-simplicity}
If an object $x$ has Grothendieck class $[x]^2=[\mathbf1]+[x]$, then $x$ is
simple, self-dual, and $\FPdim(x)=\ph$.
\end{lemma}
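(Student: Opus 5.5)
The plan is to work entirely in the Grothendieck ring of the ambient fusion category $\calE$, which has nonnegative integer structure constants with respect to the basis of simple classes. Write $[x]=\sum_i m_i[X_i]$ with $m_i\in\Z_{\ge0}$ and the $X_i$ pairwise nonisomorphic simples.

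First, simplicity. Since $\Hom(\mathbf1,x\otimes x^*)$ has dimension $\sum_i m_i^2$, it suffices to compare with the unit multiplicity in $[x]^2$. Self-duality comes first, from rigidity and the fusion rule. Duality is an anti-involution of the based ring, so it sends $[x]^2=\mathbf1+[x]$ to $([x^*])^2=\mathbf1+[x^*]$. The unit multiplicity in $[x][x^*]$ is $\sum_i m_i m_{i^*}$, where $i^*$ indexes $X_i^*$.

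Define $N_x=\sum_i m_i$ and consider the unit multiplicity $c$ of $[x]^2$. On the one hand $c=\sum_{i,j}m_im_j\dim\Hom(\mathbf1,X_iX_j)=\sum_i m_i m_{i^*}$. On the other hand $c=1$ from the fusion rule, plus the unit multiplicity in $[x]$, which is $m_{\mathbf1}$.

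Next, exclude the unit. If $m_{\mathbf1}\ge1$, write $[x]=m_{\mathbf1}+[y]$, where $y$ has no unit summand. Expanding the square gives $m_{\mathbf1}^2+2m_{\mathbf1}[y]+[y]^2$. Comparing with $1+m_{\mathbf1}+[y]$ at the unit coefficient forces $m_{\mathbf1}^2\le 1+m_{\mathbf1}$, hence $m_{\mathbf1}=1$. Then the coefficient comparison on $[y]$ forces $2[y]\le[y]$, which makes $y=0$. But then $x=\mathbf1$, and $\mathbf1^2=\mathbf1\ne 2\cdot\mathbf1$, a contradiction. So $m_{\mathbf1}=0$, and therefore $\sum_i m_i m_{i^*}=1$.

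Now apply the positive Frobenius--Perron character. It gives $d^2=1+d$ with $d=\FPdim(x)>0$, so $d=\ph$.

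Simplicity then comes from comparing $x\otimes x$ with $x\otimes x^*$. The key inequality is
\[
\dim\Hom(x,x)=\sum m_i^2 \;=\; \dim\Hom(\mathbf1,x\otimes x^*).
\]
Compute $\dim\Hom(x\otimes x,x\otimes x)$ in two ways. From $x^2\cong\mathbf1\oplus x$ it equals $\dim\End(\mathbf1\oplus x)=1+\sum m_i^2$, because $m_{\mathbf1}=0$ means there are no cross terms. By adjunction it also equals $\dim\Hom(x^*\otimes x,x\otimes x^*)$, which is at least $(\sum m_i^2)^2$ because both sides contain $\mathbf1^{\oplus\sum m_i^2}$. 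Setting $s=\sum m_i^2\ge1$, we get $s^2\le 1+s$, so $s=1$. Hence $x$ is simple.

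Self-duality follows quickly. Since $x$ is simple with $m_{\mathbf1}=0$, the relation $\sum m_im_{i^*}=1$ says $x\cong x^*$. Alternatively, $\mathbf1\subset x\otimes x$ gives $\Hom(x^*,x)\ne0$, and both objects are simple.

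The step needing care is the double-counting inequality. Its lower bound uses that a direct sum of $s$ copies of $\mathbf1$ sitting inside each side contributes $s^2$ morphisms. That is just semisimplicity, so I expect no genuine obstacle.
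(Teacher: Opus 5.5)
Your argument is correct, but for simplicity it takes a different route from the paper.

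The paper's proof is two lines. After obtaining $\FPdim(x)=\ph$ from $d^2=1+d$, it notes that every nonzero object has Frobenius--Perron dimension at least $1$. A decomposition $x\cong y\oplus z$ with $y,z\ne0$ would then give $\FPdim(x)\ge2>\ph$. That single inequality also disposes of a possible unit summand. Self-duality then follows from $\mathbf1\subset x\otimes x$, exactly as in your second alternative.

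You proceed in two steps instead. First you exclude a unit summand by coefficient comparison ($m_{\mathbf1}^2\le1+m_{\mathbf1}$, then $2[y]\le[y]$). Then you double count $\dim\End(x\otimes x)$ with $s=\sum_i m_i^2$: it equals $1+s$ from $x\otimes x\cong\mathbf1\oplus x$, and it is at least $s^2$ by adjunction, which forces $s=1$. Both steps check out. Depending on dual conventions, the adjunction lands in $\Hom(x\otimes x^*,x^*\otimes x)$ rather than literally in $\Hom(x^*\otimes x,x\otimes x^*)$. This is harmless: in a semisimple rigid category with simple unit, left and right duals agree up to isomorphism and Hom dimensions are symmetric, so your count is right.

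What your route buys is independence from Frobenius--Perron theory. Your simplicity and self-duality steps use only semisimplicity, rigidity, a simple unit, and finite-dimensional Hom spaces. They would therefore hold verbatim in the infinite-rank free-product sources, in the same spirit as the Hom-space induction of \cref{prop:standard-form}; only the value $\ph$ needs $\FPdim$. The paper's route is shorter because it exploits the fusion-category setting of the lemma.

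For presentation, you could drop the unused remarks: the quantity $N_x$, the anti-involution identity for $[x^*]$, and the opening ``it suffices to compare'' sentence. Your ``key inequality'' is actually an equality and should be relabelled.
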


\begin{proof}
FP-dimension gives $d^2=1+d$, hence $d=\ph$.  A sum of two nonzero objects
has FP-dimension at least $2$, contradicting $\ph<2$.  The unit summand of
$x\otimes x$ then gives $x\cong x^*$ by rigidity.
\end{proof}

Hom-space dimension, Frobenius--Perron dimension, squared norm, and
intrinsic global dimension are distinct.  We write
$\dim_\CC\Hom(X,Y)$, $\FPdim(X)$, $|X|^2$, and
\[
 \Dim(\calC)=\sum_{X\in\Irr(\calC)}|X|^2,
\]
respectively.

\section{Formal codegrees and arithmetic obstructions}\label{sec:formal-codegrees}

Let $(R,B)$ be a finite-rank based ring with involution $b\mapsto b^*$.
For an irreducible representation $E$ of $R\otimes_{\mathbb Z}\CC$ with
character $\chi_E$, its formal codegree is the scalar by which the central element
\[
 \sum_{b\in B}\operatorname{Tr}_E(b)\,b^*
\]
acts on $E$.  Equivalently,
\begin{equation}\label{eq:formal-codegree-character}
 f_E=\frac{1}{\dim E}\sum_{b\in B}\chi_E(b)\chi_E(b^*).
\end{equation}

An algebraic integer $\alpha$ is a \emph{$d$-number} if the principal ideal $(\alpha)$ in the ring of all algebraic integers is invariant under $\Gal(\overline{\Q}/\Q)$.  We use the following three formal-codegree obstructions.

\begin{theorem}[Formal-codegree obstructions]\label{thm:ostrik-package}
Let $R=K_0(\calC)$ for a fusion category $\calC$.
\begin{enumerate}[label=\textup{(\roman*)}]
\item Every formal codegree of $R$ is a $d$-number~\cite[Theorem~1.2]{Ostrik2009}.
\item For every $E\in\Irr(R)$, the quotient $\Dim(\calC)/f_E$ is an algebraic integer~\cite[Corollary~2.14]{Ostrik2015}; this is the meaning of ``$f_E$ divides $\Dim(\calC)$'' in that result.
\item If $\calC$ is spherical, every formal codegree lies in the number field generated by the spherical dimensions of its simple objects~\cite[Corollary~2.15]{Ostrik2015}.
\end{enumerate}
If $\widetilde{\calC}$ is the sphericalization, let
\[
 \pi:K_0(\widetilde{\calC})\twoheadrightarrow K_0(\calC)
\]
be the homomorphism induced by the forgetful functor.  For an irreducible
representation $E$ of $K_0(\calC)$, set $\widetilde E:=E\circ\pi$.  Then
\begin{equation}\label{eq:sphericalization-codegree}
 f_{\widetilde E}=2f_E.
\end{equation}
Here $\Dim(\calC)$ is the intrinsic global dimension (the sum of the squared
norms of the simple objects), which is the convention used in
\cite{Ostrik2015}.  The factor $2$ in the last identity is the explicit
sphericalization calculation of \cite[Section~3.2]{Ostrik2009}.
\end{theorem}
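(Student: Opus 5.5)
Parts (i)--(iii) are quoted results, so I will not reprove them. I will only check that the hypotheses match the cited statements. The one thing to verify is that the cited $\Dim(\calC)$ is the intrinsic global dimension, the sum of the squared norms $|X|^2$; that is the convention fixed at the end of the preliminaries. The real content is the identity \eqref{eq:sphericalization-codegree}. My plan is to derive it directly from the definition of formal codegree, using only the shape of the based ring $K_0(\widetilde{\calC})$ and of the map $\pi$.

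First I will recall the structure of the sphericalization from \cite[Section~3.2]{Ostrik2009}. Its simple objects are pairs $(X,\varepsilon)$, where $X\in\Irr(\calC)$ and $\varepsilon$ is one of the two square roots compatible with the pivotal data. Forgetting $\varepsilon$ is the functor inducing $\pi$. The facts I need are the following.
\begin{enumerate}[label=\textup{(\alph*)}]
\item The map $\pi$ sends $\Irr(\widetilde{\calC})$ onto $\Irr(\calC)$, and every fibre has exactly two elements, $b^{+}$ and $b^{-}=b^{+}\otimes g$. Here $g=(\mathbf1,-)$ is an invertible simple of order two with $\pi(g)=\mathbf1$.
\item Duality is compatible with $\pi$: $\pi(\tilde b^{*})=\pi(\tilde b)^{*}$.
\end{enumerate}
Since $\pi$ is a surjective ring homomorphism, $\widetilde E=E\circ\pi$ is again an irreducible representation, of the same dimension as $E$. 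It is therefore an eligible input for the formal codegree.

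Next I evaluate the central element $\sum_{\tilde b}\Tr_{\widetilde E}(\tilde b)\,\tilde b^{*}$ on $\widetilde E$. By (a) and (b), each $\tilde b$ in the fibre over $b$ satisfies
\[
\Tr_{\widetilde E}(\tilde b)=\Tr_E(b),\qquad \widetilde E(\tilde b^{*})=E(b^{*}).
\]
Grouping the sum by fibres gives
\[
\widetilde E\Bigl(\sum_{\tilde b}\Tr_{\widetilde E}(\tilde b)\,\tilde b^{*}\Bigr)=2\sum_{b\in B}\Tr_E(b)\,E(b^{*})=2f_E\cdot\operatorname{id}_E.
\]
Hence $f_{\widetilde E}=2f_E$. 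The same computation can be run through the character formula \eqref{eq:formal-codegree-character}: the numerator doubles while $\dim\widetilde E=\dim E$, which gives the same identity.

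The main obstacle is justifying (a) and (b), namely that the fibres of $\pi$ on simple objects all have size exactly two and are permuted compatibly by duality. For (a), I would use that $\widetilde{\calC}$ is the $\Z/2$-equivariantization, or equivalently the category of pairs consisting of an object and a square root of the pivotal comparison. Such square roots exist for every simple object and come in a sign pair, and tensoring with $g$ flips the sign. For (b), the dual object carries the inverse-transpose square root, which again lies over $X^{*}$. Once these two facts are in place, the codegree identity is the purely formal computation above.
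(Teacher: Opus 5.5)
Your proposal is correct and takes the same approach as the paper. The paper gives no argument beyond citation: it takes (i)--(iii) from Ostrik and attributes the factor $2$ to the sphericalization calculation of \cite[Section~3.2]{Ostrik2009}. Your fibre count is exactly that calculation: each simple has two lifts differing by the sign object, $\pi$ is compatible with duality, and so the central element $\sum\Tr_{\widetilde E}(\tilde b)\,\tilde b^*$ acts by twice the original sum while $\dim\widetilde E=\dim E$. This is also the same two-lift structure the paper uses later in the proof of \cref{prop:Rn-dimension-field}.
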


\section{Two Fibonacci generators: based rings and categorifiability}\label{sec:two-fibonacci}

\subsection{Alternating-word standard form}\label{sec:standard-rings}

Let $a,b$ be self-dual based-ring elements satisfying
\[
 a^2=1+a,
 \qquad b^2=1+b.
\]
Write $u_k$ and $v_k$ for the alternating words of length $k$ beginning
with $a$ and $b$.  Let $R_\infty$ be the free Fibonacci based ring, with
basis $1,u_k,v_k$ $(k\geq1)$, and for $n\geq2$ put
\[
 R_n=R_\infty/(u_n-v_n),\qquad c_n:=u_n=v_n.
\]

\begin{proposition}\label{prop:standard-form}
Let $\calC$ be a semisimple rigid $\CC$-linear tensor category with simple
unit and finite-dimensional Hom spaces, tensor-generated by distinct simple
objects $a,b$ satisfying $a^2\cong\mathbf1\oplus a$ and
$b^2\cong\mathbf1\oplus b$.
If $u_m\not\cong v_m$ for every $m$, then
\[
 \Irr(\calC)=\{\mathbf1,u_m,v_m:m\geq1\},
 \qquad K_0(\calC)\cong R_\infty.
\]
Otherwise let $n$ be least with $u_n\cong v_n$.  Then
\[
 \Irr(\calC)=
 \{\mathbf1,u_1,\ldots,u_{n-1},v_1,\ldots,v_{n-1},c_n\},
 \qquad K_0(\calC)\cong R_n.
\]
In particular, in the second case $\calC$ is automatically a fusion
category.
\end{proposition}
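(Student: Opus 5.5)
We argue by induction on word length, using only the fusion rules $a^2\cong\mathbf1\oplus a$, $b^2\cong\mathbf1\oplus b$, rigidity, and Frobenius reciprocity. Since $a\not\cong b$ are simple and both are self-dual by \cref{lem:fib-simplicity}, Frobenius reciprocity gives $\dim\Hom(a,b)=0$ and $\dim\Hom(ab,\mathbf1)=\dim\Hom(a,b)=0$.

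\textbf{Step 1 (simplicity of alternating words).} We show by induction on $k$ that, as long as no coincidence $u_m\cong v_m$ with $m<k$ has occurred, the words $u_k$ and $v_k$ are simple. The tool is the computation
\[
 \dim\End(u_k)=\dim\Hom(u_k\otimes u_k^*,\mathbf1).
\]
Here $u_k^*$ is the reversed alternating word. Contracting the middle letters with $x^2=\mathbf1+x$ gives
\[
 u_k\otimes u_k^*\cong u_{k-1}u_{k-1}^*\oplus u_{k-1}\,x\,u_{k-1}^*,
\]
where $x$ is the last letter of $u_k$. The first summand contributes exactly one unit summand, because $u_{k-1}$ is simple by induction. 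For the second summand, $\dim\Hom(u_{k-1}x u_{k-1}^*,\mathbf1)=\dim\Hom(u_{k-1}x,u_{k-1})$, and we must show this vanishes. By the induction hypothesis $u_{k-1}x=u_k$ is distinct from the lower words. More concretely, the right multiplication rule
\[
 u_{k-1}\cdot x\cong u_k \quad\text{when $x$ differs from the last letter of $u_{k-1}$}
\]
produces a word whose simple constituents must be checked against $u_{k-1}$ by FP-dimension and by the lengths $k$ and $k-1$.

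A cleaner route is to prove simultaneously, by induction, the full multiplication table of $R_\infty$ (or of $R_n$) on the candidate simples:
\[
 u_j\cdot x=\begin{cases} u_{j+1}, & \text{$x$ alternates},\\ u_{j-1}+u_j, & \text{otherwise}.\end{cases}
\]
With this table we compute $\dim\Hom(w,w')=\langle ww'^*,\mathbf1\rangle$ inside the based ring for all pairs of candidates. The pairing is triangular in word length, so the candidates $\mathbf1,u_k,v_k$ are simple and pairwise nonisomorphic, except for the forced identification $u_n\cong v_n$.

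\textbf{Step 2 (the case $u_n\cong v_n$).} If $n$ is least with $u_n\cong v_n$, then $c_n:=u_n$ is simple by Step 1, since simplicity of $u_n$ only uses the case $m<n$. We compute
\[
 c_n\cdot x\cong u_{n-1}'\oplus c_n,
\]
where we write $c_n$ as the alternating word ending in $x$ and $u_{n-1}'$ is the word obtained by deleting that final letter. Hence the set $\{\mathbf1,u_1,\dots,u_{n-1},v_1,\dots,v_{n-1},c_n\}$ is closed under right multiplication by $a$ and $b$. Since $a$ and $b$ tensor-generate $\calC$, every simple object is a summand of a word in $a,b$, and therefore lies in this set. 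Distinctness of the listed simples comes from the triangular pairing of Step 1. The fusion rules then match $R_n$ on generators, which gives $K_0(\calC)\cong R_n$. Finite rank follows, and with semisimplicity, rigidity, and simple unit this makes $\calC$ a fusion category.

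\textbf{Step 3 (the case of no coincidence).} If no coincidence occurs, the same closure argument with the infinite list $\{\mathbf1,u_m,v_m\}$ gives $K_0(\calC)\cong R_\infty$.

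The main obstacle is the bookkeeping in Step 1. We must show that $u_k$ is not isomorphic to any $u_j$ or $v_j$ with $j<k$, and not isomorphic to $v_k$ before the first coincidence. We handle this with FP-dimensions $\ph^k$ for the comparisons across different lengths. For $u_k$ versus $v_k$ at equal length, the two words begin with different letters, so this is exactly the hypothesis being tracked, namely $u_k\not\cong v_k$ for $k<n$.
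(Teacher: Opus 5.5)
Your overall plan (a Frobenius-reciprocity induction, then closure under right multiplication) is the right one, but Step~1 never proves its key vanishing, and the tools you offer for it cannot prove it. You correctly reduce simplicity of $u_k$ to $\Hom(u_k,u_{k-1})=0$. Saying that $u_k$ is ``distinct from the lower words'' by induction is circular, since that is what is being proved and $u_k$ is not yet known to be simple. FP-dimension and word length carry no information here either. In $\Fib\boxtimes\Fib$ one has $u_3=aba\cong b\oplus ab$, so $\Hom(u_3,u_2)\ne0$, although every dimension and length is exactly as in the free product. The vanishing depends on the relation one length down. The missing step is a second Frobenius reciprocity at the left end. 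Write $u_k=a\,v_{k-1}$ and $u_{k-1}=a\,v_{k-2}$, with $v_0=\mathbf1$. Then
\[
 \dim\Hom(u_k,u_{k-1})=\dim\Hom(v_{k-1},v_{k-2})+\dim\Hom(v_{k-1},u_{k-1}).
\]
The first term equals $\dim\End(v_{k-1})-1=0$ by your own formula at length $k-1$. Hence $\dim\End(u_k)=1+\dim\Hom(u_{k-1},v_{k-1})$, which is \eqref{eq:endomorphism-induction}: simplicity at length $k$ is equivalent to noncoincidence at length $k-1$.

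Your ``cleaner route'' does not repair this. The rewriting rules only express $w\,w'^*$ as a sum of alternating words of length up to $2k$. The unit multiplicities of those words are not fixed by the rules but by the unknown relations; in $R_2$, for instance, $abab\cong\mathbf1\oplus a\oplus b\oplus ab$. So ``the pairing is triangular'' is asserted rather than derived.

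FP-dimensions are also unavailable during the induction. $\calC$ is not assumed to be fusion, and in the no-coincidence branch it is not. In the finite branch you could separate different lengths by FP-dimension after closure has shown finite rank, but not in the infinite branch. Cross-length distinctness should instead come from the same endpoint moves, inside a joint induction on ``all alternating words of length at most $k$ are simple and pairwise nonisomorphic.'' For example, $v_{k-1}$ is $u_k$ with its first letter removed, so the two words share their last letter, and
\[
 \Hom(u_k,v_{k-1})\cong\Hom(u_{k-1},v_{k-2})\oplus\Hom(u_{k-1},v_{k-1}).
\]
With this induction in place, your Steps~2 and~3 are correct and agree with the paper's \eqref{eq:n-plus-one-reductions}. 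These are the computation $c_nx\cong u'_{n-1}\oplus c_n$, closure, and exhaustion.
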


\begin{proof}
Put $\langle X,Y\rangle=\dim_\CC\Hom(X,Y)$.  The generators are self-dual
by rigidity.  We induct up to the first equality of alternating words.
At the step from length $k$ to length $k+1$, assume that the alternating
words of lengths at most $k$ are simple and pairwise nonisomorphic.
Moving an endpoint across a Hom space, using
$x^2\cong\mathbf1\oplus x$, and applying these lower-length Hom vanishings
gives
\begin{equation}\label{eq:endomorphism-induction}
 \langle u_{k+1},u_{k+1}\rangle=1+\langle u_k,v_k\rangle,
 \qquad
 \langle v_{k+1},v_{k+1}\rangle=1+\langle v_k,u_k\rangle.
\end{equation}
If the first equality occurs at length $n$, these induction identities are
used for $1\le k<n$; if there is no equality, they apply at every step.
The same endpoint moves reduce every pairing of a new word with an earlier
word to an off-diagonal pairing already known to vanish.  Starting from the
distinct simples $u_1=a$ and $v_1=b$, strong induction therefore shows that,
until the first equality $u_n\cong v_n$, all alternating words are simple
and pairwise nonisomorphic; at each length only the pair $u_k,v_k$ can be
the first equality.  This is an adaptation of the Frobenius-reciprocity method in
\cite[Proposition~3.2, Corollary~3.3, and Section~5]{IzumiMorrisonPenneys2013}
(compare also \cite[Proposition~2.2]{IzumiMorrisonPenneys2016}).
The cited propositions concern a unitary invertible--Fibonacci pair,
whereas Section~5 discusses two Fibonacci factors. The Hom-space induction
above is the argument used here; it requires neither unitarity, finiteness,
nor Frobenius--Perron dimensions.

If there is no equality, reducing every $aa$ and $bb$ shows that the listed
alternating words exhaust the simples and gives $R_\infty$.  If $n$ is
minimal, replacing the length-$n$ prefix by the other alternating word gives
\begin{equation}\label{eq:n-plus-one-reductions}
 u_{n+1}\cong v_{n-1}\oplus c_n,
 \qquad v_{n+1}\cong u_{n-1}\oplus c_n.
\end{equation}
These identities reduce all longer alternating words.  Hence the displayed
$2n$ simples exhaust $\calC$, and their fusion rules are precisely those of
$R_n$.
\end{proof}

\begin{corollary}\label{cor:free-two-fib-quotient}
Let $Q:\Fib*\Fib\to\calC$ be a faithful dominant tensor functor to a
semisimple rigid $\CC$-linear tensor category with simple unit and
finite-dimensional Hom spaces.  If $Q(u_m)\not\cong Q(v_m)$ for every
$m\geq1$, then $Q$ is a tensor equivalence.
\end{corollary}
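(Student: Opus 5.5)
The plan is to show that $Q$ is fully faithful and essentially surjective.

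Faithfulness is automatic by \cref{lem:automatic-faithfulness}. The source $\Fib*\Fib$ is itself generated by the distinct simples $\tau_a,\tau_b$, and no alternating words coincide there. So \cref{prop:standard-form} applied to the source gives
\[
\Irr(\Fib*\Fib)=\{\mathbf1,u_m,v_m:m\ge1\},\qquad K_0\cong R_\infty.
\]
Next, $a=Q(\tau_a)$ and $b=Q(\tau_b)$ satisfy the Fibonacci fusion rule, so they are simple by \cref{lem:fib-simplicity}. They are distinct, since $Q(u_1)\not\cong Q(v_1)$ by hypothesis. The target $\calC$ is tensor-generated by $a,b$ because $Q$ is dominant and the source is generated by $\tau_a,\tau_b$. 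Thus \cref{prop:standard-form} applies to $\calC$ in its first case. Hence $\Irr(\calC)=\{\mathbf1,Q(u_m),Q(v_m)\}$, these objects are pairwise nonisomorphic simples, and $Q$ induces a bijection on simple classes.

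Full faithfulness then follows. Both categories are semisimple, and $Q$ sends simples to simples bijectively on isomorphism classes. So for simple $S,T$,
\[
\dim\Hom(S,T)=\delta_{S,T}=\dim\Hom(QS,QT).
\]
Since $Q$ is injective on Hom spaces, it is bijective on each such space. Additivity extends this to finite direct sums, so $Q$ is fully faithful on all objects.

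Essential surjectivity holds because every object of $\calC$ is semisimple, and each of its simple summands is some $Q(u_m)$, $Q(v_m)$, or $\mathbf1\cong Q(\mathbf1)$. Being fully faithful and essentially surjective, $Q$ is an equivalence of linear categories. With its tensor structure it is therefore a tensor equivalence.

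The only delicate point is confirming that the hypotheses of \cref{prop:standard-form} hold in $\calC$: simplicity and distinctness of $a,b$, and tensor generation via dominance. Once these are checked, everything is bookkeeping on simple objects.
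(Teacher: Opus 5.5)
Your proposal is correct and follows the paper's own argument. \cref{prop:standard-form} gives the same simple-object list for source and target, matching Hom dimensions together with faithfulness give full faithfulness, and dominance gives essential surjectivity; your explicit checks of simplicity, distinctness and generation fill in what the paper leaves implicit. One small point: \cref{lem:fib-simplicity} is proved with Frobenius--Perron dimensions, and $\calC$ may a priori have infinitely many simples, so apply it inside the subcategory generated by $Q(\tau)$, which is a fusion category because $Q(\tau)^{\otimes2}\cong\mathbf1\oplus Q(\tau)$ and $Q(\tau)^*\cong Q(\tau^*)\cong Q(\tau)$.
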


\begin{proof}
The source and target have the same simple-object list by
\cref{prop:standard-form}.  On each pair of simples the induced Hom map is
therefore an isomorphism (both spaces are zero or both are one-dimensional),
so $Q$ is fully faithful; dominance gives essential surjectivity.
\end{proof}

The coincident-generator case gives the single Fibonacci category and will
be recorded as $n=1$ when needed.

In the notation of \cref{sec:preliminaries}, the same quadratic and
alternating braid presentations identify
\[
 R_n\cong\HFib(I_2(n))\quad(n\ge2),\qquad
 R_\infty\cong\HFib(I_2(\infty)),
\]
with $a,b$ corresponding to $h_s,h_t$. The standard bases agree by
\cref{prop:standard-dihedral-model}; in the infinite case they are the
reduced alternating words. We retain $R_n$ for the arithmetic calculations.

\subsection{The finite standard rings}

\begin{proposition}[Standard dihedral model]\label{prop:standard-dihedral-model}
For $n\geq2$, $R_n$ is free of rank $2n$ with basis
\begin{equation}\label{eq:Rn-basis}
 B_n=\{1,u_1,\ldots,u_{n-1},v_1,\ldots,v_{n-1},c_n\}.
\end{equation}
After $T_a=\ph a$ and $T_b=\ph b$, its scalar extension to
$\KFib$ is the dihedral Iwahori--Hecke algebra
$H_{\ph^2}(I_2(n))$.
\end{proposition}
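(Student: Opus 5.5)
The proof has two parts. First we show that $R_n$ is free on $B_n$; then we identify the scalar extension with the dihedral Hecke algebra.

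\textbf{Freeness.} $R_\infty$ is free on $1,u_k,v_k$, and it is also the ring presented by self-dual generators with $a^2=1+a$ and $b^2=1+b$. The reduced words in $a,b$ are exactly the alternating words, so $R_\infty$ is the monoid-type ring on alternating words with the quadratic reductions.

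Let $I$ be the two-sided ideal generated by $u_n-v_n$. Right multiplication gives
\[
 u_n b=u_{n+1} \ (n \text{ even}),\qquad
 u_n a=u_{n-1}+u_n \ (n \text{ odd}),
\]
and similarly on the left and for $v_n$. From these, $I$ contains every element $u_{n+k}-v_{n-k}-c$-type combination of the form displayed in \eqref{eq:n-plus-one-reductions}. Modulo $I$, every alternating word of length $\ge n$ therefore reduces to a $\Z$-combination of elements of $B_n$, so $B_n$ spans $R_n$.

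For linear independence, I would not analyse $I$ directly. Instead I would exhibit a ring of rank $2n$ on which $R_\infty$ surjects with $u_n-v_n\mapsto0$ and in which the images of $B_n$ are independent. The natural candidate is $\HFib(I_2(n))$ from the Hecke notation subsection, with basis $h_w$, $w\in I_2(n)$, of size $2n$.

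Define $R_\infty\to\HFib(I_2(n))$ by $a\mapsto h_s$ and $b\mapsto h_t$. This is well defined because $h_s^2=1+h_s$ and $h_t^2=1+h_t$, and the generators are self-dual. The multiplication rule gives $h_s h_w=h_{sw}$ whenever length increases. So the alternating word of length $k<n$ starting with $a$ maps to $h_{sts\cdots}$, and $u_n$ and $v_n$ both map to $h_{w_0}$ by the braid relation. The map thus factors through $R_n$ and sends $B_n$ bijectively onto the standard basis $\{h_w\}$. The elements of $B_n$ are therefore independent, and $R_n\cong\HFib(I_2(n))$ is free of rank $2n$ with basis $B_n$.

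\textbf{Hecke identification.} Put $T_s=\ph h_s$ and $T_t=\ph h_t$. Then
\[
 T_s^2=\ph^2(1+h_s)=\ph^2+\ph T_s.
\]
We need $T_s^2=(\ph^2-1)T_s+\ph^2$, and this holds because $\ph^2-1=\ph$. Hence $(T_s-\ph^2)(T_s+1)=0$.

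The braid relation of length $n$ for $T_s,T_t$ is $\ph^n$ times $u_n=v_n$. So $R_n\otimes\KFib$ is a quotient of $H_{\ph^2}(I_2(n))$. Since $H_{\ph^2}(I_2(n))$ has $\KFib$-dimension $2n$ by \cite[Chapter~4]{GeckPfeiffer2000}, and $R_n\otimes\KFib$ also has dimension $2n$ by the first part, the surjection is an isomorphism.

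\textbf{Main obstacle.} The only step with real content is that $\HFib(I_2(n))$ is an associative ring whose standard basis has the stated multiplication. This I take from the standard Hecke presentation, specialised at $q=\ph^2$ with the rescaling $h_w=\ph^{-\ell(w)}T_w$. I would check that the rescaled structure constants are integral, which follows from $\ph^2-1=\ph$.
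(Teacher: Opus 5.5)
Your proof is correct, but it establishes linear independence by a different route from the paper.

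\textbf{The paper's route.} The paper proves spanning by the same word reduction you use. It then gets independence from representation theory. The explicit modules of \cref{lem:character-completeness} are pairwise nonisomorphic irreducibles whose squared dimensions sum to $2n$. Hence $R_n\otimes\CC$ has a semisimple quotient of dimension $2n$, which forces the $2n$ spanning words to be independent over $\CC$, and hence over $\Z$. That lemma uses only the spanning half of the proposition, so nothing is circular. The Hecke identification is then read off from the presentation.

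\textbf{Your route.} You map $R_n$ to $\HFib(I_2(n))$, carrying $B_n$ onto the standard basis $\{h_w\}$. You import the Iwahori--Hecke basis theorem of \cite[Chapter~4]{GeckPfeiffer2000} through the rescaling $h_w=\ph^{-\ell(w)}T_w$.

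\textbf{What each buys.} The paper's argument is self-contained, needing only $2\times2$ matrix identities and the density theorem, and the same modules are reused for the formal codegrees. Yours depends on the general basis theorem. In return it delivers directly the based-ring isomorphism $R_n\cong\HFib(I_2(n))$ with matching standard bases. That is exactly what the paper invokes just after \cref{cor:free-two-fib-quotient}.

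Your closing dimension count is harmless but unnecessary. After $T_a=\ph a$ and $T_b=\ph b$, the defining relations of $R_n\otimes\KFib$ are literally the Hecke relations, since $\ph$ is a unit.

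\textbf{Small slips.} Neither affects the argument.
\begin{enumerate}[label=\textup{(\roman*)}]
\item For even $n$ the word $u_n$ ends in $b$, so $u_nb=u_{n-1}+u_n$ and $u_na=u_{n+1}$. Your displayed parities are swapped.
\item The sentence about ``$c$-type combinations'' should simply be the reduction $u_{n+1}=v_{n-1}+c_n$ of \eqref{eq:n-plus-one-reductions}, which holds in $R_n$ in both parities.
\end{enumerate}
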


\begin{proof}
We reduce words by induction on length. An adjacent repeated letter is
shortened using $a^2=1+a$ or $b^2=1+b$. If the word is alternating and
has length greater than $n$, apply the length-$n$ braid relation to its
first $n$ letters. The last letter of the replaced segment now agrees
with the next letter, so a quadratic relation shortens the word.
Alternating words of length at most $n$ belong to $B_n$, with the two
length-$n$ words identified as $c_n$. Thus every word is a nonnegative
integral combination of $B_n$.  The irreducible modules in
\cref{lem:character-completeness} have squared dimensions summing to $2n$;
hence the spanning set is linearly independent and is an integral basis.
Reversal gives its based-ring involution.  Finally
\[
 (T_a-\ph^2)(T_a+1)=(T_b-\ph^2)(T_b+1)=0,
\]
and $u_n=v_n$ is the length-$n$ braid relation, which is the stated Hecke
presentation.
\end{proof}

\subsection{Irreducible representations}

Set $\lambda_+=\ph$ and $\lambda_-=-\ph^{-1}$.  These are the roots of $x^2=x+1$.  When $n$ is odd there are two one-dimensional characters, with $(a,b)=(\lambda_+,\lambda_+)$ and $(\lambda_-,\lambda_-)$.  When $n$ is even, all four choices $(\lambda_\epsilon,\lambda_\delta)$ occur.

The two-dimensional representations are indexed by
\[
 1\le j\le\left\lfloor\frac{n-1}{2}\right\rfloor.
\]
Write $\alpha_j=\pi j/n$.  In $E_j$ the alternating character values satisfy
\begin{equation}\label{eq:alternating-character}
 \chi_j(u_k)=\chi_j(v_k)=c_k^{(j)},
 \qquad
 c_k^{(j)}=
 \begin{cases}
  2\cos(k\alpha_j),&k\text{ even},\\[2mm]
  \dfrac{\sin(k\alpha_j)}{\sin\alpha_j},&k\text{ odd}.
 \end{cases}
\end{equation}

\begin{lemma}[Explicit irreducible modules]\label{lem:character-completeness}
For $1\leq j\leq\lfloor(n-1)/2\rfloor$, put
\[
 \alpha=\frac{\pi j}{n},\qquad t=2\cos(2\alpha),\qquad
 p=\frac{t+\ph^{-1}}{\sqrt5},\qquad r=1+p-p^2,
\]
and define
\begin{equation}\label{eq:explicit-two-dimensional-matrices}
 A_j=\begin{pmatrix}\ph&0\\0&-\ph^{-1}\end{pmatrix},
 \qquad
 B_j=\begin{pmatrix}p&1\\r&1-p\end{pmatrix}.
\end{equation}
Then $a\mapsto A_j$, $b\mapsto B_j$ defines an irreducible representation $E_j$ of $R_n$.  Together with the one-dimensional representations listed above, these are all irreducible representations of $R_n\otimes\CC$, and their alternating character values are those of \eqref{eq:alternating-character}.
\end{lemma}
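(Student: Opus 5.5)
The plan has three steps: verify the defining relations, identify the character values, and count dimensions to get completeness and irreducibility.

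\emph{Relations.} Since $A_j$ is diagonal with entries $\ph$ and $-\ph^{-1}$, the roots of $x^2=x+1$, we have $A_j^2=1+A_j$. For $B_j$, Cayley--Hamilton gives $B_j^2-(\Tr B_j)B_j+\det B_j=0$. Here $\Tr B_j=1$ and $\det B_j=p(1-p)-r=-1$, so $B_j^2=1+B_j$. It remains to check the braid relation $u_n=v_n$, that is, $A_jB_jA_j\cdots=B_jA_jB_j\cdots$ with $n$ factors on each side. I would rescale to the Hecke generators $T_a=\ph A_j$ and $T_b=\ph B_j$, each with eigenvalues $\ph^2$ and $-1$. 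A standard fact about a pair of such quadratic operators on $\CC^2$ is that the braid relation of length $n$ holds whenever the product $T_aT_b$ has eigenvalues whose ratio is $e^{\pm 2i\alpha}$ with $n\alpha\in\pi\Z$, so that $T_aT_b$ is a scalar multiple of a rotation of order dividing $n$. This fact would be proved by diagonalising $T_aT_b$ and expanding both alternating products in its powers.

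To use it, compute $\Tr(A_jB_j)=\ph p+\ph^{-1}(p-1)=\sqrt5\,p-\ph^{-1}=t$ and $\det(A_jB_j)=(-1)(-1)=1$. Thus $A_jB_j$ has eigenvalues $e^{\pm2i\alpha}$. Since $2\alpha\notin\pi\Z$ for $1\le j\le\lfloor(n-1)/2\rfloor$, these eigenvalues are distinct. Hence $(A_jB_j)^n=1$, and $(A_jB_j)^k\neq$ scalar for $0<k<n$. For even $n=2m$ the braid relation is $(A_jB_j)^m=(B_jA_j)^m$, which holds because $(A_jB_j)^m=\pm1$ is central. For odd $n$ the relation reads $(A_jB_j)^mA_j=B_j(A_jB_j)^m$; I would verify this using $B_jA_jB_j\cdot$ and the identity $B_j=A_j^{-1}(A_jB_j)$ together with $(A_jB_j)^n=1$. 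The main obstacle I expect is making this odd case clean. A cleaner route is the Chebyshev recursion for alternating words in a $2$-dimensional module, which I would run in the next step and which gives both the braid relation and the character formula at once.

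\emph{Characters.} Write $X=A_jB_j$. From the eigenvalues, $\Tr X^m=2\cos(2m\alpha)$, which gives $\chi(u_{2m})=\chi(v_{2m})=2\cos(2m\alpha)$. For odd length, $u_{2m+1}=X^mA_j$. Using $A_j=1+A_j^{-1}$ and $X^mA_j$, the Cayley--Hamilton recursion $X^{m+1}=tX^m-X^{m-1}$ gives a linear recursion for $\Tr(X^mA_j)$ with initial values $\Tr A_j=1$ and $\Tr(XA_j)=\chi(aba)$. This recursion is satisfied by $\sin((2m+1)\alpha)/\sin\alpha$, the Chebyshev $U$ sequence. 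Matching the initial values $1$ and $t+1=\sin3\alpha/\sin\alpha$ proves \eqref{eq:alternating-character}. Alternatively one can compute $\chi(aba)$ directly from $\Tr(A_j^2B_j)=\Tr B_j+\Tr(A_jB_j)=1+t$. The same recursions, applied to the matrices themselves rather than their traces, give $u_n=v_n$ in both parities.

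\emph{Irreducibility and completeness.} $E_j$ is irreducible because $A_j$ has distinct eigenvalues and its eigenvectors $e_1,e_2$ are not eigenvectors of $B_j$: the off-diagonal entries of $B_j$ are $1$ and $r$. If $r=0$ the eigenvector $e_1$ would be common, but then $X$ would be triangular with eigenvalues $\ph p$ and $-\ph^{-1}(1-p)$ of product $1$, so $p(1-p)=-1$, i.e.\ $r=1+p-p^2=0$ is exactly this degenerate case. This must be excluded using $t\neq\pm2$, which I would verify. Distinct $j$ give distinct characters, since $\chi(ab)=2\cos2\alpha_j$ and these values are distinct in the given range. The one-dimensional characters are clearly pairwise distinct. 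Finally I count dimensions: $4\cdot1+4\cdot(n-2)/2=2n$ for $n$ even, and $2+4\cdot(n-1)/2=2n$ for $n$ odd. Since $R_n$ is spanned by $2n$ elements, this forces $R_n\otimes\CC$ to be semisimple of dimension $2n$ with exactly these irreducibles. This step is what \cref{prop:standard-dihedral-model} uses.
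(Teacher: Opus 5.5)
Your outline tracks the paper's proof in every step you actually carry out: the quadratic relations from trace and determinant, $\Tr(A_jB_j)=t$ and $\det(A_jB_j)=1$, the even case from $(A_jB_j)^m=(-1)^jI$, the Chebyshev trace recursion for odd words, irreducibility via the coordinate lines, and the $2n$ dimension count. The gap is that the odd-$n$ braid relation $X^mA_j=B_jX^m$, with $X=A_jB_j$ and $n=2m+1$, is never proved, and neither of your suggested routes closes as stated.

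Running Cayley--Hamilton on the matrices gives $X^m=U_{m-1}(t/2)X-U_{m-2}(t/2)I$, hence
\[
 X^mA_j-B_jX^m=U_{m-1}(t/2)\,(A_jB_jA_j-B_jA_jB_j)-U_{m-2}(t/2)\,(A_j-B_j).
\]
The recursion by itself says nothing about the cubic difference. For the same reason, your ``standard fact'' does not follow from expanding alternating products in powers of $T_aT_b$: odd words carry a leftover $A_j$ or $B_j$ that does not commute with the product.

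The missing ingredient is the paper's identity $A_jB_jA_j-B_jA_jB_j=(t+1)(A_j-B_j)$. You can derive it from polarized Cayley--Hamilton, $A_jB_j+B_jA_j=A_j+B_j+(t-1)I$. Multiply on the left by $A_j$, respectively $B_j$, and use the quadratic relations to get $A_jB_jA_j=tA_j+I-B_j$ and $B_jA_jB_j=tB_j+I-A_j$. The difference above then becomes $\bigl((t+1)U_{m-1}(t/2)-U_{m-2}(t/2)\bigr)(A_j-B_j)$, and the scalar equals $\sin((2m+1)\alpha)/\sin\alpha=0$.

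Alternatively, your first route can be finished with data you already have. Since $X^n=I$ and $B_j=A_j^{-1}X$, the relation $X^mA_j=B_jX^m$ is equivalent to $(A_jX^m)^2=I$. Because $\det(A_jX^m)=-1$, Cayley--Hamilton reduces this to $\Tr(A_jX^m)=0$, which is exactly your odd character value $\sin(n\alpha)/\sin\alpha$. What is missing in your write-up is this determinant and trace step.

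Two smaller corrections.
\begin{itemize}
\item Your claim that $(A_jB_j)^k$ is not scalar for $0<k<n$ is false. For even $n$, $k=n/2$ gives precisely the scalar power you use next, and for odd $n$ it fails whenever $\gcd(j,n)>1$. The claim is unused, so drop it.
\item The eigenvalue ratio of $T_aT_b$ is $e^{\pm4i\alpha}$, not $e^{\pm2i\alpha}$, although the condition you actually use, $X^n=I$, is the correct one.
\end{itemize}
Your irreducibility argument is a valid variant of the paper's computation $r=0\Rightarrow t\in\{3,-2\}$: if $r=0$, then $X$ is real triangular with real eigenvalues, impossible since $0<2\alpha<\pi$. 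For odd words beginning with $b$, run the same trace recursion with $B_jA_j$ and initial values $\Tr B_j=1$, $\Tr(B_jA_jB_j)=1+t$.
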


\begin{proof}
Direct calculation gives
\begin{equation}\label{eq:explicit-matrix-identities}
 A_j^2=I+A_j,\quad B_j^2=I+B_j,\quad
 \det(A_jB_j)=1,\quad \Tr(A_jB_j)=t,
\end{equation}
and
\begin{equation}\label{eq:explicit-braid-reduction}
 A_jB_jA_j-B_jA_jB_j=(t+1)(A_j-B_j).
\end{equation}
Write $X=A_jB_j$.  Cayley--Hamilton says
\begin{equation}\label{eq:X-chebyshev}
 X^m=U_{m-1}(t/2)X-U_{m-2}(t/2)I,
\end{equation}
where $U_k$ is the Chebyshev polynomial of the second kind and $U_{-1}=0$.  The eigenvalues of $X$ are $e^{\pm2i\alpha}$.

If $n=2m$, then $X^m=(-1)^jI$.  Since $(B_jA_j)^m=A_j^{-1}X^mA_j$, the two alternating words of length $n$ agree.  If $n=2m+1$, then \eqref{eq:explicit-braid-reduction} and \eqref{eq:X-chebyshev} give
\[
 X^mA_j-B_jX^m
 =\bigl((t+1)U_{m-1}(t/2)-U_{m-2}(t/2)\bigr)(A_j-B_j).
\]
With $t=2\cos(2\alpha)$, the scalar in parentheses is
\[
 \frac{\sin((2m+1)\alpha)}{\sin\alpha}=0.
\]
Thus $u_n=v_n$ in both parities.

The two eigenspaces of $A_j$ are the coordinate lines.  The upper-right entry of $B_j$ is $1$.  Moreover $r=0$ would force $p\in\{\ph,-\ph^{-1}\}$ and hence $t\in\{3,-2\}$, whereas $-2<t<2$ for the stated range of $j$.  Thus neither coordinate line is $B_j$-invariant, proving irreducibility.

For an even alternating word of length $2s$,
\[
 \Tr(X^s)=2\cos(2s\alpha).
\]
For an odd word put $c_{s}^{\mathrm{odd}}=\Tr(X^sA_j)$.  Cayley--Hamilton gives
\[
 c_{s+1}^{\mathrm{odd}}=t c_{s}^{\mathrm{odd}}-c_{s-1}^{\mathrm{odd}},\qquad c_{0}^{\mathrm{odd}}=1,\qquad c_{1}^{\mathrm{odd}}=t+1,
\]
so $c_{s}^{\mathrm{odd}}=\sin((2s+1)\alpha)/\sin\alpha$.  Cyclicity of trace gives the same values for words beginning with $b$.  This proves \eqref{eq:alternating-character}.

The numbers $t=2\cos(2\pi j/n)$ in the chosen range are distinct, so the $E_j$ are pairwise nonisomorphic.  Finally,
\[
 2\cdot1^2+\frac{n-1}{2}\cdot2^2=2n\quad(n\text{ odd}),
 \qquad
 4\cdot1^2+\left(\frac n2-1\right)2^2=2n\quad(n\text{ even}).
\]
The length-reduction procedure in the proof of
\cref{prop:standard-dihedral-model} expresses every word in the
$2n$ standard words, so the presented complex algebra has
dimension at most $2n$.  The displayed pairwise nonisomorphic irreducible modules give
a semisimple quotient of dimension at least the sum above, namely $2n$.
Consequently the algebra has dimension $2n$, its radical is zero, and the
displayed representations are complete.
\end{proof}

\subsection{Two-dimensional formal codegrees}

\begin{proposition}\label{prop:two-dimensional-codegrees}
For $1\le j\le\lfloor(n-1)/2\rfloor$, the formal codegree of $E_j$ is
\begin{equation}\label{eq:two-fib-codegree}
 f_{n,j}=n\left(1+\frac{1}{4\sin^2(\pi j/n)}\right).
\end{equation}
\end{proposition}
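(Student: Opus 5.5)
We compute directly from the character formula \eqref{eq:formal-codegree-character} with $\dim E_j=2$:
\[
 f_{n,j}=\tfrac12\sum_{x\in B_n}\chi_j(x)\chi_j(x^*).
\]
The involution reverses words. Reversing an even alternating word swaps $u_k$ and $v_k$, while an odd alternating word is fixed. By \eqref{eq:alternating-character} the characters satisfy $\chi_j(u_k)=\chi_j(v_k)$, so in every case $\chi_j(x^*)=\chi_j(x)$, and the sum becomes $\sum_x\chi_j(x)^2$. Collecting the basis \eqref{eq:Rn-basis} gives
\[
 2f_{n,j}=4+2\sum_{k=1}^{n-1}\bigl(c_k^{(j)}\bigr)^2+\bigl(c_n^{(j)}\bigr)^2 .
\]
Here the term $4$ is $\chi_j(1)^2=2^2$, and $c_n$ is counted once. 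Writing $4=2(c_0^{(j)})^2$ with $c_0=2\cos 0=2$, we get
\[
 f_{n,j}=\sum_{k=0}^{n-1}c_k^2+\tfrac12 c_n^2 .
\]
This is the symmetrized sum $\tfrac12\sum_{k=0}^{n}c_k^2+\tfrac12\sum_{k=1}^{n-1}c_k^2$ with the endpoint corrected.

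Next we evaluate the sum, writing $\alpha=\pi j/n$, so that $n\alpha=\pi j$. For even $k$ we have $c_k^2=4\cos^2(k\alpha)=2+2\cos(2k\alpha)$. For odd $k$ we have
\[
 c_k^2=\frac{\sin^2(k\alpha)}{\sin^2\alpha}=\frac{1-\cos(2k\alpha)}{2\sin^2\alpha}.
\]
The sum therefore splits into a constant part and oscillating parts $\sum\cos(2k\alpha)$ taken over even and over odd $k$ separately.

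We first show that the oscillating parts vanish. The key fact is that $e^{2ik\alpha}=e^{2\pi ijk/n}$, and $\sum_{k=0}^{n-1}e^{2\pi ijk/n}=0$ since $n\nmid j$. Restricting to even or odd $k$ in $0\le k\le n-1$ needs care, and the endpoint term $c_n$ must be handled too. Here $\cos(2n\alpha)=1$, so $c_n^2$ equals $4$ if $n$ is even and $0$ if $n$ is odd; the latter holds because $\sin(n\alpha)=0$. I would treat $n$ odd and $n$ even separately.

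For $n$ even, $k\mapsto 2k$ over even $k$ runs through multiples of $4\alpha$. Its sum $\sum_{m=0}^{n/2-1}e^{4\pi ijm/n}$ vanishes unless $2j\equiv0\pmod n$, which the range $j\le(n-1)/2$ excludes. The odd part is then the full sum minus the even part, so it vanishes as well.

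For $n$ odd, the parity classes of $k$ over $0\le k\le n-1$ are not full cosets. The cleaner move is to use the symmetry $c_{n-k}^2=c_k^2$ up to the parity swap. Alternatively, I would sum $\cos(2k\alpha)$ over even $k\in[0,2n)$, using the $2n$-periodicity of the characters, and halve.

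Once the oscillating parts vanish, the count is as follows. About $n/2$ values of $k$ are even, each contributing $2$, and about $n/2$ are odd, each contributing $1/(2\sin^2\alpha)$. The weighted totals give
\[
 n\cdot 1+\frac{n}{4\sin^2\alpha},
\]
which is exactly \eqref{eq:two-fib-codegree}. I expect the main obstacle to be this parity bookkeeping for odd $n$, together with the boundary term $\tfrac12c_n^2$. I would organize the computation as $f_{n,j}=\tfrac12\sum_{k=0}^{2n-1}c_k^2\cdot\tfrac12$, via the extension $c_{2n-k}=\pm c_k$, so that each parity class becomes a complete set of $n$ residues. Then the geometric-sum vanishing applies uniformly, and the result can be checked against $n=5$, $j=1$ as a sanity test.
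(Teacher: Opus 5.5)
The overall strategy works, and the full-period folding you end with is a different and cleaner route than the paper's. The paper stays on $1\le k<n$, splits by the parity of $k$, and evaluates the cosine and sine sums separately for even and odd $n$ (for instance $\tfrac n4-1$ versus $\tfrac{n-2}{4}$), together with the endpoint value $(c_n^{(j)})^2\in\{4,0\}$. Folding into $0\le k<2n$ makes each parity class a complete set of $n$ terms and removes the case split. You are right that some folding or reweighting is needed. With unit weights on $1\le k<n$ the oscillating parts do not vanish for odd $n$: for $n=3$, $j=1$ the single even term is $\cos(4\pi/3)=-\tfrac12$.

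However, your normalization is wrong in two places, and as written it gives incorrect values.

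First, $2(c_0^{(j)})^2=8$, not $4$; the unit contributes $(c_0^{(j)})^2$ exactly once. So your displayed formula $f_{n,j}=\sum_{k=0}^{n-1}c_k^2+\tfrac12c_n^2$ is too large by $\tfrac12c_0^2=2$. It gives $6$ instead of $4$ for $n=3$, $j=1$, and $8$ instead of $6$ for $n=4$, $j=1$. The symmetrized expression you write next is the correct one, but it is not equal to the formula before it.

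Second, $2n-k\equiv k\pmod 2$ and $n\alpha=\pi j$. Hence $c_{2n-k}=c_k$ for even $k$ and $c_{2n-k}=-c_k$ for odd $k$, so
\[
 \sum_{k=0}^{2n-1}c_k^2=c_0^2+2\sum_{k=1}^{n-1}c_k^2+c_n^2=2f_{n,j}.
\]
That is, $f_{n,j}=\tfrac12\sum_{k=0}^{2n-1}c_k^2$, with no second factor $\tfrac12$.

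With this identity the count is exact rather than ``about $n/2$''. The terms $k=2m$ and $k=2m+1$, $0\le m\le n-1$, contribute constants $2n$ and $n/(2\sin^2\alpha)$. The oscillating parts are the real parts of $\sum_{m=0}^{n-1}e^{4\pi ijm/n}$ and $e^{2i\alpha}\sum_{m=0}^{n-1}e^{4\pi ijm/n}$. Both vanish because $1\le 2j\le n-1$ gives $n\nmid 2j$. Halving yields $n\bigl(1+\frac{1}{4\sin^2\alpha}\bigr)$, as claimed.
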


\begin{proof}
Reversal is the based-ring involution.  Thus $u_k^*=u_k$ and $v_k^*=v_k$ for odd $k$, while $u_k^*=v_k$ and $v_k^*=u_k$ for even $k$; also $c_n^*=c_n$.  Since \eqref{eq:alternating-character} is real and takes the same value on each paired word, \eqref{eq:formal-codegree-character} gives
\begin{equation}\label{eq:codegree-expanded}
 2f_{n,j}=4
 +8\!\sum_{\substack{1\le k<n\\k\ \mathrm{even}}}\!\cos^2(k\alpha_j)
 +\frac{2}{\sin^2\alpha_j}
   \!\sum_{\substack{1\le k<n\\k\ \mathrm{odd}}}\!\sin^2(k\alpha_j)
 +(c_n^{(j)})^2.
\end{equation}
For even $n$ the required finite sums are
\[
 \sum_{\substack{1\le k<n\\k\ \mathrm{even}}}\cos^2(k\alpha_j)=\frac n4-1,
 \qquad
 \sum_{\substack{1\le k<n\\k\ \mathrm{odd}}}\sin^2(k\alpha_j)=\frac n4,
 \qquad (c_n^{(j)})^2=4.
\]
For odd $n$ they are
\[
 \sum_{\substack{1\le k<n\\k\ \mathrm{even}}}\cos^2(k\alpha_j)=\frac{n-2}{4},
 \qquad
 \sum_{\substack{1\le k<n\\k\ \mathrm{odd}}}\sin^2(k\alpha_j)=\frac n4,
 \qquad (c_n^{(j)})^2=0.
\]
These identities follow by writing sine and cosine squares in terms of $e^{2ik\alpha_j}$ and summing the resulting finite geometric series.  In either parity, the right-hand side of \eqref{eq:codegree-expanded} is
\[
 2n+\frac{n}{2\sin^2\alpha_j}.
\]
Division by two proves \eqref{eq:two-fib-codegree}.
\end{proof}

\subsection{Sphericalization and the dimension field}

Recall $\KFib=\Q(\sqrt5)$ and $\psi=(1-\sqrt5)/2=-\ph^{-1}$.

\begin{lemma}[Reality of spherical dimensions]\label{lem:real-spherical-dimensions}
Every categorical dimension in a spherical fusion category over $\CC$ is
real.
\end{lemma}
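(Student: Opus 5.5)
We must show that every categorical dimension in a spherical fusion category $\calC$ over $\CC$ is real. The plan is to combine the standard identity relating the dimension of an object to that of its dual with pivotal positivity of squared norms.

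First, recall that in a spherical category the left and right traces agree. Hence for every simple $X$ we have $\dim(X^*)=\dim(X)$: the dimension of the dual is the trace of the identity computed with the opposite duality, and sphericality identifies these two traces.

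Second, use the Etingof--Nikshych--Ostrik structure of squared norms. For a simple $X$ in a pivotal fusion category over $\CC$, the squared norm $|X|^2$ is intrinsic and equals $\dim(X)\dim(X^*)$. By ENO, $|X|^2$ is a totally positive algebraic integer. Moreover, for any pivotal structure one has
\[
 \dim(X^*)=\overline{\dim(X)}.
\]
This holds because $\dim(X)$ is a sum of roots of unity, up to the standard normalization. I would cite ENO, Proposition~2.9, together with the accompanying argument that the dimension of the dual is the complex conjugate. That argument runs through the fact that $\dim(X)$ is a trace of a finite-order operator after passage to the Drinfeld center or sphericalization; alternatively one can invoke $|X|^2=\dim X\cdot\overline{\dim X}$ in ENO's Theorem~2.3 setup.

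Third, combine the two steps: sphericality gives $\dim(X)=\dim(X^*)=\overline{\dim(X)}$, so $\dim(X)\in\mathbb R$. Dimensions of arbitrary objects are sums over simple summands with multiplicities, so they are real as well.

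The main obstacle is justifying $\dim(X^*)=\overline{\dim X}$ without unitarity. I would first try the route through the Drinfeld center. There $\dim$ of a simple object is tied to $S$-matrix entries $S_{\mathbf1X}$, and the identity $S_{\mathbf1X^*}=\overline{S_{\mathbf1X}}$ follows from unitarity of the normalized modular $S$-matrix together with $S^2=$ charge conjugation, both valid over $\CC$ in the spherical setting. Since the forgetful functor from the center preserves dimensions, it suffices to know that the object $I(\mathbf1)$ contains, for each simple $X$ of $\calC$, a center object whose image involves $X$. If this becomes technical, the fallback is to cite ENO directly for $\dim(X^*)=\overline{\dim X}$ in spherical categories.
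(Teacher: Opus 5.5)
Your argument is correct, but it takes the key step from a different place than the paper does. You and the paper agree on the first step: sphericality gives $\dim(X^*)=\dim(X)$. You then import the identity $\dim(X^*)=\overline{\dim(X)}$, valid for every pivotal structure on a fusion category over $\CC$ (the fact you cite from ENO, Proposition~2.9), and conclude at once. The paper instead proves reality directly. The dimension $d$ is a character of $K_0(\calC)$, and Frobenius reciprocity gives $L_{x^*}=L_x^{\mathsf T}$ in the regular representation. Hence $2d(x)=d(x+x^*)$ is an eigenvalue of the real symmetric matrix $L_{x+x^*}$ and is therefore real.

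Your route is shorter and yields the stronger conjugation identity, but it depends on an external theorem. The paper's route is self-contained and uses only the fusion rules. The two are really the same mechanism. Apply the paper's eigenvalue argument also to the Hermitian matrix $i(L_x-L_x^{\mathsf T})$: this gives $d(x)-d(x^*)\in i\mathbb R$. Together with $d(x)+d(x^*)\in\mathbb R$, that is exactly the ENO identity for any pivotal dimension character.

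One caveat. The heuristic justifications you sketch for the ENO identity would not work as stated.
\begin{itemize}
\item Knowing that $\dim(X)$ is a sum of roots of unity says nothing about how $\dim(X^*)$ relates to $\overline{\dim(X)}$.
\item The Drinfeld-center route controls only $\dim F(Z)$ for simple $Z$ in the center. $F(Z)$ generally has several simple constituents, so reality of $\dim F(Z)$ does not give reality of each $\dim X$.
\item In the nonunitary setting, the relation $S_{\mathbf1X^*}=\overline{S_{\mathbf1X}}$ is itself a consequence of the same Hermitian property of fusion-ring characters, so that route does not reduce the difficulty.
\end{itemize}
Either cite ENO outright, or replace these sketches with the fusion-matrix argument above.
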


\begin{proof}
Let $d$ be the dimension character of the Grothendieck ring and let $x$ be a
simple basis element.  Sphericality gives $d(x^*)=d(x)$.  In the regular
representation, $L_{x^*}=L_x^{\mathsf T}$, so $L_{x+x^*}$ is a real
symmetric matrix.  The number
$d(x+x^*)=2d(x)$ is one of its eigenvalues and is therefore real.
Additivity gives the assertion for every object.
\end{proof}

\begin{proposition}[Dimension-field obstruction]\label{prop:Rn-dimension-field}
If $R_n$ is the Grothendieck ring of a fusion category, then every formal
codegree of $R_n$ belongs to $\KFib$.
\end{proposition}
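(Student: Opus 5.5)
We pass to a spherical structure using the sphericalization, apply \cref{thm:ostrik-package}(iii) there, and then transport the result back to $\calC$ with \eqref{eq:sphericalization-codegree}.

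\emph{Step 1: codegrees of the sphericalization lie in its dimension field.} Let $\calC$ be a fusion category with $K_0(\calC)\cong R_n$, and let $\widetilde{\calC}$ be its sphericalization. By \cref{thm:ostrik-package}(iii), every formal codegree of $K_0(\widetilde{\calC})$ lies in the field $L$ generated by the spherical dimensions of the simple objects of $\widetilde{\calC}$. In particular, for each $E\in\Irr(R_n)$ the pulled-back representation $\widetilde E=E\circ\pi$ has $f_{\widetilde E}\in L$. By \eqref{eq:sphericalization-codegree}, $f_{\widetilde E}=2f_E$, so $f_E\in L$. It therefore suffices to show $L\subseteq\KFib$.

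\emph{Step 2: the dimension field is contained in $\KFib$.} The spherical dimension function $d$ of $\widetilde{\calC}$ is a ring character of $K_0(\widetilde{\calC})$. Restricting along the forgetful map, $d$ is determined on simples by its values on the simples lying over those of $\calC$, together with the invertible twisting objects that the sphericalization adjoins.

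First consider the lifts $\tilde a,\tilde b$ of the generators. The fusion rules $\tilde a\otimes\tilde a\cong\mathbf1\oplus\tilde a$ and $\tilde b\otimes\tilde b\cong\mathbf1\oplus\tilde b$ survive, possibly up to invertible twists $z$ with $d(z)=\pm1$. Hence $d(\tilde a)$ and $d(\tilde b)$ are roots of $x^2=\pm x\pm1$, and all such roots lie in $\KFib$. For a cleaner argument, I would note instead that $\FPdim(a)=\ph$ makes $\langle a\rangle$ a rank-two Fibonacci-rule category. By the Ostrik classification (\cref{rem:marked-types}) it is Fib or YL, and each is spherical with $d(a)\in\{\ph,\psi\}$.

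Every simple of $\calC$ is an alternating word in $a$ and $b$ by \cref{prop:standard-form}. Its lifts are therefore summands of tensor products of $\tilde a$, $\tilde b$ and invertibles, and multiplicativity of $d$ on these alternating words gives $d(\tilde u_k),d(\tilde v_k),d(\tilde c_n)\in\KFib$. The invertible objects of $\widetilde{\calC}$ have $d=\pm1$. Reality from \cref{lem:real-spherical-dimensions} is consistent with these values and can be used to exclude complex roots. Thus $L\subseteq\KFib$.

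\emph{Main obstacle.} The delicate point is Step 2: controlling the sphericalization's extra simples and the signs introduced by twisting. Specifically, one must check that each simple of $\widetilde{\calC}$ is $X\otimes z$ with $X$ a lift of a simple of $\calC$ and $z$ invertible of dimension $\pm1$. I would verify this from Ostrik's explicit description of $\widetilde{\calC}$ as a $\Z/2\Z$-equivariantization-type construction \cite[Section~3.2]{Ostrik2009}. Once that is established, every spherical dimension is $\pm$ a product of $d(\tilde a)$ and $d(\tilde b)$, and hence lies in $\KFib$.
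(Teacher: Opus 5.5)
Your route is the paper's: sphericalize, apply \cref{thm:ostrik-package}(iii) to $\widetilde{\calC}$, remove the factor $2$ using \eqref{eq:sphericalization-codegree}, and bound the dimension field by lifting $a,b$ and multiplying along alternating words. One step is wrong as written, and it is exactly the step that needs an argument.

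\textbf{The false claim.} You assert that every root of $x^2=\pm x\pm1$ lies in $\KFib$. This fails: the roots of $x^2=x-1$ and of $x^2=-x-1$ are primitive sixth and third roots of unity, which lie in $\Q(\sqrt{-3})$. Write $\tilde a^{2}\cong\varepsilon^e\oplus\varepsilon^f\tilde a$, where $\varepsilon$ is the sign object of the sphericalization. The case $e=1$ is precisely the danger.

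\textbf{What excludes it.} \Cref{lem:real-spherical-dimensions} is the essential input that rules out $e=1$; it is not a consistency check. Once $d(\tilde a)$ is known to be real, only the equations $x^2=1\pm x$ survive. Their roots are $\pm\ph^{\pm1}$, all in $\KFib$. Replacing $\tilde a$ by $\varepsilon\tilde a$ if necessary then gives $\tilde a^2\cong\mathbf1\oplus\tilde a$, as in the paper. Make this the actual argument rather than an afterthought.

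\textbf{The ``cleaner'' substitute does not work.} The rank-two classification computes $d(a)$ for a spherical structure on $\langle a\rangle\subset\calC$. The field in (iii) is instead generated by dimensions in $\widetilde{\calC}$, and $\calC$ is not assumed pivotal. That shortcut therefore says nothing about whether a lift obeys the Fibonacci rule or its $\varepsilon$-twisted version, which is the whole issue.

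\textbf{Your ``main obstacle.''} It is dispatched as in the paper. The alternating word in $\tilde a,\tilde b$ is itself simple, because its image under the forgetful functor is simple. So the two lifts of each basis element are this word and its $\varepsilon$-twist. Their dimensions are $\pm$ products of $d(\tilde a)$ and $d(\tilde b)$, and hence lie in $\KFib$.
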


\begin{proof}
Let $P:\widetilde{\calC}\to\calC$ be the sphericalization and let
$\varepsilon$ be its central sign object, so that
$P(\varepsilon)\cong\id$ and $\dim(\varepsilon)=-1$.  Choose a simple lift
$A$ of $a$.  Every simple object $X$ of $\calC$ has exactly two simple lifts,
which differ by tensoring with $\varepsilon$.  Since $P(A^2)\cong
a^2\cong\id\oplus a$, with both summands of multiplicity one, $A^2$ contains
one lift of $\id$ and one lift of $a$.  Thus there are $e,f\in\{0,1\}$ such
that
\[
 A^2\cong\varepsilon^e\oplus\varepsilon^fA.
\]
Writing $d=\dim(A)\in\mathbb R$ by
\cref{lem:real-spherical-dimensions}, we obtain
$d^2=(-1)^e+(-1)^fd$.  If $e=1$, this is one of
$d^2-d+1=0$ and $d^2+d+1=0$, neither of which has a real root.  Hence
$e=0$.  Replacing $A$ by $\varepsilon A$ when $f=1$, we may arrange
\[
 A^2\cong\id\oplus A,
 \qquad \dim(A)\in\{\ph,\psi\}\subset \KFib.
\]
The same argument gives a lift $B$ of $b$ with
$B^2\cong\id\oplus B$ and $\dim(B)\in\{\ph,\psi\}$.

Every basis element in \eqref{eq:Rn-basis} is represented by an alternating
word in $a,b$.  The corresponding word in $A,B$ is simple: otherwise its
image under $P$ would decompose although the underlying alternating word is
simple.  It is a lift of that basis element and has dimension in $\KFib$; its
other lift is obtained by tensoring with $\varepsilon$ and has the negative
dimension.  These are all the simple objects of $\widetilde{\calC}$.  Thus
the field generated by their spherical dimensions is contained in $\KFib$.

Pull an irreducible representation $E$ of $R_n$ back along
$K_0(\widetilde{\calC})\twoheadrightarrow K_0(\calC)$.  Its formal
codegree is $2f_E$ by \eqref{eq:sphericalization-codegree}.  Applying
\cref{thm:ostrik-package}(iii) to $\widetilde{\calC}$ gives
$2f_E\in \KFib$, and hence $f_E\in \KFib$.
\end{proof}

\subsection{Squared norms and intrinsic global dimension}

For $\lambda\in\{\lambda_+,\lambda_-\}$, let $\chi_\lambda$ be the diagonal
one-dimensional character with $\chi_\lambda(a)=\chi_\lambda(b)=\lambda$,
and put $r_\lambda=\lambda^2$.  Its formal codegree is
\[
 D_n(r_\lambda)=1+2\sum_{k=1}^{n-1}r_\lambda^k+r_\lambda^n
       =(1+r_\lambda)\frac{r_\lambda^n-1}{r_\lambda-1}.
\]
Consequently
\begin{equation}\label{eq:diagonal-codegrees}
 D_n^+=\sqrt5(\ph^{2n}-1),
 \qquad
 D_n^-=\sqrt5(1-\ph^{-2n}),
 \qquad
 \frac{D_n^+}{D_n^-}=\ph^{2n}.
\end{equation}
For even $n$, the two mixed characters have
\begin{equation}\label{eq:mixed-codegree}
 D_n^{\mathrm{mix}}=\frac{5n}{2}.
\end{equation}

\begin{lemma}[Possible intrinsic dimensions]\label{lem:possible-intrinsic-dimensions}
If $K_0(\calC)\cong R_n$, then the squared norm of each generator belongs to
$\{\ph^2,\ph^{-2}\}$.  Moreover,
\begin{equation}\label{eq:possible-global-dimensions}
 \Dim(\calC)\in
 \begin{cases}
  \{D_n^+,D_n^-\},&n\text{ odd},\\
  \{D_n^+,D_n^-,D_n^{\mathrm{mix}}\},&n\text{ even}.
 \end{cases}
\end{equation}
\end{lemma}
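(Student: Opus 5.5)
The plan is to identify the squared-norm function with a one-dimensional character of $R_n$ and then read off $\Dim(\calC)$ as the formal codegree of that character.

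For a fusion category, the function $X\mapsto |X|^2$ on simple objects extends to a ring homomorphism $K_0(\calC)\to\CC$. This is the standard fact behind Ostrik's formal-codegree framework: $|X|^2=d_\ell(X)d_r(X)$ for any pivotal-free choice, or equivalently the character computed in the sphericalization. Moreover $\Dim(\calC)=\sum_X |X|^2$ is the formal codegree of this character whenever $|X^*|^2=|X|^2$, because for a one-dimensional character $\chi$ we have $f_\chi=\sum_b\chi(b)\chi(b^*)$. So I will determine this character $\chi=|\cdot|^2$ on $R_n$ and evaluate its codegree.

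\textbf{Step 1: the value on a generator.} To compute $|a|^2$, I will use the sphericalization, exactly as in the proof of \cref{prop:Rn-dimension-field}. There we found a lift $A$ with $A^2\cong\id\oplus A$ and $\dim(A)\in\{\ph,\psi\}$. Squared norms are preserved by the forgetful functor, and in a spherical category $|X|^2=\dim(X)\dim(X^*)$. Since $A$ is self-dual, $|a|^2=\dim(A)^2\in\{\ph^2,\psi^2\}=\{\ph^2,\ph^{-2}\}$, and likewise for $b$. This proves the first claim of the lemma.

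\textbf{Step 2: the global dimension.} The map $X\mapsto|X|^2$ is multiplicative. In the spherical lift it is $\dim(X)\dim(X^*)$, a product of two characters, hence itself a character; it is then pulled back along $\pi$, since the two lifts have the same squared norm. It is also invariant under duality. Hence $\chi$ is the one-dimensional character of $R_n$ given by $\chi(a)=|a|^2$ and $\chi(b)=|b|^2$.

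A character of $R_n$ sends $a$ to a root of $x^2=x+1$. I therefore rewrite $\chi$ as the character $\theta$ determined by $\theta(a)=\pm\ph^{\pm1}$. Concretely, $\chi=\theta_1\theta_2$, where $\theta_1$ comes from $\dim$ and $\theta_2$ from $\dim\circ{}^*$. On self-dual generators $\theta_1=\theta_2$, so $\chi(a)=\theta(a)^2$ and $\chi(b)=\theta(b)^2$. Here $\theta=\dim(A),\dim(B)$ is one of the characters listed before \cref{lem:character-completeness}, with values in $\{\lambda_+,\lambda_-\}$.

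Summing $\chi$ over the basis \eqref{eq:Rn-basis}, using that $\chi$ is multiplicative on alternating words, gives the codegree of $\theta$. That codegree is $\sum_b\theta(b)\theta(b^*)=\sum_b\chi(b)$, because $\theta(b^*)=\theta(b)$ in the dual-invariant spherical setting.

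\textbf{Step 3: case split.} The outcome depends on the parity of $n$.
\begin{enumerate}
\item If $n$ is odd, only the diagonal characters exist, so $\Dim(\calC)\in\{D_n^+,D_n^-\}$ by \eqref{eq:diagonal-codegrees}.
\item If $n$ is even, the mixed characters are also available, and they contribute $D_n^{\mathrm{mix}}$ by \eqref{eq:mixed-codegree}.
\end{enumerate}
For the diagonal case, the geometric sum $1+2\sum_{k=1}^{n-1} r^k+r^n$ is already recorded in the paper.

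The main obstacle is justifying that $\theta(b^*)=\theta(b)$, so that the codegree is literally $\sum\chi$. For odd words this is automatic, since those words are self-dual. For even words $u_k^*=v_k$, and the two values agree because $\theta(u_k)=\theta(a)^{k/2}\theta(b)^{k/2}=\theta(v_k)$. Given this symmetry, the two expressions coincide.
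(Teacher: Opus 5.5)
Your Step~1 and the final summation over $B_n$ agree with the paper. The bridge in Step~2, however, is false, and it is the step that carries the odd-$n$ case.

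\textbf{The false step.} The function $X\mapsto|X|^2$ does not extend to a ring homomorphism $K_0(\calC)\to\CC$, because a pointwise product of two characters on a basis is not a character. Already in $\Fib$ this fails: such a homomorphism would need $|\tau|^2|\tau|^2=|\mathbf 1|^2+|\tau|^2$, but $\ph^4\neq 1+\ph^2$. You notice the same problem yourself when you observe that $\chi(a)=\ph^2$ is not a root of $x^2=x+1$. The correct standard fact is that $\Dim=f_{\dim}$ in the spherical case; $\Dim$ is not the sum of a character $|\cdot|^2$.

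Nor does $\dim$ give a character of $R_n=K_0(\calC)$. It is a character of $K_0(\widetilde{\calC})$ that does not factor through $\pi$, since $\dim(\varepsilon)=-1$ while $P(\varepsilon)\cong\mathbf 1$. So the sentence ``$\theta=\dim(A),\dim(B)$ is one of the characters listed'' is unsupported. The assignment $a\mapsto\dim A$, $b\mapsto\dim B$ defines a character of $R_\infty$, and it descends to $R_n$ only if $\theta(u_n)=\theta(v_n)$. For even $n$ this is automatic. For odd $n$ it is equivalent to $\dim A=\dim B$, which is exactly the exclusion of the mixed choice. Your Step~3(1), ``only the diagonal characters exist,'' therefore assumes the one nontrivial point of the lemma. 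The obstacle you single out, $\theta(b^*)=\theta(b)$, is harmless.

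\textbf{The missing argument.} It is short, and it is the paper's parity step. Squared norms multiply on simple tensor products, so $u_n\cong v_n$ gives $|u_n|^2=|v_n|^2$. For odd $n$, counting letters in the two alternating words gives $|u_n|^2/|v_n|^2=|a|^2/|b|^2$. Hence $|a|^2=|b|^2$, and since $\ph^2\neq\ph^{-2}$ the two choices agree.

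Equivalently, in the sphericalization both $U_n$ and $V_n$ are simple lifts of $c_n$, so $U_n\cong\varepsilon^eV_n$. This gives $\dim A/\dim B=\pm1$, which forces $\dim A=\dim B$ because $\ph/\psi=-\ph^2$.

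With this inserted, $\theta$ is a genuine character of $R_n$ satisfying $\theta(w)^2=|w|^2$ on every basis word. Your identification
$\Dim(\calC)=\sum_{w\in B_n}\theta(w)^2=\sum_{w\in B_n}\theta(w)\theta(w^*)=f_\theta$
then becomes valid. It is a clean way to see why the monomial sums in the paper's proof coincide with $D_n^{\pm}$ and $D_n^{\mathrm{mix}}$.
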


\begin{proof}
The lift calculation in the proof of \cref{prop:Rn-dimension-field} shows
that a simple object satisfying the Fibonacci fusion rule has squared norm
$\ph^2$ or $\psi^2=\ph^{-2}$.  Squared norms multiply on simple tensor
products.  Summing the corresponding monomials over
\eqref{eq:Rn-basis} gives $D_n^+$ when both choices are $\ph^2$, $D_n^-$
when both are $\ph^{-2}$, and $D_n^{\mathrm{mix}}$ when the choices differ.
If $n$ is odd, the two sides of $u_n=v_n$ contain unequal numbers of $a$ and
$b$, so equality of their squared norms forces the two choices to agree.
If $n$ is even, the relation imposes no further condition.
\end{proof}

\subsection{Uniform reduction of the parameter}

\begin{proposition}\label{prop:Rn-candidates}
If $n\ge4$ and $R_n$ is categorifiable, then
\[
 n\in\{4,5,6,10\}.
\]
\end{proposition}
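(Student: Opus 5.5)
The approach is to apply the dimension-field obstruction of \cref{prop:Rn-dimension-field} to the two-dimensional formal codegrees from \cref{prop:two-dimensional-codegrees}. Since $n\ge4$, we have $\lfloor(n-1)/2\rfloor\ge1$, so $E_1$ exists. Its formal codegree is
\[
 f_{n,1}=n\Bigl(1+\frac{1}{4\sin^2(\pi/n)}\Bigr)
 =n\Bigl(1+\frac{1}{2-2\cos(2\pi/n)}\Bigr).
\]
If $R_n$ is categorifiable, then $f_{n,1}\in\KFib$. This forces $\cos(2\pi/n)\in\KFib$, because $n\ne0$ and the map $x\mapsto n(1+1/(2-2x))$ is invertible over $\Q$.

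The next step is the standard degree count. The number $2\cos(2\pi/n)$ generates the maximal real subfield of $\Q(\zeta_n)$, which has degree $\varphi(n)/2$ over $\Q$ for $n\ge3$. Membership in $\KFib=\Q(\sqrt5)$ requires $\varphi(n)/2\in\{1,2\}$, that is, $\varphi(n)\in\{2,4\}$. For $n\ge4$ this gives
\[
 n\in\{4,6,5,8,10,12\}.
\]
When $\varphi(n)=4$, the real subfield is quadratic, and it must equal $\Q(\sqrt5)$ itself. It is $\Q(\sqrt5)$ for $n=5,10$, $\Q(\sqrt2)$ for $n=8$, and $\Q(\sqrt3)$ for $n=12$. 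So $n=8$ and $n=12$ are excluded, since $\cos(\pi/4)=\sqrt2/2$ and $\cos(\pi/6)=\sqrt3/2$ do not lie in $\KFib$. The cases $n=4,6$ have rational $\cos(2\pi/n)$ and survive, leaving exactly $\{4,5,6,10\}$.

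I do not expect a real obstacle here: the argument only needs \cref{prop:Rn-dimension-field}, the explicit codegree formula, and the classical fact $[\Q(\cos(2\pi/n)):\Q]=\varphi(n)/2$. The one check is that $\sin^2(\pi/n)\ne0$, so the codegree formula is well defined, which holds for $n\ge4$. If one wants to avoid citing the degree formula, the finitely many $n$ with $\varphi(n)\le4$ can be listed by elementary means, since $\varphi(n)\ge\sqrt{n/2}$. I would also note that the integrality tools of \cref{thm:ostrik-package}(i),(ii) and \cref{lem:possible-intrinsic-dimensions} are not needed for this reduction; they are reserved for eliminating the surviving values afterwards.
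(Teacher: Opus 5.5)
Your proposal is correct and follows essentially the same route as the paper. Both arguments invert the rational M\"obius expression for $f_{n,1}$ to get $\Q(\zeta_n+\zeta_n^{-1})\subseteq\KFib$, deduce $\phi(n)\le4$, and discard $n=8,12$ because $\sqrt2,\sqrt3\notin\KFib$. The only difference is that the paper proves $[\Q(t):\Q]=\phi(n)/2$ via complex conjugation and enumerates $\phi(n)\le4$ by prime divisors, where you cite these facts.
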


\begin{proof}
Let $t=2\cos(2\pi/n)$.  The representation $E_1$ exists for every
$n\ge3$, and \eqref{eq:two-fib-codegree} can be rewritten as
\begin{equation}\label{eq:codegree-mobius}
 f_{n,1}=n\frac{3-t}{2-t}.
\end{equation}
Conversely,
$t=(3n-2f_{n,1})/(n-f_{n,1})$, so
\[
 \Q(f_{n,1})=\Q(t)=\Q(\zeta_n+\zeta_n^{-1}).
\]
By \cref{prop:Rn-dimension-field}, this real cyclotomic field is contained
in $\KFib$.  For $n\ge3$, complex conjugation is the nontrivial automorphism of
$\Q(\zeta_n)$ over its fixed field $\Q(t)$: indeed, $\zeta_n$ is nonreal
and satisfies $X^2-tX+1=0$.  Consequently
$[\Q(t):\Q]=\phi(n)/2$, where $\phi$ is Euler's totient function, and hence
$\phi(n)\le4$.

We solve this inequality without a finite cutoff.  If a prime $p$ divides
$n$, then $p-1$ divides $\phi(n)$, so $p\in\{2,3,5\}$.  If $5\mid n$,
its exponent is one, no factor $3$ can occur, and the exponent of $2$ is at
most one; this gives $n=5,10$.  Otherwise $n=2^a3^b$ with $b\le1$;
if $b=0$ then $a\le3$, and if $b=1$ then $a\le2$.  Thus the values at least
four are
\[
 4,5,6,8,10,12.
\]
For these parameters the elements $t$ are, respectively,
\[
 0,\quad\frac{\sqrt5-1}{2},\quad1,\quad\sqrt2,
 \quad\frac{1+\sqrt5}{2},\quad\sqrt3.
\]
The fields $\Q(\sqrt2)$ and $\Q(\sqrt3)$ are not subfields of the
quadratic field $\KFib$: a quadratic field has no proper intermediate field,
and the three square classes $2,3,5$ in $\Q^\times/(\Q^\times)^2$ are
distinct.  This excludes $8$ and $12$.
\end{proof}

\subsection{The four remaining parameters}

\begin{proposition}\label{prop:exclude-4-5-6-10}
None of $R_4,R_5,R_6,R_{10}$ is categorifiable.
\end{proposition}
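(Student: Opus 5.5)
The plan is to dispose of the four surviving parameters with the two formal-codegree obstructions that \cref{prop:Rn-candidates} did not use: the $d$-number property \cref{thm:ostrik-package}(i) and the divisibility $\Dim(\calC)/f_E\in\overline{\Z}$ of \cref{thm:ostrik-package}(ii). Throughout I work only with the representation $E_1$, whose formal codegree is $f_{n,1}=n(3-t)/(2-t)$ with $t=2\cos(2\pi/n)$ by \eqref{eq:codegree-mobius}. By \cref{lem:possible-intrinsic-dimensions}, $\Dim(\calC)$ must be one of $D_n^\pm$ or, for even $n$, $D_n^{\mathrm{mix}}=5n/2$. The parameters split naturally into the odd-type pair $n=5,10$, where $t\notin\Q$, and the rational pair $n=4,6$.

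For $n=5$ and $n=10$ the values of $t$ are $\ph^{-1}$ and $\ph$, which are Galois conjugate in $\KFib$; so the two cases are exchanged by the nontrivial automorphism and one computation suffices. Taking $t=\ph^{-1}$ with conjugate $t'=-\ph$, I compute the norms
\[
N(2-t)=(2-\ph^{-1})(2+\ph)=5,\qquad N(3-t)=(3-\ph^{-1})(3+\ph)=11.
\]
Thus $2-t$ is $\sqrt5$ times a unit, and $f_{5,1}=\sqrt5\cdot(\text{unit})\cdot(3-t)$. The element $3-t$ generates a prime of norm $11$, and $11$ splits in $\KFib$. Hence the ideal $(f_{5,1})$ has the form $(\sqrt5)\mathfrak p_{11}$ and is not stable under $\Gal(\KFib/\Q)$. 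The one point needing care is passing from $\KFib$ to the ring of all algebraic integers: for a Galois extension, extension of ideals is injective, so Galois invariance of $(f)$ in $\overline{\Z}$ would force invariance in $\mathcal O_{\KFib}$. Therefore $f_{5,1}$ (and, by conjugation, $f_{10,1}$) is not a $d$-number, contradicting \cref{thm:ostrik-package}(i).

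For $n=4$ and $n=6$ the codegree is rational: $f_{4,1}=6$ and $f_{6,1}=12$, so (i) gives nothing and I use (ii). For a rational integer $m$ and $D\in\mathcal O_{\KFib}$, the condition $D/m\in\overline{\Z}$ forces $m^2\mid N(D)$. I compute
\[
N(D_n^+)=N(D_n^-)=D_n^+D_n^-=5(L_{2n}-2),
\]
where $L_k$ are the Lucas numbers. For $n=4$ this gives $5\cdot45=225$, which is not divisible by $36$, and $D_4^{\mathrm{mix}}=10$ is not divisible by $6$. For $n=6$ it gives $5\cdot320=1600$, which is not divisible by $144$, and $D_6^{\mathrm{mix}}=15$ is not divisible by $12$. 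Thus every admissible $\Dim(\calC)$ violates (ii), and this finishes all four cases.

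I expect the main obstacle to be the $d$-number step for $n=5,10$. It needs a precise factorization of $f_{5,1}$, a check that $11$ really splits, and the ideal-extension argument into $\overline{\Z}$. The rational cases are routine norm arithmetic once the Lucas-number formula for $N(D_n^\pm)$ is recorded.
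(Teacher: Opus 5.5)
Your arguments for $n=4$, $n=6$ and $n=5$ are correct. For $n=5$, the factorization $(f_{5,1})=(\sqrt5)\,\mathfrak p_{11}$ with $11$ split is a valid alternative to the paper's trace computation for $f/\sigma(f)$. For $n=4,6$ your norm test is essentially the paper's argument.

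The gap is $n=10$. It is false that $t=2\cos(2\pi/10)=\ph$ is Galois conjugate to $2\cos(2\pi/5)=\ph^{-1}$. The conjugate of $\ph^{-1}$ is $-\ph$, as your own norm computation $(2-\ph^{-1})(2+\ph)$ uses, and the conjugate of $\ph$ is $-\ph^{-1}$. The $n=10$ case therefore does not follow from the $n=5$ case. Worse, the $d$-number test genuinely fails for $E_1$ at $n=10$. There $2-t=\ph^{-2}$ is a unit and $3-t=\sqrt5\,\ph^{-1}$, so
\[
 f_{10,1}=10\sqrt5\,\ph .
\]
Its principal ideal $(2)(\sqrt5)^3$ is Galois-stable. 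Thus $f_{10,1}$ \emph{is} a $d$-number, and \cref{thm:ostrik-package}(i) applied to $E_1$ gives no contradiction.

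There are two ways to fix this. The paper takes $j=n/5$, which means using $E_2$ when $n=10$. Then $\alpha_2=\pi/5$ and $f_{10,2}=2f_{5,1}=15+\sqrt5$. Since $(2)$ is Galois-stable, your $n=5$ argument shows $f_{10,2}$ is not a $d$-number.

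Alternatively, keep $E_1$ and use \cref{thm:ostrik-package}(ii) as you did for $n=4,6$. Here $N(f_{10,1})=500$, while $N(D_{10}^{\pm})=5(L_{20}-2)=75625$ and $N(D_{10}^{\mathrm{mix}})=625$. Neither is divisible by $500$, so no admissible $\Dim(\calC)$ is compatible with $E_1$.
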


\begin{proof}
First suppose $5\mid n$ and take $j=n/5$.  Then
\begin{equation}\label{eq:five-codegree}
 f_{n,n/5}=\frac n{10}(15+\sqrt5).
\end{equation}
Let $\sigma$ be the nontrivial automorphism of $\KFib$, extended to
$\overline\Q$.  If this formal codegree were a $d$-number, invariance of its
principal ideal would make
\[
 \frac{f_{n,n/5}}{\sigma(f_{n,n/5})}
 =\frac{15+\sqrt5}{15-\sqrt5}
 =\frac{23+3\sqrt5}{22}
\]
an algebraic integer.  Its field trace is $23/11$, which is not an integer.
This contradicts \cref{thm:ostrik-package}(i), and excludes $n=5,10$.

For $n=4$, we have $f_{4,1}=6$, while
\[
 \frac{D_4^\pm}{6}=\frac{35\pm15\sqrt5}{4},
 \qquad
 \frac{D_4^{\mathrm{mix}}}{6}=\frac53.
\]
The first two numbers have nonintegral field trace $35/2$, and the last is
not an integer.  Thus none is an algebraic integer.

For $n=6$, we have $f_{6,1}=12$, while
\[
 \frac{D_6^\pm}{12}=30\pm\frac{40}{3}\sqrt5,
 \qquad
 \frac{D_6^{\mathrm{mix}}}{12}=\frac54.
\]
The first two numbers have nonintegral field norm $100/9$, and the last is
not an integer.  In both cases this contradicts the algebraic integrality of
$\Dim(\calC)/f_E$ in \cref{thm:ostrik-package}(ii), because
\cref{lem:possible-intrinsic-dimensions} exhausts the possible values of
$\Dim(\calC)$.
\end{proof}

\begin{theorem}[Categorifiable dihedral Fibonacci rings]\label{thm:arithmetic-classification}
For $n\ge2$, if $R_n$ is the Grothendieck ring of a fusion category, then
$n=2$ or $3$.  Both values occur: $R_2$ is realized by
$\Fib\boxt\Fib$, and $R_3$ by the chosen category $\TT$.
\end{theorem}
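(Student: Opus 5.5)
The theorem has an exclusion half and an existence half, and both are assembled from results already established.

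\emph{Exclusion.} Fix $n\ge2$ and suppose $R_n\cong K_0(\calC)$ for a fusion category $\calC$. If $n\ge4$, \cref{prop:Rn-candidates} leaves only $n\in\{4,5,6,10\}$. \Cref{prop:exclude-4-5-6-10} then excludes all four values. The cases $n=5,10$ fail by the $d$-number obstruction of \cref{thm:ostrik-package}(i) applied to $f_{n,n/5}$. The cases $n=4,6$ fail by the divisibility obstruction of \cref{thm:ostrik-package}(ii), using the exhaustive list of possible values of $\Dim(\calC)$ in \cref{lem:possible-intrinsic-dimensions}. Hence $n\in\{2,3\}$. There is no further obstacle in this direction, since all the arithmetic has already been carried out.

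\emph{Existence.} For $n=2$, take $a=\tau\boxtimes\mathbf1$ and $b=\mathbf1\boxtimes\tau$ in $\Fib\boxtimes\Fib$. These are distinct simples satisfying the Fibonacci rule and $ab\cong ba$. They generate, and the four simples $\mathbf1,a,b,ab$ match the basis $B_2$. By \cref{prop:standard-form}, the first alternating coincidence occurs at length $2$, so $K_0\cong R_2$.

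For $n=3$, use the fixed model $\TT$ with simples $\mathbf1,a,b,ab,ba,c$. The fusion table \eqref{eq:TT-fusion-table} gives $a^2=\mathbf1+a$ and $b^2=\mathbf1+b$. The objects $a$ and $b$ are distinct, as are $ab$ and $ba$, while $aba\cong c\cong bab$. By \cref{prop:standard-form}, the least coincidence is at $n=3$, and therefore $K_0(\TT)\cong R_3$.

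The only point needing any care is checking that the simple-object list of $\TT$ is generated by $a$ and $b$ with exactly this first coincidence. This is immediate from the table: the products $ab$ and $ba$ are simple and distinct, and the product $(ab)a=c$ equals $(ba)b$.
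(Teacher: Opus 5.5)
Your proposal is correct and follows the paper's proof: the exclusion is \cref{prop:Rn-candidates} followed by \cref{prop:exclude-4-5-6-10}, and existence comes from the two models $\Fib\boxtimes\Fib$ and $\TT$. Your explicit check, via \cref{prop:standard-form} and the fusion table \eqref{eq:TT-fusion-table}, that the first alternating coincidence occurs at lengths $2$ and $3$ spells out what the paper leaves to that table and to the cited constructions of the rank-six category.
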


\begin{proof}
For $n\ge4$, \cref{prop:Rn-candidates} leaves only $4,5,6,10$, and
\cref{prop:exclude-4-5-6-10} excludes all four.  The two stated models
realize $n=2,3$; see \cite{Liu2015,MorrisonPenneys2015} for the rank-six
realization.
\end{proof}

\begin{theorem}[Two-generator based rings]
\label{thm:arithmetic-main}
Let $\calC$ be a semisimple rigid $\CC$-linear tensor category with simple
unit and finite-dimensional Hom spaces, generated by two nonisomorphic
simple objects $a,b$ satisfying
\[
 a^{\otimes2}\cong\mathbf1\oplus a,
 \qquad b^{\otimes2}\cong\mathbf1\oplus b.
\]
Either $K_0(\calC)\cong R_\infty$, or $K_0(\calC)\cong R_n$ for a unique
$n\ge2$ and $\calC$ is a fusion category.  In the latter case
categorifiability forces $n\in\{2,3\}$.  For a marked quotient of
$\Fib*\Fib$, the first alternative means that the quotient functor is a
tensor equivalence.
\end{theorem}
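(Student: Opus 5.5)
This theorem assembles results already proved, so the plan is to chain them in order. First apply \cref{prop:standard-form} to $\calC$ with its generators $a,b$. That proposition gives a dichotomy. Either $u_m\not\cong v_m$ for every $m$, in which case $K_0(\calC)\cong R_\infty$. Or there is a least $n$ with $u_n\cong v_n$, in which case $\Irr(\calC)$ is the displayed finite list of $2n$ simples, $K_0(\calC)\cong R_n$, and $\calC$ is a fusion category. Since $a\not\cong b$, we have $u_1\not\cong v_1$, so $n\ge2$.

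Uniqueness of $n$ I would obtain from the rank: by \cref{prop:standard-dihedral-model}, $R_n$ is free of rank $2n$, so $n$ is determined by $K_0(\calC)$. The same count separates the finite case from $R_\infty$, which has infinite rank. One point deserves care: uniqueness should be read as uniqueness of $n$ with $K_0(\calC)\cong R_n$ as based rings. Since the rank is an invariant even of the underlying abelian group, no based-ring subtlety arises.

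In the finite case $\calC$ is a fusion category with $K_0(\calC)\cong R_n$, so $R_n$ is categorifiable. \cref{thm:arithmetic-classification} then forces $n\in\{2,3\}$; this rests on \cref{prop:Rn-candidates} and \cref{prop:exclude-4-5-6-10}.

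For the last sentence, let $Q:\Fib*\Fib\to\calC$ be a marked quotient. It is faithful by \cref{lem:automatic-faithfulness} and dominant by definition, and $\calC$ is tensor-generated by $Q(\tau_1),Q(\tau_2)$. The first alternative is exactly the case $Q(u_m)\not\cong Q(v_m)$ for all $m$. There \cref{cor:free-two-fib-quotient} gives that $Q$ is a tensor equivalence. Note that a fusion target forces the second alternative, since $R_\infty$ has infinite rank; the statement is nonetheless phrased for the general target class.

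I expect no real obstacle. The only delicate step is checking that the alternatives in the theorem match those of \cref{prop:standard-form}: the rank argument shows that $K_0(\calC)\cong R_\infty$ occurs precisely when no alternating words coincide, and that case is exactly where the corollary applies.
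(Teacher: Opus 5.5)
Your proposal is correct and follows the paper's proof: the dichotomy and automatic finiteness come from \cref{prop:standard-form}, and the restriction $n\in\{2,3\}$ comes from \cref{thm:arithmetic-classification}. You also spell out two points the paper leaves implicit, namely uniqueness of $n$ via the rank $2n$ and the final sentence via \cref{cor:free-two-fib-quotient}; the paper in turn records the realizations by $\Fib*\Fib$, $\Fib\boxtimes\Fib$, and $\TT$, which the statement as phrased does not require.
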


\begin{proof}[Proof of \cref{thm:arithmetic-main}]
The dichotomy and automatic finiteness are
\cref{prop:standard-form}.  In the finite branch,
\cref{thm:arithmetic-classification} gives $n\in\{2,3\}$.
The free product realizes $R_\infty$, while $\Fib\boxtimes\Fib$ and $\TT$
realize $R_2$ and $R_3$.
\end{proof}

\section{The finite two-Fibonacci quotients}
\label{sec:algebraic-uniqueness}

Fix the usual complex Fibonacci category $\Fib$.  Let $\rho,\mu$ be the
nonisomorphic simple images of two specified tensor embeddings of $\Fib$
into a fusion category $\calC$, and suppose that they tensor-generate
$\calC$.  Let $n$ be the length of their first alternating relation.
The standard form of \cref{prop:standard-form} and the arithmetic
obstruction of \cref{thm:arithmetic-classification} give $n=2$ or $3$.

\begin{theorem}[Ordered monoidal uniqueness]
\label{thm:marked-uniqueness}
If $n=2$, then $\calC\simeq\Fib\boxtimes\Fib$.  If $n=3$, then
$\calC\simeq\TTthree$.  In both cases the tensor equivalence intertwines
the ordered pair of source embeddings.
\end{theorem}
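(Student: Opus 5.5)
We treat the two cases separately, using throughout that the source embeddings $\iota_\rho,\iota_\mu:\Fib\to\calC$ are part of the data and that the fusion ring is already known to be $R_2$ or $R_3$ by \cref{prop:standard-form} and \cref{thm:arithmetic-classification}.

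\emph{Case $n=2$.} Here $\rho\mu\cong\mu\rho$ is simple and the simples are $\mathbf1,\rho,\mu,\rho\mu$. The two marked subcategories $\langle\rho\rangle$ and $\langle\mu\rangle$ are fusion subcategories with trivial intersection whose product exhausts $\calC$, so $\FPdim\calC=\FPdim\langle\rho\rangle\cdot\FPdim\langle\mu\rangle$. The plan is to show that $\calC$ is generated by two commuting subcategories: first produce a half-braiding of $\rho$ with $\mu$, and then invoke the standard criterion that the tensor functor $\langle\rho\rangle\boxtimes\langle\mu\rangle\to\calC$, $X\boxtimes Y\mapsto XY$, is an equivalence. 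This criterion requires the subcategories to centralize each other in a suitable sense. To avoid needing a braiding, we would instead argue via the Müger centralizer inside $Z(\calC)$, or directly as follows. Since $\Fib$ is modular, the embedding $\iota_\rho$ lifts to a braided embedding $\Fib\to Z(\calC)$ (fix the half-braiding of $\rho$ with itself from $\Fib$). We then use Müger's decomposition: $\calC\simeq\Fib\boxtimes\calC'$, where $\calC'$ is the centralizer. A simple object of dimension $\ph$ different from $\rho$ and not containing it must lie in $\calC'$, so $\calC'=\langle\mu\rangle$.

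A more elementary alternative, which we regard as the safer route, is to show directly that the functor $\langle\rho\rangle\boxtimes\langle\mu\rangle\to\calC$ is fully faithful on simples (by the fusion ring) and monoidal. The monoidal structure requires an isomorphism $\rho\mu\to\mu\rho$ satisfying hexagon-type compatibilities. Such an isomorphism is unique up to scalar, because $\Hom(\rho\mu,\mu\rho)$ is one-dimensional, and the scalar is pinned down by compatibility with the unit summand of $\rho^2$. Finally, \cref{lem:fib-marking-rigidity} identifies the marked factors.

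\emph{Case $n=3$.} We would use the graphical presentation from Appendix~\ref{app:two-fibonacci-uniqueness}. Fix trivalent vertices for $\rho$ and $\mu$ normalized via the markings, so that the $F$-symbols of each factor are the $\Fib$ ones. Choose a basis morphism $\beta\in\Hom(\rho\mu\rho,\mu\rho\mu)$, which is one-dimensional since $c=\rho\mu\rho$ is simple. Next, express all remaining structure constants (the $6j$-type symbols involving mixed vertices through $c$) in terms of $\beta$, using the fusion table \eqref{eq:TT-fusion-table}. The key step is to derive jellyfish-style relations: $\beta$ composed with a cap or trivalent vertex on a boundary strand reduces to lower diagrams. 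The consistency of these relations (pentagons among the generators) should force the coefficients to a finite set, with the sole remaining freedom being the rescaling of $\beta$. The ordered marking then leaves a unique solution, namely the one realized by $\TT$.

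This determines a planar-algebra presentation whose evaluation is fixed, so $\calC$ is the idempotent completion of the same planar algebra as $\TT$. The equivalence is then obtained by \cite[Theorem~A]{HPT2023}, and it sends the markings to $\iota_a,\iota_b$ after adjusting by \cref{lem:fib-marking-rigidity}.

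We expect the main obstacle to be the uniqueness computation in the case $n=3$ without unitarity. We cannot use positivity, so the argument must show that the polynomial consistency equations in the unknown coefficients have a unique solution modulo rescaling. In particular, it must rule out a Galois-conjugate solution; this is ruled out because the $\Fib$ (not $\YL$) markings fix the loop values at $\ph$.
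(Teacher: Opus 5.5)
\textbf{Case $n=2$: the interchange is not constructed.} Your first route is circular. A braiding on $\Fib$ gives half-braidings of $\rho$ only against objects of $\langle\rho\rangle$. Lifting $\iota_\rho$ to $Z(\calC)$ therefore already requires a half-braiding of $\rho$ against $\mu$, natural for the Fibonacci vertices of $\mu$. That is precisely the coherent interchange you are trying to construct, and M\"uger's splitting is available only afterwards.

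Your ``safer'' route names the right object but proves only its uniqueness. A candidate $c_{\mu,\rho}\in\Hom(\mu\rho,\rho\mu)$ has a single free scalar $r$, while the conditions on its hexagon extension are overdetermined. Naturality with $\operatorname{coev}_\rho$ gives an equation of the form $r^2\alpha=1$, and naturality with the trivalent vertex $\rho\to\rho^{2}$ gives one of the form $r\beta=1$; all the relevant Hom spaces are one-dimensional. The same two conditions in the $\mu$ variable give $r^2\alpha'=1$ and $r\beta'=1$. The unit summand fixes $r$ only up to sign. The content of the statement is the consistency $\alpha=\beta^2=\alpha'$, $\beta=\beta'$, and you give no argument for it. In the appendix these are the identities $\omega_U=\sigma_U=1$ (after normalizing $\mathcal R_-(U)=V$ and $y=1$), which come only out of the train-basis coefficient comparison.

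If you want a non-graphical proof for $n=2$, the paper already contains one. \cref{lem:coherent-block-multiplication} (twisted-Deligne-product rigidity for factors with trivial universal grading group) applies verbatim to $\calA=\langle\rho\rangle$ and $\calD=\langle\mu\rangle$, since their multiplication is an exact factorization with $XY\cong YX$. Its equivalence is tautological on both factors, so it intertwines the markings.

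\textbf{Case $n=3$: the decisive steps are missing.} Your outline is the appendix strategy: an inverse coupon in the one-dimensional space $\Hom(\rho\mu\rho,\mu\rho\mu)$, jellyfish relations, then evaluation. But each decisive step is left as ``should'', and three things are missing.
\begin{enumerate}
\item A spherical structure on $\calC$ restricting to the positive one on both marked copies of $\Fib$. Without unitarity you otherwise have no planar algebra to which \cite[Theorem~A]{HPT2023} applies, and no trace pairing. The paper obtains this from automatic pseudo-unitarity (\cref{lem:pseudo-unitary}).
\item The actual elimination of the free scalars. After normalizing $\mathcal R_-(U)=V$, local consistency gives only $\omega_U^3=1$, $x^2=\omega_U$ and $y^2=1$ (\cref{lem:inverse-rotation,lem:local-jellyfish-scalars}). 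The nonstandard candidates ($\omega_U$ a primitive cube root of unity, or $\sigma_U=-1$) are not Galois conjugates of the standard solution, so fixing the loop value at $\ph$ does not exclude them. The paper expands $\rho\mu\rho(U)$ and $U(\mu\rho\mu(U))V$ in the trains $B_0,\ldots,B_8$, whose reflection Gram matrix is diagonal with nonzero entries (\cref{lem:train-independence}). The diagrammatic identity then becomes a coefficientwise comparison forcing $\omega_U=\sigma_U=1$.
\item A proof that the resulting relations evaluate every closed diagram (\cref{prop:closed-evaluation}), and that the kernel of the presentation functor to $\calC$ is the negligible ideal. The latter uses fullness and nondegeneracy of the trace pairing in $\calC$.
\end{enumerate}
Without these, ``a planar-algebra presentation whose evaluation is fixed'' restates the conclusion rather than deriving it.
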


For $n=2,3$, the skein-theoretic uniqueness methods of Liu and of
Izumi--Morrison--Penneys admit an algebraic formulation without a unitarity
assumption on the target \cite[Theorem~5.3]{Liu2015}
\cite[Section~5]{IzumiMorrisonPenneys2013}.
We give the proof in Appendix~\ref{app:two-fibonacci-uniqueness}, using the latter
presentation with inverse intertwiners and the spherical trace pairing.
Automatic pseudo-unitarity supplies the canonical spherical structure.

An alternative algebraic proof of marked monoidal uniqueness for the two
surviving cases is given in
\cite[Proposition~\CoxeterRankTwoProposition]{FibCoxeter}, using regular
Fibonacci actions and module-naturality constraints. Unlike the
skein-theoretic argument in Appendix~\ref{app:two-fibonacci-uniqueness},
that method extends to global uniqueness for Coxeter--Hecke
categorifications. Neither argument assumes that the candidate target is
unitary. We retain the graphical proof as an independent argument and for
its presentation used in the ground-field constructions of
\cref{sec:characteristic-zero-fields}.

\begin{theorem}[Finite two-Fibonacci quotients]
\label{thm:two-fib-classification}
Let $Q:\Fib*\Fib\to\calC$ be a dominant tensor functor with fusion target.
If the two distinguished images are isomorphic, then $Q$ is equivalent, as
an ordered quotient, to the fold functor
$\nabla:\Fib*\Fib\to\Fib$ determined by two copies of
$\operatorname{id}_{\Fib}$.
Otherwise exactly one of the following holds:
\begin{enumerate}[label=\textup{(\roman*)}]
\item the first alternating relation is $ab\cong ba$ and
$\calC\simeq\Fib\boxtimes\Fib$;
\item the first alternating relation is $aba\cong bab$ and
$\calC\simeq\TT$.
\end{enumerate}
In all three cases the equivalence intertwines the ordered source embeddings.
\end{theorem}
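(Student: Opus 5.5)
The plan is to split according to whether the two distinguished images $a=Q(\tau_1)$ and $b=Q(\tau_2)$ are isomorphic, and then assemble results already established.

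\emph{Coincident case.} Let $\iota_1=Q|_{\Fib_1}$ and $\iota_2=Q|_{\Fib_2}$. Since $\iota_1(\tau)\cong\iota_2(\tau)$, \cref{lem:fib-marking-rigidity} gives a unique monoidal isomorphism $\lambda:\iota_1\Rightarrow\iota_2$. Dominance forces $\calC=\langle a\rangle$, so $\iota_1$ corestricts to a tensor equivalence $\bar\iota_1:\Fib\to\calC$. I then compare $Q$ with $\bar\iota_1\circ\nabla$. By the universal property of the free product recalled in \cref{sec:preliminaries}, a tensor functor out of $\Fib*\Fib$ is determined, up to a unique monoidal isomorphism fixed on the factors, by its two factor restrictions. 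The restrictions of $Q$ are $(\iota_1,\iota_2)$, and those of $\bar\iota_1\nabla$ are $(\iota_1,\iota_1)$. The pair $(\mathrm{id},\lambda^{-1})$ identifies these two pairs, so it induces a monoidal isomorphism $Q\cong\bar\iota_1\nabla$. This is exactly equivalence as ordered quotients.

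\emph{Distinct case.} By \cref{lem:automatic-faithfulness} the functor $Q$ is faithful, and the target is tensor-generated by $a$ and $b$. Applying \cref{prop:standard-form} to $\calC$, the branch $K_0\cong R_\infty$ is impossible, because the target is fusion and therefore has finite rank. Hence $K_0(\calC)\cong R_n$ for a unique first relation length $n\ge2$. \Cref{thm:arithmetic-classification} then forces $n\in\{2,3\}$. For $n=2$ the first relation is $ab\cong ba$, and for $n=3$ it is $aba\cong bab$. \Cref{thm:marked-uniqueness} gives $\calC\simeq\Fib\boxtimes\Fib$ or $\calC\simeq\TT$, respectively, via a tensor equivalence that intertwines the ordered source embeddings up to monoidal isomorphism. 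Using the free-product universal property again, this equivalence identifies $Q$ with the model quotient built from the two model embeddings. ``Exactly one'' holds because $n$ is an invariant of $\calC$: it is the first length at which the alternating words become isomorphic. The cases are also distinguished by rank, $4$ versus $6$.

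The substantive input is \cref{thm:marked-uniqueness}, which is deferred to the appendix. Within this assembly, the only delicate step is the bookkeeping that turns ``intertwines the embeddings'' into equivalence of the entire source functor. That step relies on the uniqueness clause in the free-product universal property together with \cref{lem:fib-marking-rigidity}, which ensures the comparison isomorphisms on each factor are canonical.
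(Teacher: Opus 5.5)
Your proposal is correct and follows essentially the same route as the paper. The coincident case goes through \cref{lem:fib-marking-rigidity} and the free-product universal property, and the distinct case through \cref{prop:standard-form}, \cref{thm:arithmetic-classification}, and \cref{thm:marked-uniqueness}; you also make explicit two points the paper leaves tacit, namely that finite rank excludes $R_\infty$ and that the free product's universal property must also extend factorwise monoidal isomorphisms to the whole source functor.
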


\begin{proof}
Let $a,b$ be the two distinguished images.  If $a\cong b$, the first marked
embedding is a tensor equivalence $\Fib\simeq\calC$, since its image
tensor-generates $\calC$.  By \cref{lem:fib-marking-rigidity}, the second
marked embedding is uniquely monoidally isomorphic to the first.
Transporting through a quasi-inverse to the first embedding identifies both
restrictions of $Q$ with $\operatorname{id}_{\Fib}$; the free-product
universal property therefore identifies $Q$ with $\nabla$ as an ordered
quotient.  Otherwise \cref{prop:standard-form} gives a unique $n\ge2$ with
$K_0(\calC)\cong R_n$; \cref{thm:arithmetic-classification}
gives $n=2$ or $3$, and \cref{thm:marked-uniqueness} gives the stated
ordered equivalences.
\end{proof}

\section{Coxeter rigidity and transitive Fibonacci families}
\label{sec:many-fibonacci}

Let $\calB$ be a fusion category generated by pairwise nonisomorphic
marked Fibonacci objects $a_s$, $s\in S$, for a finite set $S$.
The specified source embeddings induce a dominant tensor functor
\begin{equation}\label{eq:many-fib-quotient}
 Q:\Fib^{*S}\longrightarrow\calB,
 \qquad \tau_s\longmapsto a_s.
\end{equation}
For distinct $s,t\in S$, \cref{thm:two-fib-classification} shows that either
$a_sa_t\cong a_ta_s$, or the ordered pair $(a_s,a_t)$ generates a copy of
$\TT$.  Let $\Gamma$ be the graph on $S$ whose edges are the noncommuting
pairs, and put
\[
 m_{st}=\begin{cases}
 2,&\{s,t\}\notin E(\Gamma),\\
 3,&\{s,t\}\in E(\Gamma).
 \end{cases}
\]
The associated Coxeter group is
\begin{equation}\label{eq:coxeter-group}
 W_\Gamma=\left\langle s\ (s\in S)\ \middle|\
 s^2=1,\ (st)^{m_{st}}=1\right\rangle .
\end{equation}
For the Coxeter system $(W_\Gamma,S)$, write
$\HFib(W_\Gamma)$ for the standard-basis normalization of its
Hecke ring with basis $h_w$, $w\in W_\Gamma$.  With $\ell$ denoting Coxeter
length, multiplication by a simple generator is given by
\begin{equation}\label{eq:normalized-hecke-multiplication}
 h_sh_w=
 \begin{cases}
  h_{sw},&\ell(sw)=\ell(w)+1,\\
  h_{sw}+h_w,&\ell(sw)=\ell(w)-1.
 \end{cases}
\end{equation}
Thus $h_s^2=1+h_s$.  This is the normalization of
$H_{\ph^2}(W_\Gamma)$ used in \cref{prop:standard-dihedral-model}; see
\cite[Chapter~4]{GeckPfeiffer2000}.

\begin{theorem}[Coxeter rigidity]\label{thm:coxeter-rigidity}
The group $W_\Gamma$ is finite, and there is a based-ring isomorphism
\begin{equation}\label{eq:coxeter-ring-identification}
 K_0(\calB)\cong\HFib(W_\Gamma),
 \qquad [a_s]\longmapsto h_s.
\end{equation}
In particular,
\[
 \rank K_0(\calB)=|W_\Gamma|,
 \qquad \FPdim(h_w)=\ph^{\ell(w)}.
\]
Consequently no proper quotient of $\HFib(W_\Gamma)$ is a fusion
ring in which the images of the $h_s$ remain pairwise distinct
Fibonacci basis elements, retain the graph $\Gamma$, and generate the
based ring.
\end{theorem}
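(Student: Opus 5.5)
We aim to construct a surjective based-ring map $\HFib(W_\Gamma)\to K_0(\calB)$, $h_w\mapsto[a_{s_1}\cdots a_{s_k}]$ for a reduced word $s_1\cdots s_k$ of $w$, and then show that it is injective on the standard basis. Finiteness of $W_\Gamma$ follows once injectivity is known, since $K_0(\calB)$ has finite rank.

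\emph{Well-definedness.} The relations of the Hecke ring are already satisfied in $K_0(\calB)$. The quadratic relations hold because $[a_s]^2=1+[a_s]$. The braid relations $a_sa_t\cong a_ta_s$ for non-edges and $a_sa_ta_s\cong a_ta_sa_t$ for edges hold by \cref{thm:two-fib-classification}. Hence the presentation of $H_{\ph^2}(W_\Gamma)$ gives a ring homomorphism $\pi:\HFib(W_\Gamma)\to K_0(\calB)\otimes\Q$ sending $h_s\mapsto[a_s]$. By Matsumoto's theorem, $h_w$ is sent to the class of the reduced-word product $a_w:=a_{s_1}\otimes\cdots\otimes a_{s_k}$, independent of the reduced word. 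Every simple object of $\calB$ is a summand of some word in the generators, and every word reduces in the Hecke ring to a nonnegative combination of $h_w$'s. So $\pi$ is surjective, and it is enough to show the following:
\begin{enumerate}[label=\textup{(\alph*)}]
\item each $a_w$ is simple;
\item $a_w\not\cong a_{w'}$ for $w\ne w'$.
\end{enumerate}
Then $\pi$ is a bijection on bases, hence a based isomorphism. Dimensions are multiplicative, so $\FPdim(a_w)=\ph^{\ell(w)}$.

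\emph{Height induction.} We prove (a) and (b) together by induction on the Frobenius--Perron height $\ell$, mimicking the Hom-space induction of \cref{prop:standard-form}. Assume all $a_w$ with $\ell(w)\le k$ are simple and pairwise distinct. For $\ell(sw)=\ell(w)+1=k+1$, Frobenius reciprocity gives
\[
\langle a_sa_w,a_sa_w\rangle=\langle a_w,a_sa_sa_w\rangle=1+\langle a_w,a_sa_w\rangle .
\]
The problem is to show that $a_w$ is not a summand of $a_sa_w$. By the inductive hypothesis, any summand of $a_sa_w$ of dimension $\ph^k$ must coincide with some $a_{w'}$. Such a coincidence would give a relation between distinct height-$k$ elements. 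We then trace the relation back through left and right descents: peeling off common letters, we reduce to a relation between two reduced words whose first letters differ, say $s$ and $t$. Looking at the two-colour $\{s,t\}$ subcategory and using the exchange condition, such a relation must come from an alternating relation of length exceeding $m_{st}$. This is impossible, because the pair $a_s,a_t$ generates exactly $\Fib\boxtimes\Fib$ or $\TT$, and its first alternating relation has length $m_{st}$. The same argument handles (b) at height $k+1$. Two distinct $w,w'$ of length $k+1$ with $a_w\cong a_{w'}$ have left descents; if these agree we cancel them, using simplicity of $a_s$ and Frobenius reciprocity; otherwise we reduce to the polygon in the parabolic $\langle s,t\rangle$.

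\emph{Main obstacle.} We expect this local path-elimination step to be the hardest. It requires showing that a coincidence at height $k+1$ always produces a contradiction within a single rank-two parabolic subgroup. The Hom-space calculation only controls $\langle a_sa_w,a_{s'}a_{w'}\rangle$ through $\langle a_w,a_sa_{s'}a_{w'}\rangle$. Expanding $a_sa_{s'}$ gives either the product $a_{s'}a_s$ or the $\TT$-relation. We would organise this as a descent on the pair of paths in the Cayley graph, using the deletion and exchange conditions for Coxeter groups as the combinatorial input.

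\emph{Remaining assertions.} Once the isomorphism is proved, $W_\Gamma$ is finite, since $|W_\Gamma|=\rank K_0(\calB)$ and $\calB$ is fusion. The final sentence of the theorem follows by the same argument, since only the fusion-ring data were used. Any proper quotient of $\HFib(W_\Gamma)$ that keeps the $h_s$ distinct, keeps the graph $\Gamma$, and still generates the based ring satisfies the inductive hypotheses. The height induction then forces it to be the whole ring.
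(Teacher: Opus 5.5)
Your reduction is sound: the quadratic and braid relations hold in $K_0(\calB)$ by \cref{thm:two-fib-classification}, so the Hecke presentation gives a ring map $\pi:\HFib(W_\Gamma)\to K_0(\calB)$ with $\pi(h_w)=[a_w]$, and the theorem is equivalent to your (a) and (b). But (a) and (b) are the entire content of the theorem: they say exactly that $K_0(\calB)$ is not a proper quotient of $\HFib(W_\Gamma)$. The length induction you sketch for them does not close.

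For simplicity of $a_sa_w$ you must rule out $\Hom(a_w,a_sa_w)\neq0$, that is, $a_sa_w\cong a_w\oplus Z$ with $Z$ simple of dimension $\ph^{k-1}$. This is equivalent to $a_s\subset a_w\otimes a_{w^{-1}}$, a condition on a word of length $2k$. Your hypothesis on lengths $\le k$ says nothing about it. Your sentence about a summand of dimension $\ph^k$ coinciding with some $a_{w'}$, and so producing a relation between distinct height-$k$ elements, does not apply: that summand is $a_w$ itself.

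The same problem arises in (b). Suppose $a_x\cong a_y$ with $\ell(x)=\ell(y)=k+1$, and $s$ is a left descent of $x$ but not of $y$. Multiplying by $a_s$ gives only $a_{sy}\cong a_{sx}\oplus a_x$. This contradicts nothing unless you already know simplicity at length $k+2$. The exchange condition is a statement in $W_\Gamma$, and you have no mechanism transferring it to isomorphism classes in $\calB$. You flag this yourself as the main obstacle; as written it is the missing proof, not a step of it.

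The paper supplies this with a global combinatorial structure on $\Irr(\calB)$ rather than word-by-word Frobenius reciprocity.
\begin{itemize}
\item The matrix $N_s$ of left multiplication by $[a_s]$ is symmetric with $N_s^2=N_s+I$. Hence it is a direct sum of blocks $\left(\begin{smallmatrix}0&1\\1&1\end{smallmatrix}\right)$, that is, a fixed-point-free involution $P_s$ of the simple basis.
\item Along the edges of $P_s$, $\FPdim$ changes by a factor $\ph$, so $\FPdim X=\ph^{h(X)}$ for an integer height $h$.
\item Right multiplication by a simple $X$ identifies each $\{s,t\}$-component with the regular module of the rank-$2m_{st}$ dihedral subring. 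So each component is a square or a hexagon.
\item On that module $\Tr(L_{a_s}L_{a_t})=1$, which gives each polygon a unique height peak and a unique valley.
\item The $P_s$ therefore satisfy the Coxeter relations, and $W_\Gamma$ acts transitively on $\Irr(\calB)$.
\item Freeness follows by shortening any closed path at $\mathbf 1$ at a vertex of maximal height. One either deletes a backtrack or replaces the two edges through a polygon's peak by the complementary, lower side.
\end{itemize}
This height-elimination on closed paths is exactly the ingredient your induction lacks. A downward rewriting argument then gives $h=\ell$ and the Hecke multiplication rule.
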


\begin{proof}
Write $R=K_0(\calB)$, let $\mathcal I$ be its standard basis, and let
$\tau_0:R\to\mathbb Z$ be the coefficient of the tensor unit.  The inner
product
\[
 \langle x,y\rangle=\tau_0(x^*y)
\]
makes $\mathcal I$ an orthonormal basis.

We first record the two-generator subrings.  For $s\ne t$, the relations
give a positive homomorphism from the normalized dihedral Hecke ring of
type $I_2(m_{st})$ to $R$.  Its standard basis is
\[
 1,a_s,a_t,a_sa_t
\]
when $m_{st}=2$, and
\[
 1,a_s,a_t,a_sa_t,a_ta_s,a_sa_ta_s
\]
when $m_{st}=3$.  The coefficient of the unit vanishes on every displayed
nonunit element.  Indeed,
\[
 \tau_0(a_sa_t)=\tau_0(a_ta_s)=0
\]
because $a_s,a_t$ are distinct self-dual basis elements, and in the second
case
\[
 \tau_0(a_sa_ta_s)
 =\tau_0(a_s^2a_t)
 =\tau_0(a_t)+\tau_0(a_sa_t)=0.
\]
Here the first equality uses cyclicity of the unit trace.
Hence the pullback of $\tau_0$ is the standard unit trace, so the images
of the displayed standard basis elements are orthonormal. Write one such
image as $z=\sum_{X\in\mathcal I}n_XX$, where $n_X\in\mathbb Z_{\ge0}$.
Since $\mathcal I$ is orthonormal,
\[
 1=\langle z,z\rangle=\sum_{X\in\mathcal I}n_X^2.
\]
Exactly one coefficient is $1$ and all others are $0$, so $z$ itself
belongs to $\mathcal I$. Orthogonality between different images then
makes them distinct.  The two-generator based subring $R_{st}$ consequently has
rank $2m_{st}$ and is the standard rank-four or rank-six dihedral ring.

Let $N_s$ be the matrix of left multiplication by $a_s$ on $\mathcal I$.
Frobenius reciprocity makes $N_s$ symmetric, and
\[
 N_s^2=N_s+I.
\]
If the diagonal entry in the column indexed by $X\in\mathcal I$ is $d$,
then the $(X,X)$ entry of this equation gives
\begin{equation}\label{eq:fib-matrix-column}
 d^2+\sum_{Y\ne X}(N_s)_{Y,X}^2=d+1.
\end{equation}
Thus $d\in\{0,1\}$ and there is exactly one off-diagonal entry, equal to
one, in each column.  Symmetry pairs the basis elements, and the
off-diagonal entry of $N_s^2=N_s+I$ shows that the two diagonal entries in
each pair sum to one.  Hence every block of $N_s$ has the form
\begin{equation}\label{eq:fibonacci-matching-block}
 \begin{pmatrix}0&1\\1&1\end{pmatrix}.
\end{equation}
Let $P_s$ be the fixed-point-free involution of $\mathcal I$ given by the
off-diagonal entries, and let $D_s=N_s-P_s$.  The diagonal matrix $D_s$
marks the upper endpoint of every $P_s$-edge.

The union of the coloured $P_s$-edges is connected: its component
containing the unit is closed under multiplication by every $a_s$ and under
taking basis constituents, and the $a_s$ generate $R$ as a based ring.
Along a coloured edge $X\leftrightarrow Y$, either
\[
 Y=a_sX\qquad\text{or}\qquad Y=(a_s-1)X.
\]
Since $a_s(a_s-1)=(a_s-1)a_s=1$, every $X\in\mathcal I$ is a unit of the
underlying ring $R$.  This statement concerns ring multiplication; it does
not say that the corresponding simple object is tensor-invertible.

The common Frobenius--Perron eigenvector of the $N_s$ has ratio $\ph$
between the lower and upper endpoints of every coloured edge.  Starting at
the tensor unit therefore gives
\begin{equation}\label{eq:coxeter-height}
 \FPdim(X)=\ph^{h(X)},
 \qquad h(X)\in\mathbb Z_{\ge0},\qquad h(1)=0,
\end{equation}
and every coloured edge changes $h$ by one.  Nonnegativity of $h$ follows
from $\FPdim(X)\ge1$.

Fix $s\ne t$ and a basis element $X$.  Let $\mathcal O_{st}(X)$ be its
connected component under the two matchings $P_s,P_t$.  In the complexified
ring,
\begin{equation}\label{eq:two-colour-orbit}
 \operatorname{span}_{\mathbb C}\mathcal O_{st}(X)
 =(R_{st}\otimes_{\mathbb Z}\mathbb C)X.
\end{equation}
The inclusion from left to right follows from
$P_iY\in\{a_iY,(a_i-1)Y\}$; the reverse inclusion follows from
$a_iY\in\mathbb Z\{Y,P_iY\}$.  Right multiplication by $X$
is injective, so
\begin{equation}\label{eq:two-colour-size}
 |\mathcal O_{st}(X)|=\rank R_{st}=2m_{st}.
\end{equation}
Thus every two-colour component is a square or a hexagon.  The two
matchings cannot share an edge, since a component containing a shared edge
would have only two vertices.

On such a component,
\begin{equation}\label{eq:peak-trace}
 \Tr\bigl(N_sN_t\vert_{\operatorname{span}_{\mathbb C}
 \mathcal O_{st}(X)}\bigr)
 =\Tr\bigl(D_sD_t\vert_{\operatorname{span}_{\mathbb C}
 \mathcal O_{st}(X)}\bigr).
\end{equation}
Indeed, $P_s,P_t$ have no fixed points and no common edge, so all other
terms in the expansion of $(P_s+D_s)(P_t+D_t)$ have zero trace.  The
right-hand side counts the common upper endpoints, namely the local peaks
of the height function.  By \eqref{eq:two-colour-orbit}, right
multiplication by $X$ identifies the left $R_{st}$-module on the component
with the regular module of $R_{st}$.  The trace in
\eqref{eq:peak-trace} is consequently
\[
 \Tr(L_{a_s}L_{a_t}\vert_{R_{st}})=1.
\]
For $m_{st}=2$ this is the product of the traces of the two Fibonacci
matrices.  For $m_{st}=3$, put $c=a_sa_ta_s$; left multiplication by
$a_sa_t$ on the ordered basis
$1,a_s,a_t,a_sa_t,a_ta_s,c$ is
\[
\begin{array}{c|cccccc}
z&1&a_s&a_t&a_sa_t&a_ta_s&c\\ \hline
(a_sa_t)z&a_sa_t&c&a_s+a_sa_t&a_ta_s+c&1+a_s+c&
 a_t+a_ta_s+a_sa_t+c,
\end{array}
\]
whose trace is one. Each square or hexagon therefore has a unique peak.
To count valleys, traverse the cycle and record the height increments,
each of which is $+1$ or $-1$. A peak is a change from $+1$ to $-1$,
and a valley is a change from $-1$ to $+1$. These two kinds of sign
change alternate in a cyclic sequence, so their numbers are equal.
There is consequently a unique valley.
The two paths from the valley to the peak are monotone and have length
$m_{st}$; after subtracting the valley height, the cyclic height profiles are
\[
 (0,1,2,1)\qquad\text{and}\qquad(0,1,2,3,2,1),
\]
respectively.

The permutations $P_s$ now satisfy the Coxeter relations in
\eqref{eq:coxeter-group}, and hence define a transitive action of
$W_\Gamma$ on $\mathcal I$.  We show that the stabilizer of the tensor unit
is trivial.  Represent an element of this stabilizer by a closed coloured
path based at $1$, and choose an occurrence of its largest positive height.
The adjacent vertices both have height one less.  If the two adjacent
edges have the same colour, delete the immediate backtrack.  If their
colours are $s\ne t$, replace the two-edge segment through the unique peak
of its square or hexagon by the complementary path in that polygon.  The
replacement uses exactly the Coxeter relation for $s,t$, and all its
new interior vertices have smaller height.

This operation strictly decreases the number of occurrences of the current
maximum height and never introduces a larger height.  The lexicographically
ordered tuple of the numbers of vertices at each positive height therefore
decreases, so the procedure terminates.  A closed path based at height zero
with no positive vertex is constant.  The original word is consequently
the identity in $W_\Gamma$.  The action is free as well as transitive, and
thus
\begin{equation}\label{eq:coxeter-regular-action}
 \mathcal I\cong W_\Gamma,
 \qquad \rank R=|W_\Gamma|.
\end{equation}
Since $R$ is finite, so is $W_\Gamma$.

It remains to identify the multiplication, not only the rank.  Direct every
coloured edge from its upper endpoint to its lower endpoint.  This rewriting
system terminates by \eqref{eq:coxeter-height}.  If two different downward
edges leave the same vertex, the corresponding two-colour polygon has that
vertex as its unique peak, and the two descending paths around the polygon
meet at its unique valley.  Thus the system is locally confluent.  It has a
single terminal vertex: adjacent vertices have the same terminal vertex,
and the full coloured graph is connected; the terminal vertex is therefore
$1$.  Hence every $X$ has a descending path of length $h(X)$ to $1$, while
every path from $1$ to $X$ has length at least $h(X)$.  Under
\eqref{eq:coxeter-regular-action}, this says that $h(h_w)=\ell(w)$.
Reading the two cases of \eqref{eq:fibonacci-matching-block} now gives
exactly \eqref{eq:normalized-hecke-multiplication}.  This proves
\eqref{eq:coxeter-ring-identification}.  A proper fusion-ring quotient
retaining the stated generators and graph would satisfy the same argument
and have the same standard basis and multiplication, so no such quotient
exists.
\end{proof}

Theorem~\ref{thm:coxeter-rigidity} has a geometric proof: the
Frobenius--Perron height gives two-colour polygons with a unique peak and
valley, and local path elimination recovers the Coxeter action.
The companion paper \cite[Theorem~\CoxeterRigidityTheorem]{FibCoxeter}
proves a more general statement for rigid based rings, without a finite-rank
or categorifiability assumption, by commuting left and right matchings and
the exchange condition. Neither proof is used in place of the other.

Since $W_\Gamma$ is finite and all off-diagonal exponents are two or three,
the classification of finite simply-laced Coxeter systems implies that
$\Gamma$ is a disjoint union of finite ADE Dynkin diagrams
\cite[Section~6.9]{Davis2008}. The weaker forest consequence below is the
form needed for the transitive-symmetry argument, and we give its direct
proof.

\begin{corollary}\label{cor:coxeter-forest}
The graph $\Gamma$ is a forest.
\end{corollary}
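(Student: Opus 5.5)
To prove that $\Gamma$ is a forest, we argue by contradiction using the finiteness of $W_\Gamma$ from \cref{thm:coxeter-rigidity}. Suppose $\Gamma$ contains a cycle. Choose a cycle of minimal length, with vertices $s_1,\dots,s_k$ ($k\ge3$) in cyclic order. By minimality this cycle is an induced subgraph. Let $S'=\{s_1,\dots,s_k\}$ and let $W'\subseteq W_\Gamma$ be the standard parabolic subgroup it generates. A standard fact about Coxeter groups is that $(W',S')$ is itself a Coxeter system, with Coxeter matrix the restriction of $(m_{st})$. So $W'$ is the Coxeter group of an induced $k$-cycle with all labels $3$, which is the affine Weyl group of type $\tilde A_{k-1}$.

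The goal is to show this group is infinite, contradicting finiteness of $W_\Gamma$. I would avoid quoting the ADE classification, since the paper wants a direct proof, and instead use one of two routes.

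\emph{First route: a nonpositive Tits form.} Consider the symmetric bilinear form $B(e_s,e_t)=-\cos(\pi/m_{st})$ on $\mathbb R^{S'}$. For the $k$-cycle this is $I-\tfrac12 A$, where $A$ is the adjacency matrix of the cycle. Then $B(v,v)=0$ for $v=(1,\dots,1)$, so $B$ is not positive definite. The standard theorem is that a finite Coxeter group has a positive definite Tits form, because the geometric representation preserves an invariant inner product and is faithful. Hence $W'$ is infinite.

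\emph{Second route: an explicit element of infinite order.} This route is more elementary. Use the geometric representation $\sigma_s(v)=v-2B(e_s,v)e_s$. Compute that $\sigma_s(v)=v$ for $v=\sum_s e_s$, since each vertex of the cycle has exactly two neighbours. Then exhibit a vector $u$ and an element $w\in W'$ with $w(u)=u+cv$ for some $c\ne0$. A natural candidate is the Coxeter element $w=s_1s_2\cdots s_k$, acting on $u=e_{s_1}$. Iterating gives $w^N(u)=u+Ncv$, so $w$ has infinite order.

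The main obstacle is twofold. First, one must justify that the parabolic subgroup is the Coxeter group of the induced subgraph; this is Tits' theorem on standard parabolic subgroups, and I would cite it from \cite{Davis2008}. Second, one must verify the translation identity in the second route, which is a short but careful computation.

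A cheaper alternative avoids parabolic subgroups entirely. The cycle Coxeter group surjects onto a known infinite group: map $s_i$ to the affine reflections generating $\tilde A_{k-1}$, which satisfy the required relations. But this surjection lives on $W'$, not on $W_\Gamma$, so it still requires the parabolic embedding. I would therefore instead extend to all of $W_\Gamma$ by sending the generators outside $S'$ to a suitable quotient group. That extension does not obviously work, so citing the parabolic subgroup theorem is the plan I would commit to.
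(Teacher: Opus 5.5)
Your proof is correct, but it reaches the contradiction by a different route from the paper's. You stay inside $W_\Gamma$. Tits' theorem identifies the standard parabolic subgroup on the vertices $S'$ of a shortest (hence chordless) cycle with the cycle's Coxeter group. The Tits form $I-\tfrac12A$, which vanishes on $v=\sum_{s\in S'}e_s$, then shows that this group is infinite.

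The paper never forms a parabolic subgroup; it goes back to the category. The fusion subcategory of $\calB$ generated by the $a_s$ with $s\in S'$ is again a fusion category generated by pairwise nonisomorphic marked Fibonacci objects, and its noncommutation graph is the induced cycle. So \cref{thm:coxeter-rigidity}, applied to that subcategory, says directly that the abstract group $W_{C_k}$ is finite. This removes exactly the obstacle you raised about the ``cheaper alternative'': once the theorem is applied to the subcategory, any action of $W_{C_k}$ containing an element of infinite order suffices. The paper uses the affine action on $\{x\in\mathbb R^k:\sum_i x_i=0\}$ by coordinate transpositions together with the reflection in $x_1-x_k=1$. Composing that reflection with the transposition $(1\ k)$ gives translation by $(1,0,\dots,0,-1)$.

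What each route buys:
\begin{itemize}
\item The paper's argument is self-contained, needing only a check of the Coxeter relations.
\item Yours imports two standard but nontrivial results, the parabolic-subgroup theorem and the positive-definiteness criterion, and in exchange is purely group-theoretic.
\end{itemize}
Your first route does not actually need the parabolic theorem. The Tits form of $W_\Gamma$ restricted to $\operatorname{span}\{e_s:s\in S'\}$ is the form of the induced subgraph, so $B_\Gamma(v,v)=|S'|-e\le 0$, where $e\ge|S'|$ is the number of edges of $\Gamma$ among the vertices of any cycle. Hence $W_\Gamma$ itself is infinite, whether or not the cycle has chords.

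One correction to your second route: the translation identity fails for the proposed $u=e_{s_1}$. For $k=3$, with $w=s_1s_2s_3$ acting by the geometric representation, you get $w(e_{s_1})=2e_{s_1}+2e_{s_2}+e_{s_3}$, which is not of the form $e_{s_1}+cv$. The route can be repaired, since $u=2e_{s_1}+e_{s_2}$ satisfies $w(u)=u+3v$. As written, though, that step would not go through. Your first route is complete, so this does not affect the proof.
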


\begin{proof}
Suppose first that $\Gamma$ is a cycle $C_n$, $n\ge3$.  On
\[
 V=\{(x_1,\ldots,x_n)\in\mathbb R^n:\textstyle\sum_i x_i=0\}
\]
let $r_i$, $i<n$, exchange the $i$th and $(i+1)$st coordinates, and set
\[
 r_n(x_1,x_2,\ldots,x_n)
   =(x_n+1,x_2,\ldots,x_{n-1},x_1-1).
\]
These affine transformations satisfy the Coxeter relations of $C_n$.  The
first $n-1$ generators contain the coordinate transposition
$p=(1\ n)$, and
\[
 r_np:x\longmapsto x+(1,0,\ldots,0,-1)
\]
has infinite order.  Thus $W_{C_n}$ is infinite, contrary to
\cref{thm:coxeter-rigidity}.  If $\Gamma$ contains a cycle, a shortest one
has no chord.  The fusion subcategory generated by its vertices has that
induced cycle as its noncommutation graph, and the same argument gives a
contradiction.
\end{proof}

\begin{remark}\label{rem:coxeter-path}
Transitivity cannot be omitted from the matching statement below.  For
$\Gamma=A_3$, one has $W_\Gamma\cong S_4$, and
$\HFib(W_\Gamma)$ is a rank-$24$ fusion ring generated by three
Fibonacci basis elements whose noncommutation graph is a path.  Its categorification is provided by the companion construction
\cite[Theorem~\CoxeterConstructionTheorem]{FibCoxeter}.
The path $A_3$ has no vertex-transitive automorphism group: its middle
vertex and endpoints have different degrees. Hence no family of tensor
autoequivalences can transitively permute these three marked generators,
so this example does not contradict the transitive matching theorem.
\end{remark}

\begin{corollary}[Transitive matching]\label{cor:transitive-matching}
Suppose a group $L$ acts on $\calB$ by tensor autoequivalences and
transitively permutes the family $(a_s)_{s\in S}$.  Then $\Gamma$ is empty
or a perfect matching.
\end{corollary}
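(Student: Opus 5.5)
The plan is to show that the action of $L$ induces a vertex-transitive group of graph automorphisms of $\Gamma$, and then to combine the forest property of \cref{cor:coxeter-forest} with a degree count.

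First, each $\ell\in L$ acts by a tensor autoequivalence $\rho_\ell$ of $\calB$. By hypothesis $\rho_\ell$ permutes the objects $a_s$ up to isomorphism. Choose the permutation $\pi_\ell$ of $S$ with $\rho_\ell(a_s)\cong a_{\pi_\ell(s)}$; it is well defined because the $a_s$ are pairwise nonisomorphic. Since $\rho_\ell$ is a tensor functor, it preserves isomorphism classes of tensor products:
\[
 \rho_\ell(a_sa_t)\cong a_{\pi_\ell(s)}a_{\pi_\ell(t)} .
\]
Hence $a_sa_t\cong a_ta_s$ holds if and only if $a_{\pi_\ell(s)}a_{\pi_\ell(t)}\cong a_{\pi_\ell(t)}a_{\pi_\ell(s)}$. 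Therefore $\pi_\ell$ is an automorphism of the noncommutation graph $\Gamma$. This step uses only that $\Gamma$ is defined by a fusion-ring condition, which any tensor autoequivalence respects; the Fibonacci markings are not needed. Transitivity of the action on the family then says that the image of $L$ in $\operatorname{Aut}(\Gamma)$ acts transitively on $S$. It follows that $\Gamma$ is vertex-transitive, and in particular regular of some degree $k$.

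Next, by \cref{cor:coxeter-forest}, $\Gamma$ is a finite forest. A finite forest on $|S|$ vertices with $c\ge1$ components has exactly $|S|-c$ edges, which is less than $|S|$. Counting edge endpoints gives $k|S| = 2(|S|-c)$, so $k<2$ and hence $k\in\{0,1\}$. If $k=0$, then $\Gamma$ is empty. If $k=1$, every vertex has exactly one neighbour, so $\Gamma$ is a perfect matching. Equivalently, if $k\ge2$, a walk that never backtracks would eventually return to a vertex already visited and so produce a cycle, which a forest does not contain.

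The main point needing care is the first step. One must check that the permutation $\pi_\ell$ is well defined, which follows from pairwise nonisomorphism. One must also check that commutation is tested purely on isomorphism classes, so the tensorators of $\rho_\ell$ play no role. The rest of the argument is elementary graph theory.
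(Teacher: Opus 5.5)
Your proposal is correct and follows the paper's proof: tensor autoequivalences preserve commutation, so $\Gamma$ is a finite vertex-transitive forest, and the edge count $k|S|=2(|S|-c)$ forces $k\in\{0,1\}$. The one omission is the trivial case $S=\varnothing$, where $c=0$ and the count gives no information; the paper treats it separately, and it is immediate because $\Gamma$ then has no edges.
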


\begin{proof}
If $S$ is empty, then $\calB=\Vec$ and the assertion is immediate.
Assume henceforth that $S$ is nonempty.
Tensor autoequivalences preserve the two pair types, so $\Gamma$ is a
finite vertex-transitive forest.  If it has $N$ vertices, $c$ connected
components, and common degree $d$, then
\[
 Nd=2|E(\Gamma)|=2(N-c),
 \qquad d=2-\frac{2c}{N}<2.
\]
Thus $d=0$ or $d=1$.
\end{proof}

\begin{corollary}\label{cor:neutral-no-invertibles}
The tensor unit is the only invertible simple object of $\calB$.
\end{corollary}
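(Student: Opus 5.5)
We deduce the corollary from \cref{thm:coxeter-rigidity}, which identifies $K_0(\calB)$ with $\HFib(W_\Gamma)$ as based rings. Under this identification the simple objects of $\calB$ correspond to the standard basis elements $h_w$, $w\in W_\Gamma$, and the Frobenius--Perron dimension is
\[
 \FPdim(h_w)=\ph^{\ell(w)}.
\]
An invertible simple object $u$ satisfies $u\otimes u^*\cong\mathbf1$. By multiplicativity of Frobenius--Perron dimension, and since $\FPdim(u^*)=\FPdim(u)>0$, this forces $\FPdim(u)=1$.

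It then remains to find the standard basis elements of dimension one. Since $\ph>1$, the equation $\ph^{\ell(w)}=1$ holds only when $\ell(w)=0$, that is, only for $w=e$. Under the based-ring isomorphism, $h_e$ corresponds to the tensor unit. Hence every invertible simple object of $\calB$ is isomorphic to $\mathbf1$, and the class $[\mathbf1]$ is the only element of $\Inv(\calB)$.

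No step here is a real obstacle; everything rests on \cref{thm:coxeter-rigidity}. One point deserves care: the proof of that theorem observes that every basis element is a unit of the ring $R$, and this must not be confused with tensor invertibility. The argument above avoids the issue by working only with dimensions. As an alternative route, one could use the height function $h$ of \eqref{eq:coxeter-height} in place of Coxeter length. Because $h$ vanishes only at the vertex $1$ of the connected coloured graph, the same conclusion follows directly from \eqref{eq:coxeter-height}.
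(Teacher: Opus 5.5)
Your argument is correct and is essentially the paper's own proof: it reads off $\FPdim(h_w)=\ph^{\ell(w)}$ from \cref{thm:coxeter-rigidity} and notes that only $w=e$ gives dimension one, and your extra line showing that an invertible simple has $\FPdim(u)=1$ fills in the step the paper leaves implicit. For your alternative route, note that ``$h$ vanishes only at $1$'' is not contained in \eqref{eq:coxeter-height}; it comes from the later unique-terminal-vertex (confluence) step that gives $h(h_w)=\ell(w)$, so that route is really the same argument.
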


\begin{proof}
By \cref{thm:coxeter-rigidity}, the simple objects have Frobenius--Perron
dimensions $\ph^{\ell(w)}$, $w\in W_\Gamma$.  Only $w=1$ has dimension one.
\end{proof}

We next pass from the graph to a tensor decomposition.  Assume that
$\Gamma$ is a matching $M$, and let $S_0$ be its set of unmatched vertices.
For $s\in S_0$, write $\calC_s=\langle a_s\rangle\simeq\Fib$.  For
$e=\{s,t\}\in M$, write $\calC_e=\langle a_s,a_t\rangle\simeq\TT$, with
the ordering inherited from the two labelled source factors.  Put
$I=S_0\sqcup M$.

\begin{proposition}[Exact block factorization]
\label{prop:exact-block-factorization}
Choose an ordering $I=\{i_1,\ldots,i_r\}$ and set
\[
 \calA_j=\langle\calC_{i_1},\ldots,\calC_{i_j}\rangle,
 \qquad \calA_0=\Vec.
\]
For every $j$, multiplication induces an equivalence of finite semisimple
linear categories
\begin{equation}\label{eq:exact-block-factorization}
 m_j:\calA_{j-1}\boxtimes\calC_{i_j}\longrightarrow\calA_j,
 \qquad X\boxtimes Y\longmapsto X\otimes Y.
\end{equation}
Moreover, every object of $\calA_{j-1}$ commutes up to isomorphism with
every object of $\calC_{i_j}$.
\end{proposition}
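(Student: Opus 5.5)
The plan is to work at the level of Grothendieck rings, using \cref{thm:coxeter-rigidity}, and then promote the ring statements to statements about simple objects. Since $\Gamma$ is the matching $M$, the Coxeter group splits as a direct product
\[
 W_\Gamma\cong\prod_{i\in I}W_i,
\]
with $W_s\cong\Z/2\Z$ for $s\in S_0$ and $W_e\cong S_3$ for $e\in M$; generators in different blocks commute. Apply \cref{thm:coxeter-rigidity} to the fusion subcategory $\calA_j$. It is generated by the marked Fibonacci objects in the blocks $i_1,\ldots,i_j$, and its noncommutation graph is the induced submatching. This gives
\[
 K_0(\calA_j)\cong\HFib\Bigl(\prod_{k\le j}W_{i_k}\Bigr),
 \qquad \rank K_0(\calA_j)=\prod_{k\le j}|W_{i_k}|.
\]

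First, commutation. Each generator $a_s$ of $\calA_{j-1}$ commutes up to isomorphism with each generator $a_t$ of $\calC_{i_j}$, since $\{s,t\}\notin E(\Gamma)$. Every object of $\calA_{j-1}$ is a summand of a word in its generators, and similarly for $\calC_{i_j}$. Commuting generator by generator then gives $U\otimes V\cong V\otimes U$ for words $U,V$. For arbitrary objects $X\subset U$ and $Y\subset V$, compare Grothendieck classes: commutativity of $K_0$ on these classes follows from commutativity of the generators in the ring, and semisimplicity turns equality of classes into isomorphism.

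Next, $m_j$ on simples. For $w\in\prod_{k<j}W_{i_k}$ and $w'\in W_{i_j}$, the product $(w,w')$ is reduced, with $\ell(ww')=\ell(w)+\ell(w')$. Iterating the first case of the Hecke multiplication rule \eqref{eq:normalized-hecke-multiplication} along a reduced word for $w$, applied to $h_{w'}$, gives $h_wh_{w'}=h_{ww'}$ in $K_0(\calA_j)$. Hence $X\otimes Y$ is simple for simple $X\in\calA_{j-1}$ and $Y\in\calC_{i_j}$. Distinct pairs give distinct simples, because $W_\Gamma$ is a direct product, and every simple of $\calA_j$ arises in this way. So $m_j$ sends simples bijectively to simples.

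Finally, I will promote this to an equivalence of linear categories. The simples of the Deligne product are exactly $X\boxtimes Y$, and each has one-dimensional endomorphisms. The functor $m_j$ is linear and, by the Hom-space computation $\Hom(X\otimes Y,X'\otimes Y')$, is zero or one-dimensional according to whether the pairs agree. Since a tensor functor does not kill nonzero maps (as in \cref{lem:automatic-faithfulness}), $m_j$ is fully faithful on simples. It is essentially surjective by the bijection on simples, so it is an equivalence.

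The main obstacle I expect is verifying the identity $h_wh_{w'}=h_{ww'}$ inside $\calA_j$. This needs the identification of \cref{thm:coxeter-rigidity} to be compatible with restricting to the subfamily of generators, and it needs the marked basis elements in $K_0(\calA_j)$ to correspond to the same group elements as in the subcategories $\calA_{j-1}$ and $\calC_{i_j}$. I would handle this by noting that both identifications are determined by $[a_s]\mapsto h_s$ together with words of minimal height.
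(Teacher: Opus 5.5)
Your argument is correct, but it takes a different route from the paper. The paper never invokes \cref{thm:coxeter-rigidity} here. It argues locally, by induction on $j$. First it shows $\calA_{j-1}\cap\calC_{i_j}=\Vec$: every nontrivial fusion subcategory of a $\Fib$ or $\TT$ block contains a distinguished generator, while the only simples of $\calA_{j-1}$ of dimension $\ph$ are the earlier labelled generators. It then uses $\Hom(XY,X'Y')\cong\Hom(X'^*X,Y'Y^*)$ together with object-level reordering of words from distinct blocks. This is the standard exact-factorization criterion: trivial intersection plus commutation.

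You instead read everything off the global identification $K_0(\calA_j)\cong\HFib\bigl(\prod_{k\le j}W_{i_k}\bigr)$. There, length additivity in the direct product gives $h_wh_{w'}=h_{ww'}=h_{w'}h_w$. This single identity yields simplicity, distinctness, exhaustion, and also commutation, so your separate commutation paragraph is redundant.

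The obstacle you flag is harmless. Under the based-ring isomorphism, the class of a reduced word $a_{s_1}\cdots a_{s_k}$ is $h_{s_1\cdots s_k}$. A reduced word in the factor $W_{j-1}=\prod_{k<j}W_{i_k}$ remains reduced in $W_j$. The rank count $|W_{j-1}|$, from \cref{thm:coxeter-rigidity} applied to $\calA_{j-1}$, shows these words exhaust the simples of $\calA_{j-1}$, and likewise for $\calC_{i_j}$.

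Each route has its own advantage. Yours is shorter given the rigidity theorem and needs no analysis of the fusion subcategories of $\TT$. The paper's uses only the two-generator blocks and a Hom computation, which is the form it reuses over a non-closed field in \cref{prop:char-zero-many-fib}.

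One small correction: $m_j$ is not a tensor functor, since the paper stresses it has no tensorator yet, so \cref{lem:automatic-faithfulness} does not apply. You do not need it. $m_j$ sends $\mathrm{id}_{X\boxtimes Y}$ to $\mathrm{id}_{X\otimes Y}\neq0$, so it is an isomorphism between the relevant one-dimensional Hom spaces.
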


\begin{proof}
The simple objects in a Fibonacci block are represented by $\mathbf1,a$,
and those in a $\TT$ block by
\[
 \mathbf1,a,b,ab,ba,aba.
\]
Every block simple is therefore represented by a word in its distinguished
Fibonacci generators.  Generators belonging to distinct blocks commute as
objects, so such words can be reordered.  It follows in particular that
objects belonging to distinct generated blocks commute up to isomorphism;
no coherent choice of these isomorphisms is used at this stage.

We prove the assertion by induction on $j$.  Assume that the simples of
$\calA_{j-1}$ are precisely the pairwise nonisomorphic products of simples
from the first $j-1$ blocks.  Every nontrivial fusion subcategory of $\Fib$
contains its distinguished Fibonacci object.  Every nontrivial fusion
subcategory of $\TT$ contains at least one of its two distinguished
Fibonacci objects: this follows immediately for $a,b$; for $ab$ or $ba$ it
follows after multiplying by the dual; and for $aba$ it follows from the
fusion table \eqref{eq:TT-fusion-table}.  Consequently, if
$\calA_{j-1}\cap\calC_{i_j}$ were nontrivial, a distinguished Fibonacci
object of the new block would belong to $\calA_{j-1}$.

The only simples of $\calA_{j-1}$ having Frobenius--Perron dimension $\ph$
are the distinguished Fibonacci objects in the preceding blocks: every
other nonunit product has dimension at least $\ph^2$.  This contradicts the
pairwise nonisomorphism of the labelled generators.  Hence
\begin{equation}\label{eq:block-intersection}
 \calA_{j-1}\cap\calC_{i_j}=\Vec.
\end{equation}

For simples $X,X'\in\calA_{j-1}$ and $Y,Y'\in\calC_{i_j}$, rigidity gives
\begin{equation}\label{eq:block-hom-computation}
 \Hom(XY,X'Y')\cong \Hom(X'^*X,Y'Y^*).
\end{equation}
The left argument on the right-hand side belongs to $\calA_{j-1}$ and the
right argument belongs to $\calC_{i_j}$.  By
\eqref{eq:block-intersection}, this space is one-dimensional when
$X\cong X'$ and $Y\cong Y'$, and is zero otherwise.  Thus the objects $XY$
are simple and pairwise nonisomorphic.  Their additive span is closed under
tensor products and duals because words from distinct blocks can be
reordered.  It contains all labelled generators, hence equals $\calA_j$.
Therefore $m_j$ is an equivalence of linear categories; equivalently, this
is the exact-factorization criterion of
\cite[Theorem~3.8]{Gelaki2017}.
\end{proof}

The preceding proposition supplies the Abelian equivalence, but its
multiplication functor does not yet carry a tensorator.  The required
coherence follows from the rigidity of twisted Deligne products.

\begin{lemma}[Tensor splitting of a block factorization]
\label{lem:coherent-block-multiplication}
Let $\calA$ and $\calD$ be fusion subcategories of a fusion category
$\calC$ over $\CC$.  Suppose multiplication
$\calA\boxtimes\calD\to\calC$ is an equivalence of linear categories and
$XY\cong YX$ for every $X\in\calA$, $Y\in\calD$.  If the universal grading
groups of $\calA$ and $\calD$ are trivial, there is a tensor equivalence
\[
 \Phi:\calA\boxtimes\calD\xrightarrow{\ \simeq\ }\calC
\]
whose restrictions to $\calA$ and $\calD$ are the given tensor-subcategory
embeddings.
\end{lemma}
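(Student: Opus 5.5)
The plan is to build the tensorator of the multiplication functor out of a coherent family of commutation isomorphisms. For simple $X\in\calA$ and $Y\in\calD$, the hypothesis that $m:\calA\boxtimes\calD\to\calC$ is a linear equivalence makes $XY$ and $YX$ simple and isomorphic. Hence $\Hom(YX,XY)$ is one-dimensional, and any choice of nonzero $c_{Y,X}:YX\to XY$ is determined up to a scalar. I will extend these choices additively to a natural isomorphism $c:\otimes^{\mathrm{op}}|_{\calD\times\calA}\Rightarrow\otimes|_{\calA\times\calD}$. The candidate tensorator for $\Phi(X\boxtimes Y)=XY$ is then
\[
 (XY)(X'Y')\xrightarrow{\ \mathrm{id}_X\otimes c_{Y,X'}\otimes\mathrm{id}_{Y'}\ }(XX')(YY'),
\]
with the associators of $\calC$ inserted. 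The monoidal axiom for $\Phi$ reduces, by the standard computation (as for tensor products of centralizing subcategories), to two hexagon identities. One says that $c_{Y,XX'}$ agrees with $(\mathrm{id}_X\otimes c_{Y,X'})(c_{Y,X}\otimes\mathrm{id}_{X'})$. The other says that $c_{YY',X}$ agrees with $(c_{Y,X}\otimes \mathrm{id}_{Y'})(\mathrm{id}_Y\otimes c_{Y',X})$. Together with normalization $c_{\mathbf1,X}=c_{Y,\mathbf1}=\mathrm{id}$, this gives the tensor structure. Its restrictions to $\calA\boxtimes\mathbf1$ and $\mathbf1\boxtimes\calD$ are the inclusions with trivial tensorators, which is the required compatibility with the given embeddings.

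The main step, and the main obstacle, is to rescale the $c_{Y,X}$ so that both hexagons hold. First fix a simple $Y\in\calD$ and take simple $X,X'\in\calA$ and a simple summand $Z\subset XX'$. The two sides of the first hexagon, restricted to the $Z$-channel, are maps between one-dimensional spaces, so they differ by a scalar $\beta_Y(X,X';Z)\in\CC^\times$.

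I would first organize these data as a monoidal natural isomorphism problem rather than as scalars. For fixed $Y$, the functors $X\mapsto YXY^*$ and $X\mapsto XYY^*$ compare two $\calA$-module structures. More cleanly, I would consider the relative centralizer $\mathcal Z_\calA(\calC)$: an object $Y$ together with a natural isomorphism $Y\otimes-\Rightarrow -\otimes Y$ on $\calA$ satisfying the hexagon. I will show that every simple $Y\in\calD$ lifts to this category.

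The set of lifts of a given $Y$, if nonempty, is a torsor under $\operatorname{Aut}_\otimes(\mathrm{id}_\calA)$, the group of monoidal automorphisms of the identity functor. This group is the character group of the universal grading group $U(\calA)$, hence trivial. For existence, the scalars $\beta_Y$ satisfy a $2$-cocycle condition coming from the pentagon in $\calC$. Using one-dimensionality of all relevant Hom spaces, together with the fact that in a fusion category with trivial universal grading every simple is a summand of a power of the adjoint objects, I would show that the obstruction is multiplicative along fusion channels. It therefore defines a grading character, and triviality of $U(\calA)$ kills it after rescaling. I expect this cohomological bookkeeping to be the delicate part. If it fails in this form, I would fall back on realizing $\calC$ as a $\calD$-bimodule category equivalent to $\calA\boxtimes\calD$ and extracting $c$ from the bimodule structure.

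Having lifted each simple $Y$ with a unique half-braiding relative to $\calA$, the second hexagon follows by the same uniqueness, applied now with the roles exchanged. Both $(c_{Y,-}\otimes\mathrm{id})(\mathrm{id}\otimes c_{Y',-})$ and $c_{YY',-}$ restricted to a simple summand of $YY'$ are relative half-braidings on the same object. They therefore differ by an element of $\operatorname{Aut}_\otimes(\mathrm{id}_\calA)=1$. Symmetrically, the hypothesis on $U(\calD)$ controls any residual rescaling on the $\calD$ side. With both hexagons in hand, $\Phi$ is a tensor functor. It is an equivalence because its underlying linear functor is $m$.
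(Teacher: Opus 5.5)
Your choice of $c_{Y,X}$ on simple pairs, the reduction of monoidality of $\Phi$ to the two hexagons, the torsor statement for lifts to $\mathcal Z_\calA(\calC)$, and the deduction of the second hexagon from uniqueness are all fine. The gap is the \emph{existence} of a lift of a fixed simple $Y\in\calD$, which you attribute to $U(\calA)=1$.

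By exact factorization, $\End_\calC(Y\otimes M)=\operatorname{id}_Y\otimes\End_\calA(M)$ for $M\in\calA$. So the hexagon defect is $\operatorname{id}_Y\otimes\theta_Y(X,X')$ with $\theta_Y(X,X')\in\operatorname{Aut}_\calA(X\otimes X')$. On a channel $Z$ this is an element of $\mathrm{GL}(\Hom(Z,X\otimes X'))$, not a scalar. Multiplicity two already occurs in $\TT$, where $c\otimes c\supset 2c$, so ``one-dimensionality of all relevant Hom spaces'' is not available.

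More importantly, coherence in $\calC$ shows that $\theta_Y^{-1}$ is a tensor structure on $\operatorname{id}_\calA$. Rescaling $c_{Y,-}$ by $\operatorname{Aut}(\operatorname{id}_\calA)$ changes it only by a monoidal isomorphism. Hence $Y$ lifts if and only if the tensor autoequivalence $(\operatorname{id}_\calA,\theta_Y^{-1})$ is monoidally isomorphic to the identity. This is a degree-two obstruction. Triviality of $U(\calA)$ excludes nontrivial gradings, which are degree-one data; that is exactly what gives uniqueness of lifts. The step ``multiplicative along fusion channels, hence a grading character'' describes $1$-cochains and does not apply to this class.

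Your existence step uses only $\calA$ and the fixed $Y$, and this cannot suffice unless object-fixing tensor autoequivalences of $\calA$ are ruled out. Suppose some $\calA$ with $U(\calA)=1$ has an object-fixing tensor autoequivalence $\alpha\not\cong\operatorname{id}$ with $\alpha^2\cong\operatorname{id}$. Then $\calC=\calA\rtimes_\alpha\Z/2\Z$ with $\calD=\Vec_{\Z/2\Z}$ satisfies every hypothesis except $U(\calD)=1$. Yet the generator of $\Z/2\Z$ has no $\calA$-relative half-braiding, since one would give $\alpha\cong\operatorname{id}$.

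The missing idea is to let $Y$ vary. This is where $U(\calD)=1$ is genuinely used. In your sketch it appears only for a ``residual rescaling,'' which does not exist once lifts are unique.

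For $W\subset Y\otimes Y'$, compare two isomorphisms $(Y\otimes Y')\otimes X\to X\otimes(Y\otimes Y')$. The first is the composite $(c_{Y,-}\otimes\operatorname{id}_{Y'})(\operatorname{id}_Y\otimes c_{Y',-})$, whose defect is the product of those for $Y$ and $Y'$. The second is the natural isomorphism assembled summandwise from the $c_{W,-}$.

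Their ratio at $X$ lies in $\operatorname{Aut}_\calD(Y\otimes Y')$. Comparing the two defects on each channel forces this ratio to be projectively multiplicative in $X$. Then $U(\calA)=1$ makes it a scalar family, and the two defects agree up to a coboundary. Thus $[\theta_W]=[\theta_Y][\theta_{Y'}]$ in the group of monoidal isomorphism classes of tensor structures on $\operatorname{id}_\calA$. So $Y\mapsto[\theta_Y]$ factors through $U(\calD)=1$, and every simple $Y$ lifts.

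With this repair your construction becomes a direct proof of the special case of the twisted-Deligne-product theorem \cite[Theorem~3.15]{ENO2026} that the paper simply cites.
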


\begin{proof}
The embedded subcategories, the Abelian multiplication equivalence, and the
object isomorphisms $XY\cong YX$ make $\calC$ a twisted Deligne product of
$\calA$ and $\calD$ in the sense of
\cite[Introduction]{ENO2026}.  By \cite[Theorem~3.15]{ENO2026}, a twisted
Deligne product of two fusion categories with trivial universal grading
groups is equivalent to their ordinary Deligne product, tautologically on
both embedded factors.  Taking the equivalence in the displayed direction
gives $\Phi$ and its tensorator.
\end{proof}

\begin{theorem}[Transitive several-Fibonacci block decomposition]
\label{thm:many-fib-blocks}
Suppose a group $L$ acts on $\calB$ by tensor autoequivalences and
transitively permutes the labelled Fibonacci objects $(a_s)_{s\in S}$.  The
noncommutation graph $\Gamma$ is empty or a perfect matching $M$, and
there is a tensor equivalence
\begin{equation}\label{eq:many-fib-factorization}
 \calB\simeq
 \begin{cases}
  \Fib^{\boxtimes S},&E(\Gamma)=\varnothing,\\[2mm]
  \TT^{\boxtimes M},&\Gamma\text{ is a perfect matching }M,
 \end{cases}
\end{equation}
intertwining all labelled source embeddings.
\end{theorem}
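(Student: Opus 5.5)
The plan is to assemble the decomposition from results already established, in three stages. First, \cref{cor:transitive-matching} shows that $\Gamma$ is empty or a perfect matching $M$. Either way, every vertex lies in exactly one block of $I=S_0\sqcup M$, with $S_0=S$ or $S_0=\varnothing$ respectively. For each block, \cref{thm:two-fib-classification} supplies a tensor equivalence $\Fib\simeq\calC_s$ or $\TT\simeq\calC_e$ that intertwines the labelled source embeddings; for an edge $e=\{s,t\}$ this uses the ordering inherited from the labels. Choosing an ordering $I=\{i_1,\dots,i_r\}$, \cref{prop:exact-block-factorization} then gives, at each step, an equivalence of linear categories $\calA_{j-1}\boxtimes\calC_{i_j}\to\calA_j$. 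It also gives object-level commutation between $\calA_{j-1}$ and $\calC_{i_j}$. Since the labelled generators tensor-generate $\calB$, we have $\calA_r=\calB$.

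Second, I will upgrade each step to a tensor equivalence by induction on $j$, using \cref{lem:coherent-block-multiplication} with $\calA=\calA_{j-1}$ and $\calD=\calC_{i_j}$. The hypothesis to check is that both universal grading groups are trivial, for which it suffices that each category equals its adjoint subcategory. For a block this holds because $a\in\Ad$ whenever $a^2\cong\mathbf1\oplus a$, and the generators of $\Fib$ or $\TT$ are such objects. For $\calA_{j-1}$ the same reasoning applies, since it is generated by Fibonacci objects, which are all adjoint. This holds without invoking the inductive structure at all.

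The lemma then produces tensor equivalences $\calA_{j-1}\boxtimes\calC_{i_j}\simeq\calA_j$ restricting to the given embeddings. Composing these with the inductive equivalence $\boxtimes_{k<j}\calC_{i_k}\simeq\calA_{j-1}$ yields
\[
\Phi:\ \textstyle\boxtimes_{k=1}^r\calC_{i_k}\xrightarrow{\ \simeq\ }\calB .
\]
Precomposing with the block equivalences from $\Fib$ or $\TT$ gives \eqref{eq:many-fib-factorization}.

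Third, I will verify that the labelled embeddings are intertwined. Each source embedding $\iota_s:\Fib\to\calB$ factors through its block $\calC_{i}$, and by the tautological restriction property of the lemma, $\Phi$ restricts on the $i$th tensor factor to the inclusion of $\calC_i$. It remains to compare the composite with $\iota_s$, and this comparison reduces to the block-level statement of \cref{thm:two-fib-classification}. Any remaining ambiguity is a monoidal isomorphism between two embeddings of $\Fib$ with isomorphic images, and \cref{lem:fib-marking-rigidity} makes this unique.

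The main obstacle is the inductive coherence in the second stage. After the first step, the restriction of $\Phi$ to the earlier factors is the previously constructed equivalence rather than a literal subcategory inclusion. I must check that the lemma's restriction statement is compatible with this composite, so that the restriction to each earlier factor $\calC_{i_k}$ is still monoidally isomorphic to its inclusion. I would handle this by stating the induction hypothesis as the existence of a tensor equivalence whose restriction to each factor is monoidally isomorphic to the inclusion. This hypothesis is stable under composition, and since the factors have trivial grading, no additional cocycle data arises when the steps are composed.
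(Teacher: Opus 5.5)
Your proposal is correct and follows the paper's proof. Like the paper, you take the graph dichotomy from \cref{cor:transitive-matching} and the linear exact factorization from \cref{prop:exact-block-factorization}, then apply \cref{lem:coherent-block-multiplication} successively, using that each stage has trivial universal grading group because it equals its adjoint subcategory; this is equivalent to the paper's observation that $x\subset x^2$ forces $g=g^2$. Your extra bookkeeping on the inductive composite and on \cref{lem:fib-marking-rigidity} spells out what the paper compresses into ``at each stage the tensor equivalence restricts to the given embeddings.''
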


\begin{proof}
The graph statement is \cref{cor:transitive-matching}, and
\cref{prop:exact-block-factorization} gives the exact factorization by the
embedded blocks.  Every intermediate category $\calA_j$ has trivial
universal grading group.  Indeed, it is generated by Fibonacci objects
$x$, and $x\subset x^2$ forces the degree of $x$ in any grading to satisfy
$g=g^2$, hence $g=1$.  Apply
\cref{lem:coherent-block-multiplication} successively.  At each stage the
tensor equivalence restricts to the given embeddings of the two factors,
so the final equivalence retains every marked source factor.
\end{proof}

\section{Conjugates, grading, and crossed products}
\label{sec:neutral-reduction}

Let
\[
 F:\Fib*\Vec_G^\Omega\longrightarrow\calE
\]
be a finite composite and put $a=F(\tau)$.  Let
$q:G\twoheadrightarrow H$ be the epimorphism onto the group of isomorphism
classes in the pointed image.  Choose invertible representatives $u_h\in\calE$,
with $u_e=\mathbf1$, and choose $\bar u_h:=u_{h^{-1}}$ as a left dual of
$u_h$, with
\[
 \operatorname{coev}_h:\mathbf1\longrightarrow\bar u_h\otimes u_h,
 \qquad
 \operatorname{ev}_h:u_h\otimes\bar u_h\longrightarrow\mathbf1.
\]
Choose multiplication maps
\[
 m_{h,k}:u_h\otimes u_k\xrightarrow{\ \simeq\ }u_{hk}.
\]
Their associativity defect is a normalized cocycle $\omega$ determined by
\begin{align}\label{eq:representative-omega}
 m_{h,k\ell}(\operatorname{id}\otimes m_{k,\ell})
 \alpha_{u_h,u_k,u_\ell}
 =\omega(h,k,\ell)\,
 m_{hk,\ell}(m_{h,k}\otimes\operatorname{id}).
\end{align}
With the convention of \eqref{eq:eta-differential}, the pointed tensorator is
a cochain $\eta$ satisfying $d\eta=\Omega/(q^*\omega)$.
We use the parenthesized convention
\[
 \Ad_{u_h}(X)=(u_h\otimes X)\otimes\bar u_h.
\]

Set
\[
 K=\{h\in H:u_h\otimes a\otimes\bar u_h\cong a\},
 \qquad S=H/K,
\]
and let $\iota:\Fib\to\calE$ be the original marked embedding.  For every
$k\in K$, \cref{lem:fib-marking-rigidity} supplies a unique monoidal natural
isomorphism
\begin{equation}\label{eq:stabilizer-marking-cell}
 \lambda_k:\Ad_{u_k}\iota\Longrightarrow\iota.
\end{equation}
Thus the object stabilizer $K$ is also the stabilizer of the marked
embedding. For $u_e=\mathbf1$, take the canonical duality of the unit.
The unit constraints then give a monoidal natural isomorphism
$\Ad_{u_e}\iota\Rightarrow\iota$. By the same uniqueness, this is
$\lambda_e$; under these unit identifications, $\lambda_e=\id_\iota$.

Choose representatives $r_s\in H$ for $s\in S$, with $r_K=e$, and define
\[
 a_s=u_{r_s}\otimes a\otimes\bar u_{r_s},
 \qquad \iota_s=\Ad_{u_{r_s}}\iota,
 \qquad
 \calB=\langle a_s:s\in H/K\rangle_{\otimes,\oplus,\mathrm{summands}}.
\]
Distinct cosets give nonisomorphic objects.  Each $a_s$ carries the marked
embedding $\iota_s$.  For $h\in H$ and $s\in H/K$, put
\[
 k(h,s)=r_{hs}^{-1}hr_s\in K.
\]
Put $\rho_h=\Ad_{u_h}$, and let
$m_{h,k}^{\vee}:\bar u_{hk}\to\bar u_k\otimes\bar u_h$ be the left-dual
mate of $m_{h,k}$.  The compositor
$\mu_{h,k}:\rho_h\rho_k\Rightarrow\rho_{hk}$ is obtained, after
reassociation, by applying $m_{h,k}$ on the left and
$(m_{h,k}^{\vee})^{-1}$ on the right.  Define
\begin{equation}\label{eq:constructed-marking-transport}
\begin{aligned}
 \xi_{h,s}:\rho_h\iota_s
 &\xRightarrow{\ \mu_{h,r_s}\iota\ }
   \rho_{hr_s}\iota
  =\rho_{r_{hs}k(h,s)}\iota\\
 &\xRightarrow{\ (\mu_{r_{hs},k(h,s)}\iota)^{-1}\ }
   \rho_{r_{hs}}\rho_{k(h,s)}\iota
 \xRightarrow{\ \rho_{r_{hs}}\lambda_{k(h,s)}\ }
   \iota_{hs}.
\end{aligned}
\end{equation}
Both sides of the marked-action coherence equation are monoidal natural
isomorphisms between the same two marked Fibonacci embeddings.  Uniqueness
in \cref{lem:fib-marking-rigidity} therefore makes the two transformations
equal.  The same uniqueness shows that changing a coset representative
produces the uniquely equivalent marked family.

\begin{lemma}[Conjugate free-product quotient]
\label{lem:neutral-free-product}
There is a finite quotient
\[
 Q:\Fib^{*(H/K)}\longrightarrow\calB,
 \qquad \tau_s\longmapsto a_s.
\]
Conjugation preserves $\calB$ and acts transitively on the distinguished
objects.
\end{lemma}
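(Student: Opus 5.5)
We first check that $\calB$ is a fusion category with the listed marked objects. It is a fusion subcategory of $\calE$, since it is closed under tensor products, duals and summands: the generators $a_s$ are self-dual, because $a$ is. Each $a_s=u_{r_s}\otimes a\otimes\bar u_{r_s}$ carries the marked embedding $\iota_s=\Ad_{u_{r_s}}\iota$. Here $\Ad_{u_{r_s}}$ is a tensor autoequivalence of $\calE$, with tensorator built from $\operatorname{ev}_{r_s}$ and associators, so $\iota_s$ is again a tensor embedding $\Fib\to\calB$. The objects $a_s$ are pairwise nonisomorphic, as recorded before the lemma. Indeed, $a_s\cong a_{s'}$ gives $u_{r_{s'}^{-1}r_s}\otimes a\otimes\bar u_{r_{s'}^{-1}r_s}\cong a$ after conjugating by $\bar u_{r_{s'}}$ and using $m_{h,k}$ and its dual mate. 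Hence $r_{s'}^{-1}r_s\in K$ and $s=s'$.

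Next we construct $Q$ from the free-product universal property stated in the subsection on free products and finite quotients. The labelled family $\iota_s:\Fib\to\calB$ determines a tensor functor $Q=*_{s}\iota_s:\Fib^{*(H/K)}\to\calB$ with $\tau_s\mapsto a_s$. This is unique up to a unique monoidal isomorphism fixed on the factors. Faithfulness is automatic by \cref{lem:automatic-faithfulness}. For dominance, every simple of $\calB$ is a summand of a tensor word in the $a_s$ by definition of $\calB$, and every such word is $Q$ of the corresponding word in the $\tau_s$. Since $\calB$ is fusion, $Q$ is a finite quotient.

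Then we show that $\rho_h=\Ad_{u_h}$ preserves $\calB$ and permutes the generators transitively. Since $\rho_h$ is a tensor autoequivalence of $\calE$ preserving direct sums and summands, it suffices to check generators. We use the compositor $\mu_{h,r_s}$ and the definition $k(h,s)=r_{hs}^{-1}hr_s\in K$. These give
\[
 \rho_h(a_s)\cong\rho_{hr_s}(a)=\rho_{r_{hs}k(h,s)}(a)\cong\rho_{r_{hs}}\rho_{k(h,s)}(a)\cong\rho_{r_{hs}}(a)=a_{hs},
\]
where the isomorphism $\rho_{k(h,s)}(a)\cong a$ comes from $\lambda_{k(h,s)}$. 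This is exactly the object-level shadow of $\xi_{h,s}$ in \eqref{eq:constructed-marking-transport}. Thus $\rho_h(\calB)\subseteq\calB$, and applying $\rho_{h^{-1}}$ together with $\mu$ gives equality up to isomorphism. Transitivity follows because $H$ acts transitively on $H/K$, with $\rho_{r_s}(a_K)=a_s$. At the marked level, $\xi_{h,s}$ identifies $\rho_h\iota_s$ with $\iota_{hs}$.

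The main obstacle is bookkeeping rather than substance. One must ensure that $\rho_h$ restricts to a genuine tensor autoequivalence of $\calB$, and that the transport cells $\xi_{h,s}$ are compatible with the compositors. That compatibility, however, was already settled before the lemma by the uniqueness in \cref{lem:fib-marking-rigidity}. So for this lemma only the object-level invariance and transitivity are needed. The one subtle point is the use of the duality choice $\bar u_h=u_{h^{-1}}$ in identifying $\rho_h\rho_k\cong\rho_{hk}$; we handle it via the dual mate $m^\vee_{h,k}$ as specified above.
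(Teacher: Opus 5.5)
Your proposal is correct and follows the same route as the paper. The free-product universal property applied to the conjugated markings $\iota_s$ gives $Q$, dominance comes from the definition of $\calB$, faithfulness comes from \cref{lem:automatic-faithfulness}, and the compositors $\mu_{g,h}$, together with $\lambda_{k(h,s)}$, induce the transitive left action of $H$ on $H/K$. Your extra checks (that $\calB$ is a fusion subcategory and that the $a_s$ are pairwise nonisomorphic) make explicit what the paper leaves implicit or records just before the lemma. One cosmetic slip: with the paper's conventions, the tensorator of $\Ad_{u}$ contracts the middle $\bar u\otimes u$ using $\operatorname{coev}^{-1}$, not $\operatorname{ev}$.
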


\begin{proof}
The free-product property gives $Q$, and the definition of $\calB$ gives
dominance.  It is faithful by \cref{lem:automatic-faithfulness}.  The
multiplication maps and the associator give the natural isomorphisms
$\Ad_{u_g}\circ\Ad_{u_h}\cong\Ad_{u_{gh}}$ described by the compositors
$\mu_{g,h}$.  Thus conjugation induces the left action on $H/K$, which is
transitive.
\end{proof}

Conjugation by the $u_h$ gives tensor autoequivalences of $\calB$ and acts
transitively on the labelled family.  Hence
\cref{thm:many-fib-blocks} applies.  Writing $S=H/K$ and $\Gamma$ for
the noncommutation graph on $S$, we obtain
\begin{equation}\label{eq:block-identification}
 \calB\simeq
 \begin{cases}
  \Fib^{\boxtimes S},&E(\Gamma)=\varnothing,\\[2mm]
  \TT^{\boxtimes M},&\Gamma\text{ is a perfect matching }M.
 \end{cases}
\end{equation}
The equivalence retains all marked source embeddings.  In the second case,
the perfect matching $M$ is $H$-invariant, since conjugation preserves the
ordered two-generator type.

\begin{lemma}[Normal forms and separation]
\label{lem:normal-form-separation}
Every simple object of $\calE$ is isomorphic to $X\otimes u_h$ for a simple
$X\in\calB$ and $h\in H$.  If
\[
 X\otimes u_h\cong Y\otimes u_k
\]
with $X,Y\in\Irr(\calB)$, then $h=k$ and $X\cong Y$.
\end{lemma}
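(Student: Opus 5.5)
We prove the two assertions separately: existence of normal forms by a closure argument, and separation by a Hom computation that reduces to the triviality of invertibles in $\calB$ (\cref{cor:neutral-no-invertibles}).

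\emph{Existence.} Let $\calN$ be the full additive subcategory of $\calE$ spanned by the objects $X\otimes u_h$ with $X\in\calB$ and $h\in H$. We show that $\calN$ is closed under tensor products and summands and contains the image of $F$.

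\begin{itemize}
\item It contains $a=a_K\otimes u_e$ and every $u_h=\mathbf1\otimes u_h$.
\item For tensor products we commute a $u_h$ past an object of $\calB$:
\[
 u_h\otimes Y\cong \bigl((u_h\otimes Y)\otimes\bar u_h\bigr)\otimes u_h=\rho_h(Y)\otimes u_h .
\]
Here $\rho_h(Y)\in\calB$, because conjugation preserves $\calB$ (\cref{lem:neutral-free-product}). Hence $(X\otimes u_h)\otimes(Y\otimes u_k)\cong X\rho_h(Y)\otimes u_{hk}$, which lies in $\calN$.
\item Since $u_h$ is invertible, $-\otimes u_h$ is an autoequivalence. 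A summand of $X\otimes u_h$ is therefore $X'\otimes u_h$ with $X'$ a summand of $X$, and $X'$ lies in $\calB$.
\end{itemize}

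The image of $F$ is generated by $a$ and the $u_h$, so it lies in $\calN$. Dominance of $F$ then gives that every simple object of $\calE$ lies in $\calN$. Such a simple is a summand of some $X\otimes u_h$, hence of the form $X'\otimes u_h$ with $X'$ simple in $\calB$.

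\emph{Separation.} Suppose $X\otimes u_h\cong Y\otimes u_k$ with $X,Y$ simple in $\calB$. Tensoring on the right with $\bar u_k$ gives $X\otimes u_{hk^{-1}}\cong Y$, so
\[
 u_{hk^{-1}}\cong X^*\otimes Y\ \text{ up to summands.}
\]
More precisely, $\Hom(X\otimes u_g,Y)\cong\Hom(u_g,X^*\otimes Y)$ with $g=hk^{-1}$, and this space is nonzero. Thus the invertible simple object $u_g$ is a summand of $X^*\otimes Y$, which lies in $\calB$. Since $\calB$ is a fusion subcategory, it is closed under summands, so $u_g$ is an invertible simple object of $\calB$. \Cref{cor:neutral-no-invertibles} then gives $u_g\cong\mathbf1$.

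Because the $u_h$ represent distinct isomorphism classes indexed by $H$, this forces $g=e$, that is, $h=k$. Cancelling the invertible $u_h$ then gives $X\cong Y$.

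The main point is this separation step: we must ensure that $u_g$ being a constituent of an object of $\calB$ forces $u_g\in\calB$. This holds because $\calB$ is a replete fusion subcategory closed under summands. The corollary applies because \cref{cor:neutral-no-invertibles} concerns $\calB$ itself, via the Coxeter rigidity dimensions $\ph^{\ell(w)}$ of \cref{thm:coxeter-rigidity}.
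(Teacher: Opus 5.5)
Your proposal is correct and follows the paper's proof. Existence comes from moving the pointed letters to the right via $u_h\otimes Y\cong\rho_h(Y)\otimes u_h$; your closure-and-dominance argument spells out what the paper's one-line proof leaves implicit. Separation comes from Frobenius reciprocity placing the invertible $u_{hk^{-1}}$ in $\calB$, after which \cref{cor:neutral-no-invertibles} forces $h=k$ and $X\cong Y$.
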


\begin{proof}
Moving every pointed letter to the right, using
$u_h\otimes X\cong\Ad_{u_h}(X)\otimes u_h$, gives the asserted normal form.
If $Xu_h\cong Yu_k$, put $g=hk^{-1}$.  Frobenius reciprocity gives
\[
 0\ne\Hom_\calE(Xu_g,Y)\cong\Hom_\calE(u_g,X^*Y).
\]
Thus the invertible object $u_g$ lies in $\calB$.  By
\cref{cor:neutral-no-invertibles}, $g=e$ and $X\cong Y$.
\end{proof}

\begin{proposition}[Faithful grading]\label{prop:B-equals-adjoint}
The decomposition
\[
 \calE=\bigoplus_{h\in H}\calB u_h
\]
is a faithful $H$-grading, its identity component is $\calB$, and
$\calB=\Ad(\calE)$.  Moreover,
\[
 \Inv(\calE)=\{[u_h]\mid h\in H\}\cong H.
\]
\end{proposition}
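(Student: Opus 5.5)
The plan is to assemble the statement from \cref{lem:normal-form-separation} together with a grading argument.

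First, the decomposition. By \cref{lem:normal-form-separation}, every simple of $\calE$ is $X\otimes u_h$ with $X\in\Irr(\calB)$ and $h$ uniquely determined. Let $\calE_h$ be the additive span of the simples $Xu_h$; these subcategories are pairwise disjoint and cover $\Irr(\calE)$. For multiplicativity I would compute
\[
 (Xu_h)(Yu_k)\cong X\,\Ad_{u_h}(Y)\,u_hu_k\cong X\rho_h(Y)\,u_{hk},
\]
using that $\rho_h$ preserves $\calB$ (\cref{lem:neutral-free-product}) and that $u_hu_k\cong u_{hk}$. Since $\calB$ is closed under tensor products and summands, the right side lies in $\calE_{hk}$. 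Duals satisfy $(Xu_h)^*\cong \bar u_h X^*\cong \rho_{h^{-1}}(X^*)u_{h^{-1}}\in\calE_{h^{-1}}$. The grading is faithful because $u_h\in\calE_h$ is nonzero for every $h$, and its identity component is $\calE_e=\calB u_e=\calB$.

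Second, $\calB=\Ad(\calE)$. For the inclusion $\Ad(\calE)\subseteq\calB$, use that every $Z\otimes Z^*$ with $Z$ homogeneous lies in the trivial component, so $\Ad(\calE)\subseteq\calE_e=\calB$. For the reverse inclusion, each generator $a_s$ satisfies $a_s^2\cong\mathbf1\oplus a_s$, so $a_s$ is a summand of $a_s\otimes a_s^*$. Hence $a_s\in\Ad(\calE)$, and since $\Ad(\calE)$ is a fusion subcategory, it contains $\calB$.

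Third, the invertibles. An invertible simple $Xu_h$ forces $X\cong (Xu_h)\bar u_h$ to be invertible in $\calB$, so $X\cong\mathbf1$ by \cref{cor:neutral-no-invertibles}. Thus $\Inv(\calE)=\{[u_h]\}$, and these classes are distinct by the separation part of \cref{lem:normal-form-separation}. The map $h\mapsto[u_h]$ is a homomorphism via $m_{h,k}$, so it is an isomorphism onto $\Inv(\calE)$.

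I do not expect a serious obstacle. The one point to check carefully is that the components $\calE_h$ are well defined, that is, independent of the representation $Xu_h$. This is exactly what the separation statement of \cref{lem:normal-form-separation} provides.
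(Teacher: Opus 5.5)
Your proposal is correct and follows the paper's proof essentially step for step. Both arguments use the same four ingredients:
\begin{itemize}
\item normal-form separation, which makes the components disjoint and nonzero;
\item conjugation, which gives $(\calB u_h)(\calB u_k)\subseteq\calB u_{hk}$;
\item $a_s\subset a_sa_s^*$ together with $ZZ^*\in\calB$ for homogeneous $Z$, which give $\calB=\Ad(\calE)$;
\item \cref{cor:neutral-no-invertibles}, which identifies the invertibles.
\end{itemize}
Your extra verification that duals respect the grading is harmless but not needed, since in a fusion category it follows automatically from multiplicativity.
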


\begin{proof}
Normal-form separation makes the components disjoint and nonzero, and
conjugation gives $(\calB u_h)(\calB u_k)\subseteq\calB u_{hk}$.  Every
$a_s$ is a summand of $a_sa_s^*$, so $\calB\subseteq\Ad(\calE)$.  Conversely
$(Xu_h)(Xu_h)^*\in\calB$, hence $\Ad(\calE)\subseteq\calB$.  If $Xu_h$ is
invertible, then $X$ is invertible in $\calB$, hence $X=\mathbf1$.
\end{proof}

\begin{theorem}[Crossed-product presentation]
\label{thm:automatic-crossed-product}
The choices $(u_h,m_{h,k})$ determine a categorical conjugation action
$\rho$ on $\calB$ and a graded tensor equivalence
\[
 \Phi:\calB\rtw_\rho^\omega H\xrightarrow{\ \simeq\ }\calE,
 \qquad \Phi(X,h)=X\otimes u_h.
\]
Changing the representatives or multiplication maps changes the coherent
outer action by an equivalent one and therefore gives an equivalent crossed
product.
\end{theorem}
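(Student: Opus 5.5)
The plan is to build the action $\rho$ and the tensorator of $\Phi$ explicitly from the chosen data, and then use \cref{lem:normal-form-separation} and \cref{prop:B-equals-adjoint} to see that $\Phi$ is an equivalence.

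\emph{The action.} Set $\rho_h=\Ad_{u_h}$ restricted to $\calB$. By \cref{lem:neutral-free-product}, conjugation preserves $\calB$. The tensor structure $J^h_{X,Y}$ is the composite
\[
 (u_hX\bar u_h)(u_hY\bar u_h)\to u_hX(\bar u_hu_h)Y\bar u_h\to u_hXY\bar u_h,
\]
built from associators and the evaluation $\bar u_h\otimes u_h\cong\mathbf1$ obtained from $\operatorname{coev}_h^{-1}$. The map $J^h_0$ comes from $\operatorname{ev}_h$. The compositors $\mu_{h,k}$ are those fixed before \eqref{eq:constructed-marking-transport}. I would check the action pentagon, the unit axioms and the monoidality of $\mu_{h,k}$ by direct string-diagram calculation. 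In that calculation, the failure of the $m$'s to be associative on the left is measured by $\omega(h,k,\ell)$ through \eqref{eq:representative-omega}. On the right, the mate $m^\vee$ produces the inverse scalar, and the two cancel. So $\rho$ is a genuine, untwisted categorical action, while $\omega$ survives only in the crossed-product associator.

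\emph{The functor.} Define $\Phi(X,h)=X\otimes u_h$ and extend additively. On morphisms, $\Phi$ is $f\mapsto f\otimes\operatorname{id}$. The tensorator
\[
 \Phi_2:(Xu_h)(Yu_k)\to X\rho_h(Y)u_{hk}
\]
is the composite
\[
 Xu_hYu_k\cong Xu_hY(\bar u_hu_h)u_k\cong X\rho_h(Y)(u_hu_k)\xrightarrow{m_{h,k}}X\rho_h(Y)u_{hk}.
\]
The main obstacle is the hexagon for $\Phi_2$ against the crossed-product associator, which is $\omega(h,k,\ell)$ times $\alpha_\calB$, $\mu^{-1}$ and $J^h$. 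I would reduce it to a single identity on the pointed strands. After all $\calB$-strands are slid past the $u$'s using the zig-zag identities, both sides differ only by the two bracketings of $u_hu_ku_\ell\to u_{hk\ell}$. By \eqref{eq:representative-omega} these two bracketings differ exactly by $\omega(h,k,\ell)$, and the $\mu$ and $J$ factors match the coevaluation insertions. For bookkeeping I would first strictify $\calE$ by Mac Lane coherence, so that only the $m$'s, the duality maps and $\omega$ remain.

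\emph{Equivalence and independence of choices.} $\Phi$ is fully faithful: for simples,
\[
 \Hom(Xu_h,Yu_k)=0 \quad (h\ne k),
\]
and for $h=k$ it equals $\Hom_\calB(X,Y)$ because $-\otimes u_h$ is an autoequivalence. This is the separation in \cref{lem:normal-form-separation}. $\Phi$ is essentially surjective by the normal form in the same lemma. It is graded by \cref{prop:B-equals-adjoint}.

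Now change the data to $u'_h\cong u_h$ via isomorphisms $\theta_h$, and to new multiplication maps $m'$. Then conjugation by $\theta_h$ gives monoidal isomorphisms $\rho'_h\Rightarrow\rho_h$. These are compatible with the compositors up to a $2$-cochain $\beta$ with $\omega'=\omega\,d\beta$. So the two data are equivalent coherent actions, and the induced crossed products are equivalent. Concretely, the identity on objects with tensorator rescaled by $\beta$ and $\theta$ gives this equivalence. It commutes with the two functors $\Phi$ and $\Phi'$ up to the evident natural isomorphism $X\theta_h$.
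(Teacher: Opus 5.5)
Your proposal is correct and follows essentially the paper's own proof. Both take $\rho_h=\Ad_{u_h}$ with $J^h$ built from the duality maps, and compositors from $m_{h,k}$ and the inverse of its mate, whose scalar defects cancel. Both define the tensorator of $\Phi$ by inserting $\operatorname{coev}_h$ and applying $m_{h,k}$, reduce its coherence to \eqref{eq:representative-omega}, and get full faithfulness and essential surjectivity from \cref{lem:normal-form-separation}.

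The only divergence is choice-independence: you construct the comparison directly, whereas the paper cites Galindo's biequivalence. One point there should be sharpened. Because $\Ad$ kills scalars, exactly as you observed for $m$ and $m^\vee$, the cells $\Ad(\theta_h)$ are strictly compatible with the compositors. The cochain $\beta$ enters only as the tensorator rescaling $\kappa$, with $d\beta$ equal to the ratio of the two cocycles, as in \cref{lem:marked-galindo-specialization}.
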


\begin{proof}
Put $\rho_h(X)=(u_h\otimes X)\otimes\bar u_h$.  Associativity, evaluation,
and coevaluation equip $\rho_h$ with tensor structure $J^h$, while
$m_{h,k}$ and the inverse of its left-dual mate give the compositors
$\mu_{h,k}:\rho_h\rho_k\Rightarrow\rho_{hk}$.  The scalar defect in
$m_{h,k}$ occurs once on each side of conjugation and cancels, so these
compositors satisfy the categorical-action pentagon.  Equation
\eqref{eq:representative-omega} is the scalar associativity defect, so the
crossed-product convention of Section~\ref{sec:preliminaries} applies.  The
canonical conjugation map
\[
 c_{h,Y}:u_h\otimes Y\longrightarrow\rho_h(Y)\otimes u_h
\]
is obtained by reassociating and inserting $\operatorname{coev}_h$.
The tensorator of $\Phi$ is
\[
 (X\otimes u_h)\otimes(Y\otimes u_k)
 \xrightarrow{\ \alpha,\ \operatorname{id}_X\otimes c_{h,Y}\otimes
 \operatorname{id}_{u_k}\ }
 (X\otimes\rho_hY)\otimes(u_h\otimes u_k)
 \xrightarrow{\ \operatorname{id}\otimes m_{h,k}\ }
 (X\otimes\Ad_{u_h}Y)\otimes u_{hk}.
\]
Its pentagon is \eqref{eq:representative-omega}.  Normal-form separation
shows that $\Phi$ is fully faithful and dominant.  Choice-independence at the
level of coherent outer actions is the biequivalence of
\cite[Theorem~4.1]{Galindo2011}.  Here $\Inv(\calB)=1$ by
\cref{cor:neutral-no-invertibles}, so no additional invertible object in the
neutral component enters this specialization.
Rescaling the chosen evaluation and coevaluation maps changes these formulas
by the monoidal cells and scalar cochains allowed in the action equivalence
of \cref{sec:classification}; it therefore does not change the classified
composite.
\end{proof}

\section{Equivariant perfect matchings}
\label{sec:group-actions}

For the conjugate family indexed by $H/K$, the Coxeter argument in
\cref{cor:transitive-matching} shows that the noncommutation graph is empty
or a perfect matching.  Tensor autoequivalences preserve commutation and the
ordered two-generator tensor subcategories, so the matching is
$H$-invariant.  It remains to describe such equivariant perfect matchings.

\begin{lemma}[Normalizer-quotient parametrization]
\label{lem:normalizer-involutions}
Fixed-point-free $H$-equivariant involutions of $H/K$ are in bijection with
nontrivial elements
\[
 \bar n\in N_H(K)/K,
 \qquad \bar n^2=1.
\]
\end{lemma}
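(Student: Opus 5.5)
The plan is to use the standard description of $H$-equivariant self-maps of a transitive $H$-set. An $H$-equivariant map $\sigma:H/K\to H/K$ is determined by the image of the base coset $K$. Write $\sigma(K)=nK$. Equivariance forces $\sigma(hK)=hnK$ for all $h\in H$. This is well defined exactly when $kn\in nK$ for every $k\in K$, that is, when $n^{-1}Kn\subseteq K$. Since $H$ is finite, this is equivalent to $n\in N_H(K)$. Conversely, every $n\in N_H(K)$ defines an equivariant map $hK\mapsto hnK$, and this map depends only on the class $\bar n=nK\in N_H(K)/K$. So we get a bijection
\[
 N_H(K)/K\;\longleftrightarrow\;\operatorname{End}_H(H/K),
 \qquad \bar n\longmapsto \sigma_{\bar n},\quad \sigma_{\bar n}(hK)=hnK .
\]

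Next I would check that this bijection turns composition into multiplication. We have $\sigma_{\bar n}\sigma_{\bar m}(hK)=\sigma_{\bar n}(hmK)=hmnK$. Hence $\sigma_{\bar n}\circ\sigma_{\bar m}=\sigma_{\bar m\bar n}$, so the bijection is an anti-isomorphism onto the group of equivariant automorphisms. For the involution question the order reversal is harmless. The identity $\sigma_{\bar n}^2=\mathrm{id}$ holds exactly when $\bar n^2=1$ in $N_H(K)/K$.

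For fixed points, $\sigma_{\bar n}(hK)=hK$ means $hnK=hK$, which means $n\in K$. This condition does not depend on $h$. So $\sigma_{\bar n}$ is either the identity or fixed-point-free, according to whether $\bar n$ is trivial. Combining the two steps, the fixed-point-free equivariant involutions correspond exactly to the nontrivial elements $\bar n$ with $\bar n^2=1$, as claimed.

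There is no real obstacle here. The only points needing care are the well-definedness step, where finiteness turns $n^{-1}Kn\subseteq K$ into normalization, and using a single convention (left cosets with left action) throughout. As a final remark, I would note that a perfect matching is the same as a fixed-point-free involution. This lets the lemma be applied directly to the $H$-invariant matching $M$ of \eqref{eq:block-identification}.
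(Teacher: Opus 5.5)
Your proof is correct and follows the paper's own proof essentially step for step. Both determine the equivariant map by the image $nK$ of the base coset, identify well-definedness with $n\in N_H(K)$, note that composition reverses order (which leaves the involutions unchanged), and observe that a fixed point forces $n\in K$. Your explicit use of finiteness to upgrade $n^{-1}Kn\subseteq K$ to $n\in N_H(K)$ is a detail the paper leaves implicit.
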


\begin{proof}
An equivariant map $f:H/K\to H/K$ is determined by $f(K)=nK$, and
equivariance forces $f(hK)=hnK$.  This is well-defined exactly when
$n\in N_H(K)$.  Composition corresponds to the opposite multiplication
convention in $N_H(K)/K$, which does not change the set of involutions.  The
map squares to the identity exactly when $n^2\in K$.  If it fixes some
$hK$, then cancellation in $hnK=hK$ gives $nK=K$; conversely, $n\in K$
gives the identity.  Thus the nontrivial involutions are precisely the
fixed-point-free ones.
\end{proof}

It follows that exactly one of the following holds:
\begin{align}
 \calB_A&=\Fib^{\boxtimes H/K},\label{eq:type-A-B}\\
 \calB_B&=\TT^{\boxtimes(H/K)/\langle\bar n\rangle},
 \label{eq:type-B-B}
\end{align}
where the second case is parametrized by a nontrivial involution
$\bar n\in N_H(K)/K$.

The expression in \eqref{eq:type-B-B} is used as an $H/K$-marked category.
For each block $O=\{s,s\bar n\}$ its two tensor embeddings of $\Fib$ are
labelled by the two elements of $O$.  One may choose an ordering to
identify the block with the fixed ordered model
$(\TT,\iota_a,\iota_b)$; reversing the ordering is implemented by
precomposing with $\sigma$, and the fixed cell
$\epsilon:\sigma^2\Rightarrow\operatorname{id}_{\TT}$ identifies a double
reversal.  The ordering is therefore auxiliary and is not an additional
classification invariant.  An action used below is a full categorical
action together with coherent intertwiners of all labelled embeddings; it
is not specified merely by a permutation of the labels or by a list of
block reversals.

\section{Classification of finite Fibonacci--pointed composites}
\label{sec:classification}

Fix a finite group $G$, a normalized cocycle
$\Omega\in Z^3(G,\CC^\times)$, and the category $\Fib$ from
\cref{sec:preliminaries}.

\begin{definition}[Classification datum]
\label{def:classification-datum}
An admissible datum consists of the following.
\begin{enumerate}[label=\textup{(\roman*)},leftmargin=2.8em]
\item An epimorphism $q:G\twoheadrightarrow H$, a subgroup $K\le H$, a
normalized cocycle $\omega\in Z^3(H,\CC^\times)$, and a normalized
$2$-cochain $\eta\in C^2(G,\CC^\times)$ such that
\[
 d\eta=\frac{\Omega}{q^*\omega}.
\]
\item Put $S=H/K$.  An $S$-marked block category
$(\calB,\{\iota_s\}_{s\in S})$ of one of the following forms:
\begin{align*}
 \textup{(A)}\quad &\calB=\Fib^{\boxtimes H/K};\\
 \textup{(B)}\quad &\calB=\TT^{\boxtimes(H/K)/\langle\bar n\rangle},
\end{align*}
where each
\[
 \iota_s:\Fib\longrightarrow\calB
\]
is a specified tensor embedding.  In \textup{(A)} these are the factor
embeddings; write $a_s=\iota_s(\tau)$.  In \textup{(B)}, $\bar n$ is a
nontrivial element of order two
in $N_H(K)/K$, and the factor indexed by $\{s,s\bar n\}$ carries the two
marked embeddings $\iota_s,\iota_{s\bar n}$.  Choosing the opposite ordering
of that pair precomposes its identification with the fixed model $\TT$ by
$\sigma$; the cell $\epsilon:\sigma^2\Rightarrow\operatorname{id}_{\TT}$
makes this change involutive.  No additional ordering is retained beyond
the labelled embeddings.
\item A categorical action $(\rho,J,\mu)$ of $H$ on $\calB$: tensor
autoequivalences $\rho_h$ with specified tensor structures
\[
 J^h_{X,Y}:\rho_h(X)\otimes\rho_h(Y)\longrightarrow\rho_h(X\otimes Y),
 \qquad J^h_0:\mathbf1\longrightarrow\rho_h(\mathbf1),
\]
coherent
monoidal natural isomorphisms
$\mu_{h,k}:\rho_h\rho_k\Rightarrow\rho_{hk}$, and monoidal natural
isomorphisms of marked embeddings
\begin{equation}\label{eq:marked-action-cell}
 \xi_{h,s}:\rho_h\iota_s\Longrightarrow\iota_{hs}.
\end{equation}
They are normalized at $e\in H$ and satisfy
\begin{equation}\label{eq:marked-action-coherence}
 \xi_{hk,s}\circ(\mu_{h,k}\iota_s)
 =\xi_{h,ks}\circ(\rho_h\xi_{k,s})
 \qquad(h,k\in H,\ s\in S).
\end{equation}
Thus the datum records the action on the marked tensor functors, not only on
their generating objects.  In type \textup{(B)}, a reversal of a chosen
presentation of a block is encoded by the corresponding pair of cells
\eqref{eq:marked-action-cell} and by the fixed $(\sigma,\epsilon)$.
\end{enumerate}
\end{definition}

The tensorators $J^h$, the transformations $\mu_{h,k}$, the cells $\xi_{h,s}$,
and all their coherence are part of the datum; an action on labels alone is
insufficient.

\begin{lemma}[No hidden neutral invertible data]
\label{lem:no-hidden-neutral-data}
For either marked block category in \cref{def:classification-datum},
\[
 \Inv(\calB)=1,
 \qquad
 \operatorname{Aut}_{\otimes}(\operatorname{id}_{\calB})=1,
 \qquad
 \Inv(Z(\calB))=1.
\]
\end{lemma}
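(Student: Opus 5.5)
We prove the three vanishing statements in turn, using only the facts that $\calB$ is a Deligne product of copies of $\Fib$ or of $\TT$, and that each block is generated by Fibonacci objects.

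\emph{Invertible objects.} By \cref{thm:coxeter-rigidity}, applied to the marked family $\{a_s\}$, or directly from the block factorization \cref{prop:exact-block-factorization}, every simple object of $\calB$ has Frobenius--Perron dimension $\ph^{\ell}$ for some $\ell\ge0$. Equality holds with $\ell=0$ only for the unit. This is \cref{cor:neutral-no-invertibles}, so $\Inv(\calB)=1$.

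\emph{Monoidal automorphisms of the identity.} Recall that $\operatorname{Aut}_\otimes(\operatorname{id}_\calB)$ is canonically the group of characters of the universal grading group. We have already seen, in the proof of \cref{thm:many-fib-blocks}, that this grading group is trivial: any simple $x$ with $x\subset x^2$ has degree $g$ with $g=g^2$. So it suffices to argue directly. A monoidal automorphism $\theta$ of $\operatorname{id}_\calB$ acts on each distinguished Fibonacci object $a_s$ by a scalar $z$. Naturality on the two summands of $a_s^{\otimes2}\cong\mathbf1\oplus a_s$ gives $z^2=1$ and $z^2=z$, exactly as in \cref{lem:fib-marking-rigidity}. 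Hence $z=1$. Since $\theta$ is monoidal and the $a_s$ tensor-generate $\calB$, $\theta$ is trivial on every summand of every tensor word, and so $\theta=\id$.

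\emph{Invertible objects of the center.} Here I would use that the forgetful functor $Z(\calB)\to\calB$ sends an invertible object $(X,\beta)$ to an invertible object $X$. By the first step, $X\cong\mathbf1$. Half-braidings on $\mathbf1$ are monoidal natural automorphisms of $\operatorname{id}_\calB$: a half-braiding $\mathbf1\otimes Y\to Y\otimes\mathbf1$ is a natural family $\theta_Y$, and the hexagon axiom is exactly the monoidality of $\theta$. By the second step, $\beta$ is trivial, so $(X,\beta)\cong\mathbf1$ in $Z(\calB)$. Equivalently, I would quote the standard exact sequence $1\to\widehat{U(\calB)}\to\Inv Z(\calB)\to\Inv(\calB)$, whose outer terms both vanish.

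The only point requiring care is identifying half-braidings on the unit with monoidal automorphisms of the identity functor, including the unit normalization. I expect this to be the main, though mild, obstacle. Both categories are fixed explicit models, so the argument does not depend on the labelled embeddings. Moreover, nothing above uses unitarity.
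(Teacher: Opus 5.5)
Your proposal is correct and is essentially the paper's proof. You quote \cref{cor:neutral-no-invertibles} for the first assertion. You force the scalar on each $a_s$ to be $1$ using the two summands of $a_s^{\otimes2}\cong\mathbf1\oplus a_s$, then propagate by tensor generation and naturality. Finally, you deduce the central statement from the facts that an invertible object of $Z(\calB)$ has underlying object $\mathbf1$ and that a half-braiding on $\mathbf1$ is a monoidal automorphism of $\operatorname{id}_{\calB}$. Your side remarks about characters of the universal grading group and the exact sequence for $\Inv(Z(\calB))$ are valid alternatives but are not needed.
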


\begin{proof}
The first assertion is \cref{cor:neutral-no-invertibles}.  Let
$z$ be a monoidal natural automorphism of $\operatorname{id}_{\calB}$.
On the simple object $a_s=\iota_s(\tau)$ write
$z_{a_s}=c_s\operatorname{id}_{a_s}$.  The two summands of
$a_s^{\otimes2}\cong\mathbf1\oplus a_s$ give
$c_s^2=1$ and $c_s^2=c_s$, respectively, hence $c_s=1$.  The objects $a_s$
tensor-generate $\calB$, so naturality with the inclusions and projections of
simple summands gives $z=\operatorname{id}$.  Finally, an invertible object
of $Z(\calB)$ has invertible underlying object and hence underlying object
$\mathbf1$.  A half-braiding on $\mathbf1$ is the same as a monoidal natural
automorphism of $\operatorname{id}_{\calB}$, so it is trivial.
\end{proof}

These three triviality statements determine the form of graded equivalences
used below.  The first prevents a homogeneous unit from acquiring a
nontrivial invertible factor in the neutral component.  The second eliminates
monoidal automorphisms of the neutral identity functor, and the third
eliminates central invertible lifts.  Consequently the remaining homogeneous
coherence is scalar.

\begin{lemma}[Marked scalar normal form for crossed-product equivalences]
\label{lem:marked-galindo-specialization}
Let $(\calB,\rho,\omega)$ and $(\calB',\rho',\omega')$ be data of the
preceding form, and let $\theta:H\xrightarrow{\sim}H'$ be a group
isomorphism.  Up to graded monoidal natural isomorphism, a
$\theta$-graded tensor equivalence
\[
 P:\calB\rtw_\rho^\omega H
   \longrightarrow\calB'\rtw_{\rho'}^{\omega'}H'
\]
is determined by a tensor equivalence $T:\calB\to\calB'$, monoidal natural
isomorphisms
\[
 \gamma_h:T\rho_h\Longrightarrow\rho'_{\theta(h)}T,
\]
and a normalized cochain $\kappa\in C^2(H,\CC^\times)$ satisfying
\begin{align}
 \gamma_{hk}\circ T(\mu_{h,k})
 &=\mu'_{\theta(h),\theta(k)}T\circ
   (\rho'_{\theta(h)}\gamma_k)\circ(\gamma_h\rho_k),
 \label{eq:scalar-normal-form-action}\\
 d\kappa&=\frac{\omega}{\theta^*\omega'}.
 \label{eq:scalar-normal-form-cocycle}
\end{align}
Explicitly, $P(X,h)=(T(X),\theta(h))$, and, writing $T_2$ for the
tensorator of $T$, its tensorator on degrees $(h,k)$ is
\begin{equation}\label{eq:scalar-normal-form-tensorator}
 \kappa(h,k)\,
 T_2(X,\rho_hY)\circ
 \bigl(\operatorname{id}_{T(X)}\otimes\gamma_{h,Y}^{-1}\bigr).
\end{equation}
Conversely, these formulas define a $\theta$-graded tensor equivalence.
There are no additional neutral invertible or central parameters.
\end{lemma}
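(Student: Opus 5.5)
The plan is to specialize Galindo's description of graded equivalences between crossed products \cite[Theorem~4.1]{Galindo2011} and then use \cref{lem:no-hidden-neutral-data} to remove every parameter except $T$, $\gamma$ and $\kappa$. The converse direction is a direct check, so we do it first. Given $(T,\gamma,\kappa)$ satisfying \eqref{eq:scalar-normal-form-action} and \eqref{eq:scalar-normal-form-cocycle}, define $P$ on homogeneous objects by $P(X,h)=(T(X),\theta(h))$ and extend additively; the tensorator is \eqref{eq:scalar-normal-form-tensorator}. The hexagon for this tensorator against the crossed-product associators of \cref{sec:preliminaries} splits into two factors. The $\calB$-part reduces, via naturality of $T_2$, the monoidality of the $\gamma_h$ (which intertwines $J^h$ with $J'^{\theta(h)}$), and the compatibility \eqref{eq:scalar-normal-form-action} of $\gamma$ with the compositors. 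The scalar part is exactly $\omega(h,k,\ell)$ against $\omega'(\theta h,\theta k,\theta\ell)$ times the coboundary of $\kappa$, and it closes by \eqref{eq:scalar-normal-form-cocycle}. Finally, $P$ is an equivalence because $T$ is one and $\theta$ is bijective.

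For the forward direction, let $P$ be a $\theta$-graded tensor equivalence. Its restriction to degree $e$ is a tensor equivalence $T:\calB\to\calB'$.
\begin{enumerate}[label=\textup{(\arabic*)}]
\item For each $h$, the image $P(\mathbf1,h)$ is an invertible object of degree $\theta(h)$. By \cref{lem:no-hidden-neutral-data}, $\Inv(\calB')=1$, so $P(\mathbf1,h)\cong(\mathbf1,\theta(h))$. Fix such isomorphisms, normalized at $e$.
\item Transport the tensorator of $P$ along $(X,e)\otimes(\mathbf1,h)=(X,h)$. This identifies $P(X,h)$ naturally with $(T(X),\theta(h))$. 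In this normal form, the component of the tensorator on degrees $(h,e)$ is fixed by $T_2$.
\item The component on degrees $(e,h)$, namely $(\mathbf1,h)\otimes(Y,e)\cong(\rho_hY,h)$, compared in both orders, produces a natural isomorphism $\gamma_{h,Y}:T\rho_hY\to\rho'_{\theta h}TY$. Monoidality of $\gamma_h$ comes from the hexagon on degrees $(h,e,e)$.
\item The component on degrees $(h,k)$ applied to units is an automorphism of $(\mathbf1,\theta(hk))$, hence a scalar $\kappa(h,k)$. The general tensorator is then forced to be \eqref{eq:scalar-normal-form-tensorator}.
\item The remaining hexagons are the identities \eqref{eq:scalar-normal-form-action} and \eqref{eq:scalar-normal-form-cocycle}.
\end{enumerate}

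The main obstacle is showing that no further invariants survive and that the choices made in steps (1)–(4) change $P$ only by a graded monoidal natural isomorphism. In Galindo's general framework, the degree-$h$ unit may be sent to an invertible object $(v_h,\theta h)$ with $v_h\in\Inv(\calB')$, which is excluded by $\Inv(\calB')=1$. The comparison maps also carry natural automorphisms of $T$ and of the $\rho'$. A re-choice of the isomorphisms in (1) is an automorphism of $(\mathbf1,\theta h)$. We would show that any such ambiguity is a monoidal automorphism of $\operatorname{id}_{\calB'}$ (trivial by \cref{lem:no-hidden-neutral-data}) or, in the central-lift guise, an invertible object of $Z(\calB')$ (also trivial). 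What is left is rescaling by a $1$-cochain $\beta$, which alters $\kappa$ by $d\beta$ and is realized by an evident graded monoidal isomorphism. This gives the normal form up to graded monoidal isomorphism with no additional neutral invertible or central parameters.
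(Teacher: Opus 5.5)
Your proposal is correct and follows essentially the same route as the paper's proof. Both arguments specialize Galindo's description and use $\Inv(\calB')=1$ to identify each $P(\mathbf1,h)$ with $(\mathbf1,\theta(h))$, then read off $T$, $\gamma_h$ and the scalar $\kappa$ from the neutral, mixed and unit components of the tensorator. Both obtain \eqref{eq:scalar-normal-form-action} and \eqref{eq:scalar-normal-form-cocycle} from the coherence on mixed and all-unit triples, and both note that re-choosing the unit identifications changes $\kappa$ by a coboundary while \cref{lem:no-hidden-neutral-data} excludes further parameters. One cosmetic point: the degree labels in your steps (2) and (3) are swapped, since $(X,e)\otimes(\mathbf1,h)$ has degrees $(e,h)$ and $(\mathbf1,h)\otimes(Y,e)$ has degrees $(h,e)$.
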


\begin{proof}
After regrading the target by $\theta$, the description of $1$- and $2$-cells
in \cite[Section~4 and Theorem~4.1]{Galindo2011} specializes as follows.
Restricting $P$ to the neutral
component gives $T$.  In Galindo's normal form the image of the homogeneous
unit $(\mathbf1,h)$ contains an invertible object of $\calB'$.  It is
$\mathbf1$ by \cref{lem:no-hidden-neutral-data}; choose this identification
for every $h$.  The mixed components of the tensorator of $P$ then give the
monoidal transformations $\gamma_h$, while its component on
$(\mathbf1,h)\otimes(\mathbf1,k)$ is a nonzero endomorphism of the simple
unit and hence a scalar $\kappa(h,k)$.

The monoidal-functor coherence for mixed triples is precisely
\eqref{eq:scalar-normal-form-action}.  When all three objects are
homogeneous units it is \eqref{eq:scalar-normal-form-cocycle},
with the differential convention of \eqref{eq:eta-differential}.  The
remaining tensorator components are forced by the mixed pentagons and give
\eqref{eq:scalar-normal-form-tensorator}.  Conversely, the monoidality of
$T$ and $\gamma$, together with
\eqref{eq:scalar-normal-form-action}--\eqref{eq:scalar-normal-form-cocycle},
verifies the tensorator pentagon directly.

A different choice of the identifications of the homogeneous units changes
$\kappa$ by the coboundary of a normalized scalar $1$-cochain; this is
exactly a $2$-cell in Galindo's description.  The last two assertions of
\cref{lem:no-hidden-neutral-data} exclude any further central lift or
neutral natural-automorphism parameter.
The marked cells $t_s$ and the source cochain $\lambda_g$ used below are
additional compatibility data for the retained source functor.  They are
verified directly by the monoidal-naturality equations below and are not
part of the unmarked crossed-product biequivalence itself.
\end{proof}

\begin{theorem}[Fibonacci--pointed classification]
\label{thm:fib-pointed-classification}
Equivalence classes of composites
\[
 F:\Fib*\Vec_G^\Omega\longrightarrow\calE
\]
are in bijection with equivalence classes of admissible classification data.
Here equivalence of data is the relation specified in
\cref{sec:equivalence-data}.
The target associated to a datum is
\[
 \calE=\calB\rtw_\rho^\omega H,
\]
and the restrictions of $F$ are determined by
\[
 F(\tau)=\iota_K(\tau),
 \qquad F(g)=(\mathbf1,q(g)),
\]
with tensorator $\eta$ on the pointed factor.
\end{theorem}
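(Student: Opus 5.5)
We construct two mutually inverse maps between the set of composites and the set of data, and check that each respects the equivalence relations.

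\emph{Composite to datum.} Given $F$, we run the construction of \cref{sec:neutral-reduction}. The pointed image determines $q:G\twoheadrightarrow H$. The chosen representatives $(u_h,m_{h,k})$ give $\omega$ through \eqref{eq:representative-omega} and the pointed tensorator $\eta$ with $d\eta=\Omega/q^*\omega$. The conjugation stabilizer is $K$. \Cref{lem:neutral-free-product} and \cref{thm:many-fib-blocks} identify $\calB$ with a marked block category of type (A) or (B). In type (B), the $H$-invariant perfect matching is encoded, by \cref{lem:normalizer-involutions}, as a nontrivial involution $\bar n\in N_H(K)/K$. The conjugation action $\rho_h=\Ad_{u_h}$, with its $J^h$ and $\mu_{h,k}$, together with the cells $\xi_{h,s}$ of \eqref{eq:constructed-marking-transport}, gives item (iii). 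Here \eqref{eq:marked-action-coherence} holds automatically by the uniqueness in \cref{lem:fib-marking-rigidity}. \Cref{thm:automatic-crossed-product} then yields $\calE\simeq\calB\rtw_\rho^\omega H$. Under this equivalence $F(\tau)\mapsto\iota_K(\tau)$ and $F(g)\mapsto(\mathbf1,q(g))$, with tensorator $\eta$.

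\emph{Datum to composite.} Conversely, given an admissible datum, we form $\calE=\calB\rtw_\rho^\omega H$. We define $F$ on the Fibonacci factor by $\iota_K$ followed by the neutral inclusion. On the pointed factor we define it by $g\mapsto(\mathbf1,q(g))$ with tensorator $\eta$; this is valid because $d\eta=\Omega/q^*\omega$. The free-product property assembles these into one functor $F$. We then check three things:
\begin{itemize}
\item $\calE$ is fusion, and $F$ is dominant: the conjugates of $\iota_K(\tau)$ by $(\mathbf1,h)$ are $\iota_{hK}(\tau)$ up to the cells $\xi$, and these generate $\calB$.
\item The stabilizer recovered from $F$ is $K$. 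This holds because distinct labels give nonisomorphic generators.
\item Reapplying the first construction with $u_h=(\mathbf1,h)$ returns the original datum.
\end{itemize}
Conversely, \cref{thm:automatic-crossed-product} shows that the round trip starting from a composite returns an equivalent composite.

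\emph{Equivalences.} This is the main obstacle. An equivalence of composites retaining the source functor consists of a tensor equivalence $P:\calE\to\calE'$ together with monoidal isomorphisms $P F\Rightarrow F'$ on both factors. We would show that $P$ is automatically graded. Indeed, $\Ad(\calE)=\calB$ by \cref{prop:B-equals-adjoint}, so $P$ preserves the universal grading and induces an isomorphism $\theta:H\to H'$. This $\theta$ is compatible with $q,q'$ because $P$ intertwines the pointed parts, and it carries $K$ to $K'$.

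\Cref{lem:marked-galindo-specialization} then puts $P$ in scalar normal form $(T,\gamma,\kappa)$. Next we translate the retained source isomorphisms. On the Fibonacci factor they give marked cells $t_s:T\iota_s\Rightarrow\iota'_{\theta s}$. These are determined by $t_K$ and the cells $\xi,\xi',\gamma$, and \cref{lem:fib-marking-rigidity} shows they are unique once they exist. On the pointed factor they give a scalar cochain $\lambda_g$ relating $\eta$, $q^*\kappa$ and $\eta'$ via $d\lambda$.

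The relation of \cref{sec:equivalence-data} should be exactly this list: $\theta$, $T$, $\gamma$, $\kappa$, $t$ and $\lambda$, subject to \eqref{eq:scalar-normal-form-action}, \eqref{eq:scalar-normal-form-cocycle}, compatibility of $t$ with $\xi$, and the cochain relation for $\lambda$.

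The delicate points are two. First, natural isomorphisms between equivalences of composites must induce exactly the stated equivalence on data. This is where \cref{lem:no-hidden-neutral-data} is needed, to exclude extra parameters. Second, the auxiliary block orderings in type (B) must drop out. This uses the fixed pair $(\sigma,\epsilon)$. We would verify both by tracking the tensorator components \eqref{eq:scalar-normal-form-tensorator} on the generators $\iota_K(\tau)$ and $(\mathbf1,q(g))$ only, since these generate the free-product source.
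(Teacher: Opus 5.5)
Your proposal is correct and follows essentially the same route as the paper. From composite to datum you use the conjugate marked family, \cref{thm:many-fib-blocks}, \cref{lem:normalizer-involutions} and \cref{thm:automatic-crossed-product}; from datum to composite you use the free-product property, with dominance from the cells $\xi_{h,K}$; and you match the equivalences through $\Ad(\calE)=\calB$, the scalar normal form of \cref{lem:marked-galindo-specialization}, the transported cells $t_s$ and the cochain $\lambda$. The differences are only of emphasis, for example your explicit check that the recovered stabilizer is $K$ because distinct labels give nonisomorphic generators, which the paper leaves implicit.
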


\begin{proof}
Let $(\calE,F)$ be a composite.  Its pointed restriction determines the
epimorphism $q:G\twoheadrightarrow H$, the cocycle $\omega$ on the pointed
image, and the tensorator cochain $\eta$ with
$d\eta=\Omega/(q^*\omega)$.  The stabilizer of $a=F(\tau)$ is $K$, and
\cref{lem:neutral-free-product} identifies the conjugates of $a$ with the
transitive $H$-set $H/K$.

 The conjugated source embeddings give the markings $\iota_s$, and
 \eqref{eq:constructed-marking-transport} gives the cells $\xi_{h,s}$;
 associativity and uniqueness give
\eqref{eq:marked-action-coherence}.  The $H$-action is transitive on the
marked family, so \cref{thm:coxeter-rigidity,cor:transitive-matching} show
that its noncommutation graph is empty or a perfect matching.  The exact
factorization and tensor-splitting parts of \cref{thm:many-fib-blocks}
identify $\calB$ with the corresponding marked block product.  In the empty
case $\calB$ is of type \textup{(A)}.  In the perfect case
\cref{lem:normalizer-involutions} supplies the involution $\bar n$ and
$\calB$ is of type \textup{(B)} as an $S$-marked category.  Conjugation
supplies the full categorical action $(\rho,J,\mu)$.  Finally,
\cref{thm:automatic-crossed-product} gives
\[
 \calE\simeq\calB\rtw_\rho^\omega H.
\]

Conversely, $q$ and $\eta$ define a dominant tensor functor
$\Vec_G^\Omega\to\Vec_H^\omega$.  The marking $\iota_K$ is the
distinguished tensor embedding $\Fib\to\calB$.  The free-product universal property
combines them into
\[
 F:\Fib*\Vec_G^\Omega\longrightarrow\calB\rtw_\rho^\omega H.
\]
Its image contains all homogeneous units $(\mathbf1,h)$, $h\in H$, and, by the cells $\xi_{h,K}$, every marked conjugate
$a_{hK}$; these generate the crossed product, so $F$ is dominant; it is faithful by
\cref{lem:automatic-faithfulness}.  The two constructions are inverse up
to the equivalence of data described below and
\cref{lem:marked-galindo-specialization}.
\end{proof}

\subsection{Equivalence of classification data}\label{sec:equivalence-data}

Use the multiplicative inhomogeneous cochain differential.  Let
\[
 \mathfrak D=(H,K,q,\omega,\eta,\calB,\iota,\rho,J,\mu,\xi),
 \qquad
 \mathfrak D'=(H',K',q',\omega',\eta',\calB',\iota',\rho',J',\mu',\xi')
\]
be admissible data.  An equivalence $\mathfrak D\simeq\mathfrak D'$ consists
of:
\begin{enumerate}[label=\textup{(\roman*)},leftmargin=2.8em]
\item a group isomorphism $\theta:H\xrightarrow\sim H'$ with
$q'=\theta q$;
\item a $\theta$-equivariant bijection
$\beta:H/K\xrightarrow\sim H'/K'$ with $\beta(K)=K'$ which carries the
matching to the matching; equivalently, in type \textup{(B)},
$\beta(s\bar n)=\beta(s)\bar n'$;
\item a tensor equivalence $T:\calB\to\calB'$, monoidal natural
isomorphisms of markings
\[
 t_s:T\iota_s\Longrightarrow\iota'_{\beta(s)},
\]
and monoidal natural isomorphisms
\[
 \gamma_h:T\rho_h\Longrightarrow\rho'_{\theta(h)}T
\]
satisfying, with standard whiskering understood,
\begin{equation}\label{eq:action-equivalence-coherence}
 \gamma_{hk}\circ T(\mu_{h,k})
 =\mu'_{\theta(h),\theta(k)}T\circ
   (\rho'_{\theta(h)}\gamma_k)\circ(\gamma_h\rho_k);
\end{equation}
and the marked compatibility equations
\begin{equation}\label{eq:marked-equivalence-coherence}
 t_{hs}\circ T(\xi_{h,s})
 =\xi'_{\theta(h),\beta(s)}\circ
   (\rho'_{\theta(h)}t_s)\circ(\gamma_h\iota_s).
\end{equation}
In type \textup{(B)}, this equation is imposed on both marked embeddings of
every $\TT$ factor.  Thus a functor which reverses a chosen presentation of
a factor is recorded by the corresponding $t_s,t_{s\bar n}$ and the fixed
swap $\sigma$, without a separate choice of ordering;
\item a normalized $2$-cochain $\kappa\in C^2(H,\CC^\times)$ and a
normalized $1$-cochain $\lambda\in C^1(G,\CC^\times)$ such that
\begin{equation}\label{eq:explicit-cochain-equivalence}
 d\kappa=\frac{\omega}{\theta^*\omega'},
 \qquad
 \eta\,(q^*\kappa)=\eta'\,d\lambda.
\end{equation}
\end{enumerate}
The tensorator of the induced equivalence on homogeneous components is
multiplied by $\kappa(h,k)$.  Its pentagon is the first equation in
\eqref{eq:explicit-cochain-equivalence}.  On pointed simples, the tensorator
of the composite of the source functor with the induced equivalence is
\[
 \eta(g,h)\kappa(q(g),q(h)),
\]
whereas the tensorator of the second source functor is $\eta'(g,h)$.
Consequently the monoidal-naturality equation for components
$\lambda_g\operatorname{id}_{q'(g)}$ is
\[
 \lambda_{gh}\eta(g,h)\kappa(q(g),q(h))
 =\eta'(g,h)\lambda_g\lambda_h,
\]
which is exactly the second equation in
\eqref{eq:explicit-cochain-equivalence}.  Equations
\eqref{eq:action-equivalence-coherence} and
\eqref{eq:marked-equivalence-coherence}, together with
\cref{lem:marked-galindo-specialization}, give a tensor equivalence of the
crossed products and the component $t_K$ on the $\Fib$ source factor.  Hence
equivalent data produce equivalent composites.

Conversely, let an equivalence of composites be given.  The identities
$\calB=\Ad(\calE)$ and
$\Inv(\calE)=\{[u_h]\mid h\in H\}$ from
\cref{prop:B-equals-adjoint} recover the neutral component and induce the
group isomorphism $\theta$, its restriction $T$, the equality
$q'=\theta q$, and the equivariant coset map $\beta$.  Commutation of the
marked generators recovers the matching, so $\beta$ preserves the type
\textup{(B)} involution.  After regrading by $\theta$,
\cref{lem:marked-galindo-specialization} supplies the action intertwiners
$\gamma_h$ and the scalar cochain $\kappa$, with precisely
\eqref{eq:action-equivalence-coherence} and the first equation of
\eqref{eq:explicit-cochain-equivalence}.

The monoidal natural isomorphism of source functors restricts on the
$\Fib$ factor to $t_K$.  Since $\iota_s$ is the marked conjugate of
$\iota_K$, the tensorator of the target equivalence transports $t_K$ to
cells $t_s$ for all $s\in H/K$.  The monoidal-functor pentagon makes this
transport independent of a representative of $s$ and gives
\eqref{eq:marked-equivalence-coherence}.  On the pointed simples the same
source isomorphism gives scalars $\lambda_g$, and its tensorator equation is
the second equation of \eqref{eq:explicit-cochain-equivalence}.

Changing the representatives $u_h$, their multiplication maps, or the coset
representatives changes only $(T,t,\gamma,\kappa,\lambda)$ by the $2$-cells
described in \cref{lem:marked-galindo-specialization}.  Choosing the opposite
presentation of a $\TT$ factor precomposes its marked identification with
$\sigma$; $\epsilon$ identifies a double change, and the two cells $t_s$ in
\eqref{eq:marked-equivalence-coherence} record the change.  Thus no
ordering of a paired block or choice of representatives gives an additional class, and an
equivalence of composites recovers precisely the stated equivalence class
of data.

\section{Schematic examples and constraints}\label{sec:examples}

The examples below record only the group-theoretic shapes forced by the
classification.  They are schematic: a concrete composite still requires
the cocycle, tensorator cochain, and compatible marked categorical action
from \cref{def:classification-datum}.

Suppose first that $H$ is abelian.  Then $N_H(K)/K=H/K$, so type
\textup{(B)} of \cref{def:classification-datum} is determined by a
nonidentity element of order two
\[
 t\in H/K,\qquad t^2=e.
\]
The two possible identity components are
\[
 \Fib^{\boxtimes H/K}
 \quad\text{and}\quad
 \TT^{\boxtimes(H/K)/\langle t\rangle}.
\]

Let $G=C_N$.  Its pointed image is $H=C_r$ for some $r\mid N$, and a
transitive orbit $H/K$ is cyclic of order $d$ for some $d\mid r$.  The
second block category occurs exactly when $d$ is even.  After choosing the
categorical action and compatible cocycle, the crossed products have the
forms
\[
 \Fib^{\boxtimes d}\rtw C_r
 \quad\text{and}\quad
 \TT^{\boxtimes d/2}\rtw C_r.
\]
The element of order two pairs opposite labels, agreeing with the cyclic
action described in \cite[Remark~4.9]{EdieMichell2022}.

For $H=C_2$, the possible orbit sizes are one and two.  The identity
component is therefore one of $\Fib$, $\Fib\boxt\Fib$, and $\TT$.  For
$H=C_3$, a nontrivial transitive orbit has no equivariant fixed-point-free
involution, so an orbit of size three gives $\Fib^{\boxtimes3}$.  For a
nonabelian group, the possible pairings of $H/K$ are parametrized by the
order-two elements of $N_H(K)/K$, as in
\cref{lem:normalizer-involutions}.

\section{Forms over fields of characteristic zero}
\label{sec:characteristic-zero-fields}

The preceding classification is complex-linear.  This section determines
the corresponding quotient problems over an arbitrary field of
characteristic zero.  The two-Fibonacci problem has no additional forms
once its ordered source functor is retained.  The Fibonacci--pointed
classification has the same structural form, although its scalar
cohomology and categorical actions must be taken over the ground field.

\subsection{The rank-two input and scalar extension}

Let $k$ be a field of characteristic zero.
Here $\mathcal F_{\tau,k}$ denotes the model with the explicitly chosen
scalar parameter $\tau\in k$; this local scalar notation is distinct from
the object $\tau$ of the fixed complex category $\Fib$. Both embeddings of
$\KFib$ are retained in this section.  A finite semisimple tensor
category over $k$ will mean a $k$-linear rigid semisimple tensor category
with finite-dimensional Hom spaces, finitely many isomorphism classes of
simple objects, and $\End(\mathbf1)=k$.  It is \emph{split} if
\[
 \End(X)=k
 \qquad\text{for every simple object }X.
\]
No splitness assumption is included in the definition.  If $K/k$ is a
field extension, put
\begin{equation}\label{eq:scalar-extension-category}
 \calC_K=\Kar(\calC\otimes_kK).
\end{equation}
For objects already defined over $k$, scalar extension gives
\begin{equation}\label{eq:hom-scalar-extension}
 \Hom_{\calC_K}(X_K,Y_K)
 \cong \Hom_\calC(X,Y)\otimes_kK.
\end{equation}

\begin{definition}[Rank-two model and marked composites over $k$]
\label{def:fibonacci-type-k}
A Fibonacci-type category over $k$ is a finite semisimple rigid $k$-linear
tensor category with simple objects $\mathbf1,x$ and fusion rule
$x^{\otimes2}\cong\mathbf1\oplus x$.
Fix $\tau\in k$ with $\tau^2=\tau+1$.  The category
$\mathcal F_{\tau,k}$ is the split skeletal $k$-linear tensor category with
simple objects $\mathbf1,x$, fusion rule
\[
 x^{\otimes2}=\mathbf1\oplus x,
\]
normalized unit constraints, and fixed multiplicity-one fusion bases.  All
$1\times1$ associator blocks in these bases are $1$, and its only
nonscalar block is
\begin{equation}\label{eq:fibonacci-f-matrix-definition}
 F^{xxx}_x=
 \begin{pmatrix}
  b&b\\
  1&-b
 \end{pmatrix},
 \qquad b=\tau^{-1}=\tau-1.
\end{equation}
The pentagon equations reduce to $b^2+b=1$.

A $\tau$-marked Fibonacci object of a $k$-linear tensor category $\calC$ is
a tensor embedding $\iota:\mathcal F_{\tau,k}\to\calC$, not merely the
isomorphism class of $\iota(x)$.  A finite ordered two-Fibonacci composite
is a faithful dominant tensor functor
\[
 Q:\mathcal F_{\tau,k}*\mathcal F_{\tau,k}\longrightarrow\calC,
\]
and a finite Fibonacci--pointed composite is a faithful dominant tensor
functor
\[
 F:\mathcal F_{\tau,k}*\Vec_{G,k}^{\Omega}\longrightarrow\calE.
\]
In both cases equivalence retains the entire source functor.
\end{definition}

The input category is necessarily split.  Indeed, rigidity gives
$x^*\cong x$, and hence
\[
 \End(x)\cong\Hom(\mathbf1,x^*\otimes x)
 \cong\Hom(\mathbf1,\mathbf1\oplus x)=k.
\]

\begin{proposition}[Field of definition of the rank-two category]
\label{prop:fibonacci-field-of-definition}
A Fibonacci-type category exists over $k$ if and only if
$t^2-t-1$ has a root in $k$, equivalently if and only if $\sqrt5\in k$.
After a root
\[
 \tau\in k,\qquad \tau^2=\tau+1,
\]
is chosen, there is a unique split form with this parameter, up to tensor
equivalence.  Denote it by $\mathcal F_{\tau,k}$.
\end{proposition}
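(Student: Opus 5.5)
The plan has three parts: necessity, existence, and uniqueness of the split form with a chosen $\tau$.

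\emph{Necessity.} Suppose $\calC$ is a Fibonacci-type category over $k$ with simples $\mathbf1,x$. By the remark preceding the proposition, $x$ is self-dual and $\End(x)=k$, so $\calC$ is split. The multiplicity spaces $\Hom(x\otimes x,\mathbf1)$ and $\Hom(x\otimes x,x)$ are then one-dimensional over $k$. Choose bases for them. The associator on $x^{\otimes3}$ decomposes into blocks, and the only nonscalar one is a $2\times2$ matrix $F^{xxx}_x$ with entries in $k$. The pentagon equations over $k$ are the same polynomial equations as over $\CC$. After scalar extension to $\overline k$ via \eqref{eq:scalar-extension-category} and \eqref{eq:hom-scalar-extension}, the Moore--Seiberg/Ostrik computation recalled in \cref{rem:marked-types} applies. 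It shows that, after gauge normalization, the $(1,1)$-entry $b$ of this block satisfies $b^2+b=1$. I will check that this entry is gauge invariant: rescaling the trivalent bases by $\lambda$ (for the $x\otimes x\to x$ vertex) and $\nu$ (for the unit vertex) conjugates $F^{xxx}_x$ by a diagonal matrix, and conjugation leaves diagonal entries unchanged. Hence $b\in k$, so $\tau=b^{-1}=b+1\in k$ is a root of $t^2-t-1$. Equivalently $\sqrt5=2\tau-1\in k$.

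\emph{Existence.} Conversely, given a root $\tau\in k$, I define $\mathcal F_{\tau,k}$ skeletally by the matrix \eqref{eq:fibonacci-f-matrix-definition}. The pentagon equations are polynomial identities that were already verified over $\Q(\tau)\cong\KFib$. They reduce to $b^2+b=1$, which holds because $b=\tau-1$. The triangle axiom holds because the unit constraints are normalized. Rigidity follows from the invertibility of the $(\mathbf1,x,x)$ scalars, and semisimplicity is built into the skeletal construction.

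\emph{Uniqueness for fixed $\tau$.} Take any Fibonacci-type category whose invariant $b$ equals $\tau^{-1}$. Its skeletal data are an $F$-matrix over $k$ satisfying the pentagon. I will run the standard gauge-fixing argument using only $k$-rational rescalings. First normalize all $1\times1$ blocks: the pentagon forces them to be coboundaries in $\lambda,\nu$, and the normalization requires solving for $\lambda,\nu$ only by linear or multiplicative equations with coefficients in $k$, not by taking square roots. Then the off-diagonal entries of the $2\times2$ block have product $b(1+b)=1$ (from $F^2=I$, or from the pentagon). A remaining diagonal gauge conjugation, again by a $k$-rational scalar, makes the lower-left entry $1$. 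The resulting skeletal data coincide with those of $\mathcal F_{\tau,k}$, giving a tensor equivalence.

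The main obstacle is this last gauge normalization over a non-closed field. One must make sure that every rescaling used lies in $k$ and that no descent obstruction, such as a class in $H^2$ or a quadratic twist, is hidden in the $1\times1$ blocks. I would first try to write the pentagon relations among the scalar blocks explicitly and solve them successively by rational expressions. If that becomes messy, the fallback is to compare directly with \cref{lem:fib-marking-rigidity}: its proof shows the gauge group acts with trivial stabilizer. Then a Hilbert~90-type argument for the split torus $(k^\times)^2$ yields rational solutions.
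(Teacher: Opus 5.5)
Your proposal is correct in substance, and your existence step is the paper's. Necessity and uniqueness, however, go by a different route.

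For necessity, the paper uses the intrinsic M\"uger squared norm $|x|^2=\tau^2\in k$, which satisfies $(2|x|^2-3)^2=5$. You use instead the diagonal entry $b=F^{xxx}_{x;\mathbf1\mathbf1}$, which is invariant under vertex rescalings once units are normalized. The two invariants are related by $|x|^2=b^{-2}$. The squared norm needs no skeletal conventions, while your invariant reads $\tau$ directly off the associator.

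For uniqueness, the paper does not compute. It cites Etingof--Gelaki (Section~4.2.5): once $\Q(\sqrt5)\hookrightarrow k$ is fixed, there is no further split form. This is a Galois-descent statement and fits the descent framework used later in that section. Your explicit $k$-rational gauge fixing is more elementary and self-contained. It also shows concretely why the paper's asymmetric normal form is the rational one, rather than the symmetric gauge, which needs $\sqrt b$.

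Two corrections are needed.

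\emph{The off-diagonal product is wrong.} With diagonal $(b,-b)$, $F^2=I$ gives $b^2+qr=1$, so $qr=1-b^2=b$, not $b(1+b)=1$. With product $1$, your final normalization would give $\bigl(\begin{smallmatrix} b&1\\ 1&-b\end{smallmatrix}\bigr)$. That matrix is not a solution and does not coincide with $\mathcal F_{\tau,k}$. With the correct product, making the lower-left entry $1$ by a $k$-rational choice of $\nu/\lambda^2$ forces the upper-right entry to be $b$, as required.

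\emph{The obstacle you anticipate in the $1\times1$ blocks does not arise.} With strict units, the only nontrivial such block is $F^{xxx}_{\mathbf1}$. Its gauge factor is $\lambda\nu/(\lambda\nu)=1$, so it is an invariant, and the pentagon (equivalently, the geometric classification) forces it to equal $1$. Nothing needs to be solved there, so the Hilbert~90 fallback is unnecessary. Its stated premise is also inaccurate: the stabilizer of a solution in the gauge torus is $\{(c,c^2)\}$, coming from natural automorphisms of the identity functor. It is trivial only modulo those.
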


\begin{proof}
The equivalence between the two field conditions follows from
$\sqrt5=2\tau-1$ and $\tau=(1+\sqrt5)/2$.  For existence use the skeletal
presentation in \eqref{eq:fibonacci-f-matrix-definition}; its pentagons use
only $b^2+b=1$, so it is defined over $\Q(\tau)\subset k$.  After adjoining
$\sqrt b$, this gauge is equivalent
to the usual symmetric gauge; the latter square root is therefore not a
field-of-definition condition.

Conversely, after extension to an algebraic closure, the Moore--Seiberg
rank-two classification, as recalled by Ostrik, identifies the category
with the Yang--Lee category or its Galois conjugate
\cite[Introduction and Main Theorem]{Ostrik2003}.  The M\"uger squared norm of the
nontrivial simple object is
\begin{equation}\label{eq:fibonacci-squared-norm}
 |x|^2=\tau^2=1+\tau=\frac{3\mathbin\pm\sqrt5}{2},
 \qquad (2|x|^2-3)^2=5.
\end{equation}
Thus every field of definition contains $\sqrt5$.  Etingof--Gelaki prove
this necessity and the existence of a split form over $\Q(\sqrt5)$ in
\cite[Section~4.2.5]{EtingofGelaki2012}.
These field-of-definition results are theirs.  To the authors' knowledge,
the scalar-extension argument for the ordered two-generator quotient below
does not occur in the cited literature.  Their result also
shows that fixing the embedding
$\Q(\sqrt5)\hookrightarrow k$, or equivalently the chosen root $\tau$,
leaves no further split form.
\end{proof}

\begin{lemma}[Rigidity of a ground-field Fibonacci marking]
\label{lem:k-fib-marking-rigidity}
Let $\iota_1,\iota_2:\mathcal F_{\tau,k}\to\calC$ be tensor embeddings.  If
$\iota_1(x)\cong\iota_2(x)$, then there is a unique monoidal natural
isomorphism $\iota_1\Rightarrow\iota_2$.
\end{lemma}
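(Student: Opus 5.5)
The plan follows the proof of \cref{lem:fib-marking-rigidity}, but works over $k$ and avoids the Moore--Seiberg input over $\CC$. First corestrict both embeddings to the replete tensor subcategory $\calA=\langle a\rangle$ generated by the common class $a=\iota_1(x)\cong\iota_2(x)$. Each $\iota_i$ is fully faithful, so both corestrictions are tensor equivalences $\bar\iota_i:\mathcal F_{\tau,k}\to\calA$. Choose a tensor quasi-inverse of $\bar\iota_2$. It then suffices to show that every tensor autoequivalence $\Psi$ of $\mathcal F_{\tau,k}$ with $\Psi(x)\cong x$ is monoidally isomorphic to the identity; transporting back through $\bar\iota_2$ gives $\iota_1\Rightarrow\iota_2$.

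For this step I would work directly in the skeletal presentation \eqref{eq:fibonacci-f-matrix-definition}. This is an explicit gauge computation, not an appeal to \cite{EtingofGelaki2012} over $\CC$. Replace $\Psi$ by an isomorphic functor that is the identity on objects, with $\Psi_0$ normalized. Its tensorator is determined by scalars on the multiplicity-one fusion spaces. These are $\Psi_2$ on $\mathbf1\otimes x$ and $x\otimes\mathbf1$, which are forced by the unit axioms, and two scalars $p,q\in k^\times$ on the channels $x\otimes x\to\mathbf1$ and $x\otimes x\to x$.

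The hexagon (functor-associativity) constraint on the $F^{xxx}_x$ block conjugates that $2\times2$ matrix by the diagonal matrix of channel products. Since its off-diagonal entries $b$ and $1$ are nonzero, this forces the relation $p=q^2$, and the $1\times1$ blocks give nothing further. I then exhibit a natural isomorphism $\lambda$ with $\lambda_{\mathbf1}=1$ and $\lambda_x=c$. It rescales the channel $x\otimes x\to x$ by $c$ and the channel $x\otimes x\to\mathbf1$ by $c^2$. Taking $c=q$ (or $q^{-1}$, depending on convention) trivializes $\Psi_2$, and because $p=q^2$ this works with $c$ already in $k$, so no extension of scalars is needed. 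The main obstacle is getting these exponent conventions consistent, that is, checking that the associator constraint really yields $p=q^2$ rather than a relation requiring a square root. If it came out differently, I would fall back to passing to $\bar k$, using \cite[Section~4.2.5]{EtingofGelaki2012}, and descending via the uniqueness argument below, since monoidal natural isomorphisms are determined by scalars.

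Uniqueness repeats the earlier argument verbatim. The ratio of two such isomorphisms is a monoidal automorphism of $\iota_1$. It is trivial on $\mathbf1$ and acts on $\iota_1(x)$ by a scalar $z$, which lies in $k$ because $\End(x)=k$ by the splitness remark following \cref{def:fibonacci-type-k}. Restricting to the summands $\mathbf1$ and $x$ of $x^{\otimes2}$ gives $z^2=1$ and $z^2=z$, so $z=1$. Naturality with summand inclusions then gives equality on all objects, since every object is a direct sum of copies of $\mathbf1$ and $x$.
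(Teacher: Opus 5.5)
Your proof is correct, but it follows a different route from the paper.

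\textbf{The paper's route.} The paper extends scalars to $\bar k$. It then applies the algebraically closed triviality of tensor autoequivalences of the rank-two category (\cref{lem:fib-marking-rigidity}, equivalently \cite[Section~4.2.5]{EtingofGelaki2012}) to get a unique monoidal cell over $\bar k$. Uniqueness makes this cell $\Gal(\bar k/k)$-invariant. The cell, together with its monoidality and uniqueness, is then descended to $k$ by finite-dimensional Hom-space descent and faithfulness of scalar extension.

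\textbf{Your route.} You reprove the triviality directly over $k$ in the skeletal model, and the computation closes up as hoped:
\begin{itemize}
\item On both $(x\otimes x)\otimes x$ and $x\otimes(x\otimes x)$, the tensorator acts on the two $x$-channels by the scalars $(p,q^2)$.
\item The functor-associativity axiom therefore says that $F^{xxx}_x$ commutes with $\operatorname{diag}(p,q^2)$. The nonzero off-diagonal entries $b$ and $1$ force $p=q^2$, and the $1\times1$ blocks impose no further condition.
\item The cell with $\lambda_{\mathbf1}=1$ and $\lambda_x=q^{-1}\in k^\times$ then trivializes the tensorator.
\end{itemize}
No square root arises, so your fallback to $\bar k$ is unnecessary. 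For uniqueness, you argue directly over $k$ via $z^2=1$ and $z^2=z$, rather than descending uniqueness from $\bar k$.

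\textbf{Comparison.} Your approach is self-contained and elementary. It avoids any scalar extension of $\calC$ and any Galois descent, and it shows explicitly why the statement holds over every field containing $\tau$. The paper's approach is shorter and reuses the complex lemma. It also matches the descent pattern used throughout \cref{sec:characteristic-zero-fields}, at the cost of relying on the algebraically closed input and the descent machinery.
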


\begin{proof}
After scalar extension to an algebraic closure $\bar k$, the two embeddings
have the same rank-two image.  The algebraically closed version of
\cref{lem:fib-marking-rigidity}, equivalently the triviality result of
\cite[Section~4.2.5]{EtingofGelaki2012}, gives a unique monoidal cell between
them.  Every Galois conjugate of this cell has the same source and target,
so uniqueness makes it invariant.  Finite-dimensional Hom-space descent
therefore gives a $k$-linear natural isomorphism.  Its monoidality identities
descend because scalar extension is faithful, and uniqueness descends for
the same reason.
\end{proof}

For the ground-field argument, use the presentation of the same
quadratic field $\KFib=\Q(\sqrt5)$ as
\[
 \KFib=\Q(\vartheta),
 \qquad \vartheta^2=\vartheta+1.
\]
A choice of $\tau$ gives an embedding
$\KFib\hookrightarrow k$ by $\vartheta\mapsto\tau$.

We will use the following standard spreading-out observation.  It is
included to avoid assuming that the field $k$ itself embeds in $\CC$.

\begin{lemma}[Spreading out finite tensor data]
\label{lem:spreading-out-tensor-data}
Let $\calC$ be a finite semisimple rigid tensor category over $k$, equipped
with finitely many tensor functors and natural transformations.  All these
data have a model over a finitely generated subfield $k_0\subset k$ such
that scalar extension from $k_0$ to $k$ recovers the original data.  If the
data contain $\mathcal F_{\tau,k}$, then $k_0$ may be chosen to contain the
image of $\KFib$.
\end{lemma}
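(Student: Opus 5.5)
The plan is the standard finite-presentation argument: choose a finite skeletal presentation of all the data, and let $k_0$ be the field generated over $\Q$ by its structure constants.

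First, fix a skeleton of $\calC$. Choose representatives $X_1,\dots,X_m$ of the isomorphism classes of simple objects; these are finitely many by the definition of a finite semisimple tensor category. The endomorphism algebras $D_i=\End(X_i)$ are finite-dimensional division algebras over $k$, so the whole category is determined by finite-dimensional data. Next, choose $k$-bases of the following:
\begin{itemize}
\item the algebras $D_i$;
\item the Hom spaces $\Hom(X_i\otimes X_j,X_l)$ and $\Hom(X_l,X_i\otimes X_j)$, and the corresponding decomposition maps;
\item the associator components, the unit constraints, and the evaluation and coevaluation maps of a chosen rigid structure.
\end{itemize}
Each of these structure maps is then recorded by finitely many scalars in $k$. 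Do the same for each of the finitely many tensor functors: their object maps on simples, their matrices on the chosen bases, and the components of their tensorators and unit maps, computed in skeletal form between the relevant finite categories. Do the same for the components of the finitely many natural transformations. If other finite categories appear as sources or targets, treat them identically. Let $k_0$ be the subfield of $k$ generated over $\Q$ by all these scalars, together with $\tau$ when $\mathcal F_{\tau,k}$ occurs. Then $k_0$ is finitely generated and contains the image $\Q(\tau)$ of $\KFib$.

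Second, define the $k_0$-model. Let $\calC_0$ be the $k_0$-linear category with the same simple labels. Its Hom spaces are the $k_0$-spans of the chosen basis vectors, and its multiplication, tensor product and constraint maps are given by the same structure constants. All axioms are polynomial identities in these constants: associativity of composition in the $D_i$, functoriality, the pentagon and triangle, the zigzag identities, the monoidal-functor coherence and the naturality equations. They hold in $k$ and involve only constants from $k_0$, so they hold in $k_0$. Semisimplicity is also preserved. Each $D_i^0$ is a $k_0$-form of $D_i$, and $D_i^0\otimes_{k_0}k\cong D_i$ is a division algebra, so $D_i^0$ has no zero divisors and is itself a division algebra. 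The condition $\End(\mathbf1)=k_0$ holds because the unit block is one-dimensional.

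Third, compare with the original data. By construction, $(\calC_0)_k=\Kar(\calC_0\otimes_{k_0}k)$ has Hom spaces $\Hom_{\calC_0}\otimes_{k_0}k$, which equal the original Hom spaces with the same structure maps, as in \eqref{eq:hom-scalar-extension}. The Karoubi completion adds nothing: the original category is semisimple with simples $X_i$, and the idempotents of $D_i^0\otimes k=D_i$ are trivial. The functors and transformations extend in the same way. Finally, the marked copy $\mathcal F_{\tau,k}$ is already defined over $\Q(\tau)\subset k_0$ by \eqref{eq:fibonacci-f-matrix-definition}, so its model is $\mathcal F_{\tau,k_0}$.

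The main obstacle is making the Karoubi and semisimplicity comparison precise in the non-split case. There the chosen bases of the $D_i$ must be included among the scalars, and we must check that no new simple summands appear after extension. Division-algebra stability under scalar extension, as used above, handles this.
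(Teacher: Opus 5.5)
Your proposal is correct and follows the same finite-presentation argument as the paper: record all structure constants of a chosen skeletal presentation (adjoining the image of $\KFib$), take $k_0$ to be the field they generate over $\Q$, and note that every defining identity is polynomial and therefore already holds over $k_0$. The only difference is minor. The paper adds inverses, dominance idempotents and separability idempotents to the recorded data to secure rigidity and semisimplicity of the model; in your skeletal model inverses and dominance are automatic, and semisimplicity follows because each $D_i^0\subset D_i$ is a finite-dimensional domain and hence a division algebra.
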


\begin{proof}
Choose representatives of the finitely many simple objects, bases of all
Hom spaces, tensor-product decomposition maps, associators, unit
constraints, evaluation and coevaluation maps, and the matrices describing
the additional functors and transformations.  Include the inverses of all
specified isomorphisms, the idempotents witnessing dominance, and
separability idempotents for the finite-dimensional endomorphism algebras.
Only finitely many elements of $k$ occur among their coefficients.  The
subfield generated by these elements and by $\KFib$ is finitely
generated over $\Q$.  The defining identities are polynomial identities in
the chosen matrices, while the displayed inverse and separability data
preserve rigidity and semisimplicity.  They therefore define the required
$k_0$-model, whose scalar extension is the original datum.
\end{proof}

Every finitely generated extension of $\KFib$ admits an embedding
into $\CC$ extending either prescribed embedding of $\KFib$.
Consequently a finite datum over $k$ can be compared with the complex
classification after first applying
\cref{lem:spreading-out-tensor-data}.

\subsection{Two Fibonacci factors over \texorpdfstring{$k$}{k}}

Let
\[
 Q:\mathcal F_{\tau,k}*\mathcal F_{\tau,k}\longrightarrow\calC
\]
be a finite ordered composite, and let $a,b$ be the two distinguished
images.  If $a\cong b$, set $n=1$.  Otherwise let $u_m,v_m$ be the two
alternating words of length $m$ beginning with $a,b$, and let $n$ be the
least integer for which $u_n\cong v_n$.

\begin{theorem}[Automatic splitness]
\label{thm:automatic-splitness-over-k}
Every finite ordered composite of two copies of
$\mathcal F_{\tau,k}$ is split.
\end{theorem}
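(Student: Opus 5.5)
The plan is to run the Hom-space induction of \cref{prop:standard-form} over $k$ itself and read off the endomorphism algebras of the alternating words directly. The input $\mathcal F_{\tau,k}$ is split, so $\End(a)=\End(b)=k$ and $\Hom(\mathbf1,a)=\Hom(\mathbf1,b)=0$.

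If $a\cong b$ ($n=1$), the image of the first marking is already all of $\calC$. Dominance gives every simple as a summand of a word in $a$. The Fibonacci rule reduces every such word to sums of $\mathbf1$ and $a$, and both have endomorphism algebra $k$. Hence $\calC$ is split.

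Otherwise, I repeat the endpoint-moving computation \eqref{eq:endomorphism-induction}. It only uses rigidity, the decompositions $a^2\cong\mathbf1\oplus a$ and $b^2\cong\mathbf1\oplus b$, and induction, and it is valid over any field. The key is to track $\dim_k\End(u_m)$ rather than mere simplicity. By Frobenius reciprocity, $\End(u_{m+1})\cong\Hom(u_m,u_m\otimes x\otimes x)$ for the last letter $x$. This decomposes as $\End(u_m)\oplus\Hom(u_m,u_m x)$. The second term is $\Hom(u_m,v_m)$ after reflecting a letter: it is an alternating word of length $m+1$ ending in the wrong letter, and it reduces to a length-$m$ pairing. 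The inductive hypothesis is that, for $m<n$, all $u_i,v_i$ with $i\le m$ have $\End=k$ and vanishing mutual Hom spaces. Under it, $\dim_k\End(u_{m+1})=1$ whenever $u_m\not\cong v_m$. A one-dimensional endomorphism algebra forces the object to be simple with $\End=k$, since a nonsplit simple or a decomposable object has endomorphism dimension at least $2$.

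At the first coincidence $u_n\cong v_n=:c_n$, the same count gives $\End(c_n)=k$, because $u_{n-1}\not\cong v_{n-1}$. The reductions \eqref{eq:n-plus-one-reductions} then express every longer word as a sum of the $2n$ words already shown to be split simple. By dominance every simple of $\calC$ is a summand of some word. So $\Irr(\calC)$ is exactly this list, and each member has $\End=k$, proving splitness.

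The main obstacle is justifying the induction over a non-closed field without silently using that ``simple'' means ``$\End=k$''. Over $k$, the off-diagonal Hom spaces might a priori be nonzero modules over division algebras, so the induction must be stated purely in terms of $k$-dimensions of Hom spaces between words. Those dimensions are invariant under scalar extension by \eqref{eq:hom-scalar-extension}. As a cross-check, or as an alternative route, I would extend scalars to $\bar k$, or via \cref{lem:spreading-out-tensor-data} to $\CC$, where \cref{prop:standard-form} applies. The words are defined over $k$, so \eqref{eq:hom-scalar-extension} transfers $\dim\End(w)=1$ back to $k$ for every basis word $w$. Dominance then finishes the argument as above. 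A separate check is needed that $n$ computed over $k$ agrees with $n$ over $\bar k$. This holds because isomorphism of two words is detected by the dimension of the Hom space between them, and that dimension does not change under extension.
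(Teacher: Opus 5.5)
Your argument is correct, apart from two one-line justifications noted at the end, but your main route is not the paper's.

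\textbf{The paper's route.} The paper never reruns the induction over $k$. It spreads the composite out over a finitely generated field $k_0\supset\KFib$ (\cref{lem:spreading-out-tensor-data}) and embeds $k_0$ in $\CC$ with $\tau_0\mapsto\ph$. It takes the complex word list from \cref{prop:standard-form} and then uses \eqref{eq:hom-scalar-extension} twice. The first use gives $\End_{\calC_0}(W)=k_0$ for each listed word. The second shows that every simple $X$ of $\calC_0$ has $\Hom(X,W)\neq0$ for some listed $W$. This gives exhaustion by Hom-nonvanishing rather than by your dominance-plus-word-reduction argument. Your fallback paragraph is essentially this proof.

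\textbf{Your route.} You run the endpoint-moving induction intrinsically over $k$, tracking $\dim_k\End$. This is more elementary: it needs neither spreading out nor a complex embedding. It also yields at once $K_0(\calC)\cong R_n$ over $k$ and the base-change invariance of $n$ (\cref{prop:shortest-relation-base-change}). The paper's route is shorter once the complex result is available, and it ties the $k$-form directly to the complex list used in \cref{thm:char-zero-two-fib}.

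\textbf{Details to fix.}
\begin{itemize}
\item The facts $\End_\calC(a)=k$ and $\Hom_\calC(\mathbf1,a)=0$ do not follow from splitness of $\mathcal F_{\tau,k}$, because $Q$ is only faithful, not full. Argue instead that $[a]^2=[\mathbf1]+[a]$ makes $a$ simple of Frobenius--Perron dimension $\ph\neq1$. The proof of \cref{lem:fib-simplicity} works over $k$, since every simple object has $\FPdim\geq1$. Hence $\Hom(\mathbf1,a)=0$ and $\dim_k\End(a)=\dim_k\Hom(\mathbf1,a^{\otimes2})=1$.
\item The endpoint move actually gives $\Hom(u_m,u_mx)\cong\Hom(u_m,v_m)\oplus\Hom(v_{m-1},v_m)$. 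The second summand vanishes because $v_{m-1}$ and $v_m$ are simple of different Frobenius--Perron dimensions.
\item The existence of a first coincidence $n$ should be deduced from finiteness of $\Irr(\calC)$. Otherwise your induction produces infinitely many pairwise nonisomorphic simples.
\end{itemize}
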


\begin{proof}
Apply \cref{lem:spreading-out-tensor-data} to obtain a model
$Q_0:\mathcal F_{\tau_0,k_0}*\mathcal F_{\tau_0,k_0}\to\calC_0$, where
  $\KFib\subset k_0\subset k$.  Choose an embedding
$\iota:k_0\hookrightarrow\CC$ for which $\iota(\tau_0)=\ph$.  The scalar
extension $Q_\iota$ is a finite quotient of the two specified complex
Fibonacci categories.  If the two generating images coincide, put
$\mathcal W_1=\{\mathbf1,a\}$.  Otherwise
\cref{prop:standard-form} supplies a least $n\ge2$ and the list
\begin{equation}\label{eq:alternating-word-list-k}
 \mathcal W_n=
 \{\mathbf1,u_1,\ldots,u_{n-1},v_1,\ldots,v_{n-1},u_n\},
 \qquad u_n\cong v_n.
\end{equation}
Every member $W_\CC$ of the applicable list is simple, and the list exhausts
the complex simple objects.

For each such word, \eqref{eq:hom-scalar-extension} gives
\[
 \End_{\calC_0}(W)\otimes_{k_0}\CC
 \cong\End_{(\calC_0)_\CC}(W_\CC)\cong\CC.
\]
Thus $\End_{\calC_0}(W)=k_0$, so $W$ is already a split simple object over
$k_0$.  If $X$ is any simple object of $\calC_0$, choose a simple
constituent of $X_\CC$.  It is isomorphic to the scalar extension of some
$W$ in the applicable list, and therefore
\[
 \Hom_{\calC_0}(X,W)\otimes_{k_0}\CC\ne0.
\]
Hence $\Hom_{\calC_0}(X,W)\ne0$, and simplicity gives $X\cong W$.  The
listed words therefore exhaust the simple objects of $\calC_0$ and prove
that $\calC_0$ is split.  They remain simple and exhaustive after extension
from $k_0$ to $k$, proving that $\calC$ is split.
\end{proof}

This argument uses the entire list of word representatives in
\eqref{eq:alternating-word-list-k}.  For a general composite of split tensor
categories, geometric simples can instead occur in nontrivial Galois orbits
or acquire division endomorphism algebras, so splitness of the factors alone
does not imply splitness of the composite.

\begin{proposition}[Invariance of the shortest relation]
\label{prop:shortest-relation-base-change}
Let $K/k$ be a field extension.  Whenever $u_m$ and $v_m$ are simple,
\[
 u_m\cong v_m
 \quad\Longleftrightarrow\quad
 (u_m)_K\cong(v_m)_K.
\]
Consequently the least alternating-relation parameter is unchanged by
scalar extension.
\end{proposition}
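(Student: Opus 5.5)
The plan is to reduce both directions to the scalar-extension formula \eqref{eq:hom-scalar-extension} applied to the single Hom space $\Hom_\calC(u_m,v_m)$. The objects $u_m,v_m$ are defined over $k$ as tensor words in $a,b$. Scalar extension is a tensor functor, so $(u_m)_K$ and $(v_m)_K$ are the corresponding words in $a_K,b_K$, and
\[
 \Hom_{\calC_K}((u_m)_K,(v_m)_K)\cong\Hom_\calC(u_m,v_m)\otimes_kK.
\]
The forward implication is immediate: an isomorphism $f:u_m\to v_m$ extends to $f\otimes1$, whose inverse is $f^{-1}\otimes1$.

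For the converse, suppose $(u_m)_K\cong(v_m)_K$. Then the extended Hom space is nonzero, and by the display $\Hom_\calC(u_m,v_m)\ne0$. Since $u_m$ and $v_m$ are simple by hypothesis, a nonzero morphism between them is an isomorphism by Schur's lemma in the semisimple category $\calC$. This requires neither splitness nor simplicity after extension. Only simplicity over $k$ is used, which is exactly the hypothesis.

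For the consequence about the least parameter $n$, I compare the chains of words over $k$ and over $K$. First, $a\cong b$ if and only if $a_K\cong b_K$ by the same Hom argument, since $a$ and $b$ are simple. If $a\not\cong b$, I want the extended category to be a two-Fibonacci composite to which \cref{prop:standard-form} applies. The tensor functor $\calC\to\calC_K$ composed with $Q$ is a dominant functor from $\mathcal F_{\tau,K}*\mathcal F_{\tau,K}$. Every object of $\Kar(\calC\otimes_kK)$ is a summand of an extended object, so dominance holds. Faithfulness is automatic by \cref{lem:automatic-faithfulness}.

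The main obstacle is knowing that the words $u_m,v_m$ are simple over $k$ for $m\le n$, so that the equivalence applies at each length up to the first relation. Here I use \cref{thm:automatic-splitness-over-k}: the words in $\mathcal W_n$ are split simple over $k$. Split simples remain simple after any scalar extension, because $\End((u_m)_K)=\End(u_m)\otimes_kK=K$.

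Alternatively, the Hom-space induction \eqref{eq:endomorphism-induction} of \cref{prop:standard-form} works verbatim over any field. It shows that the words are simple, with one-dimensional endomorphism space, up to the first relation, both over $k$ and over $K$. Applying the equivalence length by length then shows that $u_m\not\cong v_m$ for $m<n$ over $K$ and $u_n\cong v_n$ over $K$. Hence the least parameter is the same over $k$ and over $K$.
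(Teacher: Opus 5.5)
Your proof of the equivalence is the paper's argument: only descent of an isomorphism needs proof, and it follows from $\Hom_\calC(u_m,v_m)\otimes_kK\cong\Hom_{\calC_K}((u_m)_K,(v_m)_K)\neq0$ together with simplicity of $u_m,v_m$ over $k$. The paper leaves the consequence implicit, and your length-by-length comparison fills it in correctly. Of your two ways to get simplicity of the words up to the first relation, the dimension-counting Hom-space induction over $k$ is the cleaner one, because the list $\mathcal W_n$ in \cref{thm:automatic-splitness-over-k} is indexed by the complex parameter rather than by the least relation over $k$.
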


\begin{proof}
Only descent of an isomorphism requires proof.  If the scalar extensions
are isomorphic, then
\[
 \Hom_\calC(u_m,v_m)\otimes_kK
 \cong\Hom_{\calC_K}((u_m)_K,(v_m)_K)\ne0.
\]
Thus $\Hom_\calC(u_m,v_m)\ne0$.  The two objects are simple by hypothesis, so
a nonzero morphism between them is an isomorphism.
\end{proof}

We next construct the three standard quotients over the minimal field.  The
first is $\mathcal F_{\vartheta,\KFib}$, and the second is its
Deligne square.  For the third, use the generators and relations of
Appendix~\ref{app:two-fibonacci-uniqueness} over $\KFib$.  The Fibonacci circle
and resolution coefficients are $\vartheta$ and $\vartheta^{-1}$, and the
inverse-coupon calculation gives
\[
 \omega_U=\sigma_U=1.
\]
Hence every inverse, rotation, roof, and train relation has coefficients in
$\KFib$.  Let $\calP_{3,\KFib}$ be the additive spherical
category defined by this raw diagram presentation, before quotienting by
negligibles.  For every two boundary words $X,Y$ and every field extension
$L/\KFib$, extension of coefficients gives
\begin{equation}\label{eq:raw-presentation-base-change}
 \Hom_{\calP_{3,\KFib}}(X,Y)\otimes_{\KFib}L
 \cong
 \Hom_{\calP_{3,L}}(X_L,Y_L).
\end{equation}
Indeed, both sides are the free vector space on the same diagrams modulo
the scalar extensions of the same local relations.

Write
\[
 V=\Hom_{\calP_{3,\KFib}}(X,Y),\qquad
 W=\Hom_{\calP_{3,\KFib}}(Y,X)
\]
and let $\beta:W\times V\to \KFib$ be the spherical pairing
$\beta(w,v)=\operatorname{tr}(wv)$.  The component of the negligible ideal
is
\[
 \mathcal N(X,Y)=\operatorname{rad}_V(\beta)
 =\ker\bigl(V\longrightarrow W^*\bigr).
\]
The inclusion
$\mathcal N(X,Y)\otimes L\subseteq\mathcal N_L(X_L,Y_L)$ is immediate.
Conversely, write an element of the extended radical as
$z=\sum_i v_i\otimes\lambda_i$ with the finitely many $\lambda_i$ linearly
independent over $\KFib$.  For every $w\in W$,
\[
 0=\beta_L(w\otimes1,z)=\sum_i\beta(w,v_i)\lambda_i,
\]
so every $v_i$ already lies in $\operatorname{rad}_V(\beta)$.  Thus
\begin{equation}\label{eq:negligible-base-change}
 \mathcal N(X,Y)\otimes_{\KFib}L
 =\mathcal N_L(X_L,Y_L),
\end{equation}
and consequently
\begin{equation}\label{eq:quotient-hom-base-change}
 \Hom_{\calP_{3,\KFib}/\mathcal N}(X,Y)
 \otimes_{\KFib}L
 \cong
 \Hom_{\calP_{3,L}/\mathcal N_L}(X_L,Y_L).
\end{equation}

Taking the additive Karoubi completion now defines
\[
 \mathcal{TT}_{3,\vartheta,\KFib}.
\]
Under $\vartheta\mapsto\ph$, \eqref{eq:quotient-hom-base-change} and the
complex uniqueness theorem identify its scalar extension with $\TT$.
For the six words
\[
 \mathcal W_3=\{\mathbf1,a,b,ab,ba,aba\},
\]
the complex Hom matrix is diagonal with scalar diagonal entries.  Faithful
base change in \eqref{eq:quotient-hom-base-change} therefore gives
\[
 \Hom(W,W')=
 \begin{cases}
  \KFib,&W=W',\\
  0,&W\ne W',
 \end{cases}
 \qquad(W,W'\in\mathcal W_3).
\]
The Fibonacci splittings and $aba\cong bab$ reduce every word to a direct
sum of these six.  Hence the endomorphism algebra of every such sum is a
product of matrix algebras over $\KFib$, and every idempotent splits
as a sum of the six displayed objects.  The Karoubi completion introduces
no further simple type.  Rigidity is inherited from the spherical
presentation, so $\mathcal{TT}_{3,\vartheta,\KFib}$ is a split
fusion category.  Put
\begin{equation}\label{eq:TT-form-over-k}
 \mathcal{TT}_{3,\tau,k}
 =\mathcal{TT}_{3,\vartheta,\KFib}
   \otimes_{\KFib,\,\vartheta\mapsto\tau}k.
\end{equation}

The uniqueness of these models over a general field concerns the ordered
quotient datum, not the target category with its two generators forgotten.
For a finite ordered family of tensor functors
\[
 \mathbf q=(q_i:\mathcal S_i\longrightarrow\calD)_{i=1}^r,
\]
write $\underline{\operatorname{Aut}}(\calD;\mathbf q)$ for the categorical
group whose objects are tensor autoequivalences $T$ equipped with monoidal
cells $\alpha_i:Tq_i\Rightarrow q_i$.  An arrow
$(T,\alpha_i)\to(T',\alpha_i')$ is a monoidal natural isomorphism
$d:T\Rightarrow T'$ satisfying
$\alpha_i=\alpha_i'\circ(dq_i)$ for every $i$.

\begin{lemma}[Factorwise ordered free-product data]
\label{lem:factorwise-free-product-data}
Let $q:\mathcal S_1*\mathcal S_2\to\calD$ be a tensor functor and let
$q_i=q|_{\mathcal S_i}$.  Restriction to the two factors identifies the
categorical groups
\[
 \underline{\operatorname{Aut}}(\calD,q)
 \simeq
 \underline{\operatorname{Aut}}(\calD;q_1,q_2).
\]
It also identifies forms and Galois descent data for the full ordered
functor with those for the ordered pair $(q_1,q_2)$, including their
$1$-cells and $2$-cells.
\end{lemma}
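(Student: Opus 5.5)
The plan is to deduce the lemma from the universal property of the free product, in its $2$-categorical form, rather than from its object-level form alone. The free-product property recalled in \cref{sec:preliminaries} is derived from Yamagami's alternating-word construction. The first step is to upgrade it to a statement about categories: restriction along the two factor inclusions gives an equivalence of categories
\[
 \operatorname{Fun}_\otimes(\mathcal S_1*\mathcal S_2,\calD)\longrightarrow
 \operatorname{Fun}_\otimes(\mathcal S_1,\calD)\times\operatorname{Fun}_\otimes(\mathcal S_2,\calD).
\]
Here the morphisms are monoidal natural transformations. Essential surjectivity is the existence half already stated. Full faithfulness has two parts.
\begin{enumerate}[label=\textup{(\arabic*)}]
\item A pair of monoidal cells on the factors extends to a cell on every alternating word, by tensoring the factor cells and conjugating by the tensorators used for reductions. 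It is monoidal and natural by the same reduction-order independence.
\item A monoidal cell is determined by its values on the factors, since every object of $\mathcal S_1*\mathcal S_2$ is a summand of an alternating word and monoidality forces the component on a word to be the tensor product of the factor components.
\end{enumerate}
I would verify this word by word. Words generate, so naturality with respect to inclusions and projections of summands determines the cell.

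Given this, the identification of categorical groups is formal. An object of $\underline{\operatorname{Aut}}(\calD,q)$ is a pair $(T,\alpha)$ with $\alpha:Tq\Rightarrow q$. Restriction sends it to $(T,\alpha|_{\mathcal S_1},\alpha|_{\mathcal S_2})$, noting that $(Tq)|_{\mathcal S_i}=Tq_i$. Full faithfulness applied to the functors $Tq$ and $q$ shows that every pair $(\alpha_1,\alpha_2)$ arises from a unique $\alpha$, so restriction is bijective on objects with fixed $T$. An arrow $d:T\Rightarrow T'$ satisfies $\alpha=\alpha'\circ(dq)$ if and only if it satisfies the restricted equations for $i=1,2$, again by uniqueness of extensions of cells. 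The monoidal structure, composition of $(T,\alpha)$ with $(T',\alpha')$ by pasting, is visibly compatible with restriction. Hence the functor is a monoidal equivalence, in fact an isomorphism on hom-sets.

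For forms and Galois descent data, I would write each notion as a diagram of the same shape in the $2$-category of tensor functors out of the source. A descent datum consists of equivalences $\phi_g:{}^g\calD\to\calD$ with cells identifying $\phi_g\circ{}^gq\Rightarrow q$ and cocycle $2$-cells. Likewise, a form is a tensor category over the smaller field together with an identification of the scalar-extended functor. The same extension and uniqueness argument applies verbatim once it is checked that free products commute with Galois twisting and scalar extension, i.e.\ ${}^g(\mathcal S_1*\mathcal S_2)\simeq{}^g\mathcal S_1*{}^g\mathcal S_2$ and $(\mathcal S_1*\mathcal S_2)_K\simeq(\mathcal S_1)_K*(\mathcal S_2)_K$ compatibly with the factor inclusions. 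Both are clear from the alternating-word construction, since Hom spaces are tensor products of factor Hom spaces.

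I expect the main obstacle to be the full-faithfulness step (1). Making the extension of factor cells across reductions $xx\to\bigoplus$ rigorous requires the cells to commute with the factor tensorators and summand maps, so the bookkeeping must be done over the ground field without unitarity.
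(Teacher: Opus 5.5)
Your proposal is correct and follows essentially the same route as the paper. Both reduce the lemma to the $2$-categorical universal property of the free product: tensor functors out of $\mathcal S_1*\mathcal S_2$ correspond to their factor restrictions, and monoidal cells are uniquely determined by, and uniquely extend from, their restrictions, so the identification of $\underline{\operatorname{Aut}}(\calD,q)$ with $\underline{\operatorname{Aut}}(\calD;q_1,q_2)$, and of forms and descent data, follows formally by uniqueness; your explicit check that the free product is compatible with Galois twisting and scalar extension is what the paper means by applying these statements ``after every scalar twist''.
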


\begin{proof}
The free-product universal property says that a tensor functor out of
$\mathcal S_1*\mathcal S_2$ is the same datum as its two restrictions.  A monoidal
natural transformation is likewise uniquely determined by, and uniquely
extends from, its restrictions to the two factors.  Apply these two
statements after every scalar twist and to every functor and cell occurring
in an automorphism, a form, or a descent datum.  The extensions preserve
vertical and horizontal composition by uniqueness, so the identification
also respects modifications and all Galois coherence equations.
\end{proof}

\begin{lemma}[Rigidity of the standard ordered quotients]
\label{lem:ordered-quotient-automorphisms}
After extension to an algebraically closed field of characteristic zero,
the categorical group
$\underline{\operatorname{Aut}}(\calD;q_1,q_2)$ of the two ordered factor
functors in each of the three standard quotients is contractible.
\end{lemma}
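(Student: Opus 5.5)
We must show that the categorical group $\underline{\operatorname{Aut}}(\calD;q_1,q_2)$ is contractible for $\calD=\Fib$ (fold quotient), $\Fib\boxtimes\Fib$ and $\TT$ over an algebraically closed field $\bar k$ of characteristic zero. Contractibility has two parts. First, $\pi_0$ is trivial: every object $(T,\alpha_1,\alpha_2)$ is isomorphic to $(\operatorname{id},\operatorname{id},\operatorname{id})$. Second, $\pi_1$ is trivial: the only automorphism of the identity object is the identity. Since the ground field is algebraically closed of characteristic zero, we may replace it by $\CC$ after spreading out (\cref{lem:spreading-out-tensor-data}) and embedding the finitely generated field. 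Fixing $\tau\mapsto\ph$, the factors become the usual $\Fib$. Both $\pi_0$ and $\pi_1$ are expressed by finitely many polynomial equations in Hom spaces, so their triviality can be transported back along faithful scalar extension.

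For $\pi_1$, an automorphism of $(\operatorname{id},\operatorname{id},\operatorname{id})$ is a monoidal natural automorphism $d$ of $\operatorname{id}_\calD$ with $d q_i=\operatorname{id}$. By \cref{lem:no-hidden-neutral-data}, and by its evident analogue for the rank-two fold quotient (the computation $c^2=1$, $c^2=c$), we already have $\operatorname{Aut}_\otimes(\operatorname{id}_\calD)=1$, so $d=\operatorname{id}$. Alternatively, $d$ is determined by its values on the tensor generators $q_i(\tau)$, where it is the identity by hypothesis.

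For $\pi_0$, take $(T,\alpha_1,\alpha_2)$ with $\alpha_i:Tq_i\Rightarrow q_i$. The existence of $\alpha_i$ means that $T$ fixes the generators $q_i(\tau)$ together with their markings. The plan is to show that $T\cong\operatorname{id}$ by an isomorphism $d$ compatible with the $\alpha_i$. The cleanest route is through the free-product universal property (\cref{lem:factorwise-free-product-data}): the pair of functors $Tq_1, Tq_2$ equipped with the cells $\alpha_i$ defines a monoidal isomorphism $T\circ q\Rightarrow q$ of functors $\Fib*\Fib\to\calD$. Because $q$ is dominant and $\calD$ is semisimple, this descends to $T\Rightarrow\operatorname{id}$. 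Concretely, we need that a monoidal natural isomorphism between two tensor functors out of $\calD$ is determined by, and can be constructed from, its precomposition with the dominant quotient $q$.

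This descent step is the main obstacle. Dominance alone does not immediately give that $q^*$ is fully faithful on monoidal transformations, since $q$ is not full. I would handle it case by case. For the fold quotient, \cref{lem:fib-marking-rigidity} applied to $T$ and $\operatorname{id}$, viewed as embeddings $\Fib\to\Fib$ agreeing on $\tau$, directly gives the unique cell, and the two $\alpha_i$ agree with it by that uniqueness. For $\Fib\boxtimes\Fib$, I would use \cref{lem:coherent-block-multiplication}, or the universal property of the Deligne product together with the commutation isomorphism. A tensor functor out of $\Fib\boxtimes\Fib$ is determined up to unique isomorphism by its two restrictions and the braiding-type compatibility, and that compatibility is forced to be trivial by \cref{lem:no-hidden-neutral-data}. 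For $\TT$, I would invoke the presentation of Appendix~\ref{app:two-fibonacci-uniqueness}. There, $\TT$ is generated as a spherical category by the two Fibonacci strands and the trivalent coupons of each factor, together with one intertwiner $U:aba\to bab$ (with its inverse). Both $T$ and $\operatorname{id}$ carry the images of the factor generators to themselves via $\alpha_i$, so it remains to compare $T(U)$ with $U$ under the $\alpha_i$. Since $\Hom(aba,bab)$ is one-dimensional, $T(U)=zU$ for some scalar $z$. The relations satisfied by $U$ are not homogeneous in $U$: the inverse relation together with the fixed coefficients $\omega_U=\sigma_U=1$ and the rotation/train relations involve $U$ linearly against fixed factor morphisms. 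Those relations should force $z=1$. Once $z=1$, the cells $\alpha_i$ extend to all words and assemble into $d:T\Rightarrow\operatorname{id}$ with $\alpha_i=d q_i$, which completes $\pi_0$. If the appendix relations only determine $z$ up to sign, I would use the half-rotation relation pairing $U$ with its mate to pin down $z$, since $z^3=1$ from the threefold rotation combined with $z^2=1$ gives $z=1$.
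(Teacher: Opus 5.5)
\paragraph{Where the proposal stands.} Your treatment of the trivial automorphism group of the identity object, of the fold quotient, and of $\Fib\boxtimes\Fib$ is essentially the paper's argument. For $\Fib\boxtimes\Fib$, the ratio of the transported interchange to the standard one is a monoidal automorphism of $\operatorname{id}_{\Fib}$ in each variable, hence trivial. You also correctly see that, for $\TT$, naturality with respect to the extra generator $U$ is the only remaining issue. But there you only assert that the relations ``should force $z=1$''. The fallback you offer would fail.

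\paragraph{Why rotation cannot fix the sign.} With $T(U)=zU$ one has $T(V)=z^{-1}V$, so the inverse relation imposes nothing. Since $\omega_U=1$, every one-click rotation exchanges $U$ and $V$ with coefficient one. Consequently an odd number of clicks gives $z=z^{-1}$, and an even number gives no condition. No combination of rotations can produce $z^3=1$. Indeed, \cref{lem:inverse-rotation} records that $(U,V)\mapsto(-U,-V)$ is precisely the residual symmetry of the rotation normalization. Your ``threefold rotation'' argument therefore cannot exclude $z=-1$.

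\paragraph{The missing idea.} The sign is pinned by the jellyfish normalization $y=1$ of \cref{lem:local-jellyfish-scalars}. The relation $A_0=J_0$ in \eqref{eq:local-jellyfish-scalars}, or $A^{\mathrm{odd}}_0=J^{\mathrm{odd}}_0$ in \eqref{eq:odd-local-jellyfish-scalars} for $n=3$, equates a coupon-free diagram with a diagram containing the three coupons $U,U,V$. After transport through $\alpha_1,\alpha_2$, $T$ fixes both Fibonacci calculi, so applying $T$ to this relation gives $A_0=z\cdot z\cdot z^{-1}J_0=zJ_0$. Since $J_0\neq0$, this forces $z=1$. This is how the paper concludes: it first obtains $c^2=1$ from $\mathcal R_-(U)=V$, and then notes that $c=-1$ would negate both $x$ and $y$, contradicting $y=\sigma_U=1$.

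\paragraph{After the sign is fixed.} Once $z=1$ is obtained this way, your extension of the $\alpha_i$ to a monoidal isomorphism $T\Rightarrow\operatorname{id}$ works through the presentation, as in the paper. Your preliminary reduction to $\CC$ is harmless but unnecessary, because the presentation is algebraic.
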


\begin{proof}
For the rank-two quotient, both factor functors are the identity of
$\mathcal F_{\tau,k}$.  Given $(T,\alpha_1,\alpha_2)$, the first cell gives
a monoidal isomorphism
$\beta:T\Rightarrow\operatorname{id}$.  The difference between
$\beta$ and $\alpha_2$ is a monoidal automorphism of
$\operatorname{id}_{\mathcal F_{\tau,k}}$, which is trivial by
\cite[Section~4.2.5]{EtingofGelaki2012}.  Thus $\beta=\alpha_2$ as well,
and uniqueness follows.

For the Deligne square write its ordered factors as $\calA$ and $\calD$, with
nontrivial simple objects $a$ and $b$, and let
$c_{Y,X}:Y\otimes X\to X\otimes Y$ be the standard interchange.  Denote the
tensorator of $T$ by $J$.  The two cells $\alpha_1,\alpha_2$ transport $c$ to
the coherent interchange
\begin{equation}\label{eq:transported-interchange}
 c^T_{Y,X}=(\alpha_{1,X}\otimes\alpha_{2,Y})J_{X,Y}^{-1}
 T(c_{Y,X})J_{Y,X}(\alpha_{2,Y}^{-1}\otimes\alpha_{1,X}^{-1}).
\end{equation}
The object $ab$ is simple, so $c^T_{b,a}=r c_{b,a}$ for some
$r\in k^\times$.  The two hexagon identities and naturality for the
summands $\mathbf1$ and $a$ of $a^2$ give, respectively,
\[
 r^2=1,
 \qquad r^2=r.
\]
Thus $r=1$, and additivity and naturality give $c^T_{Y,X}=c_{Y,X}$ for all
$X\in\calA$ and $Y\in\calD$.

For $X\in\calA$ and $Y\in\calD$, define
\begin{equation}\label{eq:deligne-square-beta}
 \beta_{XY}=(\alpha_{1,X}\otimes\alpha_{2,Y})J_{X,Y}^{-1}:T(XY)\longrightarrow XY.
\end{equation}
The linear Deligne-product property extends these maps uniquely to a natural
isomorphism $\beta:T\Rightarrow\operatorname{id}$.  Its tensor coherence on
$XY\otimes X'Y'$ is the tensor-functor pentagon for $J$, the monoidality of
$\alpha_1,\alpha_2$, and \eqref{eq:transported-interchange} with $c^T=c$.
Hence $\beta$ is monoidal and restricts to $\alpha_i$ on each factor.

For $\mathcal{TT}_3$, transport the tensor structure through
$\alpha_1,\alpha_2$, so that $T$ fixes both Fibonacci calculi.  The remaining
generating Hom space is one-dimensional; hence $T(U)=cU$ and
$T(V)=c^{-1}V$ for some nonzero scalar $c$.  Preservation of the normalized
rotation relation $\mathcal R_-(U)=V$ first gives $c^2=1$.  The residual
value $c=-1$ changes both local coefficients $x,y$ in
\cref{lem:local-jellyfish-scalars} to their negatives, whereas the fixed
presentation has $y=\sigma_U=1$ by \eqref{eq:n3-omega-sigma-one}.  Hence
$c=1$.  The presentation and closed-diagram evaluation in
Appendix~\ref{app:two-fibonacci-uniqueness} then identify $T$ with the identity on the
whole target, compatibly with both factor cells.

In every case the two factor images tensor-generate the target.  Hence the
displayed identification is unique, and an automorphism of the identity
object that is the identity on the two factors is the identity on every
object and morphism.  This proves contractibility.
\end{proof}

\begin{lemma}[Descent of geometric ordered rigidity]
\label{lem:descent-geometric-ordered-rigidity}
Let $L$ be a field of characteristic zero, let $\calD$ and the finite ordered
family $\mathbf q$ be defined over $L$, and let $\overline L$ be an algebraic
closure.  If
$\underline{\operatorname{Aut}}(\calD_{\overline L};\mathbf q_{\overline L})$
is contractible, then
$\underline{\operatorname{Aut}}(\calD;\mathbf q)$ is contractible.
\end{lemma}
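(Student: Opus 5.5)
We show that the scalar-extension functor
\[
 \underline{\operatorname{Aut}}(\calD;\mathbf q)\longrightarrow
 \underline{\operatorname{Aut}}(\calD_{\overline L};\mathbf q_{\overline L})
\]
is an equivalence onto a contractible target, using faithfulness of scalar extension and Galois descent of unique cells. Contractibility of a categorical group means two things: every object is isomorphic to the unit $(\operatorname{id},\operatorname{id})$, and every object has only the identity automorphism. We treat these in reverse order, since uniqueness feeds existence.

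\emph{Automorphisms.} Let $d:(T,\alpha_i)\to(T',\alpha'_i)$ and $d'$ be two arrows over $L$. Their scalar extensions are arrows in the contractible geometric categorical group, so $d_{\overline L}=d'_{\overline L}$. By \eqref{eq:hom-scalar-extension}, the map $\Hom_\calD(X,Y)\to\Hom_\calD(X,Y)\otimes_L\overline L$ is injective. Hence each component of $d$ equals the corresponding component of $d'$, and $d=d'$. Therefore every Hom set of $\underline{\operatorname{Aut}}(\calD;\mathbf q)$ has at most one element.

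\emph{Existence.} Given $(T,\alpha_i)$ over $L$, contractibility over $\overline L$ gives an arrow $\bar d:(T,\alpha_i)_{\overline L}\to(\operatorname{id},\operatorname{id})$, and this arrow is unique. For $\gamma\in\Gal(\overline L/L)$, the conjugate ${}^\gamma\bar d$ is again an arrow between the same two objects, because both objects are base-changed from $L$ and the arrow conditions $\alpha_i=\bar d q_i$ are $\gamma$-stable. Uniqueness then forces ${}^\gamma\bar d=\bar d$. Each component $\bar d_X$ therefore lies in $(\Hom_\calD(TX,X)\otimes_L\overline L)^{\Gal}$, which equals $\Hom_\calD(TX,X)$ by Galois descent for finite-dimensional vector spaces. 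This uses characteristic zero, so that $\overline L/L$ is Galois, and it suffices to check finitely many simple $X$ together with naturality. Naturality, monoidality and the compatibility with each $\alpha_i$ are identities between $L$-morphisms that hold after extension. Faithfulness then shows they hold over $L$, so the descended $d$ is an arrow $(T,\alpha_i)\to(\operatorname{id},\operatorname{id})$ over $L$. Invertibility descends the same way.

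\emph{Main obstacle.} The main technical point is the descent of components. Since $\calD$ need not be split over $L$, one should apply descent to Hom spaces between objects defined over $L$, not to geometric simple objects. Geometric simples may only appear after Karoubi completion in $\calD_{\overline L}$, and the argument must avoid them. Naturality on all of $\calD$ follows from naturality on a finite generating set of objects defined over $L$. The argument is the same one used in \cref{lem:k-fib-marking-rigidity}, now applied to a family of cells rather than a single cell.
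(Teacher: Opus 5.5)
Your proposal is correct and follows essentially the same route as the paper. Uniqueness of the geometric cell makes it invariant under $\Gal(\overline L/L)$, so its components descend to $L$. Faithfulness of scalar extension then descends naturality, monoidality, compatibility with the $\alpha_i$, and uniqueness of arrows.
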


\begin{proof}
For an object $(T,\alpha_i)$ of the latter categorical group, geometric
contractibility gives a unique compatible monoidal cell
$\beta_{\overline L}:T_{\overline L}\Rightarrow\operatorname{id}$.  Every
$L$-automorphism of $\overline L$ fixes its source, target, and compatibility
conditions, and therefore fixes $\beta_{\overline L}$ by uniqueness.  Its
finitely many matrix entries consequently lie in $L$.  They define a cell
$\beta$ over $L$; naturality, monoidality, and compatibility with all
$\alpha_i$ descend because scalar extension is faithful.  The same argument
applied to arrows proves uniqueness and excludes nontrivial automorphisms of
the identity object.
\end{proof}

\begin{lemma}[Effective descent for a finite ordered family]
\label{lem:effective-ordered-descent}
Let $L/K$ be a finite Galois extension with group $\Gamma$.  Let
$\mathcal S_1,\ldots,\mathcal S_r$ be finite semisimple rigid tensor categories over
$K$, let $\calD$ be a finite semisimple rigid tensor category over $L$, and
let $q_i:(\mathcal S_i)_L\to\calD$ be tensor functors.  An ordered descent datum
consists of
\begin{align*}
 \Phi_\sigma &: {}^\sigma\calD\longrightarrow\calD,\\
 \alpha_{\sigma,i} &: \Phi_\sigma{}^\sigma q_i\Longrightarrow q_i,\\
 J_{\sigma,\tau} &: \Phi_\sigma{}^\sigma\Phi_\tau
   \Longrightarrow\Phi_{\sigma\tau},
\end{align*}
where the $\Phi_\sigma$ are tensor equivalences, all displayed cells are
monoidal, the datum is normalized at the identity, and
\begin{align}
 \alpha_{\sigma\tau,i}\circ
 (J_{\sigma,\tau}\,{}^{\sigma\tau}q_i)
 &=\alpha_{\sigma,i}\circ
   (\Phi_\sigma{}^\sigma\alpha_{\tau,i}),
 \label{eq:ordered-descent-source-coherence}\\
 J_{\sigma\tau,\upsilon}\circ
 (J_{\sigma,\tau}\,{}^{\sigma\tau}\Phi_\upsilon)
 &=J_{\sigma,\tau\upsilon}\circ
 (\Phi_\sigma{}^\sigma J_{\tau,\upsilon}).
 \label{eq:ordered-descent-pentagon}
\end{align}

A $1$-cell from this datum to a primed one is
$(R,t_i,c_\sigma)$, where $R:\calD\to\calD$ is a tensor equivalence,
$t_i:Rq_i\Rightarrow q_i$ and
$c_\sigma:R\Phi_\sigma\Rightarrow\Phi'_\sigma{}^\sigma R$ are monoidal
cells, satisfying
\begin{align}
 t_i\circ R(\alpha_{\sigma,i})
 &=\alpha'_{\sigma,i}\circ(\Phi'_\sigma{}^\sigma t_i)
   \circ(c_\sigma{}^\sigma q_i),
 \label{eq:ordered-descent-equivalence-source}\\
 c_{\sigma\tau}\circ R(J_{\sigma,\tau})
 &=(J'_{\sigma,\tau}{}^{\sigma\tau}R)
   \circ(\Phi'_\sigma{}^\sigma c_\tau)
   \circ(c_\sigma{}^\sigma\Phi_\tau).
 \label{eq:ordered-descent-equivalence-compositor}
\end{align}
A $2$-cell $(R,t_i,c_\sigma)\Rightarrow(R',t_i',c_\sigma')$ is a monoidal
natural isomorphism $d:R\Rightarrow R'$ such that
\begin{equation}\label{eq:ordered-descent-modification}
 t_i=t_i'\circ(dq_i),
 \qquad
 c_\sigma'\circ(d\Phi_\sigma)
  =(\Phi'_\sigma{}^\sigma d)\circ c_\sigma.
\end{equation}
Composition is the usual whiskered vertical composition.  The homotopy
$1$-groupoid, obtained by identifying $1$-cells joined by a $2$-cell, is
equivalent to the groupoid of $K$-forms of
$(\calD;q_1,\ldots,q_r)$.  If
$\underline{\operatorname{Aut}}(\calD;\mathbf q)$ is contractible, this
groupoid is empty or contractible.
\end{lemma}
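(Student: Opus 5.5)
The plan is to reduce the statement to ordinary Galois descent for finite semisimple linear categories, keeping track of the extra structure level by level. Given an ordered descent datum $(\Phi_\sigma,\alpha_{\sigma,i},J_{\sigma,\tau})$, I would first forget the markings and the tensor structure. The pair $(\Phi_\sigma,J_{\sigma,\tau})$, subject to \eqref{eq:ordered-descent-pentagon}, is a Galois descent datum for the $L$-linear category $\calD$. Finite semisimple $L$-linear categories with finite-dimensional Hom spaces form an effective descent situation for finite Galois extensions. I would realize this by passing to the category of pairs $(X,\{\phi_\sigma:\Phi_\sigma{}^\sigma X\to X\})$ satisfying the cocycle condition built from $J$. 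This produces a $K$-linear category $\calD^\Gamma$ with $(\calD^\Gamma)_L\simeq\calD$. The key input is Hom-space descent: $\Hom_{\calD^\Gamma}(X,Y)$ is the $\Gamma$-fixed subspace of a semilinear $\Gamma$-module, and Hilbert~90 (Speiser's lemma) gives $\Hom\otimes_KL\cong\Hom_\calD$. Essential surjectivity after base change comes from the induced objects $\bigoplus_\sigma\Phi_\sigma{}^\sigma X$, using that $L\otimes_KL\cong L^\Gamma$.

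Next I would descend the remaining structure. Because the $\Phi_\sigma$ are tensor equivalences and the $J_{\sigma,\tau}$ are monoidal, the tensor product, associator, unit, and duality of $\calD$ restrict to equivariant objects. This makes $\calD^\Gamma$ a finite semisimple rigid tensor category over $K$. Semisimplicity descends because $[L:K]$ is invertible in characteristic zero, so averaging splits idempotents. For each $i$, the cells $\alpha_{\sigma,i}$ equip $q_i(S_L)$ with equivariant structure for each $S\in\mathcal S_i$. Equation \eqref{eq:ordered-descent-source-coherence} is exactly the cocycle condition, so $q_i$ descends to a $K$-tensor functor $\mathcal S_i\to\calD^\Gamma$ whose base change is $q_i$. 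Conversely, a $K$-form determines a descent datum by base change, with canonical $\Phi_\sigma$. These two constructions are inverse up to the canonical comparison cells.

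For the morphisms, a $1$-cell $(R,t_i,c_\sigma)$ satisfying \eqref{eq:ordered-descent-equivalence-source} and \eqref{eq:ordered-descent-equivalence-compositor} is by construction an equivariant tensor equivalence compatible with the markings. It therefore descends to an equivalence of $K$-forms. Two $1$-cells descend to the same morphism of forms up to isomorphism exactly when a $2$-cell as in \eqref{eq:ordered-descent-modification} joins them. This uses full faithfulness of base change on natural transformations, which is Hom descent again. Hence the homotopy $1$-groupoid is equivalent to the groupoid of forms.

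For the final assertion, the automorphism group of any $1$-cell up to $2$-cells embeds in $\pi_0\underline{\operatorname{Aut}}(\calD;\mathbf q)$. Here I would check that forgetting $c_\sigma$ is faithful on $2$-cells. Any two $1$-cells between two data then differ by such an automorphism and a unique correction of the $c_\sigma$. Uniqueness follows because, in a contractible categorical group, the cells $c_\sigma$ are determined by $R$ and the $t_i$. So the groupoid is empty or contractible. I expect the main obstacle to be the honest effectiveness step: showing that the equivariant category recovers $\calD$ after base change, including the descent of rigidity and of the tensorator coherence. I would handle it by passing to the semisimple skeleton and reducing to descent of the finite-dimensional semisimple algebras $\End(\bigoplus X)$.
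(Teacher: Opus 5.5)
Your equivalence between descent data and $K$-forms is sound. The paper proves it more briefly: it chooses simple skeleta, descends the structure maps by vector-space descent, and cites Etingof--Gelaki. You build the equivariant category $\calD^\Gamma$, use Speiser's lemma for Hom descent and induced objects for essential surjectivity, and descend the $q_i$ through \eqref{eq:ordered-descent-source-coherence}. This also explicitly covers the case where the twisted functors $\Phi_\sigma{}^\sigma(-)$ permute the simple objects of $\calD$, which the paper leaves to the citation.

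The gap is in the final assertion. Empty or contractible requires that any two descent data be joined by a $1$-cell and that it be unique up to $2$-cells. Your last paragraph only addresses uniqueness: ``any two $1$-cells between two data\dots'' presupposes that one exists. Neither that paragraph nor the descent part shows that two arbitrary data $(\Phi,\alpha,J)$ and $(\Phi',\alpha',J')$ on the same $(\calD;\mathbf q)$ admit a $1$-cell.

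The missing step is the comparison the paper uses.
\begin{itemize}
\item Choose a quasi-inverse $\bar\Phi'_\sigma$ of $\Phi'_\sigma$. The autoequivalence $\Phi_\sigma\bar\Phi'_\sigma$, with marking cells built from $\alpha_{\sigma,i}$ and $(\alpha'_{\sigma,i})^{-1}$, is an object of $\underline{\operatorname{Aut}}(\calD;\mathbf q)$.
\item Contractibility gives a marking-compatible isomorphism from it to the identity. Equivalently, it gives $c_\sigma:\Phi_\sigma\Rightarrow\Phi'_\sigma$ satisfying \eqref{eq:ordered-descent-equivalence-source} with $R=\mathrm{id}$ and $t_i=\mathrm{id}$.
\item Equation \eqref{eq:ordered-descent-equivalence-compositor} then holds automatically. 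Its two sides are monoidal isomorphisms $\Phi_\sigma{}^\sigma\Phi_\tau\Rightarrow\Phi'_{\sigma\tau}$. They agree after whiskering with ${}^{\sigma\tau}q_i$, by \eqref{eq:ordered-descent-source-coherence} for both data and naturality of $c_\sigma$. So they differ by an automorphism of the identity object of $\underline{\operatorname{Aut}}(\calD;\mathbf q)$, which is trivial.
\end{itemize}

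Your uniqueness half also needs a correction. What you must check is not that forgetting $c_\sigma$ is faithful on $2$-cells, which is automatic. You must check that every arrow $d:(R,t_i)\to(R',t_i')$ of $\underline{\operatorname{Aut}}(\calD;\mathbf q)$ satisfies the second equation of \eqref{eq:ordered-descent-modification}. It does: the two sides agree after whiskering with ${}^\sigma q_i$, so again they differ by an automorphism of the identity object. This uses the contractibility hypothesis; without it the map to $\pi_0\underline{\operatorname{Aut}}(\calD;\mathbf q)$ need not be injective. Finally, the group that embeds is the automorphism group of a descent datum, not of a $1$-cell.
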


\begin{proof}
Choose simple skeleta and finite bases for the Hom spaces of $\calD$ and the
$\mathcal S_i$.  Galois descent for finite-dimensional vector spaces descends the
composition, tensor product, associator, unit and duality maps, together with
each $q_i$ and its tensorator.  Equations
\eqref{eq:ordered-descent-source-coherence} and
\eqref{eq:ordered-descent-pentagon} are precisely their compatibility
conditions.  The descended finite-dimensional endomorphism algebras remain
semisimple because they are separable in characteristic zero.  Applying the
same Hom-space descent to $R,t_i,c_\sigma$ and to $d$ proves full faithfulness
on $1$- and $2$-cells and gives the inverse construction by scalar extension.
This is the finite ordered-functor specialization of
\cite[Sections~3.1--3.5]{EtingofGelaki2012}.

Fix one descent datum.  The comparison with any other takes values in
$\underline{\operatorname{Aut}}(\calD;\mathbf q)$, and its coherence ambiguity
is an automorphism of the identity object of that categorical group.  If the
categorical group is contractible, the comparison and its coherence are
unique up to a unique $2$-cell.  Hence the homotopy groupoid is contractible
whenever it is nonempty.
\end{proof}

\begin{proposition}[Descent rigidity for the ordered quotient datum]
\label{prop:ordered-quotient-descent}
Let $K$ be a field of characteristic zero and let $\overline K$ be an
algebraic closure.  If one of the three standard ordered quotients over
$\overline K$ has a $K$-form, that form is unique up to an equivalence
intertwining the entire ordered source functor.
\end{proposition}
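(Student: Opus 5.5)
We reduce uniqueness of the $K$-form to contractibility of the ordered automorphism categorical group and then use \cref{lem:effective-ordered-descent}. Let $(\calC,Q)$ and $(\calC',Q')$ be two $K$-forms of one of the three standard ordered quotients over $\overline K$: the fold quotient, $\Fib\boxtimes\Fib$, or $\TT$. We first reduce to a finite Galois extension. Apply \cref{lem:spreading-out-tensor-data} to the two geometric identifications $\calC_{\overline K}\simeq\calD$ and $\calC'_{\overline K}\simeq\calD$, together with the cells intertwining the source functors. All of these data are then defined over a finite Galois extension $L/K$ inside $\overline K$, because only finitely many coefficients occur and each is algebraic over $K$. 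Over $L$ we therefore have a single ordered target $(\calD_L;q_1,q_2)$ carrying two ordered descent data in the sense of \cref{lem:effective-ordered-descent}, one coming from each $K$-form.

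By \cref{lem:factorwise-free-product-data}, forms, automorphisms and descent data for the full source functor $Q$ coincide with those for the ordered pair $(q_1,q_2)$ of factor restrictions. It therefore suffices to show that the groupoid of $K$-forms of $(\calD_L;q_1,q_2)$ is contractible once it is nonempty. By the last assertion of \cref{lem:effective-ordered-descent}, this follows from contractibility of $\underline{\operatorname{Aut}}(\calD_L;q_1,q_2)$.

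Geometric contractibility is \cref{lem:ordered-quotient-automorphisms}, applied after extension to $\overline K$. \Cref{lem:descent-geometric-ordered-rigidity}, with base field $L$, then transfers contractibility to the categorical group over $L$. Hence the two descent data are joined by a $1$-cell $(R,t_i,c_\sigma)$, unique up to a unique $2$-cell. Descending this $1$-cell by \cref{lem:effective-ordered-descent} gives a $K$-linear tensor equivalence $\calC\simeq\calC'$ with cells $t_i$ intertwining the two factor functors. By \cref{lem:factorwise-free-product-data}, these cells amount to an intertwiner of the entire ordered source functor.

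The main obstacle is the passage from $\overline K$ to a finite Galois extension $L$, because \cref{lem:effective-ordered-descent} is stated only for finite Galois extensions. We must check that the geometric comparison equivalence and its cells are defined over some finite $L$. We must also check that any two such choices of $L$ give compatible descent data. The first point follows from finiteness of the data, as above. The second uses the uniqueness in \cref{lem:ordered-quotient-automorphisms}: any two choices of comparison agree up to a unique cell. Those cells are Galois invariant, so enlarging $L$ does not change the resulting class.
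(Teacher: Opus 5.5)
Your proposal is correct and follows essentially the same route as the paper. You spread both $K$-forms and the geometric comparison out to a finite Galois $L/K$, and you obtain contractibility of the ordered factor automorphism categorical group over $L$ from \cref{lem:ordered-quotient-automorphisms,lem:descent-geometric-ordered-rigidity}. You then conclude by \cref{lem:effective-ordered-descent} that the two descent data are uniquely equivalent. Finally, you extend from the factor pair to the full free-product source functor by \cref{lem:factorwise-free-product-data}.

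Your closing worry about compatibility between different choices of $L$ is harmless but not needed, since the statement only asks for one intertwining equivalence over $K$.
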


\begin{proof}
Restrict the ordered quotient to its two rank-two factors.  Two proposed
$K$-forms and a geometric comparison between them are defined over some
finite Galois extension $L/K$.  By
\cref{lem:ordered-quotient-automorphisms,lem:descent-geometric-ordered-rigidity},
the ordered factor family over $L$
has contractible automorphism categorical group.  The two descent data are
therefore uniquely equivalent by \cref{lem:effective-ordered-descent}.
This equivalence descends the ordered factor functors to $K$, and
\cref{lem:factorwise-free-product-data} extends it uniquely to the full
free-product source functor.
\end{proof}

\begin{theorem}[Two-Fibonacci classification over $k$]
\label{thm:char-zero-two-fib}
Let $k$ be a field of characteristic zero, choose
$\tau\in k$ with $\tau^2=\tau+1$, and fix
$\mathcal F_{\tau,k}$.  Up to equivalence of ordered quotient tensor
categories, the finite quotients
\[
 \mathcal F_{\tau,k}*\mathcal F_{\tau,k}\longrightarrow\calC
\]
are exactly
\begin{equation}\label{eq:char-zero-two-fib-list}
 \begin{array}{ccl}
  n=1&:&\mathcal F_{\tau,k},\\
  n=2&:&\mathcal F_{\tau,k}\boxtimes_k\mathcal F_{\tau,k},\\
  n=3&:&\mathcal{TT}_{3,\tau,k}.
 \end{array}
\end{equation}
Every target in this list is split, and no finite quotient exists for
$n\ge4$.
\end{theorem}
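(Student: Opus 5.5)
The proof reduces every finite ordered quotient over $k$ to the complex classification and then descends back to $k$.

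\emph{Reduction to $\CC$.} Let $Q:\mathcal F_{\tau,k}*\mathcal F_{\tau,k}\to\calC$ be a finite quotient. By \cref{lem:spreading-out-tensor-data} it has a model $Q_0$ over a finitely generated field $k_0\supset\KFib$, with $\tau_0\mapsto\tau$. Choose an embedding $k_0\hookrightarrow\CC$ sending $\tau_0$ to $\ph$; this is possible because either embedding of $\KFib$ extends. The scalar extension $\mathcal F_{\tau_0,k_0}\otimes\CC$ is the usual $\Fib$, by the uniqueness in \cref{prop:fibonacci-field-of-definition} together with positivity of the dimension $\ph$. Hence the complexified functor is a finite quotient of $\Fib*\Fib$, and \cref{thm:two-fib-classification} gives $n\in\{1,2,3\}$ over $\CC$. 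By \cref{prop:shortest-relation-base-change}, together with the simplicity of the words over $k_0$ from the proof of \cref{thm:automatic-splitness-over-k}, the parameter $n$ is the same over $k_0$, over $k$, and over $\CC$. In particular no finite quotient with $n\ge4$ exists over $k$. Splitness of every target is \cref{thm:automatic-splitness-over-k}.

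\emph{Existence.} The three listed models are finite ordered quotients over $k$. For $n=1$ take the fold of two identities. For $n=2$ take the Deligne square with its factor embeddings. For $n=3$ take $\mathcal{TT}_{3,\tau,k}$ from \eqref{eq:TT-form-over-k}, with its two Fibonacci calculi as marked embeddings. In this last case one must check that each factor embedding is a tensor embedding of $\mathcal F_{\tau,k}$ itself. Over $\KFib$ the Fibonacci subcalculus has circle value $\vartheta$, so it is the split form with parameter $\vartheta$; by \cref{prop:fibonacci-field-of-definition} this is $\mathcal F_{\vartheta,\KFib}$. Extending scalars along $\vartheta\mapsto\tau$ gives $\mathcal F_{\tau,k}$.

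\emph{Uniqueness.} Let $Q$ have parameter $n$ and let $Q^{\mathrm{std}}_n$ be the corresponding standard model. First pass to $\bar k$, and through the spread-out model to $\CC$. The complex ordered uniqueness of \cref{thm:marked-uniqueness} identifies the two quotients geometrically. To transfer this to $\bar k$, I would use a second spreading-out: put both quotients and the complex comparison over a finitely generated field, embed that field into $\CC$ and into $\bar k$, and use the algebraically-closed rigidity of \cref{lem:ordered-quotient-automorphisms}. This shows that the geometric identification is independent of choices. Over $\bar k$, then, $Q$ and $Q^{\mathrm{std}}_n$ are two $k$-forms of the same ordered quotient, and \cref{prop:ordered-quotient-descent} identifies them by an equivalence that intertwines the entire ordered source functor. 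Finally, \cref{lem:factorwise-free-product-data} extends that equivalence from the factor pair to the free product.

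\emph{Main obstacle.} The delicate step is the passage from the complex comparison to one over $\bar k$, since $\bar k$ need not embed into $\CC$. I would settle it through the contractibility in \cref{lem:ordered-quotient-automorphisms}. The set of comparisons is a torsor that is trivial after some finitely generated extension, and any two trivializations are uniquely $2$-isomorphic. An isomorphism obtained after one embedding into $\CC$ therefore becomes algebraic over the model field: uniqueness forces its Galois invariance, so its matrix entries lie in an algebraic extension and hence in $\bar k$. From there, Galois descent as in \cref{lem:descent-geometric-ordered-rigidity} finishes the argument.
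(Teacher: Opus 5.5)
Your reduction to $\CC$, the invariance of $n$, the splitness, and the existence check all agree with the paper. This includes your verification that the two Fibonacci calculi in $\mathcal{TT}_{3,\tau,k}$ are embeddings of $\mathcal F_{\tau,k}$.

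The gap is at exactly the step you flagged. It has two parts.

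\begin{itemize}
\item[(1)] Contractibility of $\underline{\operatorname{Aut}}(\calD;q_1,q_2)$ makes an ordered comparison $P$ unique only up to a unique monoidal $2$-cell. It does not make $P$ unique. For $\sigma\in\operatorname{Aut}(\CC/k_0)$, the conjugate ${}^\sigma P$ is merely isomorphic to $P$. The matrix entries of $P$ (its values on basis morphisms, its tensorator, the cells $t_i$) are fixed only up to conjugation by the components of such a $2$-cell. So uniqueness does not force these entries to be Galois invariant, or algebraic over $k_0$.
\item[(2)] Your ``second spreading-out'' fails as stated. A finitely generated field containing the coefficients of a complex comparison can have positive transcendence degree over $k_0$. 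It then need not embed into $\bar k$ over $k_0$; take $k=\KFib$, so that $\bar k=\overline{\mathbb Q}$.
\end{itemize}
Independence of choices therefore never yields an equivalence over $\bar k$.

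The missing idea is elementary. Fix the finite object-level data. The existence of an ordered equivalence between the spread-out quotient $Q_0$ and the standard model is then a finite system of polynomial equations, with inverses adjoined as extra variables and coefficients in $k_0$. A solution over $\CC\supset\overline{k_0}$ therefore gives a solution over $\overline{k_0}$, by the Nullstellensatz.

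You also do not need $\bar k$ at all. The two quotients are then $k_0$-forms of one ordered datum over $\overline{k_0}$. Applying \cref{prop:ordered-quotient-descent} with $K=k_0$ identifies them over $k_0$, and extending scalars from $k_0$ to $k$ gives the ordered equivalence over $k$. This is the paper's argument.

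Your route through descent over $k$ also works once the gap is filled. Transport the $\overline{k_0}$-equivalence along an embedding $\overline{k_0}\hookrightarrow\bar k$ over $k_0\subset k$, and then apply \cref{prop:ordered-quotient-descent} with $K=k$.
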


\begin{proof}
The target is split by \cref{thm:automatic-splitness-over-k}, and
\cref{prop:shortest-relation-base-change} preserves its shortest relation.
Spread the quotient out over a finitely generated field
$\KFib\subset k_0\subset k$ and embed $k_0$ in $\CC$ with
$\vartheta\mapsto\ph$.  The complex classification gives exactly
$n=1,2,3$ and excludes $n\ge4$.

Let $\overline{k_0}$ be an algebraic closure of $k_0$.  The quotient over
$\overline{k_0}$ and the corresponding standard model become equivalent
after extension to $\CC$.  Such an equivalence is a finite system of
matrices satisfying polynomial equations over $\overline{k_0}$, and hence
already exists over $\overline{k_0}$.  The two quotients are therefore
$k_0$-forms of the same ordered quotient datum.  Their equivalence descends
by \cref{prop:ordered-quotient-descent}.  Extending it from $k_0$ to $k$
gives one of the three ordered equivalences in
\eqref{eq:char-zero-two-fib-list}.  The standard forms exist by the
construction preceding \cref{lem:ordered-quotient-automorphisms}.
\end{proof}

The two embeddings $\KFib\hookrightarrow\CC$ send $\vartheta$ to
$\ph$ and $1-\ph$.  They carry the formulas above to the Fibonacci and
Yang--Lee Galois-conjugate realizations, respectively.  The theorem fixes
the chosen rank-two input and makes no tensor-equivalence identification
between these two complex specializations.

\subsection{Fibonacci--pointed composites over \texorpdfstring{$k$}{k}}

Let $G$ be a finite group and let $\Omega\in Z^3(G,k^\times)$.  Write
$\Vec_{G,k}^{\Omega}$ for the split pointed $k$-linear category with
associator $\Omega$.

\begin{definition}[$k$-admissible marked datum]
\label{def:k-admissible-datum}
A $k$-admissible datum consists of the following $k$-linear data.
\begin{enumerate}[label=\textup{(\roman*)},leftmargin=2.8em]
\item An epimorphism $q:G\twoheadrightarrow H$, a subgroup $K\le H$, a
normalized $\omega\in Z^3(H,k^\times)$, and a normalized
$\eta\in C^2(G,k^\times)$ satisfying
\begin{equation}\label{eq:k-admissible-cochains}
 d\eta=\frac{\Omega}{q^*\omega}.
\end{equation}
\item With $S=H/K$, an $S$-marked block category whose labelled embeddings
$\iota_s:\mathcal F_{\tau,k}\to\calB$ identify it with exactly one of
\begin{equation}\label{eq:k-neutral-blocks}
 \mathcal F_{\tau,k}^{\boxtimes H/K}
 \quad\text{or}\quad
 \mathcal{TT}_{3,\tau,k}^{\boxtimes(H/K)/\langle\bar n\rangle},
\end{equation}
where in the second case $\bar n$ is a nonidentity element of order two in
$N_H(K)/K$ and each factor retains both ordered marked embeddings.
\item A categorical action $(\rho,J,\mu)$ of $H$ on $\calB$, together with
monoidal marked cells
\[
 \xi_{h,s}:\rho_h\iota_s\Longrightarrow\iota_{hs},
\]
normalized at the identity and satisfying the action pentagon and
\eqref{eq:marked-action-coherence} over $k$.
\end{enumerate}

An equivalence of two such data is a group isomorphism $\theta:H\to H'$ with
$q'=\theta q$, a $\theta$-equivariant bijection
$\beta:H/K\to H'/K'$ satisfying $\beta(K)=K'$ and preserving the matching,
a $k$-linear tensor
equivalence $T:\calB\to\calB'$, and monoidal cells
\[
 t_s:T\iota_s\Rightarrow\iota'_{\beta(s)},
 \qquad
 \gamma_h:T\rho_h\Rightarrow\rho'_{\theta(h)}T,
\]
satisfying \eqref{eq:action-equivalence-coherence} and
\eqref{eq:marked-equivalence-coherence}, together with normalized cochains
\[
 \kappa\in C^2(H,k^\times),
 \qquad \lambda\in C^1(G,k^\times)
\]
satisfying \eqref{eq:explicit-cochain-equivalence}.  Thus the whole marked
source functor, not only its generating objects, is retained.
\end{definition}

We first establish the block decomposition over the ground field.  In the
statement below, the local two-generator categories are equipped with their
ordered source embeddings.

\begin{proposition}[Several-Fibonacci blocks over $k$]
\label{prop:char-zero-many-fib}
Let $\calB$ be a finite semisimple rigid $k$-linear tensor category generated
by pairwise nonisomorphic marked Fibonacci objects $a_s$, $s\in S$.
Suppose a group $L$ acts by $k$-linear tensor autoequivalences and
transitively permutes this family.  Then the noncommutation graph
$\Gamma$ is empty or a perfect matching $M$, the category $\calB$ is split,
and there is a $k$-linear tensor equivalence
\begin{equation}\label{eq:char-zero-many-fib-factorization}
 \calB\simeq
 \begin{cases}
  \mathcal F_{\tau,k}^{\boxtimes_k S},&E(\Gamma)=\varnothing,\\[2mm]
  \mathcal{TT}_{3,\tau,k}^{\boxtimes_k M},&\Gamma\text{ is a perfect matching }M,
 \end{cases}
\end{equation}
that intertwines every labelled source embedding.
\end{proposition}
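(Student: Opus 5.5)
The plan is to reduce to the complex results through spreading out, then descend back to $k$, following the pattern of \cref{thm:automatic-splitness-over-k} and \cref{thm:char-zero-two-fib}.

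\emph{Step 1: pass to $\CC$.} Apply \cref{lem:spreading-out-tensor-data} to $\calB$, the finitely many markings $\iota_s$, and the finitely many autoequivalences needed to act transitively on the family. This gives a model over a finitely generated field $\KFib\subset k_0\subset k$. Then embed $k_0\hookrightarrow\CC$ with $\vartheta\mapsto\ph$.

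\emph{Step 2: apply the complex structure theorem and deduce splitness.} The extension $\calB_\CC$ is generated by the extended marked objects. These remain pairwise nonisomorphic by the descent argument of \cref{prop:shortest-relation-base-change}, since $\Hom$ spaces base-change and the $a_s$ are split simples. Two of them commute over $k_0$ exactly when they commute over $\CC$, by the same argument applied to the simple words $a_sa_t$, $a_ta_s$. So $\Gamma$ is unchanged, and the transitive action persists. \cref{thm:many-fib-blocks} then shows that $\Gamma$ is empty or a perfect matching, and that $\calB_\CC$ is the corresponding block product. By \cref{prop:exact-block-factorization}, every complex simple is the scalar extension of a product of block words in the $a_s$, and these words are defined over $k_0$. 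Repeating the endomorphism and Hom-nonvanishing argument of \cref{thm:automatic-splitness-over-k} shows that these words are split simple over $k_0$ and exhaust the simples. Hence $\calB$ is split.

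\emph{Step 3: build the equivalence over $k$.} For each block, the pair or singleton of markings restricts to a finite ordered two-Fibonacci quotient over $k$. By \cref{thm:char-zero-two-fib}, it is equivalent, compatibly with the markings, to $\mathcal{TT}_{3,\tau,k}$ or $\mathcal F_{\tau,k}$; for singletons this also follows directly from \cref{lem:k-fib-marking-rigidity}. The exact factorization of \cref{prop:exact-block-factorization} is purely Hom-theoretic. It holds over $k$ by the same proof, because FP dimensions and fusion rules are detected after base change. It gives linear equivalences $\calA_{j-1}\boxtimes_k\calC_{i_j}\to\calA_j$.

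The main obstacle is the tensor splitting, since \cref{lem:coherent-block-multiplication} was quoted only over $\CC$. I would handle it by descent instead of reproving the ENO result. The complex equivalence $\Phi_\CC:\calA_{j-1}\boxtimes\calC_{i_j}\to\calA_j$, which restricts to the given embeddings, is defined over $\overline{k_0}$ by the polynomial-system argument in \cref{thm:char-zero-two-fib}. Both source and target are forms of the same ordered family of embeddings. The categorical group of autoequivalences of the block product fixing all labelled Fibonacci embeddings is contractible over $\overline{k_0}$. This follows from \cref{lem:ordered-quotient-automorphisms} applied factorwise, together with the interchange argument used there for the Deligne square: the scalar $r$ on $a_sa_t$ is forced to be $1$, extending blockwise by induction. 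Then \cref{lem:descent-geometric-ordered-rigidity} and \cref{lem:effective-ordered-descent} descend $\Phi$ uniquely to $k_0$, and extending scalars gives it over $k$. Iterating over $j$ yields \eqref{eq:char-zero-many-fib-factorization}, intertwining every labelled embedding.
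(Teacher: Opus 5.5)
Your proposal is correct, but it gets the tensor structure by a different and heavier route than the paper.

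The paper's route is as follows.
\begin{itemize}
\item It does not spread out to $\CC$. It extends directly to $\bar k$ to obtain the matching.
\item It then reruns the exact-factorization induction over $k$ itself. Trivial intersection is checked after base change, and \eqref{eq:block-hom-computation} over $k$ shows that the products $XY$ are split simple. So splitness and the linear factorization come out together.
\item For the tensorator, it takes the coherent interchange $c_{Y,X}:YX\to XY$ supplied over $\bar k$ by the twisted Deligne product theorem. It shows $c$ is unique, since the ratio of two interchanges is a bicharacter of the trivial universal grading groups. Hence $c$ is Galois-invariant, its coordinates lie in the one-dimensional $k$-spaces $\Hom_\calB(YX,XY)$, and the multiplication functor itself acquires a $k$-linear tensorator.
\end{itemize}

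Your route is different.
\begin{itemize}
\item You spread out to $k_0\subset\CC$, so that \cref{thm:many-fib-blocks} and \cref{lem:coherent-block-multiplication} apply exactly where they are stated.
\item You obtain splitness from the automatic-splitness argument.
\item You descend an abstract marked equivalence using the Nullstellensatz together with \cref{lem:descent-geometric-ordered-rigidity,lem:effective-ordered-descent}.
\end{itemize}

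This requires a block-product version of \cref{lem:ordered-quotient-automorphisms}, which the paper does not state. Your sketch of it is sound, with one point to make explicit. The scalar $r(X,Y)$ must be shown to equal $1$ for all simple $X\in\calA_{j-1}$ and $Y\in\calC_{i_j}$, not only for generator pairs. That extension is the hexagon multiplicativity $r(Z,Y)=r(X,Y)r(X',Y)$ for $Z\subset XX'$, which is precisely the bicharacter step in the paper's proof. So your contractibility argument already contains what would let you descend $c$ directly.

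Your approach gains two things: you never need the complex-stated splitting results over an arbitrary algebraically closed field, and you get uniqueness of the marked equivalence as a byproduct. The paper's approach gains economy and an explicit $k$-linear tensor structure on the multiplication functor itself.
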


\begin{proof}
Extend to an algebraic closure $\bar k$.  The objects $(a_s)_{\bar k}$
remain simple and pairwise nonisomorphic by
\eqref{eq:hom-scalar-extension}, and they generate $\calB_{\bar k}$ after
idempotent completion.  The action of $L$ also extends and remains
transitive.  By \cref{thm:char-zero-two-fib}, every pair generates the
rank-four commuting block or the rank-six $\mathcal{TT}_{3,\tau,\bar k}$
block.  The based-ring proof of
\cref{thm:coxeter-rigidity,cor:transitive-matching}, which is independent of
the algebraically closed characteristic-zero ground field, applies to
$K_0(\calB_{\bar k})$.  Hence its noncommutation graph is empty or a perfect
matching.  The pair type is unchanged by scalar extension, so this is also
the graph over $k$.  The local ordered quotient theorem supplies
object-level commutation between distinct blocks over $k$.

We next repeat the exact-factorization induction over $k$, taking some care
not to assume that the ambient category is already split.  Suppose the
subcategory $\calA$ generated by the preceding blocks has the asserted split
simple-word basis, and let $\calD$ be the new split Fibonacci or
$\mathcal{TT}_{3,\tau,k}$ block.  Extend the three categories to an algebraic
closure $\bar k$.  Characteristic zero makes finite-dimensional semisimple
algebras separable, so scalar extension preserves semisimplicity; the
specified simple objects of $\calA$ and $\calD$ remain simple by
\eqref{eq:hom-scalar-extension}.  If
$\calA_{\bar k}\cap\calD_{\bar k}$ were nontrivial, the argument in
\cref{prop:exact-block-factorization} would put a distinguished Fibonacci
generator of $\calD_{\bar k}$ among the simple words of
$\calA_{\bar k}$ having Frobenius--Perron dimension $\ph$.  It would
therefore be isomorphic to an earlier labelled generator.  The corresponding
Hom space is the scalar extension of a Hom space over $k$, so this
isomorphism would already exist over $k$, a contradiction.  Hence the
intersection is trivial after scalar extension and over $k$.

For simple $X,X'\in\calA$ and $Y,Y'\in\calD$, the rigidity calculation
\eqref{eq:block-hom-computation} now gives
\[
 \Hom_{\calB}(XY,X'Y')=
 \begin{cases}
  k,&X\cong X'\text{ and }Y\cong Y',\\
  0,&\text{otherwise}.
 \end{cases}
\]
Thus the products $XY$ are split simple objects and are pairwise
nonisomorphic.  Their additive span is closed under tensor products and
duals by the word commutation relations, and it contains the generators.
It therefore exhausts the next generated subcategory.  Induction proves
that $\calB$ is split and that multiplication gives an exact factorization
by the embedded blocks as a semisimple $k$-linear category.

It remains to construct the tensorator over $k$.  Extend one stage
$\calA\calD$ of the exact factorization to $\bar k$.  The two factors have
trivial universal grading groups: each is generated by Fibonacci objects
$x$ satisfying $x\subset x^2$, so the degree of every such generator is
trivial.  Hence \cite[Theorem~3.15]{ENO2026} supplies a coherent
interchange
\[
 c_{Y,X}:Y\otimes X\longrightarrow X\otimes Y
 \qquad(X\in\calA_{\bar k},\ Y\in\calD_{\bar k})
\]
and thereby a tensor structure on the multiplication equivalence that
retains the two embedded factors.

This interchange is unique.  Indeed, if $c'$ is another one, then for
simple $X,Y$ exact factorization makes $c'_{Y,X}$ a scalar
$r(X,Y)\in\bar k^\times$ times $c_{Y,X}$.  Naturality and the two hexagon
identities imply
\[
 r(Z,Y)=r(X,Y)r(X',Y)
 \quad\text{whenever }Z\subset X\otimes X',
\]
and the analogous identity in the second variable.  Thus $r$ factors
through a bicharacter of the two universal grading groups
\cite[Lemma~2.1]{ENO2026}; both groups are trivial, so $r=1$.

Every Galois conjugate of $c$ is another coherent interchange retaining the
same two embedded factors, and hence equals $c$.  The coordinates of $c$ in
the one-dimensional $k$-spaces
$\Hom_{\calB}(YX,XY)$ consequently lie in $k$.  Naturality, the hexagon
identities, and the unit identities descend faithfully.  The resulting
$k$-linear tensorator makes multiplication a tensor equivalence retaining
both factors.  Applying this argument successively to the exact
factorization proves \eqref{eq:char-zero-many-fib-factorization} with all
labels preserved.
\end{proof}

\begin{theorem}[Fibonacci--pointed classification over $k$]
\label{thm:char-zero-pointed}
Let $k$ be a field of characteristic zero containing $\sqrt5$, choose
$\tau\in k$ with $\tau^2=\tau+1$, and fix
$\mathcal F_{\tau,k}$.  Let $G$ be a finite group and let
$\Omega\in Z^3(G,k^\times)$.  Equivalence classes of finite composites
\[
 F:\mathcal F_{\tau,k}*\Vec_{G,k}^{\Omega}\longrightarrow\calE
\]
are in bijection with equivalence classes of $k$-admissible classification
data.  Every target $\calE$ is split, and the datum reconstructs it as
\[
 \calE\simeq\calB\rtw_\rho^\omega H,
\]
where $\calB$ is one of the two block categories in
\eqref{eq:k-neutral-blocks}.
\end{theorem}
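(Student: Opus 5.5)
We will transcribe the proof of \cref{thm:fib-pointed-classification} over $k$, replacing each complex input with its ground-field version from this section. Start from a finite composite $F:\mathcal F_{\tau,k}*\Vec_{G,k}^{\Omega}\to\calE$ and put $a=F(x)$. The pointed image consists of invertible objects of $\calE$. We first check that these are split: $\End(u)\cong\Hom(\mathbf1,u^*u)=k$. Hence choices $u_h$, $m_{h,k}$ give $q:G\twoheadrightarrow H$, a normalized $\omega\in Z^3(H,k^\times)$ and $\eta$ with \eqref{eq:k-admissible-cochains}, exactly as in \cref{sec:neutral-reduction}. Define $K$ and the conjugates $a_s$, $s\in H/K$, as before.

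The marking cells $\lambda_k$ and the transports $\xi_{h,s}$ of \eqref{eq:constructed-marking-transport} come from \cref{lem:k-fib-marking-rigidity}. The same uniqueness argument yields \eqref{eq:marked-action-coherence} over $k$. Conjugation then acts transitively on the marked family generating $\calB$. Here \cref{prop:char-zero-many-fib} replaces \cref{thm:many-fib-blocks}: it shows that $\calB$ is split and is identified, compatibly with the labels, with one of the block categories \eqref{eq:k-neutral-blocks}. \Cref{lem:normalizer-involutions} is purely group-theoretic and supplies $\bar n$.

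Next we prove normal-form separation (\cref{lem:normal-form-separation}) and the faithful grading (\cref{prop:B-equals-adjoint}) over $k$. The point needing care is that $\Inv(\calB)=1$ over $k$. To get it, extend scalars to $\bar k$ and use \cref{cor:neutral-no-invertibles} for the split $\calB_{\bar k}$; since $\calB$ is split, its simples stay simple. Every simple of $\calE$ is a summand of some $X\otimes u_h$ with $X$ simple in $\calB$. The Frobenius reciprocity computation gives $\End(Xu_h)\cong\End(X)=k$, so $Xu_h$ is split simple and $\calE$ is split. The crossed-product equivalence of \cref{thm:automatic-crossed-product} is given by explicit formulas that make sense over any field, so $\calE\simeq\calB\rtw_\rho^\omega H$ over $k$. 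The converse construction, from a $k$-admissible datum via the free-product universal property, is verbatim. Dominance holds because the image contains all $(\mathbf1,h)$ and all the $a_s$.

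The main obstacle is matching the equivalence relation, i.e.\ the $k$-version of \cref{lem:marked-galindo-specialization}. Galindo's description is stated over algebraically closed fields, so it cannot be quoted directly. We would verify that the normal form still holds by a direct argument. First, \cref{lem:no-hidden-neutral-data} holds over $k$: its proof is purely computational, and $\Inv(Z(\calB))=1$ follows from the same scalar argument. So a graded equivalence $P$ sends $(\mathbf1,h)$ to a homogeneous invertible object, which must be $(\mathbf1,\theta(h))$. Its tensorator components on units are scalars $\kappa(h,k)\in k^\times$, and its mixed components give $\gamma_h$. Any remaining ambiguity is a $1$-cochain with values in $k^\times$.

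As a fallback we would argue by descent. Spread out via \cref{lem:spreading-out-tensor-data}, use the $\bar k$ statement, and descend the resulting $(T,\gamma,\kappa,t,\lambda)$ by Galois invariance. This works as in \cref{lem:descent-geometric-ordered-rigidity}, using the uniqueness of marked cells from \cref{lem:k-fib-marking-rigidity} and the triviality of neutral automorphisms. With this in place, the two directions of \cref{sec:equivalence-data} go through word for word with $\CC^\times$ replaced by $k^\times$.
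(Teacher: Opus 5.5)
Your proposal is correct and follows the paper's proof essentially step for step. The shared steps are: the $k^\times$-valued pointed data coming from one-dimensional multiplication spaces, the marking cells from \cref{lem:k-fib-marking-rigidity}, the block identification via \cref{prop:char-zero-many-fib}, splitness of $\calE$ from $\End(X\otimes u_h)\cong\End(X)=k$, and the observation that the crossed-product construction is algebraic and so works over $k$.

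For the equivalence relation, the paper takes exactly your direct route: it notes that the proof of \cref{lem:marked-galindo-specialization} uses only \cref{lem:no-hidden-neutral-data} and $\End(\mathbf1)=k$. Your descent fallback is therefore unnecessary. As stated, it would also need more care, because the extracted data over $\bar k$ are canonical only after choosing the identifications $P(\mathbf1,h)\cong(\mathbf1,\theta(h))$ over $k$.
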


\begin{proof}
The pointed restriction has image $\Vec_{H,k}^\omega$ for an epimorphism
$q:G\twoheadrightarrow H$.  An invertible simple object has endomorphism
algebra $k$, and each multiplication space between chosen invertible
representatives is one-dimensional.  Their associativity defect and the
pointed tensorator therefore give exactly the $k^\times$-valued data in
\eqref{eq:k-admissible-cochains}.

Let $x$ be the nontrivial simple object of $\mathcal F_{\tau,k}$.
Conjugating $a=F(x)$ by these invertible representatives produces the
labelled family indexed by $H/K$, where
\[
 K=\{h\in H:\Ad_{u_h}(a)\cong a\}.
\]
For every $k_0\in K$, \cref{lem:k-fib-marking-rigidity} gives the unique
$k$-linear monoidal cell
\[
 \lambda_{k_0}:\Ad_{u_{k_0}}\iota\Longrightarrow\iota.
\]
Choose representatives $r_s$ and put
$k_0(h,s)=r_{hs}^{-1}hr_s$.  The same composite as in
\eqref{eq:constructed-marking-transport},
\[
 \rho_h\Ad_{u_{r_s}}\iota
 \xRightarrow{\mu_{h,r_s}}
 \rho_{hr_s}\iota
 \xRightarrow{\mu_{r_{hs},k_0(h,s)}^{-1}}
 \rho_{r_{hs}}\rho_{k_0(h,s)}\iota
 \xRightarrow{\rho_{r_{hs}}\lambda_{k_0(h,s)}}
 \Ad_{u_{r_{hs}}}\iota,
\]
defines $k$-linear cells $\xi_{h,s}$.  Their coherence and independence of
the representatives follow from the uniqueness in
\cref{lem:k-fib-marking-rigidity}.  Thus the conjugation action of $H$ is
transitive on the marked embeddings, and every two labelled generators form a finite ordered
two-Fibonacci quotient.  Thus \cref{thm:char-zero-two-fib} supplies the
commute-or-$\mathcal{TT}_{3,\tau,k}$ alternative, and
\cref{prop:char-zero-many-fib} identifies the
conjugate-generated category, with every labelled embedding retained, with
the corresponding split block product in \eqref{eq:k-neutral-blocks}.

The normal-form proof of \cref{lem:normal-form-separation} now gives every
simple object of $\calE$ uniquely as $X\otimes u_h$ with $X$ simple in
$\calB$.  Consequently
\[
 \End_\calE(X\otimes u_h)\cong\End_\calB(X)=k,
\]
so $\calE$ is split.  The grading and crossed-product construction in
\cref{prop:B-equals-adjoint,thm:automatic-crossed-product} are algebraic and
give $\calE\simeq\calB\rtw_\rho^\omega H$ over $k$.  The proof of
\cref{lem:marked-galindo-specialization} likewise uses only the absence of
neutral and central invertible data and the fact that
$\End(\mathbf1)=k$; its scalar parameters therefore lie in $k^\times$.
This proves both the classification and its stated equivalence relation.
\end{proof}

The theorem is an intrinsic classification over $k$.  It does not identify
its set of equivalence classes with the complex one.  In particular, the
maps induced by scalar extension on group cohomology need not be injective,
and distinct $k$-linear categorical actions can become equivalent after
extension.  The quotient, cochains, and action appearing in the theorem
retain precisely this ground-field information.

\section{Conclusion and future directions}\label{sec:conclusion}

The two-Fibonacci classification supplies the local monoidal input for the
several-generator Coxeter system.  Finiteness makes its noncommutation graph
a forest, and conjugation by the pointed image makes the graph transitive and
hence a matching.  Exact factorization and tensor splitting identify the
resulting blocks before the normal-form argument produces a faithful grading
and a crossed-product description of every finite Fibonacci--pointed
composite.
The classification retains the source marking, the quotient group and
stabilizer, the cochain data, and the full marked categorical action.  Over
an arbitrary field of characteristic zero the same description holds after
fixing the rank-two input, with all scalar and categorical data defined over
the ground field.

\subsection{Infinite composites of finitely many Fibonacci factors}

Let $m\ge2$ and consider semisimple rigid tensor quotients of
$\Fib^{*m}$ with finite-dimensional Hom spaces but infinitely many simple
objects.  For $m=2$, the alternating-word induction has a sharp consequence:
if no finite relation $u_n\cong v_n$ occurs, every reduced alternating word
remains simple and distinct, and \cref{cor:free-two-fib-quotient} identifies
the marked quotient with the free product $\Fib*\Fib$.  Thus the two-factor
infinite problem does not produce an
additional family beyond the free product.

For pairwise nonisomorphic marked positive generators in the split
locally finite semisimple rigid complex-linear setting, the companion
paper \cite[Corollary~\CoxeterClassificationCorollary]{FibCoxeter}
identifies the infinite-rank possibilities with the Coxeter systems whose
off-diagonal exponents are $2,3,\infty$ and whose groups are infinite.
In particular, cycles $C_n$, $n\ge3$, with exponent three on adjacent
vertices give infinite-rank categories, although every two-generator
subcategory is a fusion category. Their construction and marked
uniqueness are established there. The forest restriction in
\cref{cor:coxeter-forest} applies to fusion targets and is therefore
consistent with these examples. Mixed Fibonacci/Yang--Lee branch
assignments lie outside this fixed-positive-branch classification.

\subsection{Fibonacci--Ising composites}

Fix the standard unitary complex Ising fusion category $\Ising$, including
its associator and labelled simple objects.  A natural next problem is to
classify faithful dominant tensor functors $\Fib*\Ising\to\calE$ with
fusion target, retaining both source embeddings and taking equivalence of
the entire marked source functor.  This replaces the pointed factor in the
present classification by an Ising factor.

\subsection{Ising--pointed composites}

For a fixed pointed fusion category $\calP$, one can instead ask for the
classification of faithful dominant tensor functors
$\Ising*\calP\to\calE$ with fusion target, retaining the Ising source
embedding and the pointed source functor, up to equivalence of the entire
marked source functor.
In the Fibonacci--pointed classification, $\Fib$ and the finite
several-Fibonacci composites have no nontrivial invertible simple objects
by \cref{cor:neutral-no-invertibles}.  Thus $\Inv(\calB)=1$ separates the
pointed labels in the normal forms $X\otimes u_h$, a key step in the
crossed-product reconstruction.  By contrast, $\Ising$ contains a
nontrivial invertible object $\psi$.  The subcategory generated by the
pointed conjugates of the Ising image contains the image of $\psi$ and may
have a nontrivial intersection with the pointed image.
Consequently the separation argument of
\cref{lem:normal-form-separation} does not apply directly.

\appendix
\definecolor{A3}{RGB}{0,150,0}
\definecolor{A4}{rgb}{1,0.5,0}
\definecolor{theta}{named}{blue}
\definecolor{rho}{named}{red}
\definecolor{mu}{named}{black}
\colorlet{unshaded}{white}

\tikzset{
  unshaded/.style={fill=white},
  ttplain/.style={thick,line cap=round,line join=round},
  ttrho/.style={thick,rho,line cap=round,line join=round},
  ttmu/.style={thick,mu,line cap=round,line join=round},
  tttheta/.style={thick,theta,line cap=round,line join=round},
}

\newcommand{\TTCurrentN}{2}
\newcommand{\TTSetN}[1]{\gdef\TTCurrentN{#1}}

\newcommand{\TTDownBundleCounts}[5]{%
  \pgfmathtruncatemacro{\TTRhoCount}{max(0,#3)}%
  \pgfmathtruncatemacro{\TTMuCount}{max(0,#4)}%
  \pgfmathtruncatemacro{\TTTotalCount}{\TTRhoCount+\TTMuCount}%
  \pgfmathtruncatemacro{\TTBundleShift}{#5}%
  \ifnum\TTRhoCount>\TTMuCount
    \pgfmathtruncatemacro{\TTBundleShift}{0}
  \else\ifnum\TTMuCount>\TTRhoCount
    \pgfmathtruncatemacro{\TTBundleShift}{1}
  \fi\fi
  \ifnum\TTTotalCount>0
    \foreach \tti in {1,...,\TTTotalCount}{%
      \pgfmathsetmacro{\ttx}{.18*(\tti-(\TTTotalCount+1)/2)}%
      \pgfmathtruncatemacro{\ttparity}{mod(\tti+\TTBundleShift,2)}%
      \ifnum\ttparity=1
        \draw[ttrho] ($#1+(\ttx,0)$) -- ($#1+(\ttx,#2)$);%
      \else
        \draw[ttmu] ($#1+(\ttx,0)$) -- ($#1+(\ttx,#2)$);%
      \fi
    }%
  \fi
}
\newcommand{\TTDownBundleA}[4]{\TTDownBundleCounts{#1}{#2}{#3}{#4}{0}}
\newcommand{\TTDownBundleB}[4]{\TTDownBundleCounts{#1}{#2}{#3}{#4}{1}}
\newcommand{\TTBottomRemainderA}[4]{%
  \pgfmathtruncatemacro{\TTRemRho}{max(0,\TTCurrentN-(#3))}%
  \pgfmathtruncatemacro{\TTRemMu}{max(0,\TTCurrentN-(#4))}%
  \TTDownBundleA{#1}{#2}{\TTRemRho}{\TTRemMu}%
}
\newcommand{\TTBottomRemainderB}[4]{%
  \pgfmathtruncatemacro{\TTRemRho}{max(0,\TTCurrentN-(#3))}%
  \pgfmathtruncatemacro{\TTRemMu}{max(0,\TTCurrentN-(#4))}%
  \TTDownBundleB{#1}{#2}{\TTRemRho}{\TTRemMu}%
}

\newcommand{\TTCouponLabel}[1]{#1}
\newcommand{\TTDualCouponLabel}[1]{%
  \begingroup\edef\ttduallabel{#1}\def\ttdualU{U}\def\ttdualV{V}%
  \ifx\ttduallabel\ttdualU V\else
    \ifx\ttduallabel\ttdualV U\else #1\fi
  \fi\endgroup}
\newcommand{\TTDiagramDual}[1]{%
  \begingroup
  \let\TTCouponLabel\TTDualCouponLabel
  \tikzset{every picture/.append style={yscale=-1}}%
  #1%
  \endgroup}

\newcommand{\TTUcircle}[2]{%
  \draw[thick,unshaded] #2 circle (.4cm);
  \node at #2 {$\TTCouponLabel{#1}$};
  \node at ($#2+(0,.55)$) {$\star$};
}
\newcommand{\TTUcircleBelowStar}[2]{%
  \draw[thick,unshaded] #2 circle (.4cm);
  \node at #2 {$\TTCouponLabel{#1}$};
  \node at ($#2+(0,-.55)$) {$\star$};
}
\newcommand{\TTUcircleLeftStar}[2]{%
  \draw[thick,unshaded] #2 circle (.4cm);
  \node at #2 {$\TTCouponLabel{#1}$};
  \node at ($#2+(-.55,0)$) {$\star$};
}
\newcommand{\TTUcircleRightStar}[2]{%
  \draw[thick,unshaded] #2 circle (.4cm);
  \node at #2 {$\TTCouponLabel{#1}$};
  \node at ($#2+(.55,0)$) {$\star$};
}
\newcommand{\TTUcircleStarAngle}[3]{%
  \draw[thick,unshaded] #2 circle (.4cm);
  \node at #2 {$\TTCouponLabel{#1}$};
  \node at ($#2+(#3:.55cm)$) {$\star$};
}

\newcommand{\TTVerticalHalfBundle}[3]{%
  \pgfmathtruncatemacro{\TTHalfCount}{\TTCurrentN}%
  \foreach \tti in {1,...,\TTHalfCount}{%
    \pgfmathsetmacro{\ttx}{.20*(\tti-(\TTHalfCount+1)/2)}%
    \pgfmathtruncatemacro{\ttparity}{mod(\tti+#3,2)}%
    \ifnum\ttparity=1
      \draw[ttrho] ($#1+(\ttx,0)$)--($#1+(\ttx,#2*.75)$);%
    \else
      \draw[ttmu] ($#1+(\ttx,0)$)--($#1+(\ttx,#2*.75)$);%
    \fi
  }%
}
\newcommand{\TTVerticalCouponA}[1]{%
\begin{tikzpicture}[baseline=-.35cm]
  \coordinate(q)at(0,0);
  \TTVerticalHalfBundle{(q)}{1}{0}
  \TTVerticalHalfBundle{(q)}{-1}{1}
  \TTUcircleLeftStar{#1}{(q)}
\end{tikzpicture}}
\newcommand{\TTVerticalCouponB}[1]{%
\begin{tikzpicture}[baseline=-.35cm]
  \coordinate(q)at(0,0);
  \TTVerticalHalfBundle{(q)}{1}{1}
  \TTVerticalHalfBundle{(q)}{-1}{0}
  \TTUcircleLeftStar{#1}{(q)}
\end{tikzpicture}}

\newcommand{\TTOneClickStraightA}{%
  \pgfmathtruncatemacro{\TTInnerCount}{\TTCurrentN-1}%
  \ifnum\TTInnerCount>0
    \foreach \tti in {1,...,\TTInnerCount}{%
      \pgfmathsetmacro{\ttx}{.18*(\tti-(\TTInnerCount+1)/2)}%
      \pgfmathtruncatemacro{\tttop}{mod(\tti,2)}%
      \pgfmathtruncatemacro{\ttbottom}{mod(\tti+mod(\TTCurrentN,2)+1,2)}%
      \ifnum\tttop=1 \draw[ttrho](\ttx,.75)--(\ttx,0);%
      \else \draw[ttmu](\ttx,.75)--(\ttx,0);\fi
      \ifnum\ttbottom=1 \draw[ttrho](\ttx,0)--(\ttx,-.75);%
      \else \draw[ttmu](\ttx,0)--(\ttx,-.75);\fi
    }%
  \fi
}
\newcommand{\TTOneClickStraightB}{%
  \pgfmathtruncatemacro{\TTInnerCount}{\TTCurrentN-1}%
  \ifnum\TTInnerCount>0
    \foreach \tti in {1,...,\TTInnerCount}{%
      \pgfmathsetmacro{\ttx}{.18*(\tti-(\TTInnerCount+1)/2)}%
      \pgfmathtruncatemacro{\tttop}{mod(\tti+1,2)}%
      \pgfmathtruncatemacro{\ttbottom}{mod(\tti+mod(\TTCurrentN,2),2)}%
      \ifnum\tttop=1 \draw[ttrho](\ttx,.75)--(\ttx,0);%
      \else \draw[ttmu](\ttx,.75)--(\ttx,0);\fi
      \ifnum\ttbottom=1 \draw[ttrho](\ttx,0)--(\ttx,-.75);%
      \else \draw[ttmu](\ttx,0)--(\ttx,-.75);\fi
    }%
  \fi
}
\newcommand{\TTOneClickRotationA}[1]{%
\begin{tikzpicture}[baseline=-.35cm]
  \coordinate(q)at(0,0);
  \draw[ttmu] (-.13,-.25) arc(0:-180:.25cm)--(-.72,.75);
  \draw[ttrho] (.13,.25) arc(180:0:.25cm)--(.72,-.75);
  \TTOneClickStraightA
  \TTUcircleLeftStar{#1}{(q)}
\end{tikzpicture}}
\newcommand{\TTOneClickRotationB}[1]{%
\begin{tikzpicture}[baseline=-.35cm]
  \coordinate(q)at(0,0);
  \draw[ttrho] (-.13,-.25) arc(0:-180:.25cm)--(-.72,.75);
  \draw[ttmu] (.13,.25) arc(180:0:.25cm)--(.72,-.75);
  \TTOneClickStraightB
  \TTUcircleLeftStar{#1}{(q)}
\end{tikzpicture}}

\newcommand{\TTUnitPairCore}[3]{%
  \coordinate(ttunitpair-left)at(-.72,0);
  \coordinate(ttunitpair-right)at(.72,0);
  \pgfmathtruncatemacro{\TTRowCount}{\TTCurrentN}
  \foreach \tti in {1,...,\TTRowCount}{%
    \pgfmathsetmacro{\tty}{.20*(\tti-(\TTRowCount+1)/2)}%
    \pgfmathtruncatemacro{\ttparity}{mod(\tti,2)}%
    \ifnum\ttparity=1
      \draw[ttrho](-1.35,\tty)--($(ttunitpair-left)+(0,\tty)$);
      \draw[ttmu]($(ttunitpair-left)+(0,\tty)$)--($(ttunitpair-right)+(0,\tty)$);
      \draw[ttrho]($(ttunitpair-right)+(0,\tty)$)--(1.35,\tty);
    \else
      \draw[ttmu](-1.35,\tty)--($(ttunitpair-left)+(0,\tty)$);
      \draw[ttrho]($(ttunitpair-left)+(0,\tty)$)--($(ttunitpair-right)+(0,\tty)$);
      \draw[ttmu]($(ttunitpair-right)+(0,\tty)$)--(1.35,\tty);
    \fi
  }
  \TTUcircleStarAngle{#1}{(ttunitpair-left)}{#3}
  \TTUcircleStarAngle{#2}{(ttunitpair-right)}{#3}
}

\newcommand{\TTUnitPair}[3]{%
\begin{tikzpicture}[baseline=-.25cm]
  \TTUnitPairCore{#1}{#2}{#3}
\end{tikzpicture}}
\newcommand{\TTUnitPairCoreColorSwap}[3]{%
  \begingroup
  \colorlet{ttsavedrho}{rho}\colorlet{rho}{mu}\colorlet{mu}{ttsavedrho}%
  \TTUnitPairCore{#1}{#2}{#3}%
  \endgroup}
\newcommand{\TTUnitPairColorSwap}[3]{%
  \begin{tikzpicture}[baseline=-.25cm]\TTUnitPairCoreColorSwap{#1}{#2}{#3}\end{tikzpicture}}

\newcommand{\TTUnitPairCapCup}[4][1]{%
\begin{tikzpicture}[baseline=-.25cm]
  \coordinate(ttcap-left)at(-.72,0);
  \coordinate(ttcap-right)at(.72,0);
  \pgfmathsetmacro{\ttCapInnerY}{.18*(#1)}
  \pgfmathsetmacro{\ttCapOuterY}{-.18*(#1)}
  \pgfmathsetmacro{\ttCapControlY}{.40*(#1)}
  \pgfmathsetmacro{\ttCapApexY}{.80*(#1)}
  \pgfmathsetmacro{\ttCapExitY}{.66*(#1)}
  \draw[ttmu](-1.35,0)--(1.35,0);
  \draw[ttrho](-1.35,\ttCapOuterY)--($(ttcap-left)+(-.34,\ttCapOuterY)$);
  \draw[ttrho]($(ttcap-right)+(.34,\ttCapOuterY)$)--(1.35,\ttCapOuterY);
  \draw[ttrho]
    ($(ttcap-left)+(.34,\ttCapInnerY)$)
    ..controls($(ttcap-left)+(.64,\ttCapControlY)$)and($(ttcap-left)+(.45,\ttCapApexY)$)..($(ttcap-left)+(0,\ttCapApexY)$)
    ..controls($(ttcap-left)+(-.28,\ttCapApexY)$)and($(ttcap-left)+(-.49,\ttCapExitY)$)..(-1.35,\ttCapExitY);
  \draw[ttrho]
    ($(ttcap-right)+(-.34,\ttCapInnerY)$)
    ..controls($(ttcap-right)+(-.64,\ttCapControlY)$)and($(ttcap-right)+(-.45,\ttCapApexY)$)..($(ttcap-right)+(0,\ttCapApexY)$)
    ..controls($(ttcap-right)+(.28,\ttCapApexY)$)and($(ttcap-right)+(.49,\ttCapExitY)$)..(1.35,\ttCapExitY);
  \pgfmathtruncatemacro{\TTExtraRows}{max(0,\TTCurrentN-2)}
  \ifnum\TTExtraRows>0
    \foreach \tti in {1,...,\TTExtraRows}{%
      \pgfmathsetmacro{\tty}{-(#1)*(.34+.16*(\tti-1))}%
      \draw[ttrho](-1.35,\tty)--($(ttcap-left)+(0,\tty)$);
      \draw[ttmu]($(ttcap-left)+(0,\tty)$)--($(ttcap-right)+(0,\tty)$);
      \draw[ttrho]($(ttcap-right)+(0,\tty)$)--(1.35,\tty);
    }
  \fi
  \TTUcircleStarAngle{#2}{(ttcap-left)}{#4}
  \TTUcircleStarAngle{#3}{(ttcap-right)}{#4}
\end{tikzpicture}}

\newcommand{\TTUnitPairTrivalent}[4][1]{%
\begin{tikzpicture}[baseline=-.25cm]
  \coordinate(tttri-left)at(-.72,0);
  \coordinate(tttri-right)at(.72,0);
  \pgfmathsetmacro{\ttTriInnerY}{.18*(#1)}
  \pgfmathsetmacro{\ttTriOuterY}{-.18*(#1)}
  \pgfmathsetmacro{\ttTriControlOneY}{.40*(#1)}
  \pgfmathsetmacro{\ttTriControlTwoY}{.78*(#1)}
  \pgfmathsetmacro{\ttTriVertexY}{.96*(#1)}
  \coordinate(tttri-vertex-left)at(-.72,\ttTriVertexY);
  \coordinate(tttri-vertex-right)at(.72,\ttTriVertexY);
  \draw[ttmu](-1.55,0)--(1.55,0);
  \draw[ttrho](-1.55,\ttTriOuterY)--($(tttri-left)+(-.34,\ttTriOuterY)$);
  \draw[ttrho]($(tttri-right)+(.34,\ttTriOuterY)$)--(1.55,\ttTriOuterY);
  \draw[ttrho](-1.55,\ttTriVertexY)--(1.55,\ttTriVertexY);
  \draw[ttrho]
    ($(tttri-left)+(.34,\ttTriInnerY)$)
    ..controls($(tttri-left)+(.64,\ttTriControlOneY)$)and($(tttri-left)+(.44,\ttTriControlTwoY)$)..(tttri-vertex-left);
  \draw[ttrho]
    ($(tttri-right)+(-.34,\ttTriInnerY)$)
    ..controls($(tttri-right)+(-.64,\ttTriControlOneY)$)and($(tttri-right)+(-.44,\ttTriControlTwoY)$)..(tttri-vertex-right);
  \filldraw[rho](tttri-vertex-left)circle(.05cm);
  \filldraw[rho](tttri-vertex-right)circle(.05cm);
  \pgfmathtruncatemacro{\TTExtraRows}{max(0,\TTCurrentN-2)}
  \ifnum\TTExtraRows>0
    \foreach \tti in {1,...,\TTExtraRows}{%
      \pgfmathsetmacro{\tty}{-(#1)*(.34+.16*(\tti-1))}%
      \draw[ttrho](-1.55,\tty)--($(tttri-left)+(0,\tty)$);
      \draw[ttmu]($(tttri-left)+(0,\tty)$)--($(tttri-right)+(0,\tty)$);
      \draw[ttrho]($(tttri-right)+(0,\tty)$)--(1.55,\tty);
    }
  \fi
  \TTUcircleStarAngle{#2}{(tttri-left)}{#4}
  \TTUcircleStarAngle{#3}{(tttri-right)}{#4}
\end{tikzpicture}}

\newcommand{\TTUnitPairTrivalentColorSwap}[4][1]{%
\begin{tikzpicture}[baseline=-.25cm]
  \coordinate(tttris-left)at(-.72,0);
  \coordinate(tttris-right)at(.72,0);
  \pgfmathsetmacro{\ttTriSInnerY}{.18*(#1)}
  \pgfmathsetmacro{\ttTriSOuterY}{-.18*(#1)}
  \pgfmathsetmacro{\ttTriSControlOneY}{.40*(#1)}
  \pgfmathsetmacro{\ttTriSControlTwoY}{.78*(#1)}
  \pgfmathsetmacro{\ttTriSVertexY}{.96*(#1)}
  \coordinate(tttris-vertex-left)at(-.72,\ttTriSVertexY);
  \coordinate(tttris-vertex-right)at(.72,\ttTriSVertexY);
  \draw[ttrho](-1.55,0)--(1.55,0);
  \draw[ttmu](-1.55,\ttTriSOuterY)--($(tttris-left)+(-.34,\ttTriSOuterY)$);
  \draw[ttmu]($(tttris-right)+(.34,\ttTriSOuterY)$)--(1.55,\ttTriSOuterY);
  \draw[ttmu](-1.55,\ttTriSVertexY)--(1.55,\ttTriSVertexY);
  \draw[ttmu]
    ($(tttris-left)+(.34,\ttTriSInnerY)$)
    ..controls($(tttris-left)+(.64,\ttTriSControlOneY)$)and($(tttris-left)+(.44,\ttTriSControlTwoY)$)..(tttris-vertex-left);
  \draw[ttmu]
    ($(tttris-right)+(-.34,\ttTriSInnerY)$)
    ..controls($(tttris-right)+(-.64,\ttTriSControlOneY)$)and($(tttris-right)+(-.44,\ttTriSControlTwoY)$)..(tttris-vertex-right);
  \filldraw[mu](tttris-vertex-left)circle(.05cm);
  \filldraw[mu](tttris-vertex-right)circle(.05cm);
  \pgfmathtruncatemacro{\TTExtraRows}{max(0,\TTCurrentN-2)}
  \ifnum\TTExtraRows>0
    \foreach \tti in {1,...,\TTExtraRows}{%
      \pgfmathsetmacro{\tty}{-(#1)*(.34+.16*(\tti-1))}%
      \draw[ttmu](-1.55,\tty)--($(tttris-left)+(0,\tty)$);
      \draw[ttrho]($(tttris-left)+(0,\tty)$)--($(tttris-right)+(0,\tty)$);
      \draw[ttmu]($(tttris-right)+(0,\tty)$)--(1.55,\tty);
    }
  \fi
  \TTUcircleStarAngle{#2}{(tttris-left)}{#4}
  \TTUcircleStarAngle{#3}{(tttris-right)}{#4}
\end{tikzpicture}}

\newcommand{\TTJellyfishInput}{%
\begin{tikzpicture}[baseline=-.30cm]
  \coordinate(ttinput-vertex)at(0,0);
  \draw[ttrho](-.58,.30)--(.58,.30);
  \draw[ttmu](ttinput-vertex)--(-.52,-.56);
  \draw[ttmu](ttinput-vertex)--(0,-.56);
  \draw[ttmu](ttinput-vertex)--(.52,-.56);
  \filldraw[mu](ttinput-vertex)circle(.05cm);
\end{tikzpicture}}

\newcommand{\TTThreeCouponJellyfish}{%
\begin{tikzpicture}[baseline=-.35cm]
  \coordinate(a)at(-1.20,0);\coordinate(b)at(0,0);\coordinate(c)at(1.20,0);
  \coordinate(t)at(0,1.0);
  \draw[ttrho](-1.70,0)--(1.70,0);
  \draw[ttmu](t)--(-1.70,-.65);
  \draw[ttmu](t)--(0,-.65);
  \draw[ttmu](t)--(1.70,-.65);
  \filldraw[mu](t)circle(.05cm);
  \TTBottomRemainderA{(a)}{-.65}{2}{2}
  \TTBottomRemainderA{(b)}{-.65}{2}{2}
  \TTBottomRemainderA{(c)}{-.65}{2}{2}
  \TTUcircleStarAngle{U}{(a)}{90}
  \TTUcircleStarAngle{U}{(b)}{125}
  \TTUcircleStarAngle{V}{(c)}{90}
\end{tikzpicture}}

\newcommand{\TTSwappedJellyfishInput}{%
\begin{tikzpicture}[baseline=-.30cm]
  \coordinate(ttswap-input-vertex)at(0,0);
  \draw[ttmu](-.58,.30)--(.58,.30);
  \draw[ttrho](ttswap-input-vertex)--(-.52,-.56);
  \draw[ttrho](ttswap-input-vertex)--(0,-.56);
  \draw[ttrho](ttswap-input-vertex)--(.52,-.56);
  \filldraw[rho](ttswap-input-vertex)circle(.05cm);
\end{tikzpicture}}

\newcommand{\TTSwappedThreeCouponJellyfish}{%
\begin{tikzpicture}[baseline=-.35cm]
  \coordinate(ttswap-a)at(-1.20,0);
  \coordinate(ttswap-b)at(0,0);
  \coordinate(ttswap-c)at(1.20,0);
  \coordinate(ttswap-top)at(0,1.0);
  \draw[ttmu](-1.75,0)--(1.75,0);
  \draw[ttrho](ttswap-top)--($(ttswap-top)!1.65!(ttswap-a)$);
  \draw[ttrho](ttswap-top)--($(ttswap-top)!1.65!(ttswap-b)$);
  \draw[ttrho](ttswap-top)--($(ttswap-top)!1.65!(ttswap-c)$);
  \filldraw[rho](ttswap-top)circle(.05cm);
  \TTBottomRemainderA{(ttswap-a)}{-.68}{2}{2}
  \TTBottomRemainderB{(ttswap-b)}{-.68}{2}{2}
  \TTBottomRemainderA{(ttswap-c)}{-.68}{2}{2}
  \TTUcircleStarAngle{V}{(ttswap-a)}{90}
  \TTUcircleStarAngle{V}{(ttswap-b)}{125}
  \TTUcircleStarAngle{U}{(ttswap-c)}{90}
\end{tikzpicture}}

\newcommand{\TTOddJellyfishInputRho}{%
\begin{tikzpicture}[baseline=-.30cm]
  \coordinate(ttodd-input)at(0,0);
  \draw[ttrho](-.62,.38)--(.62,.38);
  \draw[ttmu](-.62,.20)--(.62,.20);
  \draw[ttrho](ttodd-input)--(-.52,-.56);
  \draw[ttrho](ttodd-input)--(0,-.56);
  \draw[ttrho](ttodd-input)--(.52,-.56);
  \filldraw[rho](ttodd-input)circle(.05cm);
\end{tikzpicture}}
\newcommand{\TTOddThreeCouponJellyfishRho}{%
\begin{tikzpicture}[baseline=-.35cm]
  \coordinate(ttodd-a)at(-1.20,0);
  \coordinate(ttodd-b)at(0,0);
  \coordinate(ttodd-c)at(1.20,0);
  \coordinate(ttodd-top)at(0,1.0);
  \draw[ttrho](-1.75,.10)--(1.75,.10);
  \draw[ttmu](-1.75,-.10)--(1.75,-.10);
  \draw[ttmu](ttodd-top)--(ttodd-a);
  \draw[ttmu](ttodd-top)--(ttodd-b);
  \draw[ttmu](ttodd-top)--(ttodd-c);
  \filldraw[mu](ttodd-top)circle(.05cm);
  \draw[ttrho](ttodd-a)--(-1.75,-.68);
  \draw[ttrho](ttodd-b)--(0,-.68);
  \draw[ttrho](ttodd-c)--(1.75,-.68);
  \TTUcircleStarAngle{U}{(ttodd-a)}{90}
  \TTUcircleStarAngle{U}{(ttodd-b)}{125}
  \TTUcircleStarAngle{V}{(ttodd-c)}{90}
\end{tikzpicture}}
\newcommand{\TTOddJellyfishInputMu}{%
\begin{tikzpicture}[baseline=-.30cm]
  \coordinate(ttodd-swap-input)at(0,0);
  \draw[ttmu](-.62,.38)--(.62,.38);
  \draw[ttrho](-.62,.20)--(.62,.20);
  \draw[ttmu](ttodd-swap-input)--(-.52,-.56);
  \draw[ttmu](ttodd-swap-input)--(0,-.56);
  \draw[ttmu](ttodd-swap-input)--(.52,-.56);
  \filldraw[mu](ttodd-swap-input)circle(.05cm);
\end{tikzpicture}}
\newcommand{\TTOddThreeCouponJellyfishMu}{%
\begin{tikzpicture}[baseline=-.35cm]
  \coordinate(ttodd-swap-a)at(-1.20,0);
  \coordinate(ttodd-swap-b)at(0,0);
  \coordinate(ttodd-swap-c)at(1.20,0);
  \coordinate(ttodd-swap-top)at(0,1.0);
  \draw[ttmu](-1.75,.10)--(1.75,.10);
  \draw[ttrho](-1.75,-.10)--(1.75,-.10);
  \draw[ttrho](ttodd-swap-top)--(ttodd-swap-a);
  \draw[ttrho](ttodd-swap-top)--(ttodd-swap-b);
  \draw[ttrho](ttodd-swap-top)--(ttodd-swap-c);
  \filldraw[rho](ttodd-swap-top)circle(.05cm);
  \draw[ttmu](ttodd-swap-a)--(-1.75,-.68);
  \draw[ttmu](ttodd-swap-b)--(0,-.68);
  \draw[ttmu](ttodd-swap-c)--(1.75,-.68);
  \TTUcircleStarAngle{V}{(ttodd-swap-a)}{90}
  \TTUcircleStarAngle{V}{(ttodd-swap-b)}{125}
  \TTUcircleStarAngle{U}{(ttodd-swap-c)}{90}
\end{tikzpicture}}

\newcommand{\TTDoubleSwappedJellyfishInsertion}{%
\begin{tikzpicture}[baseline=-.45cm]
  \foreach \tti in {1,...,8}{%
    \pgfmathsetmacro{\ttx}{\tti-4.5}%
    \coordinate(ttinsert-q\tti)at(\ttx,0);%
  }
  \coordinate(ttinsert-leftvertex)at(-1.5,1.0);
  \coordinate(ttinsert-rightvertex)at(1.5,1.0);
  \draw[ttmu](-4.30,0)--(4.30,0);
  \draw[ttrho]
    (-4.30,.45)--(ttinsert-q1)
    ..controls ++(-45:.60cm) and ++(225:.60cm)..(ttinsert-q2)
    --(ttinsert-leftvertex);
  \draw[ttrho]
    (ttinsert-leftvertex)--(ttinsert-q4)
    ..controls ++(-45:.60cm) and ++(225:.60cm)..(ttinsert-q5)
    --(ttinsert-rightvertex);
  \draw[ttrho]
    (ttinsert-rightvertex)--(ttinsert-q7)
    ..controls ++(-45:.60cm) and ++(225:.60cm)..(ttinsert-q8)
    --(4.30,.45);
  \draw[ttrho]
    (ttinsert-leftvertex)--(-1.5,-.40)
    arc(0:-90:.20cm)--(-4.30,-.60);
  \draw[ttrho]
    (ttinsert-rightvertex)--(1.5,-.40)
    arc(180:270:.20cm)--(4.30,-.60);
  \filldraw[rho](ttinsert-leftvertex)circle(.05cm);
  \filldraw[rho](ttinsert-rightvertex)circle(.05cm);
  \foreach \tti in {1,...,8}{%
    \pgfmathtruncatemacro{\ttphase}{mod(\tti-1,2)}%
    \ifnum\ttphase=0
      \TTBottomRemainderA{(ttinsert-q\tti)}{-.72}{2}{2}%
    \else
      \TTBottomRemainderB{(ttinsert-q\tti)}{-.72}{2}{2}%
    \fi
  }
  \TTUcircleStarAngle{U}{(ttinsert-q1)}{90}
  \TTUcircleStarAngle{V}{(ttinsert-q2)}{90}
  \TTUcircleStarAngle{V}{(ttinsert-q3)}{120}
  \TTUcircleStarAngle{U}{(ttinsert-q4)}{90}
  \TTUcircleStarAngle{V}{(ttinsert-q5)}{90}
  \TTUcircleStarAngle{V}{(ttinsert-q6)}{120}
  \TTUcircleStarAngle{U}{(ttinsert-q7)}{90}
  \TTUcircleStarAngle{V}{(ttinsert-q8)}{90}
\end{tikzpicture}}

\newcommand{\TTDoubleJellyfishInsertion}{%
\begin{tikzpicture}[baseline=-.45cm]
  \foreach \tti in {1,...,8}{%
    \pgfmathsetmacro{\ttx}{\tti-4.5}%
    \coordinate(ttnormal-q\tti)at(\ttx,0);%
  }
  \coordinate(ttnormal-leftvertex)at(-1.5,1.0);
  \coordinate(ttnormal-rightvertex)at(1.5,1.0);
  \draw[ttrho](-4.30,0)--(4.30,0);
  \draw[ttmu]
    (-4.30,.45)--(ttnormal-q1)
    ..controls ++(-45:.60cm) and ++(225:.60cm)..(ttnormal-q2)
    --(ttnormal-leftvertex);
  \draw[ttmu]
    (ttnormal-leftvertex)--(ttnormal-q4)
    ..controls ++(-45:.60cm) and ++(225:.60cm)..(ttnormal-q5)
    --(ttnormal-rightvertex);
  \draw[ttmu]
    (ttnormal-rightvertex)--(ttnormal-q7)
    ..controls ++(-45:.60cm) and ++(225:.60cm)..(ttnormal-q8)
    --(4.30,.45);
  \draw[ttmu]
    (ttnormal-leftvertex)--(-1.5,-.40)
    arc(0:-90:.20cm)--(-4.30,-.60);
  \draw[ttmu]
    (ttnormal-rightvertex)--(1.5,-.40)
    arc(180:270:.20cm)--(4.30,-.60);
  \filldraw[mu](ttnormal-leftvertex)circle(.05cm);
  \filldraw[mu](ttnormal-rightvertex)circle(.05cm);
  \foreach \tti in {1,...,8}{%
    \pgfmathtruncatemacro{\ttphase}{mod(\tti-1,2)}%
    \ifnum\ttphase=0
      \TTBottomRemainderA{(ttnormal-q\tti)}{-.72}{2}{2}%
    \else
      \TTBottomRemainderB{(ttnormal-q\tti)}{-.72}{2}{2}%
    \fi
  }
  \TTUcircleStarAngle{V}{(ttnormal-q1)}{90}
  \TTUcircleStarAngle{U}{(ttnormal-q2)}{90}
  \TTUcircleStarAngle{U}{(ttnormal-q3)}{120}
  \TTUcircleStarAngle{V}{(ttnormal-q4)}{90}
  \TTUcircleStarAngle{U}{(ttnormal-q5)}{90}
  \TTUcircleStarAngle{U}{(ttnormal-q6)}{120}
  \TTUcircleStarAngle{V}{(ttnormal-q7)}{90}
  \TTUcircleStarAngle{U}{(ttnormal-q8)}{90}
\end{tikzpicture}}

\newcommand{\TTBundleA}[2]{%
  \TTDownBundleA{#1}{#2}{\TTCurrentN}{\TTCurrentN}%
}
\newcommand{\TTBundleB}[2]{%
  \TTDownBundleB{#1}{#2}{\TTCurrentN}{\TTCurrentN}%
}

\newcommand{\TTHorizontalBundleA}{%
\begin{tikzpicture}[baseline=-.2cm]
  \pgfmathtruncatemacro{\TTLineCount}{\TTCurrentN+1}
  \foreach \tti in {1,...,\TTLineCount}{%
    \pgfmathsetmacro{\tty}{.18*((\TTLineCount+1)/2-\tti)}%
    \pgfmathtruncatemacro{\ttparity}{mod(\tti,2)}%
    \ifnum\ttparity=1 \draw[ttrho](-1,\tty)--(1,\tty);%
    \else \draw[ttmu](-1,\tty)--(1,\tty);\fi
  }
\end{tikzpicture}}

\newcommand{\TTNTwoUpperUVInsertion}{%
\begin{tikzpicture}[baseline=-.25cm]
  \draw[ttrho](-1.35,-.72)--(1.35,-.72);
  \TTUnitPairCoreColorSwap{U}{V}{90}
\end{tikzpicture}}
\newcommand{\TTNTwoLowerVUInsertion}{%
\begin{tikzpicture}[baseline=-.25cm]
  \draw[ttrho](-1.35,.72)--(1.35,.72);
  \TTUnitPairCore{V}{U}{90}
\end{tikzpicture}}
\newcommand{\TTNThreeUpperUVInsertion}{%
\begin{tikzpicture}[baseline=-.25cm]
  \draw[ttmu](-1.35,-.72)--(1.35,-.72);
  \TTUnitPairCore{U}{V}{90}
\end{tikzpicture}}
\newcommand{\TTNThreeLowerVUInsertion}{%
\begin{tikzpicture}[baseline=-.25cm]
  \draw[ttrho](-1.35,.72)--(1.35,.72);
  \TTUnitPairCoreColorSwap{V}{U}{90}
\end{tikzpicture}}

\newcommand{\TTNThreeUpperCapCup}{%
\begin{tikzpicture}[baseline=-.30cm]
  \coordinate(l)at(-.72,0);\coordinate(r)at(.72,0);
  \draw[ttrho](-1.55,.22)--(l);\draw[ttmu](l)--(r);\draw[ttrho](r)--(1.55,.22);
  \draw[ttmu](-1.55,0)--(l);\draw[ttrho](l)--(r);\draw[ttmu](r)--(1.55,0);
  \draw[ttrho](-1.55,-.22)--(l);\draw[ttrho](r)--(1.55,-.22);
  \draw[ttmu]($(l)+(.34,-.18)$)
    ..controls($(l)+(.64,-.40)$)and($(l)+(.45,-.78)$)..($(l)+(0,-.78)$)
    ..controls($(l)+(-.28,-.78)$)and($(l)+(-.49,-.66)$)..(-1.55,-.66);
  \draw[ttmu]($(r)+(-.34,-.18)$)
    ..controls($(r)+(-.64,-.40)$)and($(r)+(-.45,-.78)$)..($(r)+(0,-.78)$)
    ..controls($(r)+(.28,-.78)$)and($(r)+(.49,-.66)$)..(1.55,-.66);
  \TTUcircleStarAngle{U}{(l)}{90}\TTUcircleStarAngle{V}{(r)}{90}
  \draw[ttmu](-.38,.22)--(.38,.22);
\end{tikzpicture}}
\newcommand{\TTNThreeUpperTrivalent}{%
\begin{tikzpicture}[baseline=-.30cm]
  \coordinate(l)at(-.72,0);\coordinate(r)at(.72,0);
  \coordinate(tl)at(-.72,-.88);\coordinate(tr)at(.72,-.88);
  \draw[ttrho](-1.55,.22)--(l);\draw[ttmu](l)--(r);\draw[ttrho](r)--(1.55,.22);
  \draw[ttmu](-1.55,0)--(l);\draw[ttrho](l)--(r);\draw[ttmu](r)--(1.55,0);
  \draw[ttrho](-1.55,-.22)--(l);\draw[ttrho](r)--(1.55,-.22);
  \draw[ttmu](tl)--(tr);
  \draw[ttmu]($(l)+(.34,-.18)$)..controls(-.18,-.45)and(-.45,-.70)..(tl);
  \draw[ttmu]($(r)+(-.34,-.18)$)..controls(.18,-.45)and(.45,-.70)..(tr);
  \draw[ttmu](-1.55,-.66)--(tl);\draw[ttmu](tr)--(1.55,-.66);
  \filldraw[mu](tl)circle(.05cm);\filldraw[mu](tr)circle(.05cm);
  \TTUcircleStarAngle{U}{(l)}{90}\TTUcircleStarAngle{V}{(r)}{90}
  \draw[ttmu](-.38,.22)--(.38,.22);
\end{tikzpicture}}

\newcommand{\TTNThreeLowerCapCup}{%
\begin{tikzpicture}[baseline=-.30cm]
  \coordinate(l)at(-.72,0);\coordinate(r)at(.72,0);
  \draw[ttmu](-1.55,.22)--(l);\draw[ttmu](r)--(1.55,.22);
  \draw[ttrho](-1.55,0)--(l);\draw[ttmu](l)--(r);\draw[ttrho](r)--(1.55,0);
  \draw[ttmu](-1.55,-.22)--(l);\draw[ttrho]($(l)+(0,-.22)$)--($(r)+(0,-.22)$);\draw[ttmu](r)--(1.55,-.22);
  \draw[ttrho]($(l)+(.34,.18)$)
    ..controls($(l)+(.64,.40)$)and($(l)+(.45,.78)$)..($(l)+(0,.78)$)
    ..controls($(l)+(-.28,.78)$)and($(l)+(-.49,.66)$)..(-1.55,.66);
  \draw[ttrho]($(r)+(-.34,.18)$)
    ..controls($(r)+(-.64,.40)$)and($(r)+(-.45,.78)$)..($(r)+(0,.78)$)
    ..controls($(r)+(.28,.78)$)and($(r)+(.49,.66)$)..(1.55,.66);
  \TTUcircleStarAngle{V}{(l)}{90}\TTUcircleStarAngle{U}{(r)}{90}
\end{tikzpicture}}
\newcommand{\TTNThreeLowerTrivalent}{%
\begin{tikzpicture}[baseline=-.30cm]
  \coordinate(l)at(-.72,0);\coordinate(r)at(.72,0);
  \coordinate(tl)at(-.72,.88);\coordinate(tr)at(.72,.88);
  \draw[ttmu](-1.55,.22)--(l);\draw[ttmu](r)--(1.55,.22);
  \draw[ttrho](-1.55,0)--(l);\draw[ttmu](l)--(r);\draw[ttrho](r)--(1.55,0);
  \draw[ttmu](-1.55,-.22)--(l);\draw[ttrho]($(l)+(0,-.22)$)--($(r)+(0,-.22)$);\draw[ttmu](r)--(1.55,-.22);
  \draw[ttrho](tl)--(tr);
  \draw[ttrho]($(l)+(.34,.18)$)..controls(-.18,.45)and(-.45,.70)..(tl);
  \draw[ttrho]($(r)+(-.34,.18)$)..controls(.18,.45)and(.45,.70)..(tr);
  \draw[ttrho](-1.55,.66)--(tl);\draw[ttrho](tr)--(1.55,.66);
  \filldraw[rho](tl)circle(.05cm);\filldraw[rho](tr)circle(.05cm);
  \TTUcircleStarAngle{V}{(l)}{90}\TTUcircleStarAngle{U}{(r)}{90}
\end{tikzpicture}}

\newcommand{\TTHorizontalPair}[2]{%
  \draw[ttrho] ($#1+(0,.10)$) -- ($#2+(0,.10)$);
  \draw[ttmu]  ($#1+(0,-.10)$) -- ($#2+(0,-.10)$);
}
\newcommand{\TTHorizontalPairSwap}[2]{%
  \draw[ttmu]  ($#1+(0,.10)$) -- ($#2+(0,.10)$);
  \draw[ttrho] ($#1+(0,-.10)$) -- ($#2+(0,-.10)$);
}

\newcommand{\TTTrainGapCounts}[4]{%
  \pgfmathtruncatemacro{\TTGapDirectCount}{#3}%
  \pgfmathtruncatemacro{\TTGapPhase}{mod(#4,2)}%
  \ifnum\TTGapDirectCount<0
    \PackageError{tt_diagram_macros}{A train-gap count cannot be negative}%
      {Use a nonnegative integer for the number of direct strands.}%
  \fi
  \ifnum\TTGapDirectCount>0
    \foreach \ttj in {1,...,\TTGapDirectCount}{%
      \pgfmathsetmacro{\tty}{.11+.14*(\ttj-(\TTGapDirectCount+1)/2)}%
      \pgfmathtruncatemacro{\ttcolour}{mod(\ttj-1+\TTGapPhase,2)}%
      \ifnum\ttcolour=0
        \draw[ttrho] ($#1+(0,\tty)$)--($#2+(0,\tty)$);%
      \else
        \draw[ttmu] ($#1+(0,\tty)$)--($#2+(0,\tty)$);%
      \fi
    }%
  \fi
  \coordinate (ttgap) at ($.5*#1+.5*#2+(0,-.50)$);
  \pgfmathtruncatemacro{\TTGapTrivalentColour}%
    {mod(\TTGapDirectCount+\TTGapPhase,2)}%
  \ifnum\TTGapTrivalentColour=0
    \filldraw[rho](ttgap)circle(.05cm);
    \draw[ttrho] #1--(ttgap)--#2;
    \draw[ttrho](ttgap)--($(ttgap)+(0,-.53)$);
  \else
    \filldraw[mu](ttgap)circle(.05cm);
    \draw[ttmu] #1--(ttgap)--#2;
    \draw[ttmu](ttgap)--($(ttgap)+(0,-.53)$);
  \fi
  \pgfmathtruncatemacro{\TTGapHalfLow}{floor(\TTGapDirectCount/2)}%
  \pgfmathtruncatemacro{\TTGapHalfHigh}{\TTGapDirectCount-\TTGapHalfLow}%
  \ifnum\TTGapPhase=0
    \pgfmathtruncatemacro{\TTGapRhoUsed}{\TTGapHalfHigh}%
    \pgfmathtruncatemacro{\TTGapMuUsed}{\TTGapHalfLow}%
  \else
    \pgfmathtruncatemacro{\TTGapRhoUsed}{\TTGapHalfLow}%
    \pgfmathtruncatemacro{\TTGapMuUsed}{\TTGapHalfHigh}%
  \fi
  \ifnum\TTGapTrivalentColour=0
    \pgfmathtruncatemacro{\TTGapRhoUsed}{\TTGapRhoUsed+1}%
  \else
    \pgfmathtruncatemacro{\TTGapMuUsed}{\TTGapMuUsed+1}%
  \fi
}

\newcommand{\TTTrainGapA}[2]{\TTTrainGapCounts{#1}{#2}{1}{0}}
\newcommand{\TTTrainGapB}[2]{\TTTrainGapCounts{#1}{#2}{1}{1}}

\newcommand{\TTTrivalentDownRho}[3]{%
  \coordinate (ttv) at (#2,#3);
  \filldraw[rho] (ttv) circle (.05cm);
  \draw[ttrho] (ttv) -- ($(ttv)+(-.55,.45)$);
  \draw[ttrho] (ttv) -- ($(ttv)+(.55,.45)$);
  \draw[ttrho] (ttv) -- ($(ttv)+(0,-.55)$);
}
\newcommand{\TTTrivalentDownMu}[3]{%
  \coordinate (ttv) at (#2,#3);
  \filldraw[mu] (ttv) circle (.05cm);
  \draw[ttmu] (ttv) -- ($(ttv)+(-.55,.45)$);
  \draw[ttmu] (ttv) -- ($(ttv)+(.55,.45)$);
  \draw[ttmu] (ttv) -- ($(ttv)+(0,-.55)$);
}

\newcommand{\TTAttachedDomeRho}[2]{%
  \draw[ttrho] ($#1+(-#2,-.8)$) -- ($#1+(-#2,0)$) arc(180:0:#2 cm) -- ($#1+(#2,-.8)$);
  \coordinate (tta) at ($#1+(0,#2)$);
  \filldraw[rho] (tta) circle (.05cm);
  \draw[ttrho] (tta) -- #1;
}
\newcommand{\TTAttachedDomeMu}[2]{%
  \draw[ttmu] ($#1+(-#2,-.8)$) -- ($#1+(-#2,0)$) arc(180:0:#2 cm) -- ($#1+(#2,-.8)$);
  \coordinate (tta) at ($#1+(0,#2)$);
  \filldraw[mu] (tta) circle (.05cm);
  \draw[ttmu] (tta) -- #1;
}
\newcommand{\TTAttachedDomeBlue}[2]{%
  \draw[tttheta] ($#1+(-#2,-.8)$) -- ($#1+(-#2,0)$) arc(180:0:#2 cm) -- ($#1+(#2,-.8)$);
  \coordinate (tta) at ($#1+(0,#2)$);
  \filldraw[theta] (tta) circle (.05cm);
  \draw[tttheta] (tta) -- #1;
}

\newcommand{\TTFloatingRoofPath}[5]{%
  \begingroup
  \pgfmathsetmacro{\ttfloatradius}{(#4-#2)/2}%
  \pgfmathsetmacro{\ttfloatrise}{min(abs(\ttfloatradius),#5/2)}%
  \pgfmathsetmacro{\ttfloatshoulder}{#3+#5-\ttfloatrise}%
  \draw[#1] (#2,#3) -- (#2,\ttfloatshoulder)
    arc[start angle=180,end angle=0,x radius=\ttfloatradius cm,y radius=\ttfloatrise cm]
    -- (#4,#3);
  \endgroup
}
\newcommand{\TTFloatingRoofRho}[4]{\TTFloatingRoofPath{ttrho}{#1}{#2}{#3}{#4}}
\newcommand{\TTFloatingRoofMu}[4]{\TTFloatingRoofPath{ttmu}{#1}{#2}{#3}{#4}}
\newcommand{\TTFloatingRoofBlue}[4]{\TTFloatingRoofPath{tttheta}{#1}{#2}{#3}{#4}}
\newcommand{\TTFloatingRoofBlack}[4]{\TTFloatingRoofPath{ttplain}{#1}{#2}{#3}{#4}}

\newcommand{\TTWheelRho}[2]{\draw[ttrho] (#1-.20,#2) arc(180:0:.20cm);}
\newcommand{\TTWheelMu}[2]{\draw[ttmu] (#1-.20,#2) arc(180:0:.20cm);}
\newcommand{\TTWheelBlue}[2]{\draw[tttheta] (#1-.20,#2) arc(180:0:.20cm);}

\newcommand{\TTTrainColour}[2]{%
  \pgfmathtruncatemacro{\ttcolour}{mod(#2+\ttstartphase+#1+\ttmirror,2)}%
  \ifnum\ttcolour=0\def\ttstyle{ttrho}\else\def\ttstyle{ttmu}\fi
}
\newcommand{\TTTrainPort}[4]{%
  \coordinate (tp#1-#2) at ($(q#1)+(#3:.4cm)$);
  \expandafter\xdef\csname ttportangle@#1@#2\endcsname{#3}%
  \TTTrainColour{#1}{#2}%
}
\newcommand{\TTTrainCoupon}[2]{%
  \ifnum#2=0\TTUcircle{U}{(q#1)}\else\TTUcircle{V}{(q#1)}\fi
}
\newcommand{\TTTrainInteger}[2]{%
  \pgfmathparse{(#1)==floor(#1) && (#1)>=0}%
  \ifdim\pgfmathresult pt=1pt\else
    \PackageError{tt_diagram_macros}{Invalid train #2: #1}%
      {Train counts must be nonnegative integers.}%
  \fi
}
\newcommand{\TTTrainDrawDirect}[4]{\draw[\ttstyle] (tp#1-#2)--(tp#3-#4);}
\newcommand{\TTTrainEngine}[5]{%
  \def\ttmirror{#1}%
  \TTTrainInteger{\TTCurrentN}{n}%
  \ifnum\TTCurrentN<1
    \PackageError{tt_diagram_macros}{Train n must be positive}{Use \string\TTSetN{1} or larger.}%
  \fi
  \TTTrainInteger{#4}{cap count}%
  \pgfmathtruncatemacro{\ttcaps}{#4}%
  \pgfmathtruncatemacro{\ttvalence}{2*\TTCurrentN}%
  \def\ttlabel{#3}\def\ttU{U}\def\ttV{V}%
  \ifx\ttlabel\ttU\def\ttstartphase{0}%
  \else\ifx\ttlabel\ttV\def\ttstartphase{1}%
  \else
    \PackageError{tt_diagram_macros}{Invalid initial train label}%
      {Use U or V.}%
    \def\ttstartphase{0}%
  \fi\fi
  \def\ttalist{#2}%
  \ifx\ttalist\empty
    \PackageError{tt_diagram_macros}{Empty train list}%
      {Supply a_0 followed by at least one gap count.}%
  \fi
  \xdef\TTTrainLength{0}%
  \foreach \ttentry [count=\tti from 0] in {#2}{%
    \TTTrainInteger{\ttentry}{a_\tti}%
    \pgfmathtruncatemacro{\ttvalue}{\ttentry}%
    \expandafter\xdef\csname tta@\tti\endcsname{\ttvalue}%
    \pgfmathtruncatemacro{\ttnext}{\tti+1}%
    \xdef\TTTrainLength{\ttnext}%
  }%
  \edef\N{\TTTrainLength}%
  \ifnum\N<2
    \PackageError{tt_diagram_macros}{A train needs at least one gap}%
      {Use \string\TTLeftTrainOneCar\space or \string\TTRightTrainOneCar\space for a singleton.}%
  \fi
  \pgfmathtruncatemacro{\Nm}{\N-1}%
  \pgfmathsetmacro{\ttbottom}{-1.35-.18*\ttcaps}%
  \pgfmathsetmacro{\ttforkx}{-.72-.09*\ttcaps}%
  \pgfmathsetmacro{\ttforkhalf}{.24+.09*\ttcaps}%
  \foreach \tti in {1,...,\N}{%
    \coordinate(q\tti) at ({1.55*(\tti-1)},0);
    \pgfmathtruncatemacro{\ttprev}{\tti-1}%
    \ifnum\tti=1
      \pgfmathtruncatemacro{\ttleftused}{#5*(\csname tta@0\endcsname+1)}%
    \else
      \pgfmathtruncatemacro{\ttleftused}{\csname tta@\ttprev\endcsname+1}%
    \fi
    \ifnum\tti<\N
      \pgfmathtruncatemacro{\ttrightused}{\csname tta@\tti\endcsname+1}%
    \else\def\ttrightused{0}\fi
    \pgfmathtruncatemacro{\ttfree}{\ttvalence-\ttleftused-\ttrightused}%
    \ifnum\ttfree<0
      \PackageError{tt_diagram_macros}{Train coupon \tti\space exceeds 2n ports}%
        {Reduce the adjacent a-values. Every gap and the terminal fork use an extra port.}%
    \fi
    \ifnum\ttrightused>0
      \pgfmathtruncatemacro{\ttdirect}{\ttrightused-1}%
      \ifnum\ttdirect>0
        \foreach \ttt in {1,...,\ttdirect}{%
          \pgfmathtruncatemacro{\ttslot}{\ttt-1}%
          \pgfmathsetmacro{\ttangle}{60-100*\ttt/(\ttdirect+1)}%
          \TTTrainPort{\tti}{\ttslot}{\ttangle}{right-direct}%
        }%
      \fi
      \TTTrainPort{\tti}{\ttdirect}{-50}{right-triangle}%
    \fi
    \ifnum\ttleftused>0
      \pgfmathtruncatemacro{\ttdirect}{\ttleftused-1}%
      \ifnum\ttdirect>0
        \foreach \ttt in {1,...,\ttdirect}{%
          \pgfmathtruncatemacro{\ttslot}{\ttvalence-\ttt}%
          \pgfmathsetmacro{\ttangle}{120+100*\ttt/(\ttdirect+1)}%
          \TTTrainPort{\tti}{\ttslot}{\ttangle}{left-direct}%
          \ifnum\tti=1
            \pgfmathsetmacro{\ttex}{-1.25-.18*\ttcaps-.18*(\ttdirect-\ttt)}%
            \draw[\ttstyle] (tp\tti-\ttslot)
              .. controls ($(tp\tti-\ttslot)+(-.45,0)$) and (\ttex,{\ttbottom+.45})
              .. (\ttex,\ttbottom);
          \fi
        }%
      \fi
      \pgfmathtruncatemacro{\ttslot}{\ttvalence-\ttleftused}%
      \TTTrainPort{\tti}{\ttslot}{230}{left-triangle-or-fork}%
    \fi
    \ifnum\ttfree>0
      \foreach \ttt in {1,...,\ttfree}{%
        \pgfmathtruncatemacro{\ttslot}{\ttrightused+\ttt-1}%
        \pgfmathsetmacro{\ttangle}{-50-80*\ttt/(\ttfree+1)}%
        \TTTrainPort{\tti}{\ttslot}{\ttangle}{bottom}%
        \pgfmathsetmacro{\ttex}{.34-.68*\ttt/(\ttfree+1)}%
        \draw[\ttstyle] (tp\tti-\ttslot)
          .. controls ($(tp\tti-\ttslot)+(0,-.25)$)
          and ($(q\tti)+(\ttex,{\ttbottom+.35})$)
          .. ($(q\tti)+(\ttex,\ttbottom)$);
      }%
    \fi
  }%
  \foreach \tti in {1,...,\Nm}{%
    \pgfmathtruncatemacro{\ttnext}{\tti+1}%
    \pgfmathtruncatemacro{\ttdirect}{\csname tta@\tti\endcsname}%
    \ifnum\ttdirect>0
      \foreach \ttt in {1,...,\ttdirect}{%
        \pgfmathtruncatemacro{\ttslot}{\ttt-1}%
        \pgfmathtruncatemacro{\ttother}{\ttvalence-\ttt}%
        \TTTrainColour{\tti}{\ttslot}%
        \TTTrainDrawDirect{\tti}{\ttslot}{\ttnext}{\ttother}%
      }%
    \fi
    \pgfmathtruncatemacro{\ttother}{\ttvalence-1-\ttdirect}%
    \coordinate(tg\tti) at ($(q\tti)+(.775,-.78)$);
    \TTTrainColour{\tti}{\ttdirect}%
    \draw[\ttstyle] (tp\tti-\ttdirect)--(tg\tti)--(tp\ttnext-\ttother);
    \draw[\ttstyle] (tg\tti)--($(q\tti)+(.775,\ttbottom)$);
    \fill[\ttstyle] (tg\tti) circle(.045cm);
  }%
  \ifnum#5=1
    \pgfmathtruncatemacro{\ttslot}{\ttvalence-1-\csname tta@0\endcsname}%
    \TTTrainColour{1}{\ttslot}%
    \edef\ttforkcolour{\ttcolour}%
    \coordinate(tf) at (\ttforkx,-.72);
    \draw[\ttstyle] (tp1-\ttslot)--(tf);
    \draw[\ttstyle] ({\ttforkx-\ttforkhalf},\ttbottom)--(tf)
      --({\ttforkx+\ttforkhalf},\ttbottom);
    \fill[\ttstyle] (tf) circle(.045cm);
    \ifnum\ttcaps>0
      \foreach \ttt in {1,...,\ttcaps}{%
        \pgfmathsetmacro{\ttradius}{.10+.09*(\ttcaps-\ttt+1)}%
        \pgfmathtruncatemacro{\ttcolour}{mod(\ttforkcolour+\ttt,2)}%
        \ifnum\ttcolour=0\def\ttstyle{ttrho}\else\def\ttstyle{ttmu}\fi
        \draw[\ttstyle] ({\ttforkx-\ttradius},\ttbottom) arc(180:0:\ttradius cm);
      }%
    \fi
  \fi
  \foreach \tti in {1,...,\N}{%
    \pgfmathtruncatemacro{\ttphase}{mod(\ttstartphase+\tti-1,2)}%
    \TTTrainCoupon{\tti}{\ttphase}%
  }%
}
\newcommand{\TTLeftTrainCore}[3][0]{\TTTrainEngine{0}{#2}{#3}{#1}{1}}
\newcommand{\TTRightTrainCore}[3][0]{%
  \begin{scope}[xscale=-1]\TTTrainEngine{1}{#2}{#3}{#1}{1}\end{scope}}
\newcommand{\TTLeftTrain}[3][0]{%
  \begin{tikzpicture}[baseline=-.55cm]\TTLeftTrainCore[#1]{#2}{#3}\end{tikzpicture}}
\newcommand{\TTRightTrain}[3][0]{%
  \begin{tikzpicture}[baseline=-.55cm]\TTRightTrainCore[#1]{#2}{#3}\end{tikzpicture}}
\newcommand{\TTLeftTrainBasis}[3][0]{\TTLeftTrain[#1]{#2}{#3}}
\newcommand{\TTRightTrainBasis}[3][0]{\TTRightTrain[#1]{#2}{#3}}
\newcommand{\TTTrainPairCore}[3]{%
  \begingroup
  \def\ttU{U}\def\ttV{V}%
  \def\ttpairleft{#1}\def\ttpairright{#2}%
  \ifx\ttpairleft\ttU\def\ttleftisU{1}%
  \else\ifx\ttpairleft\ttV\def\ttleftisU{0}%
  \else\PackageError{tt_diagram_macros}{Invalid left pair label}{Use U or V.}\def\ttleftisU{0}\fi\fi
  \ifx\ttpairright\ttV\def\ttrightisV{1}%
  \else\ifx\ttpairright\ttU\def\ttrightisV{0}%
  \else\PackageError{tt_diagram_macros}{Invalid right pair label}{Use U or V.}\def\ttrightisV{0}\fi\fi
  \TTTrainInteger{#3}{pair phase}%
  \ifnum#3>1\PackageError{tt_diagram_macros}{Invalid pair phase}{Use 0 or 1.}\fi
  \ifnum#3=0
    \pgfmathtruncatemacro{\ttpairexponent}{\ttleftisU-\ttrightisV}%
    \TTTrainEngine{0}{0,1}{V}{0}{0}%
  \else
    \def\ttpairexponent{0}%
    \TTTrainEngine{0}{0,1}{U}{0}{0}%
  \fi
  \ifnum\ttpairexponent=1
    \node[anchor=east] at (-.55,-.4) {$\omega_u$};%
  \else\ifnum\ttpairexponent=-1
    \node[anchor=east] at (-.55,-.4) {$\omega_u^{-1}$};%
  \fi\fi
  \endgroup
}
\newcommand{\TTTrainPair}[3]{%
  \begin{tikzpicture}[baseline=-.5cm]\TTTrainPairCore{#1}{#2}{#3}\end{tikzpicture}}
\newcommand{\TTTrainOneCarCore}[2]{%
  \def\ttmirror{#1}\def\ttlabel{#2}\def\ttU{U}\def\ttV{V}%
  \ifx\ttlabel\ttU\def\ttstartphase{0}%
  \else\ifx\ttlabel\ttV\def\ttstartphase{1}%
  \else\PackageError{tt_diagram_macros}{Invalid one-car label}{Use U or V.}\def\ttstartphase{0}\fi\fi
  \pgfmathtruncatemacro{\tttotal}{2*\TTCurrentN}%
  \coordinate(q1) at (0,0);
  \foreach \ttt in {1,...,\tttotal}{%
    \pgfmathtruncatemacro{\ttslot}{\ttt-1}%
    \pgfmathsetmacro{\ttangle}{-30-120*\ttt/(\tttotal+1)}%
    \TTTrainPort{1}{\ttslot}{\ttangle}{one-car}%
    \pgfmathsetmacro{\ttex}{.4-.8*\ttt/(\tttotal+1)}%
    \draw[\ttstyle] (tp1-\ttslot)..controls ($(tp1-\ttslot)+(0,-.25)$)
      and (\ttex,-.7)..(\ttex,-1.05);
  }%
  \TTUcircle{#2}{(q1)}%
}
\newcommand{\TTLeftTrainOneCar}[1]{%
  \begin{tikzpicture}[baseline=-.4cm]\TTTrainOneCarCore{0}{#1}\end{tikzpicture}}
\newcommand{\TTRightTrainOneCar}[1]{%
  \begin{tikzpicture}[baseline=-.4cm,xscale=-1]\TTTrainOneCarCore{1}{#1}\end{tikzpicture}}
\newcommand{\TTOneCarLeft}[1]{\TTLeftTrainOneCar{#1}}
\newcommand{\TTOneCarRight}[1]{\TTRightTrainOneCar{#1}}
\newcommand{\TTBoundaryColourStyle}[1]{%
  \def\ttboundarycolour{#1}\def\ttr{r}\def\ttm{m}%
  \ifx\ttboundarycolour\ttr\def\ttboundarystyle{ttrho}%
  \else\ifx\ttboundarycolour\ttm\def\ttboundarystyle{ttmu}%
  \else\PackageError{tt_diagram_macros}{Invalid boundary colour}{Use r or m.}\fi\fi}
\newcommand{\TTBoundarySide}[1]{%
  \def\ttboundaryside{#1}\def\ttl{l}\def\ttr{r}%
  \ifx\ttboundaryside\ttl\def\ttboundarysign{-1}%
  \else\ifx\ttboundaryside\ttr\def\ttboundarysign{1}%
  \else\PackageError{tt_diagram_macros}{Invalid boundary side}{Use l or r.}\fi\fi}
\newcommand{\TTCouponWithSideCap}[3]{%
  \begin{tikzpicture}[baseline=-.4cm]
    \TTTrainOneCarCore{0}{#1}%
    \TTBoundarySide{#2}\TTBoundaryColourStyle{#3}%
    \draw[\ttboundarystyle] ({.75*\ttboundarysign-.2},-1.05) arc(180:0:.2cm);
  \end{tikzpicture}}
\newcommand{\TTCouponWithNestedSideCaps}[4]{%
  \begin{tikzpicture}[baseline=-.4cm]
    \TTTrainOneCarCore{0}{#1}%
    \TTBoundarySide{#2}\TTBoundaryColourStyle{#3}%
    \TTTrainInteger{#4}{nested side cap count}%
    \pgfmathtruncatemacro{\ttsidecaps}{#4}%
    \pgfmathsetmacro{\ttsidecenter}{.73+.12*\ttsidecaps}%
    \ifnum\ttsidecaps>0
      \foreach \ttcapindex in {1,...,\ttsidecaps}{%
        \pgfmathsetmacro{\ttcapradius}{.2+.12*(\ttsidecaps-\ttcapindex)}%
        \pgfmathtruncatemacro{\ttcapodd}{mod(\ttcapindex,2)}%
        \ifx\ttboundarycolour\ttr
          \ifnum\ttcapodd=1\def\ttcapstyle{ttrho}\else\def\ttcapstyle{ttmu}\fi
        \else
          \ifnum\ttcapodd=1\def\ttcapstyle{ttmu}\else\def\ttcapstyle{ttrho}\fi
        \fi
        \draw[\ttcapstyle] ({\ttsidecenter*\ttboundarysign-\ttcapradius},-1.05)
          arc(180:0:\ttcapradius cm);
      }%
    \fi
  \end{tikzpicture}}
\newcommand{\TTFloatingRoofCoupon}[2]{%
  \begin{tikzpicture}[baseline=-.4cm]
    \TTTrainOneCarCore{0}{#1}%
    \TTBoundaryColourStyle{#2}%
    \TTFloatingRoofPath{\ttboundarystyle}{-.85}{-1.05}{.85}{1.9}%
  \end{tikzpicture}}
\newsavebox{\TTFloatingRoofContentBox}
\newcommand{\TTWithFloatingRoof}[2]{%
  \begingroup
  \sbox{\TTFloatingRoofContentBox}{#2}%
  \begin{tikzpicture}[baseline=.65cm]
    \node[anchor=south west,inner sep=0pt,outer sep=0pt] at (0,0)
      {\usebox{\TTFloatingRoofContentBox}};
    \pgfmathsetmacro{\ttcontextwidth}{\wd\TTFloatingRoofContentBox/1cm}%
    \pgfmathsetmacro{\ttcontextheight}{(\ht\TTFloatingRoofContentBox+\dp\TTFloatingRoofContentBox)/1cm}%
    \pgfmathsetmacro{\ttcontextouterright}{\ttcontextwidth+.25}%
    \pgfmathsetmacro{\ttcontextouterheight}{\ttcontextheight+.4}%
    \TTBoundaryColourStyle{#1}%
    \TTFloatingRoofPath{\ttboundarystyle}{-.25}{.01406}{\ttcontextouterright}{\ttcontextouterheight}%
  \end{tikzpicture}
  \endgroup}
\newcommand{\TTSingleRoofPair}[1]{%
  \begin{tikzpicture}[baseline=-.5cm]
    \pgfmathtruncatemacro{\ttroofgap}{\TTCurrentN-1}%
    \TTTrainEngine{0}{0,\ttroofgap}{#1}{0}{0}%
  \end{tikzpicture}}
\newcommand{\TTSingleRoofTrain}[3][0]{%
  \begin{tikzpicture}[baseline=-.55cm]
    \TTBoundarySide{#3}%
    \pgfmathtruncatemacro{\ttroofgap}{\TTCurrentN-1}%
    \ifnum\ttboundarysign=-1
      \TTTrainEngine{0}{0,\ttroofgap}{#2}{#1}{1}%
    \else
      \begin{scope}[xscale=-1]\TTTrainEngine{1}{0,\ttroofgap}{#2}{#1}{1}\end{scope}%
    \fi
  \end{tikzpicture}}
\newcommand{\TTRedRoofRightTrainSkein}[1]{%
  \begingroup
  \ifnum\TTCurrentN<2
    \PackageError{tt_diagram_macros}{Red roof skein needs a direct red edge}{Use n at least 2.}%
  \fi
  \let\TTUnresolvedDirectEdge\TTTrainDrawDirect
  \renewcommand{\TTTrainDrawDirect}[4]{%
    \ifnum##1=1
      \ifnum##2=0\else\TTUnresolvedDirectEdge{##1}{##2}{##3}{##4}\fi
    \else\TTUnresolvedDirectEdge{##1}{##2}{##3}{##4}\fi}%
  \begin{tikzpicture}[baseline=-.55cm,xscale=-1]
    \pgfmathtruncatemacro{\ttroofgap}{\TTCurrentN-1}%
    \pgfmathtruncatemacro{\ttredother}{2*\TTCurrentN-1}%
    \TTTrainEngine{1}{0,\ttroofgap}{U}{0}{1}%
    \ifx#1c
      \draw[ttrho] (tp1-0)
        ..controls(.72,.35)and(.53,.95)..(0,.95)
        ..controls(-.7,.95)and(-1.2,.5)..(-1.2,0)--(-1.2,-1.35);
      \draw[ttrho] (tp2-\ttredother)
        ..controls(.83,.35)and(1.02,.95)..(1.55,.95)
        ..controls(2.25,.95)and(2.75,.5)..(2.75,0)--(2.75,-1.35);
    \else
      \pgfmathsetmacro{\ttroofleftangle}{acos(-.775/1.975)}%
      \pgfmathsetmacro{\ttroofrightangle}{acos(.775/1.975)}%
      \pgfmathsetmacro{\ttroofvertexy}{.15+1.3*sin(\ttroofleftangle)}%
      \draw[ttrho] (-1.2,-1.35)--(-1.2,.15)
        arc[start angle=180,end angle=\ttroofleftangle,x radius=1.975cm,y radius=1.3cm]
        arc[start angle=\ttroofleftangle,end angle=\ttroofrightangle,x radius=1.975cm,y radius=1.3cm]
        arc[start angle=\ttroofrightangle,end angle=0,x radius=1.975cm,y radius=1.3cm]
        --(2.75,-1.35);
      \draw[ttrho] (tp1-0)..controls(.7,.5)and(.3,1.08)..(0,\ttroofvertexy);
      \draw[ttrho] (tp2-\ttredother)..controls(.85,.5)and(1.25,1.08)..(1.55,\ttroofvertexy);
      \fill[rho] (0,\ttroofvertexy)circle(.045cm);
      \fill[rho] (1.55,\ttroofvertexy)circle(.045cm);
    \fi
  \end{tikzpicture}
  \endgroup}
\newcommand{\TTRedRoofRightTrainCapCup}{\TTRedRoofRightTrainSkein{c}}
\newcommand{\TTRedRoofRightTrainTrivalent}{\TTRedRoofRightTrainSkein{t}}
\newcommand{\TTBlackRoofLeftTrainSkein}[1]{%
  \begingroup
  \ifnum\TTCurrentN<2
    \PackageError{tt_diagram_macros}{Black roof skein needs a direct black edge}{Use n at least 2.}%
  \fi
  \let\TTUnresolvedDirectEdge\TTTrainDrawDirect
  \renewcommand{\TTTrainDrawDirect}[4]{%
    \ifnum##1=1
      \ifnum##2=0\else\TTUnresolvedDirectEdge{##1}{##2}{##3}{##4}\fi
    \else\TTUnresolvedDirectEdge{##1}{##2}{##3}{##4}\fi}%
  \begin{tikzpicture}[baseline=-.55cm]
    \pgfmathtruncatemacro{\ttroofgap}{\TTCurrentN-1}%
    \pgfmathtruncatemacro{\ttblackother}{2*\TTCurrentN-1}%
    \TTTrainEngine{0}{0,\ttroofgap}{U}{0}{1}%
    \ifx#1c
      \draw[ttmu] (tp1-0)
        ..controls(.72,.35)and(.53,.95)..(0,.95)
        ..controls(-.7,.95)and(-1.2,.5)..(-1.2,0)--(-1.2,-1.35);
      \draw[ttmu] (tp2-\ttblackother)
        ..controls(.83,.35)and(1.02,.95)..(1.55,.95)
        ..controls(2.25,.95)and(2.75,.5)..(2.75,0)--(2.75,-1.35);
    \else
      \pgfmathsetmacro{\ttroofleftangle}{acos(-.775/1.975)}%
      \pgfmathsetmacro{\ttroofrightangle}{acos(.775/1.975)}%
      \pgfmathsetmacro{\ttroofvertexy}{.15+1.3*sin(\ttroofleftangle)}%
      \draw[ttmu] (-1.2,-1.35)--(-1.2,.15)
        arc[start angle=180,end angle=\ttroofleftangle,x radius=1.975cm,y radius=1.3cm]
        arc[start angle=\ttroofleftangle,end angle=\ttroofrightangle,x radius=1.975cm,y radius=1.3cm]
        arc[start angle=\ttroofrightangle,end angle=0,x radius=1.975cm,y radius=1.3cm]
        --(2.75,-1.35);
      \draw[ttmu] (tp1-0)..controls(.7,.5)and(.3,1.08)..(0,\ttroofvertexy);
      \draw[ttmu] (tp2-\ttblackother)..controls(.85,.5)and(1.25,1.08)..(1.55,\ttroofvertexy);
      \fill[black] (0,\ttroofvertexy)circle(.045cm);
      \fill[black] (1.55,\ttroofvertexy)circle(.045cm);
    \fi
  \end{tikzpicture}
  \endgroup}
\newcommand{\TTBlackRoofLeftTrainCapCup}{\TTBlackRoofLeftTrainSkein{c}}
\newcommand{\TTBlackRoofLeftTrainTrivalent}{\TTBlackRoofLeftTrainSkein{t}}
\newcommand{\TTForkedRoofCoupon}[3]{%
  \begin{tikzpicture}[baseline=-.45cm]
    \coordinate(q) at (0,0);
    \TTBoundarySide{#3}\TTBoundaryColourStyle{#2}%
    \pgfmathtruncatemacro{\ttrooflower}{2*\TTCurrentN-1}%
    \foreach \tti in {1,...,\ttrooflower}{%
      \pgfmathsetmacro{\ttangle}{-150+120*\tti/(\ttrooflower+1)}%
      \pgfmathsetmacro{\ttex}{-.4+.8*\tti/(\ttrooflower+1)}%
      \pgfmathtruncatemacro{\ttodd}{mod(\tti,2)}%
      \ifx\ttboundarycolour\ttr
        \ifnum\ttodd=1\def\ttstyle{ttmu}\else\def\ttstyle{ttrho}\fi
      \else
        \ifnum\ttodd=1\def\ttstyle{ttrho}\else\def\ttstyle{ttmu}\fi
      \fi
      \draw[\ttstyle] (\ttangle:.4cm)
        .. controls ($(q)+(\ttangle:.4cm)+(0,-.25)$) and (\ttex,-.7)
        .. (\ttex,-1.05);
    }%
    \draw[\ttboundarystyle] (-.85,{-.875-.175*\ttboundarysign})
      --(-.85,0) arc(180:0:.85cm)--(.85,{-.875+.175*\ttboundarysign});
    \draw[\ttboundarystyle] (0,.85)--(q);
    \fill[\ttboundarystyle] (0,.85) circle(.05cm);
    \coordinate(ttrooffork) at ({.85*\ttboundarysign},-.7);
    \draw[\ttboundarystyle] ({.85*\ttboundarysign-.17},-1.05)
      --(ttrooffork)--({.85*\ttboundarysign+.17},-1.05);
    \fill[\ttboundarystyle] (ttrooffork) circle(.045cm);
    \def\ttlabel{#1}\def\ttU{U}\def\ttangle{180}%
    \ifx\ttlabel\ttU
      \ifx\ttboundarycolour\ttr\def\ttangle{45}\fi
    \else\ifx\ttboundarycolour\ttm\def\ttangle{45}\fi\fi
    \TTUcircleStarAngle{#1}{(q)}{\ttangle}%
  \end{tikzpicture}}
\newcommand{\TTTrainRoof}[1]{%
  \pgfmathsetmacro{\xr}{1.55*(\N-1)+.5}%
  \ifx#1r\TTFloatingRoofRho{-1.35}{.55}{\xr}{1.2}%
  \else\ifx#1m\TTFloatingRoofMu{-1.35}{.55}{\xr}{1.2}%
  \else
    \pgfmathsetmacro{\midr}{1.55*(\N-1)/2}%
    \pgfmathsetmacro{\ttroofleftend}{\midr-.08}%
    \pgfmathsetmacro{\ttroofrightstart}{\midr+.08}%
    \TTFloatingRoofRho{-1.35}{.55}{\ttroofleftend}{1.0}%
    \TTFloatingRoofMu{\ttroofrightstart}{.55}{\xr}{1.0}%
  \fi\fi
}
\newcommand{\TTLeftTrainRoofedRho}[3][0]{%
  \begin{tikzpicture}[baseline=-.55cm]\TTLeftTrainCore[#1]{#2}{#3}\TTTrainRoof{r}\end{tikzpicture}}
\newcommand{\TTLeftTrainRoofedMu}[3][0]{%
  \begin{tikzpicture}[baseline=-.55cm]\TTLeftTrainCore[#1]{#2}{#3}\TTTrainRoof{m}\end{tikzpicture}}
\newcommand{\TTLeftTrainDoubleRoof}[3][0]{%
  \begin{tikzpicture}[baseline=-.55cm]\TTLeftTrainCore[#1]{#2}{#3}\TTTrainRoof{d}\end{tikzpicture}}
\newcommand{\TTRightTrainRoofedRho}[3][0]{%
  \begin{tikzpicture}[baseline=-.55cm,xscale=-1]\TTTrainEngine{1}{#2}{#3}{#1}{1}\TTTrainRoof{r}\end{tikzpicture}}
\newcommand{\TTRightTrainRoofedMu}[3][0]{%
  \begin{tikzpicture}[baseline=-.55cm,xscale=-1]\TTTrainEngine{1}{#2}{#3}{#1}{1}\TTTrainRoof{m}\end{tikzpicture}}
\newcommand{\TTRightTrainDoubleRoof}[3][0]{%
  \begin{tikzpicture}[baseline=-.55cm,xscale=-1]\TTTrainEngine{1}{#2}{#3}{#1}{1}\TTTrainRoof{d}\end{tikzpicture}}

\newcommand{\TTMergedTrain}[2]{%
\begin{tikzpicture}[baseline=-.5cm]
  \pgfmathtruncatemacro{\N}{#1}
  \foreach \i in {1,...,\N}{%
    \pgfmathsetmacro{\x}{1.35*(\i-1)}
    \coordinate (q\i) at (\x,0);
  }
  \pgfmathtruncatemacro{\Nm}{\N-1}
  \ifnum\Nm>0
    \foreach \i in {1,...,\Nm}{\pgfmathtruncatemacro{\j}{\i+1}\TTHorizontalPair{(q\i)}{(q\j)}}
  \fi
  \pgfmathsetmacro{\mid}{.675*(\N-1)}
  \foreach \i in {1,...,\N}{\draw[ttmu] (q\i) -- (\mid,-1.15);}
  \draw[ttmu] (\mid,-1.15)--(\mid,-1.7);
  \foreach \i in {1,...,\N}{%
    \pgfmathtruncatemacro{\ttdegree}{2}%
    \ifnum\N=1
      \pgfmathtruncatemacro{\ttdegree}{0}%
    \else
      \ifnum\i=1 \pgfmathtruncatemacro{\ttdegree}{1}\fi
      \ifnum\i=\N \pgfmathtruncatemacro{\ttdegree}{1}\fi
    \fi
    \pgfmathtruncatemacro{\pp}{mod(\i+#2,2)}%
    \ifnum\pp=0
      \TTBottomRemainderA{(q\i)}{-.72}{\ttdegree}{\ttdegree+1}%
    \else
      \TTBottomRemainderB{(q\i)}{-.72}{\ttdegree}{\ttdegree+1}%
    \fi
  }
  \foreach \i in {1,...,\N}{%
    \pgfmathtruncatemacro{\pp}{mod(\i+#2,2)}
    \ifnum\pp=0 \TTUcircle{U}{(q\i)}\else\TTUcircle{V}{(q\i)}\fi
  }
\end{tikzpicture}}

\newcommand{\TTTwoCar}[2]{%
  \ifnum#1=0
    \ifnum#2=0\TTTrainPair{U}{U}{0}\else\TTTrainPair{U}{V}{0}\fi
  \else
    \ifnum#2=0\TTTrainPair{V}{U}{0}\else\TTTrainPair{V}{V}{0}\fi
  \fi}

\newcommand{\TTConjugationCheckN}{%
  \ifnum\TTCurrentN=2\else
    \PackageError{tt_diagram_macros}{This conjugation diagram requires n=2}{Use TTSetN with 2.}%
  \fi}
\newcommand{\TTConjugationPorts}[3]{%
  \coordinate(cq#1) at (#2,0);
  \foreach \tta [count=\tts from 0] in {#3}{%
    \coordinate(cp#1-\tts) at ($(cq#1)+(\tta:.4cm)$);}}

\newcommand{\TTConjugatedRedBlackRoof}{%
  \TTConjugationCheckN
  \begin{tikzpicture}[baseline=-.75cm]
    \coordinate(cq0) at (-1.20,-.80);
    \coordinate(cq1) at (0,0);
    \coordinate(cq2) at (1.20,-.80);
    \TTFloatingRoofPath{ttmu}{-1.30}{-.80}{1.30}{2.30}
    \TTFloatingRoofPath{ttrho}{-1.10}{-.80}{1.10}{1.70}
    \draw[ttrho] (-1.30,-.80)--(-1.30,-1.70);
    \draw[ttmu]  (-1.10,-.80)--(-1.10,-1.70);
    \draw[ttrho] (-.30,0)--(-.30,-1.70);
    \draw[ttmu]  (-.10,0)--(-.10,-1.70);
    \draw[ttrho] ( .10,0)--( .10,-1.70);
    \draw[ttmu]  ( .30,0)--( .30,-1.70);
    \draw[ttmu]  (1.10,-.80)--(1.10,-1.70);
    \draw[ttrho] (1.30,-.80)--(1.30,-1.70);
    \TTUcircleLeftStar{U}{(cq0)}
    \TTUcircle{U}{(cq1)}
    \TTUcircleRightStar{V}{(cq2)}
  \end{tikzpicture}}

\newcommand{\TTConjugatedLeftCaps}{%
  \TTConjugationCheckN
  \begin{tikzpicture}[baseline=-.55cm]
    \TTConjugationPorts{0}{0}{-40,-75,-105,-140}
    \TTConjugationPorts{1}{1.55}{45,-45,-105,-140}
    \TTConjugationPorts{2}{3.1}{-40,-75,-135,135}
    \draw[ttrho] (cp0-3)--(-0.36,-1.55);
    \draw[ttmu] (cp0-2)--(-0.12,-1.55);
    \draw[ttrho] (cp0-1)--(0.12,-1.55);
    \draw[ttmu] (cp0-0)--(0.36,-1.55);
    \draw[ttrho] (cp1-3)--(1.2,-1.55);
    \draw[ttmu] (cp1-2)--(1.55,-1.55);
    \draw[ttmu] (cp2-1)--(3.1,-1.55);
    \draw[ttrho] (cp2-0)--(3.45,-1.55);
    \draw[ttmu] (cp1-0)--(cp2-3);
    \draw[ttrho] (cp1-1)--(cp2-2);
    \TTUcircle{U}{(cq0)}
    \TTUcircle{U}{(cq1)}
    \TTUcircle{V}{(cq2)}
  \end{tikzpicture}}

\newcommand{\TTConjugatedLeftForkCap}{%
  \TTConjugationCheckN
  \begin{tikzpicture}[baseline=-.55cm]
    \TTConjugationPorts{0}{0}{-40,-75,-105,-140}
    \TTConjugationPorts{1}{1.55}{45,-45,-105,140}
    \TTConjugationPorts{2}{3.1}{45,-45,-135,135}
    \TTConjugationPorts{3}{4.65}{-40,-75,-135,135}
    \draw[ttrho] (cp0-3)--(-0.36,-1.55);
    \draw[ttmu] (cp0-2)--(-0.12,-1.55);
    \draw[ttrho] (cp0-1)--(0.12,-1.55);
    \draw[ttrho] (cp1-2)--(1.55,-1.55);
    \draw[ttmu] (cp3-1)--(4.65,-1.55);
    \draw[ttrho] (cp3-0)--(5,-1.55);
    \draw[ttrho] (cp1-0)--(cp2-3);
    \coordinate(cv0-1) at (0.775,-.85);
    \draw[ttmu] (cp0-0)--(cv0-1);
    \draw[ttmu] (cp1-3)--(cv0-1);
    \draw[ttmu] (cv0-1)--(0.775,-1.55);
    \fill[mu] (cv0-1) circle(.045cm);
    \coordinate(cv1-2) at (2.325,-.85);
    \draw[ttmu] (cp1-1)--(cv1-2);
    \draw[ttmu] (cp2-2)--(cv1-2);
    \draw[ttmu] (cv1-2)--(2.325,-1.55);
    \fill[mu] (cv1-2) circle(.045cm);
    \draw[ttmu] (cp2-0)--(cp3-3);
    \draw[ttrho] (cp2-1)--(cp3-2);
    \TTUcircle{U}{(cq0)}
    \TTUcircle{V}{(cq1)}
    \TTUcircle{U}{(cq2)}
    \TTUcircle{V}{(cq3)}
  \end{tikzpicture}}

\newcommand{\TTConjugatedLeftShortTrain}{%
  \TTConjugationCheckN
  \begin{tikzpicture}[baseline=-.55cm]
    \TTConjugationPorts{0}{0}{45,-45,-105,-140}
    \TTConjugationPorts{1}{1.55}{-40,-105,-140,135}
    \TTConjugationPorts{2}{3.1}{45,-45,-105,140}
    \TTConjugationPorts{3}{4.65}{-40,-75,-135,135}
    \draw[ttrho] (cp0-3)--(-0.36,-1.55);
    \draw[ttmu] (cp0-2)--(-0.12,-1.55);
    \draw[ttmu] (cp1-1)--(1.55,-1.55);
    \draw[ttmu] (cp2-2)--(3.1,-1.55);
    \draw[ttmu] (cp3-1)--(4.65,-1.55);
    \draw[ttrho] (cp3-0)--(5,-1.55);
    \draw[ttmu] (cp0-0)--(cp1-3);
    \coordinate(cv0-1) at (0.775,-.85);
    \draw[ttrho] (cp0-1)--(cv0-1);
    \draw[ttrho] (cp1-2)--(cv0-1);
    \draw[ttrho] (cv0-1)--(0.775,-1.55);
    \fill[rho] (cv0-1) circle(.045cm);
    \coordinate(cv1-2) at (2.325,-.85);
    \draw[ttrho] (cp1-0)--(cv1-2);
    \draw[ttrho] (cp2-3)--(cv1-2);
    \draw[ttrho] (cv1-2)--(2.325,-1.55);
    \fill[rho] (cv1-2) circle(.045cm);
    \draw[ttmu] (cp2-0)--(cp3-3);
    \draw[ttrho] (cp2-1)--(cp3-2);
    \TTUcircle{U}{(cq0)}
    \TTUcircle{V}{(cq1)}
    \TTUcircle{U}{(cq2)}
    \TTUcircle{V}{(cq3)}
  \end{tikzpicture}}

\newcommand{\TTConjugatedLeftLongTrain}{%
  \TTConjugationCheckN
  \begin{tikzpicture}[baseline=-.55cm]
    \TTConjugationPorts{0}{0}{45,-45,-105,-140}
    \TTConjugationPorts{1}{1.55}{45,-45,-135,135}
    \TTConjugationPorts{2}{3.1}{45,-45,-135,135}
    \TTConjugationPorts{3}{4.65}{45,-45,-135,135}
    \TTConjugationPorts{4}{6.2}{45,-45,-135,135}
    \TTConjugationPorts{5}{7.75}{-40,-75,-135,135}
    \draw[ttrho] (cp0-3)--(-0.36,-1.55);
    \draw[ttmu] (cp0-2)--(-0.12,-1.55);
    \draw[ttmu] (cp5-1)--(7.75,-1.55);
    \draw[ttrho] (cp5-0)--(8.1,-1.55);
    \draw[ttmu] (cp0-0)--(cp1-3);
    \coordinate(cv0-1) at (0.775,-.85);
    \draw[ttrho] (cp0-1)--(cv0-1);
    \draw[ttrho] (cp1-2)--(cv0-1);
    \draw[ttrho] (cv0-1)--(0.775,-1.55);
    \fill[rho] (cv0-1) circle(.045cm);
    \draw[ttrho] (cp1-0)--(cp2-3);
    \coordinate(cv1-2) at (2.325,-.85);
    \draw[ttmu] (cp1-1)--(cv1-2);
    \draw[ttmu] (cp2-2)--(cv1-2);
    \draw[ttmu] (cv1-2)--(2.325,-1.55);
    \fill[mu] (cv1-2) circle(.045cm);
    \draw[ttmu] (cp2-0)--(cp3-3);
    \coordinate(cv2-3) at (3.875,-.85);
    \draw[ttrho] (cp2-1)--(cv2-3);
    \draw[ttrho] (cp3-2)--(cv2-3);
    \draw[ttrho] (cv2-3)--(3.875,-1.55);
    \fill[rho] (cv2-3) circle(.045cm);
    \draw[ttrho] (cp3-0)--(cp4-3);
    \coordinate(cv3-4) at (5.425,-.85);
    \draw[ttmu] (cp3-1)--(cv3-4);
    \draw[ttmu] (cp4-2)--(cv3-4);
    \draw[ttmu] (cv3-4)--(5.425,-1.55);
    \fill[mu] (cv3-4) circle(.045cm);
    \draw[ttmu] (cp4-0)--(cp5-3);
    \draw[ttrho] (cp4-1)--(cp5-2);
    \TTUcircle{U}{(cq0)}
    \TTUcircle{V}{(cq1)}
    \TTUcircle{U}{(cq2)}
    \TTUcircle{V}{(cq3)}
    \TTUcircle{U}{(cq4)}
    \TTUcircle{V}{(cq5)}
  \end{tikzpicture}}

\newcommand{\TTNthreeConjugatedRoof}{%
  \ifnum\TTCurrentN=3\else
    \PackageError{tt_diagram_macros}{This conjugation requires n=3}{Use TTSetN with 3.}%
  \fi
  \begin{tikzpicture}[baseline=-.80cm]
    \coordinate(cq0) at (-1.55,-.90);
    \coordinate(cq1) at (0,0);
    \coordinate(cq2) at (1.55,-.90);
    \TTFloatingRoofPath{ttmu}{-1.75}{-.90}{1.75}{3.00}
    \TTFloatingRoofPath{ttrho}{-1.55}{-.90}{1.55}{2.40}
    \TTFloatingRoofPath{ttmu}{-1.35}{-.90}{1.35}{1.80}
    \draw[ttrho] (-1.75,-.90)--(-1.75,-1.75);
    \draw[ttmu]  (-1.55,-.90)--(-1.55,-1.75);
    \draw[ttrho] (-1.35,-.90)--(-1.35,-1.75);
    \draw[ttrho] (-.36,-.12)--(-.50,-.65)--(-.50,-1.75);
    \draw[ttmu]  (-.30,0)--(-.30,-1.75);
    \draw[ttrho] (-.10,0)--(-.10,-1.75);
    \draw[ttmu]  ( .10,0)--( .10,-1.75);
    \draw[ttrho] ( .30,0)--( .30,-1.75);
    \draw[ttmu]  ( .36,-.12)--( .50,-.65)--( .50,-1.75);
    \draw[ttrho] (1.35,-.90)--(1.35,-1.75);
    \draw[ttmu]  (1.55,-.90)--(1.55,-1.75);
    \draw[ttrho] (1.75,-.90)--(1.75,-1.75);
    \TTUcircleLeftStar{U}{(cq0)}
    \TTUcircle{U}{(cq1)}
    \TTUcircleRightStar{V}{(cq2)}
  \end{tikzpicture}}

\newsavebox{\TTFitDisplayBox}
\newcommand{\fitdisplay}[1]{%
  \begingroup
  \sbox{\TTFitDisplayBox}{$\displaystyle #1$}%
  \ifdim\wd\TTFitDisplayBox>.98\textwidth
    \resizebox{.98\textwidth}{!}{\usebox{\TTFitDisplayBox}}%
  \else\usebox{\TTFitDisplayBox}\fi
  \endgroup}

\section{Graphical proof of ordered two-Fibonacci uniqueness}
\label{app:two-fibonacci-uniqueness}

We prove \cref{thm:marked-uniqueness} and record the algebraic presentation
used in \cref{sec:characteristic-zero-fields}.

Fix the standard Fibonacci fusion category $\Fib$ over $\CC$.  Let $\rho$
and $\mu$ be the nontrivial simple images of two specified tensor embeddings
of $\Fib$ into a fusion category $\calC$, and assume that they tensor-generate
$\calC$.  Write $u_k$ and $v_k$ for the alternating words of length $k$
beginning with $\rho$ and $\mu$, respectively.  If $n$ is the least positive
integer for which $u_n\cong v_n$, then \cref{prop:standard-form} gives
\begin{equation}
 \Irr(\calC)=
 \{\id,u_1,\ldots,u_{n-1},v_1,\ldots,v_{n-1},c_n\},
 \qquad c_n:=u_n\cong v_n.
 \label{eq:standard-form}
\end{equation}

\subsection{Spherical normalization and inverse coupons}

\begin{lemma}[Automatic pseudo-unitarity]
\label{lem:pseudo-unitary}
The category $\calC$ is pseudo-unitary.  Its canonical spherical structure
restricts to the standard positive spherical structure on each distinguished
Fibonacci subcategory.
\end{lemma}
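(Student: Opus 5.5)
We show directly that $\Dim(\calC)=\FPdim(\calC)$; this is the definition of pseudo-unitarity. By \cref{prop:standard-form} and \cref{thm:arithmetic-classification} we may assume $K_0(\calC)\cong R_n$ with $n\in\{2,3\}$, or the single Fibonacci case. The squared norm is multiplicative on the simple tensor products appearing in the standard basis \eqref{eq:standard-form}. By \cref{lem:possible-intrinsic-dimensions}, $|\rho|^2,|\mu|^2\in\{\ph^2,\ph^{-2}\}$, and $\Dim(\calC)$ is one of $D_n^+$, $D_n^-$, or, for even $n$, $D_n^{\mathrm{mix}}$. On the other hand, the embeddings $\iota_\rho,\iota_\mu$ are tensor embeddings of the usual positive category $\Fib$. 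Squared norms are intrinsic invariants of a simple object in the fusion subcategory it generates, and they are preserved by tensor equivalences. Hence $|\rho|^2=|\mu|^2=|\tau|^2_{\Fib}=\ph^2$. The diagonal choice $\ph^2,\ph^2$ then gives $\Dim(\calC)=D_n^+$. Summing $\FPdim(X)^2$ over the standard basis, using $\FPdim(u_k)=\FPdim(v_k)=\ph^k$, gives the same monomial sum $1+2\sum_{k<n}\ph^{2k}+\ph^{2n}=D_n^+$. Therefore $\Dim(\calC)=\FPdim(\calC)$, and $\calC$ is pseudo-unitary.

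For the second assertion, we use the fact that a pseudo-unitary fusion category has a unique spherical structure whose categorical dimensions equal the Frobenius--Perron dimensions (Etingof--Nikshych--Ostrik). Restrict this spherical structure to $\langle\rho\rangle=\iota_\rho(\Fib)$. It is a spherical structure on a Fibonacci-type category with $\dim(\rho)=\FPdim(\rho)=\ph>0$. Spherical structures on a fusion subcategory are determined by the dimensions of its simple objects, because they form a torsor over invertible objects of degree dividing the universal grading group, and that group is trivial here since $x\subset x^2$. The standard positive spherical structure of $\Fib$ also has $\dim(\tau)=\ph$, so the two agree. The same argument applies to $\mu$.

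The main obstacle is justifying $|\rho|^2=\ph^2$ rather than $\ph^{-2}$. The input is that the marking is by $\Fib$, not by $\YL$; \cref{lem:possible-intrinsic-dimensions} alone does not decide this. I would verify it as follows. The squared norm $|x|^2$ of a simple object is computed from $\operatorname{ev}$, $\operatorname{coev}$ and the double-dual identification inside any fusion subcategory containing $x$. The tensor equivalence $\iota_\rho$ carries the value $|\tau|^2=\ph^2$ in $\Fib$, where $\Fib$ is pseudo-unitary, to $|\rho|^2$. Equation \eqref{eq:fibonacci-squared-norm} then confirms that $\ph^2$ is the value attained by the positive branch and $\ph^{-2}$ is the $\YL$ value. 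The mixed case $D_n^{\mathrm{mix}}$ is thereby excluded automatically, and no further arithmetic is needed.
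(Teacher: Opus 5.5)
Your proposal is correct and is essentially the paper's argument. In both, the positive marking gives $|\rho|^2=|\mu|^2=\ph^2$, and multiplicativity of squared norms on the simple alternating words gives $\Dim(\calC)=1+2\sum_{k=1}^{n-1}\ph^{2k}+\ph^{2n}=\FPdim(\calC)$. The canonical spherical structure of a pseudo-unitary category then restricts to the standard one on each marked copy of $\Fib$.

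Your appeals to \cref{lem:possible-intrinsic-dimensions} and to $n\in\{2,3\}$ are unnecessary, since the computation works for every $n$. The torsor remark is misstated: pivotal structures form a torsor over $\operatorname{Aut}_\otimes(\operatorname{id}_{\Fib})$. That group is trivial, so the restricted structure is automatically the standard one.
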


\begin{proof}
The squared norm of the nontrivial simple object of $\Fib$ is $\ph^2$.
Squared norms are multiplicative, while every alternating word in
\eqref{eq:standard-form} is simple.  Hence
\[
 |u_k|^2=|v_k|^2=\ph^{2k},\qquad |c_n|^2=\ph^{2n}.
\]
The same powers are the squares of the Frobenius--Perron dimensions, so
\[
 \Dim(\calC)=1+2\sum_{k=1}^{n-1}\ph^{2k}+\ph^{2n}
             =\FPdim(\calC).
\]
The canonical spherical structure of a pseudo-unitary fusion category
\cite[Proposition~8.23]{ENO2005} gives every simple object its
Frobenius--Perron dimension.  Its restrictions to the two marked copies of
$\Fib$ are therefore the standard spherical structures.
\end{proof}

All diagrams in this appendix are read in this spherical structure.  Red and
black strands denote $\rho$ and $\mu$, respectively.  For
$\kappa\in\{\rho,\mu\}$ choose the cyclic Fibonacci trivalent vertex so that
a circle evaluates to $\ph$, a tadpole to zero, a bigon to the identity,
and
\begin{equation}
\begin{tikzpicture}[baseline=-.30cm]
 \draw[thick,rho] (-.16,.65)--(-.16,-.65);
 \draw[thick,rho] (.16,.65)--(.16,-.65);
\end{tikzpicture}
=\frac1\ph
\underbrace{\begin{tikzpicture}[baseline=-.30cm]
 \draw[thick,rho](-.24,-.38)arc(180:0:.24cm);
 \draw[thick,rho](-.24,.38)arc(-180:0:.24cm);
\end{tikzpicture}}_{\mathsf C}
+
\underbrace{\begin{tikzpicture}[baseline=-.30cm]
 \coordinate(t)at(0,.25);\coordinate(b)at(0,-.25);
 \filldraw[rho](t)circle(.045cm);\filldraw[rho](b)circle(.045cm);
 \draw[thick,rho](t)--(-.34,.65);\draw[thick,rho](t)--(.34,.65);
 \draw[thick,rho](t)--(b);
 \draw[thick,rho](b)--(-.34,-.65);\draw[thick,rho](b)--(.34,-.65);
\end{tikzpicture}}_{\mathsf T}.
\label{eq:fibonacci-resolution-graphical}
\end{equation}
The labels $\mathsf C$ and $\mathsf T$ denote the cap--cup and trivalent
channels, respectively.  The identical relations hold in black.  This is the
Fibonacci gauge of \cite[Section~5]{IzumiMorrisonPenneys2013}.  Each cyclic
trivalent vertex is determined up to sign, and these sign choices preserve
\eqref{eq:fibonacci-resolution-graphical}.

Choose inverse isomorphisms
\[
 U:u_n\longrightarrow v_n,
 \qquad V:=U^{-1}:v_n\longrightarrow u_n.
\]
A star records the distinguished boundary interval.  Stacking $V$ above
$U$, or $U$ above $V$, gives the identity on the corresponding alternating
word.

For every displayed basis diagram $T$, let $T^\vee$ be its literal vertical
reflection, with the star reflected together with its coupon and the labels
$U,V$ interchanged.  No cyclic rotation is applied.  On a diagrammatic span,
extend $T\mapsto T^\vee$ linearly.

Let $\mathcal R_+$ and $\mathcal R_-$ be clockwise and counterclockwise
one-click rotations of a starred coupon, with the exterior strings bent by
the spherical duality maps.

\begin{lemma}[Inverse normalization and rotation]
\label{lem:inverse-rotation}
The inverse pair can be chosen so that
\begin{equation}
 \mathcal R_-(U)=V,
 \qquad \mathcal R_+(U)=\omega_UV,
 \qquad \omega_U^n=1.
 \label{eq:rotation-normalization}
\end{equation}
Consequently
\begin{equation}
 \mathcal R_+(V)=U,
 \qquad \mathcal R_-(V)=\omega_U^{-1}U.
 \label{eq:inverse-rotation-relations}
\end{equation}
The residual change preserving the first equality in
\eqref{eq:rotation-normalization} is $(U,V)\mapsto(-U,-V)$.
\end{lemma}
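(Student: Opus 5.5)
The plan is to work entirely inside the one-dimensional Hom spaces $\Hom(u_n,v_n)$ and $\Hom(v_n,u_n)$. Both are one-dimensional because $u_n\cong v_n\cong c_n$ is simple by \eqref{eq:standard-form}. A one-click rotation of a starred coupon, with exterior strands bent by the spherical duality maps of \cref{lem:pseudo-unitary}, moves one boundary strand from the bottom word to the top word.

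The first step is the colour bookkeeping. Write $\{\rho,\mu\}$ for the two colours.
\begin{itemize}
\item \(U\) has bottom $u_n$ and top $v_n$.
\item Rotating once counterclockwise moves the end strand of $u_n$ to the other side. The bottom becomes an alternating word of length $n$ beginning with the opposite colour, namely $v_n$, and the top becomes $u_n$.
\end{itemize}
So $\mathcal R_-(U)\in\Hom(v_n,u_n)$, which is spanned by $V=U^{-1}$. The same holds for $\mathcal R_+(U)$. Each rotation is invertible, since $\mathcal R_+\mathcal R_-=\mathrm{id}$ by the zigzag identities. Hence
\[
 \mathcal R_-(U)=\alpha V,\qquad \mathcal R_+(U)=\beta V
\]
for some nonzero scalars $\alpha,\beta$.

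The second step normalizes $\alpha$. Replace $U$ by $cU$; then $V$ becomes $c^{-1}V$. Since rotation is linear, $\alpha$ becomes $c^2\alpha$. Choosing $c$ with $c^2=\alpha^{-1}$ gives $\mathcal R_-(U)=V$. The only residual freedom preserving this is $c^2=1$, which gives the stated $(U,V)\mapsto(-U,-V)$. Set $\omega_U:=\beta$ under this normalization.

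The consequences \eqref{eq:inverse-rotation-relations} then follow formally:
\begin{itemize}
\item Applying $\mathcal R_+$ to $\mathcal R_-(U)=V$ gives $\mathcal R_+(V)=U$.
\item Applying $\mathcal R_-$ to $\mathcal R_+(U)=\omega_UV$ gives $U=\omega_U\mathcal R_-(V)$, that is, $\mathcal R_-(V)=\omega_U^{-1}U$.
\end{itemize}

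The remaining claim, and the main obstacle, is $\omega_U^n=1$. Sphericality says that the full $2n$-click rotation acts as the identity on a Hom space with $2n$ boundary points. I will compute $\mathcal R_+^{2n}(U)$ by alternating the two relations just derived:
\[
 U\mapsto\omega_UV\mapsto\omega_UU\mapsto\omega_U^2V\mapsto\cdots .
\]
Each pair of clicks contributes one factor of $\omega_U$. Hence $\mathcal R_+^{2n}(U)=\omega_U^nU$, and sphericality forces $\omega_U^n=1$.

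The care needed is in checking two things:
\begin{itemize}
\item that the $2n$-click rotation is exactly the identity in the chosen pivotal/spherical structure, with no twist coming from the distinguished-interval star;
\item that $\mathcal R_+$ applied to $V$ is the same operation whose value was computed above, meaning the star conventions agree after reflection.
\end{itemize}
I would verify both by writing the rotation as the standard conjugation by duality maps. The $2n$-fold composite is then the pivotal double-dual identification of \cref{lem:pseudo-unitary}, which is trivial because the canonical spherical structure was fixed there.
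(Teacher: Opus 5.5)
Your proposal is correct and follows essentially the same argument as the paper. Both rescale the inverse pair so that $\mathcal R_-(U)=V$, read off the residual sign freedom $c^2=1$, obtain \eqref{eq:inverse-rotation-relations} by inverting the rotations, and deduce $\omega_U^n=1$ from the $2n$-click full rotation, where each pair of clicks contributes $\omega_U$. One difference in ordering works in your favour: you derive $\mathcal R_+(V)=U$ before the full-rotation count, and that count needs it, whereas the paper uses it there only implicitly.
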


\begin{proof}
For any inverse pair $U_0,V_0$, one-dimensionality gives
$\mathcal R_-(U_0)=cV_0$ with $c\ne0$.  Choose $t^2c=1$ and replace the pair
by $(tU_0,t^{-1}V_0)$.  This gives the first equality and defines the
nonzero scalar $\omega_U$ in the second.  A full rotation consists of $2n$
one-click rotations.  The coupon labels alternate between $U$ and $V$, and
each consecutive pair contributes $\omega_U$; hence a full rotation gives
$\omega_U^n=1$.  Rotating the inverse relations gives
\eqref{eq:inverse-rotation-relations}.  A further rescaling preserves
$\mathcal R_-(U)=V$ exactly when its square is one.
\end{proof}

\subsection{The complete local jellyfish relations}

The graphical strategy in this subsection is the two-resolution argument of
\cite[Lemma~3.5 and Section~5]{IzumiMorrisonPenneys2013}.  We give it with
the inverse coupon $V=U^{-1}$ and the reflection $T\mapsto T^\vee$ fixed
above.

For $n=2$, put
\[
 A_0:=\TTJellyfishInput,\qquad J_0:=\TTThreeCouponJellyfish,
\]
\[
 A_1:=\TTSwappedJellyfishInput,\qquad
 J_1:=\TTSwappedThreeCouponJellyfish.
\]
Thus $A_0,J_0\in\Hom(\rho\mu,\rho\mu^2)$ and
$A_1,J_1\in\Hom(\mu\rho,\mu\rho^2)$.  The fusion rules give
\begin{align*}
 \Hom(\rho\mu,\rho\mu^2)
 &\cong\Hom(\rho\mu,\rho)\oplus\End(\rho\mu)\cong\CC,\\
 \Hom(\mu\rho,\mu\rho^2)
 &\cong\Hom(\mu\rho,\mu)\oplus\End(\mu\rho)\cong\CC.
\end{align*}

For $n=3$, define the typed diagrams
\[
 A^{\mathrm{odd}}_0:=\TTOddJellyfishInputRho,
 \qquad J^{\mathrm{odd}}_0:=\TTOddThreeCouponJellyfishRho,
\]
\[
 A^{\mathrm{odd}}_1:=\TTOddJellyfishInputMu,
 \qquad J^{\mathrm{odd}}_1:=\TTOddThreeCouponJellyfishMu.
\]
Here
\begin{align*}
 A^{\mathrm{odd}}_0,J^{\mathrm{odd}}_0
  &\in\Hom(u_3,u_3\rho),&
 \Hom(u_3,u_3\rho)&\cong\Hom(u_3,u_2)\oplus\End(u_3)\cong\CC,\\
 A^{\mathrm{odd}}_1,J^{\mathrm{odd}}_1
  &\in\Hom(v_3,v_3\mu),&
 \Hom(v_3,v_3\mu)&\cong\Hom(v_3,v_2)\oplus\End(v_3)\cong\CC.
\end{align*}
Each odd coupon has three red and three black incident strands, displayed
separately in these diagrams.

Closing $J_i$ against $J_i^\vee$ cancels the three inverse pairs and all
bigons, leaving a disjoint union of Fibonacci circles.  The same calculation
applies to $J_i^{\mathrm{odd}}(J_i^{\mathrm{odd}})^\vee$.  These closures are
nonzero powers of $\ph$.
Closing each $A$-diagram against its reflection and reducing the displayed
trivalent bigons likewise leaves a nonzero power of $\ph$.  Consequently
all of the displayed $A$- and $J$-diagrams are nonzero.  There are unique
$x,y\in\CC^\times$ such that
\begin{equation}
 A_0=yJ_0,\qquad A_1=xJ_1
 \label{eq:local-jellyfish-scalars}
\end{equation}
for $n=2$, and the same notation is used for
\begin{equation}
 A^{\mathrm{odd}}_0=yJ^{\mathrm{odd}}_0,
 \qquad A^{\mathrm{odd}}_1=xJ^{\mathrm{odd}}_1
 \label{eq:odd-local-jellyfish-scalars}
\end{equation}
when $n=3$.

\begin{lemma}[Local scalar calculation]
\label{lem:local-jellyfish-scalars}
For both $n=2$ and $n=3$, the scalars in
\eqref{eq:local-jellyfish-scalars} and
\eqref{eq:odd-local-jellyfish-scalars} satisfy
\[
 x^2=\omega_U,
 \qquad y^2=1.
\]
The residual replacement $(U,V)\mapsto(-U,-V)$ sends
$(x,y)\mapsto(-x,-y)$.  Choose the inverse pair so that $y=1$, and put
\begin{equation}
 \sigma_U:=x,
 \qquad \sigma_U^2=\omega_U.
 \label{eq:sigma-square}
\end{equation}
\end{lemma}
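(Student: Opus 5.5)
The plan is to determine $x$ and $y$ by closing each defining relation against its vertical reflection, as in the two-resolution argument of \cite[Lemma~3.5]{IzumiMorrisonPenneys2013}. Both sides of each closure are scalars that can be computed from the local Fibonacci relations \eqref{eq:fibonacci-resolution-graphical}, the inverse relations $VU=\operatorname{id}$ and $UV=\operatorname{id}$, and the rotation rules of \cref{lem:inverse-rotation}. The operation $T\mapsto T^\vee$ is linear and involves no cyclic rotation. Applying it to $A_0=yJ_0$ therefore gives $A_0^\vee=yJ_0^\vee$ with the same scalar, and likewise for the other three relations.

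\textbf{Step 1: $y^2=1$.} Substituting both relations into the spherical trace gives
\[
\operatorname{tr}(A_0A_0^\vee)=y^2\operatorname{tr}(J_0J_0^\vee).
\]
The left side contains no coupons; reducing its trivalent bigons and circles gives a power $\ph^{p}$. On the right, each $U$ in $J_0$ faces the reflected coupon $V$ in $J_0^\vee$ with compatible stars, since reflection carries the star along. The three inverse pairs therefore cancel with no rotation, and what remains is a union of Fibonacci circles and bigons with value $\ph^{q}$. A strand count should give $p=q$, hence $y^2=1$. The same count applies verbatim to $A_0^{\mathrm{odd}}$ and $J_0^{\mathrm{odd}}$ when $n=3$.

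\textbf{Step 2: $x^2=\omega_U$.} I would run the same comparison for $A_1=xJ_1$, but now the starred intervals of the coupons in $J_1$ are placed differently. In $J_1$ the coupon labels are $V,V,U$ rather than $U,U,V$, and the middle star is rotated by one click. Matching $J_1$ against $J_1^\vee$ then requires rotating coupons before an inverse pair can cancel. Each such click is governed by \eqref{eq:rotation-normalization} or \eqref{eq:inverse-rotation-relations}, so it contributes a factor $\omega_U^{\pm1}$ or $1$. The task is to record the net exponent and show it is exactly $\omega_U^{-1}$ relative to the circle count, which yields $x^2\,\omega_U^{-1}=1$.

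If this direct bookkeeping is unclear, I would switch to the double-insertion diagrams instead. Push a trivalent vertex through two consecutive coupons using first the $A_0$ relation and then the $A_1$ relation, and compare the result with a one-click rotation of the whole configuration. This produces an identity of the form $x^2=\omega_U\,y^2$, and Step 1 then gives the claim. The same argument applies for $n=3$ with the odd diagrams.

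\textbf{Step 3: the residual sign.} Under $(U,V)\mapsto(-U,-V)$ the diagrams $A_i$ contain no coupons and are unchanged. Each $J_i$ contains exactly three coupons, so it is multiplied by $(-1)^3=-1$. Hence $(x,y)\mapsto(-x,-y)$. Because $y=\pm1$ by Step 1, one such replacement achieves $y=1$. Setting $\sigma_U:=x$ then gives $\sigma_U^2=\omega_U$ directly from Step 2.

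I expect the main obstacle to be Step 2: tracking the orientation and star positions of the three coupons in $J_1$ under reflection, so that the accumulated rotation factor comes out exactly as $\omega_U^{\pm1}$ and not as some other power.
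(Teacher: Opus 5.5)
Your Step~3 (the sign count) is correct and matches the paper. Steps~1 and~2 have a genuine gap.

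\textbf{The gap.} It is the claim in Step~1 that reflecting $A_0=yJ_0$ gives $A_0^\vee=yJ_0^\vee$. The operation $T\mapsto T^\vee$ is defined on diagrams. It is not a linear contravariant functor on $\calC$, so nothing guarantees that a relation between morphisms survives reflection with the same scalar. For instance, reflection reverses the sense of a one-click rotation. A linear, relation-preserving reflection with $U^\vee=V$ would therefore turn $\mathcal R_+(U)=\omega_UV$ into $\mathcal R_-(V)=\omega_UU$, which agrees with \eqref{eq:inverse-rotation-relations} only if $\omega_U^2=1$. In a unitary setting the reflection would be the antilinear adjoint and would give $|y|^2$, not $y^2$.

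What your closure actually computes is $yy'$, where $A_0^\vee=y'J_0^\vee$ defines an a~priori different scalar $y'$. This carries no information about $y$ alone. The rescaling $(U,V)\mapsto(tU,t^{-1}V)$ preserves the Fibonacci calculi, $UV=\operatorname{id}$, $VU=\operatorname{id}$, and both closures. It multiplies $J_0$ (coupons $U,U,V$) by $t$ and $J_0^\vee$ (coupons $V,V,U$) by $t^{-1}$, so it sends $y\mapsto t^{-1}y$ and $y'\mapsto ty'$.

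Your Step~1 cancels every coupon pair ``with no rotation,'' so it uses only rescaling-invariant input. It therefore cannot prove the non-invariant statement $y^2=1$. That statement depends on the normalization $\mathcal R_-(U)=V$ of \cref{lem:inverse-rotation}, which the rescaling turns into $\mathcal R_-(U)=t^2V$. The same objection applies to closing $A_1=xJ_1$. Moreover, reflection carries each star with its coupon, so $J_1^\vee J_1$ also cancels in inverse pairs with no rotation, and no factor of $\omega_U$ can arise there.

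\textbf{The missing idea.} The paper uses a two-resolution comparison, which applies the jellyfish relation twice and forces exactly one rotation re-reading.
\begin{enumerate}
\item Write the identity on the alternating $(n+1)$-strand bundle in two ways. Either insert $UV$ on the upper $n$ strands, creating two adjacent same-colour strands below the coupon pair, and resolve them by \eqref{eq:fibonacci-resolution-graphical}. Or insert $VU$ on the lower $n$ strands and resolve the adjacent pair above.
\item The cap--cup terms agree, because their re-readings $\mathcal R_+(U)=\omega_UV$ and $\mathcal R_-(V)=\omega_U^{-1}U$ cancel. Hence the two trivalent summands are equal; call the common value $\mathcal D$.
\item Substitute $A_1=xJ_1$ at both trivalent vertices of one representative. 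The six new coupons cancel in inverse pairs and leave the other representative after one counterclockwise rotation of $V$, so $\mathcal D=x^2\omega_U^{-1}\mathcal D$.
\item Interchanging colours gives $\mathcal E=y^2\mathcal E$. The leftover re-reading is now $\mathcal R_-(U)=V$ with coefficient one, which is exactly where the normalization enters. For $n=3$ a single trivalent summand serves both purposes, since its two representatives carry vertices of opposite colours.
\end{enumerate}
Closures against reflections are used only to show $\mathcal D\ne0\ne\mathcal E$. Each such closure is a specific morphism whose value is computed directly, not a transported relation.

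Your fallback does not supply this argument. Pushing a vertex through once with $A_0$ and once with $A_1$ produces a combination such as $xy^{\pm1}$, not $x^2y^{-2}$. It also still relies on Step~1.
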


\begin{proof}
\TTSetN{2}
We first perform the calculation for $n=2$.  Begin with the three-strand
identity.  Insert the identity $UV$ in the upper two strands and apply
\eqref{eq:fibonacci-resolution-graphical} to the two adjacent red strands
created below the coupon pair.  This gives
\begin{equation}
\fitdisplay{
 \TTHorizontalBundleA
 =\TTNTwoUpperUVInsertion
 =\frac1\ph\,\TTUnitPairCapCup[-1]{U}{V}{90}
  +\TTUnitPairTrivalent[-1]{U}{V}{90}.}
\label{eq:n2-upper-inverse-resolution}
\end{equation}
Alternatively, insert the identity $VU$ in the lower two strands and resolve
the two adjacent red strands above the coupon pair:
\begin{equation}
\fitdisplay{
 \TTHorizontalBundleA
 =\TTNTwoLowerVUInsertion
 =\frac1\ph\,\TTUnitPairCapCup{V}{U}{90}
  +\TTUnitPairTrivalent{V}{U}{90}.}
\label{eq:n2-lower-inverse-resolution}
\end{equation}
The cap--cup terms themselves satisfy
\begin{equation}
\fitdisplay{
 \TTUnitPairCapCup[-1]{U}{V}{90}
 =\underbrace{\TTUnitPairCapCup{V}{U}{90}}_{\mathcal K}.}
\label{eq:n2-cap-cup-identification}
\end{equation}
To compare the two cuts, re-read the upper coupon using
$\mathcal R_+(U)=\omega_UV$ and the lower coupon using
$\mathcal R_-(V)=\omega_U^{-1}U$.  The two one-click factors cancel, leaving
the same cap--cup diagram.  Since the
left-hand sides of \eqref{eq:n2-upper-inverse-resolution} and
\eqref{eq:n2-lower-inverse-resolution} are the same three-strand identity,
their trivalent summands agree:
\begin{equation}
\fitdisplay{
 \TTUnitPairTrivalent{V}{U}{90}
 =\underbrace{\TTUnitPairTrivalent[-1]{U}{V}{90}}_{\mathcal D}.}
\label{eq:D-reflected-identification}
\end{equation}
Insert $A_1=xJ_1$ at the two red
trivalent vertices.  The six new coupons cancel in inverse pairs; after the
stars are read in their original boundary intervals, exactly one
counterclockwise rotation of $V$ remains.  Hence
\begin{equation}
\fitdisplay{\mathcal D
 =x^2\,\TTDoubleSwappedJellyfishInsertion
 =x^2\omega_U^{-1}\,\TTUnitPairTrivalent{V}{U}{90}
 =x^2\omega_U^{-1}\mathcal D.}
\label{eq:D-double-insertion}
\end{equation}
Closing $\mathcal D\mathcal D^\vee$ cancels two inverse-coupon
pairs and the two bigons, leaving a nonzero power of $\ph$.
Thus $\mathcal D\ne0$ and $x^2=\omega_U$.

Apply the same two resolutions after interchanging the strand colours.  If
$\mathcal E$ denotes the resulting common trivalent summand, then inserting
$A_0=yJ_0$ twice leaves one re-reading
$\mathcal R_-(U)=V$, whose coefficient is one:
\begin{equation}
\fitdisplay{\mathcal E
 :=\TTUnitPairTrivalentColorSwap[-1]{V}{U}{90}
 =y^2\,\TTDoubleJellyfishInsertion
 =y^2\,\TTUnitPairTrivalentColorSwap{U}{V}{90}
 =y^2\mathcal E.}
\label{eq:E-double-insertion}
\end{equation}
The closure $\mathcal E\mathcal E^\vee$ is again a nonzero power of $\ph$,
so $y^2=1$.

For $n=3$ the same comparison starts from the four-strand identity.  The
upper $UV$ insertion and the lower $VU$ insertion give, respectively,
\TTSetN{3}
\begin{equation}
\fitdisplay{
 \TTHorizontalBundleA
 =\TTNThreeUpperUVInsertion
 =\frac1\ph\,\TTNThreeUpperCapCup
  +\TTNThreeUpperTrivalent.}
\label{eq:n3-upper-inverse-resolution}
\end{equation}
and
\begin{equation}
\fitdisplay{
 \TTHorizontalBundleA
 =\TTNThreeLowerVUInsertion
 =\frac1\ph\,\TTNThreeLowerCapCup
  +\TTNThreeLowerTrivalent.}
\label{eq:n3-lower-inverse-resolution}
\end{equation}
The two cap--cup terms satisfy
\begin{equation}
\fitdisplay{
 \underbrace{\TTNThreeUpperCapCup}_{\mathcal K_3}
 =\TTNThreeLowerCapCup.}
\label{eq:n3-cap-cup-identification}
\end{equation}
The opposite one-click factors in the cap--cup terms cancel as in
\eqref{eq:n2-cap-cup-identification}.  Since the two expansions have the same
left-hand side, their trivalent summands are equal:
\begin{equation}
\fitdisplay{
 \underbrace{\TTNThreeUpperTrivalent}_{\mathcal O}
 =\TTNThreeLowerTrivalent.}
\label{eq:n3-trivalent-identification}
\end{equation}
The two spectator strands at each trivalent vertex give the typed diagrams in
\eqref{eq:odd-local-jellyfish-scalars}.  Substitute
$A^{\mathrm{odd}}_1=xJ^{\mathrm{odd}}_1$ at the two black vertices in the
left-hand representative of
\eqref{eq:n3-trivalent-identification}.  The six new coupons cancel in
inverse pairs and leave one counterclockwise rotation of $V$, so
\[
 \mathcal O=x^2\omega_U^{-1}\mathcal O.
\]
Substituting $A^{\mathrm{odd}}_0=yJ^{\mathrm{odd}}_0$ at the two red
vertices in the right-hand representative leaves only the normalized
re-reading $\mathcal R_-(U)=V$ and hence
\[
 \mathcal O=y^2\mathcal O.
\]
Closing $\mathcal O\mathcal O^\vee$ cancels two
inverse-coupon pairs and the trivalent bigons, leaving a nonzero power of
$\ph$.
Thus $\mathcal O\ne0$, and again $x^2=\omega_U$ and $y^2=1$.  Finally,
replacing the inverse pair by $(-U,-V)$ changes each three-coupon diagram by
a minus sign, so it changes both $x$ and $y$.  The stated normalization
follows.
\end{proof}

For $n=3$, changing the sign of the trivalent basis of exactly one colour,
with $U,V$ fixed, sends $(x,y)$ to $(-x,-y)$.  Restoring $y=1$ by the
residual simultaneous sign change of $U,V$ leaves $\sigma_U$ unchanged.

\TTSetN{2}
The four local roof relations at $n=2$ are
\begin{equation}
\fitdisplay{\TTFloatingRoofCoupon{U}{r}
 =\frac{\omega_U}{\ph}\,
   \TTCouponWithSideCap{V}{l}{r}
   +\sigma_U\,\TTSingleRoofTrain{U}{l}}
\label{eq:red-roof-U}
\end{equation}
\begin{equation}
\fitdisplay{\TTFloatingRoofCoupon{V}{r}
 =\frac{1}{\ph\omega_U}\,
   \TTCouponWithSideCap{U}{r}{r}
   +\sigma_U^{-1}\,\TTSingleRoofTrain{V}{r}}
\label{eq:red-roof-V}
\end{equation}
\begin{equation}
\fitdisplay{\TTFloatingRoofCoupon{U}{m}
 =\frac1\ph\,
   \TTCouponWithSideCap{V}{r}{m}
   +\TTSingleRoofTrain{U}{r}}
\label{eq:black-roof-U}
\end{equation}
\begin{equation}
\fitdisplay{\TTFloatingRoofCoupon{V}{m}
 =\frac1\ph\,
   \TTCouponWithSideCap{U}{l}{m}
   +\TTSingleRoofTrain{V}{l}}
\label{eq:black-roof-V}
\end{equation}
In each case, cut the roof at the starred interval and apply
\eqref{eq:fibonacci-resolution-graphical}.  The cap--cup branch is the listed
one-click re-reading of the coupon.  The trivalent branch has the same
ordered exterior strands and is identified by the listed local relation;
no other coupon is moved.  The scalar bookkeeping is
\[
\begin{array}{c|c|c}
\text{roof and coupon}&\text{cap--cup re-reading}&
   \text{trivalent substitution}\\ \hline
\text{red},U&\mathcal R_+(U)=\omega_UV&A_1=\sigma_UJ_1\\
\text{red},V&\mathcal R_-(V)=\omega_U^{-1}U&
   J_1=\sigma_U^{-1}A_1\\
\text{black},U&\mathcal R_-(U)=V&A_0=J_0\\
\text{black},V&\mathcal R_+(V)=U&A_0=J_0.
\end{array}
\]
Together with \eqref{eq:fibonacci-resolution-graphical}, the inverse
relations, and \eqref{eq:rotation-normalization}--
\eqref{eq:inverse-rotation-relations}, these equations cover the two coupon
labels and the two possible roof colours for $n=2$.

For $n=3$, the same four resolutions use the typed diagrams in
\eqref{eq:odd-local-jellyfish-scalars}; each coupon has six incident strands.
\TTSetN{3}
\begin{equation}
\fitdisplay{\TTFloatingRoofCoupon{U}{r}
 =\frac{\omega_U}{\ph}\,
   \TTCouponWithSideCap{V}{l}{r}
   +\sigma_U\,\TTSingleRoofTrain{U}{l}}
\label{eq:odd-red-roof-U}
\end{equation}
\begin{equation}
\fitdisplay{\TTFloatingRoofCoupon{V}{r}
 =\frac{1}{\ph\omega_U}\,
   \TTCouponWithSideCap{U}{r}{r}
   +\sigma_U^{-1}\,\TTSingleRoofTrain{V}{r}}
\label{eq:odd-red-roof-V}
\end{equation}
\begin{equation}
\fitdisplay{\TTFloatingRoofCoupon{U}{m}
 =\frac1\ph\,
   \TTCouponWithSideCap{V}{r}{m}
   +\TTSingleRoofTrain{U}{r}}
\label{eq:odd-black-roof-U}
\end{equation}
\begin{equation}
\fitdisplay{\TTFloatingRoofCoupon{V}{m}
 =\frac1\ph\,
   \TTCouponWithSideCap{U}{l}{m}
   +\TTSingleRoofTrain{V}{l}}
\label{eq:odd-black-roof-V}
\end{equation}
These formulas follow by resolving the monochromatic trivalent vertices in
\eqref{eq:odd-local-jellyfish-scalars}.

\subsection{Train notation and its trace pairing}

A left train $L_d(a_0;a_1,\ldots,a_k;U)$ consists of $k+1$ alternating
$U,V$ coupons.  Beginning at the star of the leftmost coupon, $a_0$ exterior
strands precede a terminal trivalent fork containing $d$ nested caps;
$a_i$ is the number of direct strands in the $i$th gap, in addition to its
trivalent connection.  A right train is the horizontally reflected
construction, with the strand colours determined from the star of its first
coupon.  The following three-coupon diagrams are drawn for $n=3$ with
$a_0=1$, $a_1=2$, and $a_2=1$.  The cap at the terminal fork represents a
block of $d$ nested caps.

\TTSetN{3}
\begin{center}
\begin{tikzpicture}[baseline=-.65cm]
 \TTLeftTrainCore[1]{1,2,1}{U}
 \node[font=\scriptsize,anchor=east] at (-1.30,-.70) {$a_0=1$};
 \node[font=\scriptsize] at (.775,.90) {$a_1=2$};
 \node[font=\scriptsize] at (2.325,.90) {$a_2=1$};
 \node[font=\scriptsize,anchor=east] (lcaps) at (-1.32,-1.42)
   {$d$ nested caps};
 \draw[thin,->] (lcaps.east) -- (-.93,-1.24);
 \node[font=\small] at (1.55,-1.92)
   {$L_d(1;2,1;U)$};
\end{tikzpicture}
\hspace{1.5em}
\begin{tikzpicture}[baseline=-.65cm]
 \TTRightTrainCore[1]{1,2,1}{U}
 \node[font=\scriptsize,anchor=west] at (1.30,-.70) {$a_0=1$};
 \node[font=\scriptsize] at (-.775,.90) {$a_1=2$};
 \node[font=\scriptsize] at (-2.325,.90) {$a_2=1$};
 \node[font=\scriptsize,anchor=west] (rcaps) at (1.32,-1.42)
   {$d$ nested caps};
 \draw[thin,->] (rcaps.west) -- (.93,-1.24);
 \node[font=\small] at (-1.55,-1.92)
   {$R_d(1;2,1;U)$};
\end{tikzpicture}
\end{center}

Each coupon has $2n$ incident strands, so the numbers of lower exterior
strands are
\begin{equation}
 b_1=2n-a_0-a_1-2,
 \quad b_i=2n-a_{i-1}-a_i-2,
 \quad b_{k+1}=2n-a_k-1.
 \label{eq:train-port-counts}
\end{equation}
The train is admissible when every $b_i$ is nonnegative.  Its initial coupon
label and gap data determine the remaining coupon labels and strand colours
by alternation.

Read the lower boundary as outgoing.  For an ordered boundary word $W$, let
$\operatorname{Train}(W)\subseteq\Hom(\mathbf1,W)$ be the linear span of
its train diagrams.  Literal vertical reflection sends
$T\in\Hom(\mathbf1,W)$ to $T^\vee\in\Hom(W,\mathbf1)$.  Define its spherical
closure and the resulting bilinear pairing by
\[
 \operatorname{tr}_W(S,T):=T^\vee\circ S\in\End(\mathbf1)=\CC,
 \qquad
 \langle S,T\rangle_W:=\operatorname{tr}_W(S,T).
\]

The finite train families used in the two coefficient comparisons are fixed
next.  We regard $C_0$ and $B_0$ as one-car trains whose terminal signatures
are the displayed nested side caps.  For $n=2$, put
\TTSetN{2}
\[
\fitdisplay{C_0:=\TTCouponWithNestedSideCaps{U}{r}{r}{2},\qquad
C_1:=\TTRightTrain[1]{0,1}{V}.}
\]
\[
\fitdisplay{C_2:=\TTRightTrain{1,0}{V},\qquad
C_3:=\TTRightTrain{1,1,1,1}{V}.}
\]
For $n=3$, put
\TTSetN{3}
\[
\fitdisplay{B_0:=\TTCouponWithNestedSideCaps{V}{l}{r}{3},\qquad
B_1:=\TTLeftTrain[2]{0,2}{U},\qquad
B_2:=\TTLeftTrain[1]{1,1}{U}.}
\]
\[
\fitdisplay{B_3:=\TTLeftTrain[1]{1,2,2,2}{U},\qquad
B_4:=\TTLeftTrain{2,0}{U}.}
\]
\[
\fitdisplay{B_5:=\TTLeftTrain{2,2,1,2}{U},\qquad
B_6:=\TTLeftTrain{2,1,2,2}{U}.}
\]
\[
\fitdisplay{B_7:=\TTLeftTrain{2,2,2,1}{U},\qquad
B_8:=\TTLeftTrain{2,2,2,2,2,2}{U}.}
\]

The two common boundary objects are, in left-to-right order,
\[
 W_C=\rho\mu\rho\mu\rho\mu\mu\rho,
 \qquad
 W_B=\rho\mu\rho\rho\mu\rho\mu\rho\mu\rho\mu\rho.
\]
Thus $C_0,\ldots,C_3\in\Hom(\mathbf1,W_C)$ and
$B_0,\ldots,B_8\in\Hom(\mathbf1,W_B)$, so the pairings used below are
literally the closures just defined.

\begin{lemma}[Train independence]
\label{lem:train-independence}
The diagrams $C_0,\ldots,C_3$ are linearly independent, as are
$B_0,\ldots,B_8$.  In each displayed order the reflection Gram matrix is
diagonal, with diagonal entries
\[
 (\ph^4,\ph^3,\ph^3,\ph^2),
 \qquad
 (\ph^6,\ph^5,\ph^5,\ph^4,\ph^5,
   \ph^4,\ph^4,\ph^4,\ph^3).
\]
\end{lemma}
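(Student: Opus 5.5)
The plan is to compute each entry of the reflection Gram matrix $\langle C_i,C_j\rangle_{W_C}=C_j^\vee\circ C_i$ (and likewise for the $B_i$) directly by diagrammatic evaluation. Linear independence then follows at once. A diagonal Gram matrix with nonzero diagonal entries (powers of $\ph$) is invertible. Any linear relation $\sum_i c_iC_i=0$, paired with $C_j$, gives $c_j\langle C_j,C_j\rangle=0$, hence $c_j=0$. So the whole content is the evaluation of finitely many closed diagrams. These are the $4\times4$ and $9\times9$ closures, reduced by symmetry to the upper triangle.

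For the diagonal entries I would argue uniformly. Stack $T$ against its literal reflection $T^\vee$. Every coupon of $T$ then faces its mirror image with the labels $U,V$ interchanged, across the full boundary word $W$. Each such facing pair is a composite $V\circ U$ or $U\circ V$ on an alternating word, so it cancels to the identity. Because the reflection is literal (no rotation), no factor $\omega_U$ appears. The remaining diagram is planar Fibonacci trivalent graphs of each colour. The nested caps and terminal forks close up to circles, and each gap's trivalent connection against its mirror is a bigon, which evaluates to the identity. One counts the remaining circles. For a train with $k+1$ coupons, I expect the count to be $2n-(k+1)+\dots$: roughly one circle per cap/direct-strand loop, and one lost per bigon. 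This should reproduce the listed exponents: $4,3,3,2$ for $C_0,\dots,C_3$, and $6,5,5,4,5,4,4,4,3$ for the $B_i$. I would verify each exponent from the port counts \eqref{eq:train-port-counts}, and check it against $\ph^{n+1-k}$-type bookkeeping.

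For the off-diagonal entries the aim is to show each vanishes. When $C_i$ and $C_j^\vee$ have different numbers of coupons, or different gap data, the closure does not pair coupons off as inverse pairs. I would first use the inverse relations to cancel all matching coupon pairs from the outside in. At the first mismatch, one of two situations occurs. In the first, a leftover monochromatic region contains a tadpole or a trivalent vertex attached to a cap, which is zero. In the second, the local picture is $U$ (or $V$) composed with a diagram factoring through a Fibonacci summand of dimension too small. There the relevant Hom space, e.g. $\Hom(u_n, \text{shorter word})$, vanishes by the standard form \eqref{eq:standard-form}, since $c_n$ does not occur in words of length $<n$. Where a mismatch leaves a roof over a coupon, the roof relations \eqref{eq:red-roof-U}--\eqref{eq:odd-black-roof-V} rewrite it into trains with fewer free ends, and one inducts.

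The main obstacle is the off-diagonal vanishing for pairs with different coupon counts, e.g. $C_0$ versus $C_3$ or $B_0$ versus $B_8$. Here cancellation is not immediate, and one must show that the surviving diagram factors through a zero Hom space. I would handle it by cutting the closure along a horizontal line through the first unmatched coupon. The result is a morphism $\mathbf 1\to X$ with $X$ an alternating word of length $<n$ tensored with caps, composed with a coupon. Dimension counting in $\Hom(c_n,X)=0$ then forces vanishing.
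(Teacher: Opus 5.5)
Your overall frame is the paper's: diagonal Gram entries come from cancelling mirror coupon pairs and counting circles, and off-diagonal entries vanish at the first mismatch. The off-diagonal step, however, has a real gap. The case you call the main obstacle is misdiagnosed, and the argument you propose for it does not work.

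\textbf{The cases you single out are the easy ones.} Take $\langle C_0,C_3\rangle$ and $\langle B_0,B_8\rangle$. In every displayed train, the terminal fork with its nested caps (or the side caps of $C_0$, $B_0$) is nested about the same two adjacent boundary points: the third and fourth of $W_B$, and the sixth and seventh of $W_C$. So when terminal signatures differ, a cap of one train lands on the two legs of the other train's terminal fork. The closure then contains a tadpole before any coupon is touched. For $C_0$ against $C_3$, the black cap at the sixth and seventh points of $W_C$ closes the black terminal fork of $C_3$.

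\textbf{Your Hom-space replacement fails.} Once the cap strands are included in $X$, it is no longer a word of length $<n$, and $c_n$ can occur in it: for $n=2$, $\rho\mu$ is a summand of $\rho\mu\mu\cong\rho\mu\oplus\rho$. So no dimension count forces vanishing. Moreover, the situation it is meant to handle never arises, since the first unmatched coupons are still joined by at least $n$ strands (see below).

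\textbf{Roof relations have no role here.} No roofs occur in these closures. Their coefficients also involve $\omega_U$ and $\sigma_U$, which are exactly the unknowns the later coefficient comparison uses this lemma to determine. The Gram entries must therefore be computed without them.

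\textbf{The missing step for equal terminal signatures.} These are the pairs among $C_2,C_3$, among $B_2,B_3$, and among $B_4,\ldots,B_8$; this is where coupon counts actually differ. What you need is that cancellation continues through the first unequal gap. Suppose the data agree up to $a_{i-1}$ and $a_i<a_i'$. Cancel the earlier mirror pairs and reduce the resulting fork and gap bigons. The two $i$th coupons are then joined by
\[
a_{i-1}+1+b_i'=2n-a_i'-1\ge n
\]
consecutive strands, since all gap counts are at most $n-1$. These strands are adjacent to the two stars and sit in mirror position, so the pair cancels with coefficient one and no factor $\omega_U$. After this cancellation, the lower strand of the first train's $i$th coupon lands on the lower leg of the other train's $i$th gap junction. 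Through the cancelled pair, it is joined to that same junction's coupon leg. This is the tadpole of the paper's proof, and it kills every remaining off-diagonal entry.

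\textbf{The diagonal count.} It is not true that every coupon faces its mirror across the whole boundary. The pairs become cancellable only in sequence, and a pair joined by $m\ge n$ consecutive strands cancels leaving $m-n$ circles. This gives
\[
m(T)=d+n+\sum_{i=1}^{k}(n-1-a_i),
\]
which reproduces both exponent lists. An $n+1-k$ type count does not; it would give $\ph^3$ instead of $\ph^4$ for $C_0$.
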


\begin{proof}
Close a train against its reflection and cancel coupon pairs from left to
right.  Each terminal cap gives a circle and each paired fork or gap
junction gives a bigon.  The number of remaining circles is determined by
the displayed terminal signature and gap data; in particular the one-car
trains $C_0$ and $B_0$ close to $\ph^4$ and $\ph^6$.  In general,
\[
\begin{tikzpicture}[baseline=-.36cm]
 \node[draw,rounded corners,minimum width=1.25cm,minimum height=.55cm] (a)
   at (0,.42) {$T$};
 \node[draw,rounded corners,minimum width=1.25cm,minimum height=.55cm] (b)
   at (0,-.42) {$T^\vee$};
 \draw[ttrho] (-.42,.14)--(-.42,-.14);
 \draw[ttmu] (0,.14)--(0,-.14);
 \draw[ttrho] (.42,.14)--(.42,-.14);
\end{tikzpicture}
\quad\longrightarrow\quad
\underbrace{
\begin{tikzpicture}[baseline=-.12cm]
 \draw[ttrho](-.28,0)circle(.18cm);
 \draw[ttmu](.28,0)circle(.18cm);
\end{tikzpicture}
\ \cdots\ 
\begin{tikzpicture}[baseline=-.12cm]
 \draw[ttrho](0,0)circle(.18cm);
\end{tikzpicture}}_{m(T)\text{ circles}},
\qquad
\langle T,T\rangle_W=\ph^{m(T)},
\]

For two different terminal signatures, or at the first unequal gap count,
two legs of the unmatched trivalent junction close to a tadpole.  Locally,
\[
\begin{tikzpicture}[baseline=-.3cm]
 \node[draw,rounded corners,minimum width=1.25cm,minimum height=.5cm] (a)
   at (0,.36) {$S$};
 \node[draw,rounded corners,minimum width=1.25cm,minimum height=.5cm] (b)
   at (0,-.36) {$T^\vee$};
 \draw[ttrho] (0,.11)--(0,-.11);
\end{tikzpicture}
\quad\longrightarrow\quad
\begin{tikzpicture}[baseline=-.22cm]
 \draw[ttrho](0,.05)circle(.20cm);
 \draw[ttrho](0,-.15)--(0,-.62);
\end{tikzpicture}
=0,
\qquad S\ne T.
\]
Thus every off-diagonal entry is zero.  Reading the circle counts from the
displayed signatures gives the two exponent lists in the statement.  They
are nonzero, so both train families are independent.
\end{proof}

\subsection{The coefficient comparison for \texorpdfstring{$n=2$}{n=2}}

\TTSetN{2}
Name the morphisms displayed below, in the standard operadic shorthand of
\cite[Section~5]{IzumiMorrisonPenneys2013}, by
\[
 \mathsf L_2:=\rho\mu(U),
 \qquad \mathsf R_2:=U(\mu\rho(U))V.
\]
Here and below juxtaposition means insertion into the indicated coloured
strands; identity strands and associators are suppressed.
The left-hand expression has a black roof inside a red roof.  First apply
\eqref{eq:black-roof-U}, retaining the complete outer red roof:
\begin{equation}
\fitdisplay{
\mathsf L_2
=\TTWithFloatingRoof{r}{\TTFloatingRoofCoupon{U}{m}}
=\frac1\ph\,
 \TTWithFloatingRoof{r}{\TTCouponWithSideCap{V}{r}{m}}
+\TTWithFloatingRoof{r}{\TTSingleRoofTrain{U}{r}}.}
\label{eq:n2-left-first-resolution}
\end{equation}
Apply the red-roof relations to the first term and resolve the red edge in
the second term into its cap--cup and trivalent channels:
\begin{equation}
\fitdisplay{\mathsf L_2=
 \frac{1}{\ph^2\omega_U}\TTCouponWithNestedSideCaps{U}{r}{r}{2}
 +\frac{1}{\ph\sigma_U}\TTSingleRoofTrain[1]{V}{r}
 +\frac1\ph\,\TTRedRoofRightTrainCapCup
 +\TTRedRoofRightTrainTrivalent.}
\label{eq:n2-left-second-resolution}
\end{equation}
The last two diagrams reduce locally as
\begin{equation}
\fitdisplay{
\TTRedRoofRightTrainCapCup
=\omega_U^{-1}\omega_U\,C_2=C_2,
\qquad
\TTRedRoofRightTrainTrivalent
=\sigma_U^{-1}\sigma_U\,C_3=C_3.}
\label{eq:n2-left-local-reductions}
\end{equation}
The second diagram in \eqref{eq:n2-left-second-resolution} is $C_1$, and the
first is $C_0$.  Hence
\begin{align}
\mathsf L_2={}&
 \frac{1}{\ph^2\omega_U}C_0
 +\frac{1}{\ph\sigma_U}C_1
 +\frac1\ph C_2+C_3.
\label{eq:n2-left-expansion}
\end{align}

For the other ordering, the red roof is inside the black roof.  The two
successive resolutions are
\begin{equation}
\fitdisplay{
\mu\rho(U)
=\TTWithFloatingRoof{m}{\TTFloatingRoofCoupon{U}{r}}
=\frac{\omega_U}{\ph}\,
 \TTWithFloatingRoof{m}{\TTCouponWithSideCap{V}{l}{r}}
+\sigma_U\,\TTWithFloatingRoof{m}{\TTSingleRoofTrain{U}{l}}}
\label{eq:n2-right-first-resolution}
\end{equation}
and
\begin{equation}
\fitdisplay{\mu\rho(U)=
 \frac{\omega_U}{\ph^2}\TTCouponWithNestedSideCaps{U}{l}{m}{2}
 +\frac{\omega_U}{\ph}\TTSingleRoofTrain[1]{V}{l}
 +\frac{\sigma_U}{\ph}\,\TTBlackRoofLeftTrainCapCup
 +\sigma_U\,\TTBlackRoofLeftTrainTrivalent.}
\label{eq:n2-right-second-resolution}
\end{equation}
Here
\begin{equation}
\fitdisplay{
\TTBlackRoofLeftTrainCapCup=\TTLeftTrain{1,0}{V},
\qquad
\TTBlackRoofLeftTrainTrivalent=\TTLeftTrain{1,1,1,1}{V}.}
\label{eq:n2-right-local-reductions}
\end{equation}

Insert the exterior inverse coupons.  Before cancellation this gives
\begin{equation}
\fitdisplay{\begin{aligned}
\mathsf R_2={}&\frac{\omega_U}{\ph^2}\,\TTConjugatedLeftCaps
 +\frac{\omega_U}{\ph}\,\TTConjugatedLeftForkCap\\
&+\frac{\sigma_U}{\ph}\,\TTConjugatedLeftShortTrain
 +\sigma_U\,\TTConjugatedLeftLongTrain.
\end{aligned}}
\label{eq:n2-conjugated-expansion}
\end{equation}
In each summand the rightmost newly inserted coupon cancels with its inverse.
No coupon is rotated, and the resulting diagrams are, in the displayed order,
\begin{equation}
 (C_0,C_2,C_1,C_3).
 \label{eq:n2-conjugated-trains}
\end{equation}
Thus
\begin{equation}
\mathsf R_2=
 \frac{\omega_U}{\ph^2}C_0
 +\frac{\sigma_U}{\ph}C_1
 +\frac{\omega_U}{\ph}C_2
 +\sigma_U C_3.
\label{eq:n2-right-expansion}
\end{equation}

The same cancellation gives the diagrammatic identity
\begin{equation}
\fitdisplay{
 \mathsf R_2
 =\TTConjugatedRedBlackRoof
 =\TTWithFloatingRoof{r}{\TTFloatingRoofCoupon{U}{m}}
 =\mathsf L_2.}
\label{eq:n2-conjugation-identity}
\end{equation}
The middle equality cancels the exterior inverse coupons with coefficient
one.  By \cref{lem:train-independence}, the equality is coefficientwise:
\[
\begin{array}{c|c|c|c}
 &\mathsf L_2&\mathsf R_2&\text{consequence}\\ \hline
C_0&(\ph^2\omega_U)^{-1}&\omega_U/\ph^2&\omega_U^2=1\\
C_1&(\ph\sigma_U)^{-1}&\sigma_U/\ph&\sigma_U^2=1\\
C_2&1/\ph&\omega_U/\ph&\omega_U=1\\
C_3&1&\sigma_U&\sigma_U=1
\end{array}
\]
All four equations, including $\sigma_U^2=\omega_U$, are therefore
satisfied exactly when
\begin{equation}
 n=2:\qquad \omega_U=\sigma_U=1.
 \label{eq:n2-omega-sigma-one}
\end{equation}

\subsection{The coefficient comparison for \texorpdfstring{$n=3$}{n=3}}

\TTSetN{3}
Put
\[
 \mathsf L_3:=\rho\mu\rho(U),
 \qquad \mathsf R_3:=U(\mu\rho\mu(U))V.
\]
First reduce the inner red and black roofs of $\mathsf L_3$, while retaining the
outer red roof.  Substitution of
\eqref{eq:odd-red-roof-U}--\eqref{eq:odd-black-roof-V} gives
\begin{equation}
\fitdisplay{\mathsf L_3=\frac{\omega_U}{\ph^2}
 \TTWithFloatingRoof{r}{\TTCouponWithNestedSideCaps{U}{l}{m}{2}}
 +\frac{\omega_U}{\ph}\TTWithFloatingRoof{r}{\TTLeftTrain[1]{0,2}{V}}
 +\frac{\sigma_U}{\ph}\TTWithFloatingRoof{r}{\TTLeftTrain{1,1}{V}}
 +\sigma_U\TTWithFloatingRoof{r}{\TTLeftTrain{1,2,2,2}{V}}.}
\label{eq:n3-left-first-resolution}
\end{equation}
The four remaining red roofs reduce as follows:
\begin{equation}
\fitdisplay{\TTWithFloatingRoof{r}{\TTCouponWithNestedSideCaps{U}{l}{m}{2}}
 =\frac{\omega_U}{\ph}B_0+\sigma_U B_1.}
\label{eq:n3-left-outer-0}
\end{equation}
\begin{equation}
\fitdisplay{\TTWithFloatingRoof{r}{\TTLeftTrain[1]{0,2}{V}}
 =\frac1\ph B_2+B_3.}
\label{eq:n3-left-outer-1}
\end{equation}
\begin{equation}
\fitdisplay{\TTWithFloatingRoof{r}{\TTLeftTrain{1,1}{V}}
 =\frac1\ph B_4+B_5.}
\label{eq:n3-left-outer-2}
\end{equation}
\begin{equation}
\fitdisplay{\TTWithFloatingRoof{r}{\TTLeftTrain{1,2,2,2}{V}}
 =\frac1\ph B_6+\frac1\ph B_7-\frac1\ph B_8.}
\label{eq:n3-left-outer-3}
\end{equation}
For \eqref{eq:n3-left-outer-3}, resolve the two top red edges from left to
right, and let $X_{PQ}$ denote the complete surrounding summand obtained by
choosing $P,Q\in\{\mathsf C,\mathsf T\}$.  The outer legs and the original
leftmost star remain fixed during these reductions.  In $X_{\mathsf C
\mathsf C}$ a cap closes two legs of the other cap--cup branch to a tadpole,
so this summand vanishes.  In the two mixed branches, cancel the adjacent
inverse coupons and remove the resulting bigon.  The remaining direct-gap
data are, respectively, $(2;1,2,2;U)$ and $(2;2,2,1;U)$, with the same
leftmost star as $B_6$ and $B_7$.  Since the cap--cup resolution contributes
$\ph^{-1}$, this gives
\begin{equation}\label{eq:n3-left-four-branches}
 X_{\mathsf C\mathsf C}=0,\qquad
 X_{\mathsf C\mathsf T}=\ph^{-1}B_6,\qquad
 X_{\mathsf T\mathsf C}=\ph^{-1}B_7,\qquad
 X_{\mathsf T\mathsf T}=-\ph^{-1}B_8.
\end{equation}
For the last equality, the three red vertices form a triangle.  Rotate
\eqref{eq:fibonacci-resolution-graphical} by a quarter-turn and join its two
upper endpoints to an additional trivalent vertex.  The cap--cup term on
the left vanishes by the tadpole relation, while the two terms on the right
give
\[
 0=\frac1\ph
 \begin{tikzpicture}[baseline=-.15cm]
  \coordinate(t)at(0,.10);
  \draw[ttrho](0,.75)--(t)--(-.65,-.65);
  \draw[ttrho](t)--(.65,-.65);
  \filldraw[rho](t)circle(.045cm);
 \end{tikzpicture}
 +\begin{tikzpicture}[baseline=-.15cm]
  \coordinate(t)at(0,.30);
  \coordinate(l)at(-.35,-.16);\coordinate(r)at(.35,-.16);
  \draw[ttrho](t)--(l)--(r)--cycle;
  \draw[ttrho](0,.75)--(t);
  \draw[ttrho](-.65,-.65)--(l);\draw[ttrho](.65,-.65)--(r);
  \filldraw[rho](t)circle(.045cm);
  \filldraw[rho](l)circle(.045cm);\filldraw[rho](r)circle(.045cm);
 \end{tikzpicture}.
\]
Thus the triangle is $-\ph^{-1}$ times the three-legged tree.
After this replacement, the ordered gap data are
$(2;2,2,2,2,2;U)$ and every exterior leg and star is exactly that of $B_8$.
Consequently
\begin{align}
\mathsf L_3={}&\frac{\omega_U^2}{\ph^3}B_0
 +\frac{\omega_U\sigma_U}{\ph^2}B_1
 +\frac{\omega_U}{\ph^2}B_2+\frac{\omega_U}{\ph}B_3\notag\\
&+\frac{\sigma_U}{\ph^2}B_4
 +\frac{\sigma_U}{\ph}(B_5+B_6+B_7-B_8).
\label{eq:n3-left-expansion}
\end{align}

Define the right trains $D_0,\ldots,D_8$ by
\[
\fitdisplay{D_0:=\TTCouponWithNestedSideCaps{V}{r}{m}{3},\qquad
D_1:=\TTRightTrain[2]{0,2}{U},\qquad
D_2:=\TTRightTrain[1]{1,1}{U}.}
\]
\[
\fitdisplay{D_3:=\TTRightTrain[1]{1,2,2,2}{U},\qquad
D_4:=\TTRightTrain{2,0}{U},\qquad
D_5:=\TTRightTrain{2,2,1,2}{U}.}
\]
\[
\fitdisplay{D_6:=\TTRightTrain{2,1,2,2}{U},\qquad
D_7:=\TTRightTrain{2,2,2,1}{U}.}
\]
\[
\fitdisplay{D_8:=\TTRightTrain{2,2,2,2,2,2}{U}.}
\]
The colour-reversed inner roofs of $\mu\rho\mu(U)$ give
\begin{equation}
\fitdisplay{\mu\rho\mu(U)=\frac{1}{\ph^2\omega_U}
 \TTWithFloatingRoof{m}{\TTCouponWithNestedSideCaps{U}{r}{r}{2}}
 +\frac{1}{\ph\sigma_U}\TTWithFloatingRoof{m}{\TTRightTrain[1]{0,2}{V}}
 +\frac1\ph\TTWithFloatingRoof{m}{\TTRightTrain{1,1}{V}}
 +\TTWithFloatingRoof{m}{\TTRightTrain{1,2,2,2}{V}}.}
\label{eq:n3-right-first-resolution}
\end{equation}
The remaining black roofs satisfy
\begin{equation}
\fitdisplay{\TTWithFloatingRoof{m}{\TTCouponWithNestedSideCaps{U}{r}{r}{2}}
 =\frac1\ph D_0+D_1.}
\label{eq:n3-right-outer-0}
\end{equation}
\begin{equation}
\fitdisplay{\TTWithFloatingRoof{m}{\TTRightTrain[1]{0,2}{V}}
 =\frac1\ph D_2+D_3.}
\label{eq:n3-right-outer-1}
\end{equation}
\begin{equation}
\fitdisplay{\TTWithFloatingRoof{m}{\TTRightTrain{1,1}{V}}
 =\frac1\ph D_4+D_5.}
\label{eq:n3-right-outer-2}
\end{equation}
\begin{equation}
\fitdisplay{\TTWithFloatingRoof{m}{\TTRightTrain{1,2,2,2}{V}}
 =\frac1\ph D_6+\frac1\ph D_7-\frac1\ph D_8.}
\label{eq:n3-right-outer-3}
\end{equation}
Indeed, the right-to-left version of the preceding branch calculation gives
\[
 Y_{\mathsf C\mathsf C}=0,\qquad
 Y_{\mathsf C\mathsf T}=\ph^{-1}D_6,\qquad
 Y_{\mathsf T\mathsf C}=\ph^{-1}D_7,\qquad
 Y_{\mathsf T\mathsf T}=-\ph^{-1}D_8.
\]
Here the original rightmost star is retained, and the three nonzero branches
have ordered gap data $(2;1,2,2;U)$, $(2;2,2,1;U)$, and
$(2;2,2,2,2,2;U)$ in the right-train convention.
It follows that the coefficient vector of $\mu\rho\mu(U)$ in the displayed
right-train basis is
\begin{equation}
 \left(
 \frac1{\ph^3\omega_U},\frac1{\ph^2\omega_U},
 \frac1{\ph^2\sigma_U},\frac1{\ph\sigma_U},
 \frac1{\ph^2},\frac1\ph,\frac1\ph,\frac1\ph,-\frac1\ph
 \right).
\label{eq:n3-right-train-vector}
\end{equation}

Attach the three exterior strands to the new $U$ on the left and the three
remaining strands to the new $V$ on the right.  In each summand the new
coupon cancels with its adjacent inverse, without moving a star or rotating
a coupon.  Thus, with coefficient one,
\begin{equation}
 (UD_0V,\ldots,UD_8V)
 =(B_0,B_4,B_2,B_7,B_1,B_6,B_5,B_3,B_8).
 \label{eq:n3-conjugation-permutation}
\end{equation}
The same inverse cancellation gives the full diagrammatic identity
\begin{equation}
\fitdisplay{
 \mathsf R_3
 =\TTNthreeConjugatedRoof
 =\TTWithFloatingRoof{r}{\TTWithFloatingRoof{m}{\TTFloatingRoofCoupon{U}{r}}}
 =\mathsf L_3.}
\label{eq:n3-conjugation-identity}
\end{equation}
Substitution of \eqref{eq:n3-right-train-vector} into
\eqref{eq:n3-conjugation-permutation} therefore gives
\begin{align}
\mathsf R_3={}&\frac1{\ph^3\omega_U}B_0+\frac1{\ph^2}B_1
 +\frac1{\ph^2\sigma_U}B_2+\frac1\ph B_3
 +\frac1{\ph^2\omega_U}B_4\notag\\
&+\frac1\ph B_5+\frac1\ph B_6
 +\frac1{\ph\sigma_U}B_7-\frac1\ph B_8.
\label{eq:n3-right-expansion}
\end{align}

By \cref{lem:train-independence}, the coefficient equations are
\[
\begin{array}{c|c|c|c}
 &\mathsf L_3&\mathsf R_3&\text{consequence}\\ \hline
B_0&\omega_U^2/\ph^3&1/(\ph^3\omega_U)&\omega_U^3=1\\
B_1&\omega_U\sigma_U/\ph^2&1/\ph^2&\omega_U\sigma_U=1\\
B_2&\omega_U/\ph^2&1/(\ph^2\sigma_U)&\omega_U\sigma_U=1\\
B_3&\omega_U/\ph&1/\ph&\omega_U=1\\
B_4&\sigma_U/\ph^2&1/(\ph^2\omega_U)&\omega_U\sigma_U=1\\
B_5&\sigma_U/\ph&1/\ph&\sigma_U=1\\
B_6&\sigma_U/\ph&1/\ph&\sigma_U=1\\
B_7&\sigma_U/\ph&1/(\ph\sigma_U)&\sigma_U^2=1\\
B_8&-\sigma_U/\ph&-1/\ph&\sigma_U=1
\end{array}
\]
In particular, the $B_3$ and $B_5$ rows give
\begin{equation}
 n=3:\qquad \omega_U=\sigma_U=1.
 \label{eq:n3-omega-sigma-one}
\end{equation}
All other rows, including $\sigma_U^2=\omega_U$, are then satisfied.

\subsection{Evaluation and the universal marked category}

\begin{lemma}[Two-colour polygon reduction]
\label{lem:two-colour-polygon-reduction}
Fix $n=2$ or $3$.  Let $D$ be a closed jellyfish diagram, with no external
strand endpoints and with at least two boundary coupons.  Suppose that its
coupon-free interior has no circle,
bigon, tadpole, or edge joining two trivalent vertices.  Then either $D=0$
by a one-boundary-point component, or two consecutive boundary coupons are
joined by at least $n$ consecutive strands.  In the latter case, if
$\omega_U=\sigma_U=1$, the two coupons cancel and leave a closed jellyfish diagram
with two fewer coupons.
\end{lemma}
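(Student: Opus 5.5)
This is the two-colour analogue of the Izumi--Morrison--Penneys polygon lemma \cite[Lemma~3.5 and Section~5]{IzumiMorrisonPenneys2013}: a planar Euler-characteristic count followed by a local cancellation. I would argue in three stages.

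\emph{Planar set-up.} Contract each boundary coupon to a vertex of valence $2n$ on the outer circle. The coupon-free interior is then a planar trivalent graph $G$ whose leaves are attached to the coupons. Consider one connected component $\Gamma$ of the interior together with the coupons it meets.
\begin{itemize}
\item If $\Gamma$ meets the boundary in exactly one point, it is a trivalent tree or graph with one leg, i.e.\ a morphism $\mathbf 1\to\rho$ or $\mathbf 1\to\mu$. This is zero because $\Hom(\mathbf1,\rho)=\Hom(\mathbf1,\mu)=0$. Equivalently, an innermost such component contains a tadpole after bigon-free reduction, so $D=0$.
\item Otherwise every component meets the boundary at least twice.
\end{itemize}
By hypothesis there are no circles, bigons, tadpoles, or interior edges between trivalent vertices. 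So every trivalent vertex has all three edges going to coupons.

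\emph{Counting.} Each face of the planar graph formed by $G$ and the boundary arcs lies between consecutive boundary points. The absence of bigons and of vertex--vertex edges constrains how the strands leaving a coupon are distributed among the faces. The goal is the standard averaging argument. Sum, over the $m\ge2$ coupons, the number of strands running directly to the next coupon, and compare with $2n$ times the number of coupons via Euler's formula for the disc. Each trivalent vertex absorbs three coupon ports, and each direct strand absorbs two. Planarity (outerplanarity of the contracted graph) forces the total to be large enough that some consecutive pair shares at least $n$ parallel strands. I would first establish this for a diagram containing no trivalent vertices, where it is pigeonhole on a non-crossing chord diagram. I would then show that each trivalent vertex, being adjacent to three distinct coupons with no interior neighbour, only reduces the number of available ports in a way that preserves the inequality.

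\emph{Cancellation.} Two consecutive coupons joined by $n$ consecutive strands are, after rotation, a composite of the form $U\circ\mathcal R^{j}(V)$ or similar. The rotation relations \eqref{eq:rotation-normalization} and \eqref{eq:inverse-rotation-relations} express any rotated coupon as a power of $\omega_U$ times $U$ or $V$. With $\omega_U=1$, alternation of labels ensures the pair is literally an inverse pair $UV$ or $VU$, possibly with reflected stars, and it cancels to the identity on the $n$ remaining strands. The jellyfish scalars are trivial because $\sigma_U=1$, by \eqref{eq:n2-omega-sigma-one} and \eqref{eq:n3-omega-sigma-one}, so no star bookkeeping introduces a scalar. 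The result is a closed jellyfish diagram with two fewer coupons.

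I expect the counting step to be the main obstacle. Making the Euler-characteristic inequality precise, and treating uniformly a component touching only two boundary points, needs care, particularly for $n=2$ where $2n=4$ leaves little room.
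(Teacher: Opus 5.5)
\paragraph{Verdict.} Your cancellation step is essentially the paper's. The existence step, which you flag yourself, has a genuine gap, and the mechanism you sketch for it would not close it.

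\paragraph{Return strands and the chord case.} The hypotheses do not exclude a strand that returns to the coupon it started from. Nor do they exclude a trivalent vertex with two legs at one coupon. Your dichotomy ``one-boundary-point component, or every component meets the boundary at least twice'' does not dispose of either situation. Yet your chord pigeonhole and your assertion that every trivalent vertex meets three \emph{distinct} coupons both presuppose that these are absent.

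The paper removes them by an innermost-sector argument. The two ends of such an arc lie at ports of the same colour. By alternation, an innermost such sector therefore traps a port of the other colour, whose component meets the boundary once, and so $D=0$.

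Next, an averaging or Euler count cannot produce the bound $n$. Counting the $nm$ strands against the at most $2m-3$ edge classes of the outerplanar coupon graph only yields a class with more than $n/2$ strands, and that class may be a diagonal. What you need is a single coupon none of whose strands is a diagonal. The paper obtains one by completing the noncrossing diagonals to a triangulation and taking one of two nonadjacent ears. At an ear, all $2n$ ports go to the two neighbours in two consecutive blocks, and pigeonhole gives $n$.

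\paragraph{The trivalent case.} Saying that trivalent vertices ``only reduce the number of available ports in a way that preserves the inequality'' is not an argument. Moreover, no per-coupon inequality survives. A coupon meeting one vertex has $2n-1$ other ports, which suffices only if they all go to its two neighbours. A coupon meeting two vertices has $2n-2$, where pigeonhole yields only $n-1$. The conclusion therefore depends on \emph{selecting} a coupon that meets at most one vertex and no diagonal.

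The paper does this as follows.
\begin{enumerate}
\item Take a trivalent vertex $v$ of minimal span, with legs at $U_1,U_2,U_3$ in boundary order. Minimality forces both bounded sectors of $v$ to be vertex-free.
\item If such a sector contains coupons, consider the polygon formed by that sector. Its distinguished side is the pair of legs through $v$, whose endpoints are adjacent. Hence one of two nonadjacent ears is an interior coupon, and all $2n$ of its ports go to its two neighbours.
\item If neither sector contains a coupon, then $U_2$ has $2n-1$ ports besides its leg to $v$. These split into two consecutive blocks, one of size at least $n$.
\end{enumerate}
Without this minimal-span selection, and the ear lemma it relies on, your plan restates the conclusion rather than proving it.
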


\begin{proof}
A vertex-free region containing a strand that returns to the same coupon is
zero: choose an innermost return strand.  Its endpoints are consecutive among
the ports of that colour, so alternation leaves one port of the other colour
inside, giving a one-boundary-point component.  Since no two trivalent
vertices are joined, every leg of a remaining trivalent vertex ends at a
coupon.  If two legs end at the same coupon, an innermost such sector gives
the same one-boundary-point reduction; thus that case is zero, and otherwise
the three coupons are distinct.  First suppose there is no trivalent vertex.
We may now assume that there is no return strand.
For two coupons the conclusion is immediate.  For three coupons, all $2n$
ports of any coupon join its two neighbours in consecutive blocks, so one
block has at least $n$ strands.  For at least four coupons, forgetting the
colours, regard the boundary coupons as the vertices of a polygon and
the strands joining nonconsecutive coupons as noncrossing diagonals.  Complete
these diagonals to a triangulation.  A triangulated polygon has two
nonadjacent ears.  At either ear all $2n$ coupon ports join its two
neighbours.  The ports going to either neighbour form a consecutive block,
so one block has at least $n$ strands.

Now suppose that trivalent vertices occur.  Cut the external boundary away
from the coupons and linearly order them.  Isotope the three legs of every
trivalent vertex monotonically toward the boundary.  If the legs of a vertex
meet coupons with indices $i<j<\ell$, call $\ell-i$ its span.  Choose a
vertex $v$ of minimal span.  A trivalent vertex lying in either bounded
sector of $v$ would, by planarity, have all three legs in the corresponding
proper boundary interval and hence smaller span.  Thus both bounded sectors
of $v$ contain no trivalent vertex.  Write $U_1,U_2,U_3$ for the distinct
coupons met by its legs in order.  If there is a coupon strictly between two
successive $U_i$, the corresponding empty sector is a polygon whose
distinguished side is the pair of legs through $v$.  Its other strands are
noncrossing diagonals, since a return strand in this vertex-free sector gives
the zero case above.  If the sector contains exactly one interior coupon,
all its $2n$ ports join the two endpoint coupons in consecutive blocks.
Otherwise the sector polygon has at least four vertices, and the two-ear
argument supplies an interior coupon incident to no diagonal, since the
endpoints of the distinguished side are adjacent.  In either case its $2n$
ports split into two consecutive blocks, one of size at least $n$.
If there is no coupon between either $U_1,U_2$ or
$U_2,U_3$, the coupon $U_2$ has one port joined to $v$ and its remaining
$2n-1$ ports split between its two neighbours; one of the two consecutive
blocks therefore has at least $n$ strands.

The colours alternate around every coupon.  Hence $n$ consecutive strands
form a full alternating side after moving the star by one-click rotations.
The relations in \cref{lem:inverse-rotation}, together with
$\omega_U=\sigma_U=1$, make every such rereading coefficient one.  Literal
reflection reverses the facing coupon, so the full bundle is an inverse
composition $UV$ or $VU$.  Cancelling it replaces the pair by identity
strands.  Removing any circles or bigons thereby created preserves
jellyfish form and leaves two fewer coupons.
\end{proof}

\begin{proposition}[Closed-diagram evaluation]
\label{prop:closed-evaluation}
For $n=2,3$, the two Fibonacci skein calculi, the inverse and rotation
relations, and the applicable four local roof relations
\eqref{eq:red-roof-U}--\eqref{eq:black-roof-V} or
\eqref{eq:odd-red-roof-U}--\eqref{eq:odd-black-roof-V}, with
$\omega_U=\sigma_U=1$, evaluate every closed diagram in
$\rho,\mu,U,V$ to a scalar.
\end{proposition}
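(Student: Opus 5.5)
We evaluate a closed diagram in $\rho,\mu,U,V$ by strong induction on the pair (number of coupons, number of trivalent vertices plus interior edges), ordered lexicographically. The reductions come from the preceding lemmas. With no coupons, the diagram lies in the planar algebra generated by two commuting Fibonacci skein calculi, because the two colours cross nothing. It therefore evaluates to a scalar by the usual Fibonacci rules: circles give $\ph$, tadpoles give $0$, bigons give the identity, and triangles reduce by the relation derived before \eqref{eq:n3-left-four-branches}. A closed diagram containing exactly one coupon is zero, since $\Hom(\mathbf 1,u_n)=0$ for the simple nonunit $u_n$. So assume at least two coupons.

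\textbf{Step 1 (jellyfish form).} Move every trivalent vertex and every strand segment into the region outside the coupons. Whenever a roof (a strand passing over a coupon) separates a coupon from the outer boundary, apply the roof relations \eqref{eq:red-roof-U}--\eqref{eq:black-roof-V} or their odd versions. Each application replaces the roof by a linear combination of diagrams in which that coupon has been pushed one layer outward. This is the jellyfish algorithm of \cite[Section~5]{IzumiMorrisonPenneys2013}. It terminates because every application strictly decreases the total nesting depth of coupons under roofs, while the number of coupons stays the same. At the end all coupons lie on the outer boundary of the picture.

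\textbf{Step 2 (interior simplification).} In the coupon-free interior, remove circles, bigons and tadpoles. Next, contract every edge joining two trivalent vertices, using \eqref{eq:fibonacci-resolution-graphical} in its $I=H$ form together with the triangle relation. None of these moves increases the number of coupons, and each decreases the second component of the induction measure. So we reach a diagram satisfying the hypotheses of \cref{lem:two-colour-polygon-reduction}.

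\textbf{Step 3 (coupon cancellation).} By \cref{lem:two-colour-polygon-reduction}, either the diagram is zero, or two consecutive coupons share $n$ consecutive strands. In the second case, with $\omega_U=\sigma_U=1$ from \eqref{eq:n2-omega-sigma-one} and \eqref{eq:n3-omega-sigma-one}, rotation rereading changes nothing. The pair then cancels as $UV$ or $VU$, and the diagram has two fewer coupons, so the induction hypothesis applies. Every relation used has a fixed scalar coefficient, so the output is a well-defined scalar function of the input diagram. This uses only the listed relations, not independence of choices; consistency is inherited from the existence of $\calC$.

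\textbf{Main obstacle.} The delicate point is the termination and completeness of Step 1. We must show that an arbitrary planar placement of coupons inside roofs of both colours can always be driven to jellyfish form using only the four roof relations. The concern is a coupon covered by a roof that is attached to a trivalent vertex rather than being a simple arc. I would handle this first by resolving such a roof at its trivalent vertex via \eqref{eq:fibonacci-resolution-graphical}, so that only simple roofs remain. I would then check that the measure ``sum over coupons of the number of strands separating it from the boundary'' strictly decreases under each roof relation, as both branches on the right-hand side show.
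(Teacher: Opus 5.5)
Your three stages are the paper's: pull coupons to the boundary with the roof relations, simplify the coupon-free interior, then cancel boundary pairs by \cref{lem:two-colour-polygon-reduction}. The gap is that the termination and induction bookkeeping fails as written.

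The roof relations do \emph{not} preserve the number of coupons. In each of \eqref{eq:red-roof-U}--\eqref{eq:black-roof-V} and \eqref{eq:odd-red-roof-U}--\eqref{eq:odd-black-roof-V}, the second term on the right is a two-coupon train. Pulling a coupon of depth $d$ through one strand can therefore leave two coupons of depth $d-1$. Your measure ``sum over coupons of the number of separating strands'' then changes by $d-2$ in the worst case, which is not negative once $d\ge2$. This already happens in \eqref{eq:n2-left-first-resolution}: one coupon under two roofs becomes a two-coupon train under one roof, and \eqref{eq:n2-left-expansion} ends with the four-coupon train $C_3$.

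The same fact breaks your lexicographic induction with the coupon count first. Step~1 can raise that count, so after Step~3 the diagram has two fewer coupons than the jellyfish diagram but possibly more than the diagram you started from. The induction hypothesis then does not apply.

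The repair is the paper's separation of stages:
\begin{enumerate}
\item Choose a coupon of maximal positive depth and a minimizing path. The new coupons have strictly smaller depth and no other depth increases. Hence the multiset of positive depths decreases in the multiset extension of the order on $\mathbb Z_{\ge0}$, and the pulling stage terminates.
\item Then, never using a roof relation again, induct on the number of coupons among jellyfish diagrams only. Check that the interior skein moves and the cancellation of \cref{lem:two-colour-polygon-reduction} keep the diagram in jellyfish form.
\end{enumerate}
Your worry about roofs ending at trivalent vertices is unnecessary, since a roof move is local to one edge segment pulled along the minimizing path.

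There are two smaller points.
\begin{itemize}
\item Your one-coupon argument gives the wrong reason. Such a closure is $\operatorname{tr}(XU)$ with $X\in\Hom(v_n,u_n)$ coupon-free. In $\calC$ this space is spanned by $V$, so the vanishing of $\Hom_{\calC}(\mathbf1,u_n)$ is irrelevant. What kills the closure is that $X$ is a two-colour skein diagram, hence zero already in $\Fib*\Fib$, where $u_n\not\cong v_n$; equivalently, use the paper's one-boundary-point argument. This has to come from the relations, because the proposition is what makes the negligible ideal realization-independent.
\item An $I=H$ flip does not lower the number of trivalent vertices. To remove an edge joining two trivalent vertices, read \eqref{eq:fibonacci-resolution-graphical} backwards as $\mathsf T=\mathrm{id}-\ph^{-1}\mathsf C$.
\end{itemize}
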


\begin{proof}
The evaluation has two stages.  First use the roof relations only as a
pulling algorithm.  Give a coupon its depth, the least number of strands
crossed by a path from that coupon to the external boundary.  Choose a coupon
of maximal positive depth and a minimizing path.  The roof move is supported
in a narrow neighbourhood of that path.  Its new coupons have strictly
smaller depth, while the old minimizing paths for coupons outside that
neighbourhood show that their depths do not increase.  Thus the multiset of
positive coupon depths decreases in the multiset extension of the usual
order on the nonnegative integers.  Repetition therefore writes the original
diagram as a linear combination of jellyfish diagrams, in which every coupon
meets the external boundary.

Now fix one of these boundary diagrams and do not use the roof relations
again.  Apply the two Fibonacci skein calculi in its coupon-free interior:
remove circles and bigons, expand pairs of same-colour trivalent vertices by
\eqref{eq:fibonacci-resolution-graphical}, and discard every summand
containing a tadpole.  We induct on the number $k$ of remaining boundary
coupons.

If $k=0$, the two Fibonacci skein calculi evaluate the diagram.  If $k=1$,
remove the coupon disk and consider the reduced monochromatic webs incident
to its alternating red and black boundary ports.  For $n=2$, a nonzero
reduced web of either colour joining its two ports is an arc; that arc
separates the two ports of the other colour and forces a component with one
boundary point.  For $n=3$, either a monochromatic component already has one
boundary point, or the three ports of one colour lie in a reduced trivalent
tree whose arms separate the three alternating ports of the other colour.
Again a component with one boundary point occurs.  Such a component is zero
because $\Hom(\mathbf1,\rho)=\Hom(\mathbf1,\mu)=0$, equivalently because an
innermost cap reduces it to a tadpole.  Thus every one-coupon summand
vanishes.

Suppose $k\ge2$.  The interior reductions preserve closedness and introduce
no external strand endpoints.  Every resulting summand therefore satisfies the
hypotheses of \cref{lem:two-colour-polygon-reduction}: an edge between two
same-colour trivalent vertices is exactly a reducible Fibonacci pair, and a
trivalent vertex with two legs at one coupon gives the one-boundary-point
zero used above.  The lemma therefore makes the summand zero or cancels two
coupons with coefficient one.  Induction on $k$ completes the evaluation.
This is the two-colour form of the polygon argument in
\cite[Theorem~3.9 and Section~5]{IzumiMorrisonPenneys2013}.
\end{proof}

\begin{proof}[Proof of \cref{thm:marked-uniqueness}]
Let $\calP_n$ be the free $\CC$-linear spherical category generated by the
two marked Fibonacci calculi and inverse coupons $U,V$, modulo the displayed
inverse, rotation, and local roof relations with
$\omega_U=\sigma_U=1$.  The relations give a spherical tensor functor
$\calP_n\to\calC$.

This functor is full.  The Fibonacci splittings reduce every tensor word to
the alternating simples in \eqref{eq:standard-form}.  Hom spaces between
distinct members of that list vanish; each endomorphism space is scalar;
and the only identification represented by two different alternating words
is generated by $U$ and $V$.

Let $\mathcal N_n$ be the negligible ideal of $\calP_n$.  By
\cref{prop:closed-evaluation}, every closure has a realization-independent
value, so $\mathcal N_n$ is independent of $\calC$.  The kernel of
$\calP_n\to\calC$ equals $\mathcal N_n$: one inclusion is immediate, while
the converse follows from fullness and the nondegenerate spherical trace
pairing in the pseudo-unitary fusion category $\calC$.  Hence every
realization is the Karoubi completion of $\calP_n/\mathcal N_n$.

The standard realizations exist.  For $n=2$ this quotient is
$\Fib\boxtimes\Fib$; for $n=3$ it is the rank-six category $\TTthree$
constructed in \cite[Section~5]{IzumiMorrisonPenneys2013}.  The generators
and coupons in $\calP_n$ retain their ordered source labels, so the resulting
equivalence intertwines the two marked embeddings.
\end{proof}

\section*{Acknowledgments}
Both authors acknowledge support from the Beijing Natural Science
Foundation Key Program (Grant~Z220002) and the Beijing Natural Science
Foundation (Grant~Z221100002722017).

\par\medskip
\noindent\textbf{AI-use disclosure.}
The AI tool was used as a research assistant.
The authors retain full responsibility for the final
version of the manuscript.

\bibliographystyle{amsplain}
\bibliography{references}

\end{document}